\numberwithin{equation}{section}
\numberwithin{figure}{section}
\newtheorem{theorem}{Theorem}[section]
\newtheorem{corollary}[theorem]{Corollary}
\newtheorem{lemma}[theorem]{Lemma}
\newtheorem{proposition}[theorem]{Proposition}
\newtheorem{question}{Question}
\theoremstyle{definition}
\newtheorem{definition}[theorem]{Definition}
\newtheorem{example}[theorem]{Example}
\newtheorem{remark}[theorem]{Remark}
\newtheorem{observation}[theorem]{Observation}
\newcommand\nc\newcommand
\renewcommand
\DeclareMathOperator
\newcommand\R{\mathbb{R}}
\newcommand\Q{\mathbb{Q}}
\newcommand\C{\mathbb{C}}
\newcommand\Z{\mathbb{Z}}
\newcommand\N{\mathbb{N}}
\DeclareMathOperator{\id}{id}
\DeclareMathOperator{\im}{im}
\DeclareMathOperator{\coker}{coker}
\DeclareMathOperator{\Hom}{Hom}
\DeclareMathOperator{\Ext}{Ext}
\DeclareMathOperator{\Tor}{Tor}
\DeclareMathOperator{\Crit}{Crit}
\DeclareMathOperator{\std}{std}
\newcommand\ip[1]{\left \langle #1 \right \rangle}
\newcommand\flower{\text{\ding{96}}}
\dc{\CLF}{\textsc{clf}}
\dc{\Ann}{Ann}
\dc{\Fix}{Fix}
\newcommand{\cat}[1]{{\sffamily\upshape{\textbf{#1}}}}
\newcommand\undermat[2]{%
  \makebox[0pt][l]{$\smash{\underbrace{\phantom{%
    \begin{matrix}#2\end{matrix}}}_{\text{$#1$}}}$}#2}
\tikzset{bd/.style={circle, fill, inner sep=0pt, minimum size=2mm},
	int/.style={circle, draw, fill = white, inner sep=0pt, minimum size=2mm}}
\renewcommand*\env@matrix[1][*\c@MaxMatrixCols c]{%
  \hskip -\arraycolsep
  \let\@ifnextchar\new@ifnextchar
  \array{#1}}
\begin{document}

\title[Algebraic Properties of Graph Laplacians]{Algebraic Properties of Generalized Graph Laplacians: Resistor Networks, Critical Groups, and Homological Algebra}
  
\author{David Jekel, Avi Levy, Will Dana, Austin Stromme, Collin Litterell}

\address{David Jekel, Dept.\ of Mathematics, University of California,
  Los Angeles, CA}
\email{davidjekel@math.ucla.edu}
\urladdr{\url{http://www.math.ucla.edu/~davidjekel/}}

\address{Avi Levy, Dept.\ of Mathematics, University of Washington,
  Seattle, WA}
\email{avius@uw.edu}
\urladdr{\url{http://www.math.washington.edu/~avius}}

\address{Will Dana, Dept.\ of Mathematics, University of Washingon, Seattle, WA}
\email{danaw6@uw.edu}

\address{Austin Stromme, Dept.\ of Math.\ and Dept.\ of Computer Science, University of Washington, Seattle, WA}
\email{astromme@uw.edu}

\address{Collin Litterell, Dept.\ of Mathematics, University of Washington, Seattle, WA}
\email{collindl@uw.edu}

\date{\today}
\thanks{All the authors acknowledge support of the NSF grant DMS-1460937 during summer 2015.}
\keywords{graph Laplacian, resistor network, layer-stripping, critical group, discrete harmonic function, homological algebra}
\subjclass[2010]{Primary: 05C50, secondary: 05C76, 18G15, 39A12}


\begin{abstract}
We propose an algebraic framework for generalized graph Laplacians which unifies the study of resistor networks, the critical group, and the eigenvalues of the Laplacian and adjacency matrices.  Given a graph with boundary $G$ together with a generalized Laplacian $L$ with entries in a commutative ring $R$, we define a generalized critical group $\Upsilon_R(G,L)$.  We relate $\Upsilon_R(G,L)$ to spaces of harmonic functions on the network using the $\Hom$, $\Tor$, and $\Ext$ functors of homological algebra.

We study how these algebraic objects transform under combinatorial operations on the network $(G,L)$, including harmonic morphisms, layer-stripping, duality, and symmetry.  In particular, we use layer-stripping operations from the theory of resistor networks to systematize discrete harmonic continuation.  This leads to an algebraic characterization of the graphs with boundary that can be completely layer-stripped, an algorithm for simplifying computation of $\Upsilon_R(G,L)$, and upper bounds for the number of invariant factors in the critical group and the multiplicity of Laplacian eigenvalues in terms of geometric quantities.
\end{abstract}

\maketitle


\section{Introduction}

Motivated by questions from several contexts, we study algebraic properties of a generalized critical group.  We relate spaces of harmonic functions to the generalized critical group using homological algebra.  We study how these algebraic objects transform under modifications of the network, including harmonic morphisms, layer-stripping, duality, and symmetry.  \footnote{The previous draft was submitted to the SIAM Journal of Discrete Mathematics in Apr.\ 2016.  This revised version was submitted on Jan.\ 9, 2017.}

\subsection{Layer-stripping for Resistor Networks} \label{subsec:motivationlayering}

Our first motivation comes from the theory of resistor networks developed by \cite{CIM,CM,dVGV,LPcyl,WJ,layering}.  A {\bf graph with boundary} or {\bf $\partial$-graph} is a graph $(V,E)$ together with a specified partition of $V$ into a set $\partial V$ of {\bf boundary vertices} and a set $V^\circ$ of {\bf interior vertices}.  The boundary vertices are the vertices where we will allow a net flow of current into or out of the network.  A {\bf resistor network} is an edge-weighted $\partial$-graph, where each weight or conductance $w(e)$ is strictly positive.  An {\bf electrical potential} is a  function $u \colon V \to \R$.  The {\bf net current} at a vertex $x$ is given by the {\bf weighted Laplacian}
\[
Lu(x) = \sum_{y \sim x} w(x,y)[u(x) - u(y)].
\]
A potential function is {\bf harmonic} if the net current vanishes at each interior vertex.

The discrete electrical inverse problem studied by \cite{CIM,dVGV,WJ,LPcyl,layering} asks whether the conductances of a network can be recovered by performing boundary measurements of harmonic functions.  We measure how the potentials $u|_{\partial V}$ and net currents $Lu|_{\partial V}$ relate for a harmonic function $u$, and we encode this information in a {\bf response matrix} $\Lambda$ (for precise definition, see \cite[\S 3.2]{CM}).  The inverse problem asks whether we can uniquely determine $w$ knowing only $G$ and $\Lambda$.  In other words, for fixed $G$, we want to reverse the transformation $w \mapsto \Lambda$.

The electrical inverse problem cannot be solved for all graphs, but many graphs can be recovered via {\bf layer-stripping}, a technique in which the edge weights are recovered iteratively, working inwards from the boundary.  At each step, one recovers the conductance of a near-boundary edge, then removes that edge by a {\bf layer-stripping operation} of deletion or contraction, and thus reduces the problem to a smaller graph \cite[\S 6.5]{CM}.

Layer-stripping operations have intrinsic algebraic and combinatorial interest as well.  For instance, if a $\partial$-graph can be completely layer-stripped to nothing, then one can construct its response matrix iteratively through simple transformations corresponding to the layer-stripping operations \cite[\S 6]{CM}.  This process parametrizes the response matrices associated for resistor networks which are circular planar (i.e.\ able to be embedded in the disk).  Furthermore, as observed by \cite{LP}, the action of layer-stripping operations on these response matrices generates a group isomorphic to the symplectic group.  In \cite{ALT}, circular planar networks (up to $Y$-$\Delta$ equivalence) are given the structure of a poset with $G' \geq G$ if $G'$ can be layer-stripped down to $G$.

Let us call a $\partial$-graph {\bf layerable} if it can be completely layer-stripped to the empty graph.  In this paper, we will construct an algebraic invariant to test layerability.  Our strategy is to replace edge weights in $\R_+$ with edge weights in an arbitrary commutative ring $R$.  Then we consider the weighted Laplacian as an operator on functions $u: V \to M$, where $M$ is a given $R$-module.  We examine algebraic properties of the module $\c U(G,L,M)$ of harmonic functions and the module $\c U_0(G,L,M)$ of harmonic functions such that $u$ and $Lu$ both vanish on the boundary of $G$.

We show that if a $\partial$-graph is layerable and we assign edge weights which are units in a ring $R$, then $\c U_0(G,L,M) = 0$.  That is, if $u$ is harmonic with $u|_{\partial V} = 0$ and $Lu|_{\partial V} = 0$, then $u$ is identically zero.  The idea is to start with the values on the boundary and work one's way inward following the sequence of layer-stripping operations.  At each step, we deduce that another edge has zero current or that another vertex has zero potential.  In essence, this is a discrete version of harmonic continuation.

The condition that $\c U_0(G,L,M) = 0$ for all $L$ and $M$ does not quite characterize layerable $\partial$-graphs.  If we have a $\partial$-graph $G$ and $\c U_0(G,L,M) = 0$ for every Laplacian $L$ obtained by assigning unit edge weights in any ring $R$, then $G$ may not be layerable.  However, it must be {\bf completely reducible}, that is, it can be reduced to nothing using layer-stripping and another operation which splits apart two subgraphs that are glued together at a common boundary vertex (see Theorem \ref{thm:reducibilitycharacterization}).

Moreover, we can characterize layerability algebraically by generalizing $L$ to allow arbitrary diagonal entries.  As shown in Theorem \ref{thm:layerabilitycharacterization}, $G$ is layerable if and only if $\c U_0(G,L,M) = 0$ for every generalized Laplacian $L$ of the form $D - A$, where $D$ is a diagonal matrix and $A$ is the adjacency matrix weighted by units in $R$.

\begin{remark}
It is important to point out that our invariants do \emph{not} test whether the inverse problem can be solved.  Solving the inverse problem would require not only deleting and contracting a sequence of edges, but also being able to determine the weight of each edge from the boundary behavior of the network.  For a treatment of the inverse problem through layer-stripping, see \cite{WJ,layering}.
\end{remark}

\begin{remark}
It is straightforward to test whether a specific $\partial$-graph is layerable by repeatedly iterating over the boundary vertices searching for edges that can be removed, and this can be done in polynomial time.  The advantage of an algebraic invariant is that it can be used to test layerability for whole classes of networks by relating it to other more global properties (see e.g.\ Proposition \ref{prop:boundaryinteriorbipartite}). It also gives us significant information about non-layerable graphs with boundary.
\end{remark}

\subsection{Harmonic Functions and the Critical Group} \label{subsec:harmoniccritical}

The modules $\c U(G,L,M)$ and $\c U_0(G,L,M)$ of harmonic functions turn out to be related to another $R$-module, which we call the {\bf fundamental module $\Upsilon$}.  The module $\Upsilon$ is a generalization of the \emph{critical group} of a graph (also known as the \emph{sandpile group}, \emph{Jacobian}, or \emph{Picard group}), which has received significant attention from physicists, combinatorialists, probabilists, algebraic geometers, and number theorists.

The critical group can be produced through several different combinatorial models.  The Abelian sandpile model was introduced in statistical physics by Dhar \cite{Dhar}, who was motivated by the study of self-organized criticality. Grains of sand are placed on the vertices of a graph.  If a vertex has at least as many grains of sand as its degree, the vertex is allowed to \emph{topple} by sending one grain of sand to each of its neighbors. The elements of the sandpile group are the critical configurations of sand \cite[\S 14]{GodsilRoyle}, \cite{Biggs2}.  There are other combinatorial models which produce the same group:  Extending work of \cite{Spencer} on the balancing game, \cite{BLS} introduced the chip-firing game and uncovered its connection to greedoids. The dollar game appeared in \cite{Biggs} and was analyzed extensively using the methods of algebraic potential theory.

Sandpile theory has since expanded into other areas of combinatorics, graph theory, and even algebraic geometry.  Graph theorists study the sandpile group in the guise of the quotient of the chain group by the submodule generated by cycles and bonds \cite[\S 26-29]{Biggs}.  Probabilists study the abelian sandpile model due to its intimate connections with generating uniformly random spanning trees \cite{HLMPPW,levine}; the sandpile group acts freely and transitively on the set of spanning trees of the graph \cite[\S 7]{HLMPPW}, \cite[Theorem 7.3]{Biggs2}.  Viewing sand configurations as divisors on the graph, \cite{Lor2,bakerNor2} interpreted the sandpile group as the Jacobian variety of a degenerate curve and proved a Riemann-Roch theorem for graphs.

For such a fruitful object with deep and diverse connections, the critical has a surprisingly simple algebraic characterization.  For a connected graph $G$, if $\Z V$ is the group of $0$-chains on the vertices and $L: \Z V \to \Z V$ is the graph Laplacian, then $\coker(L) \cong \Crit(G) \oplus \Z$ (see \cite[Theorem 4.2]{Biggs2}).  This construction of $\Crit(G)$ easily generalizes to $\partial$-graphs with edge weights in an arbitrary ring.  In the general case, we define $\Upsilon(G,L)$ as the cokernel of the generalized Laplacian $L$ viewed as a map from $0$-chains on the interior vertices to $0$-chains on $V$.  Similar constructions for graphs without boundary appear in \cite{BakerFaber,DKM}.

We relate $\Upsilon$ with harmonic functions by observing that
\[
\c U(G,L,M) = \Hom(\Upsilon(G,L), M);
\]
in other words, $\Upsilon(G,L)$ is the \emph{representing object} for the functor $\c U(G,L,-)$ on $R$-modules.  We also show that $\c U_0(G,L,M) = \Tor_1(\Upsilon(G,L), M)$ (for non-degenerate networks).  In other words, $M$-valued harmonic functions that are not detectable from boundary measurements indicate \emph{torsion} of the fundamental module $\Upsilon$.  These algebraic facts lead to several equivalent algebraic characterizations of layerability and complete reducibility in terms of $\Upsilon$ (Theorems \ref{thm:layerabilitycharacterization} and \ref{thm:reducibilitycharacterization}).

As a special case of our theory, for a graph without boundary with edge weights $1$, we have $\Upsilon \cong \Crit(G) \oplus \Z$.  Moreover,
\[
\c U(G,L,\R/\Z) \cong \Hom_\Z(\Crit(G), \R/\Z) \times \R/\Z \cong \Crit(G) \times \R/\Z,
\]
where the isomorphism $\Hom_\Z(\Crit(G), \R/\Z) \cong \Crit(G)$ follows from Pontryagin duality because $\Crit(G)$ is a finite abelian group.  Thus, we recover the observation of \cite[\S 2]{Solomyak} \cite[p.\ 11]{HLMPPW} that $\Crit(G)$ is isomorphic to the group of $\R/\Z$-valued harmonic functions modulo constants.

This harmonic-function perspective makes the computation of $\Upsilon$ (and hence $\Crit(G)$) accessible to the powerful technique of discrete harmonic continuation, which has proved extremely useful to the resistor network community -- for instance, see \cite[\S 4.1 - 4.5]{CM} \cite[\S 4]{CMdn} \cite{WJ} \cite[\S 2.3]{RK}.   We illustrate this technique in \S \ref{sec:CLF}, using it to compute $\Upsilon$ for a family of $\partial$-graphs embedded on the cylinder.

In \S \ref{sec:layering} we present a systematic approach which uses layer-stripping as a geometric model for harmonic continuation.  As an application, we have the following result (a special case of Theorem \ref{thm:explicitalgorithm}):  Suppose $G$ is a graph without boundary and $G'$ is obtained from $G$ by assigning $s$ vertices to be boundary vertices.  If $G'$ is layerable, then $\Crit(G)$ has at most $s - 1$ invariant factors.  In fact, these invariant factors can be found from the Smith normal form of an $s \times s$ matrix computed explicitly from the sequence of layer-stripping operations.

\subsection{Discrete Differential Geometry and Complex Analysis} \label{subsec:discretedifferentialgeometry}

The generalized critical group $\Upsilon$ serves as a link between the combinatorial properties of a $\partial$-graph and the algebraic properties of harmonic functions, not unlike the way that homology links the topology of a Riemannian manifold to harmonic differential forms.  In light of Hodge theory, harmonic differential forms on a manifold represent elements of the de Rham cohomology groups. On the other hand, these groups are characterized as $\Hom_{\Z}(H^n, \R)$ by the de Rham Theorem, where $H^n$ is the homology of a chain complex defined using formal linear combinations of simplices.  In a similar way, the module $\c U(G,L,M)$ of harmonic functions on an $R$-network can be represented as $\Hom(\Upsilon, M)$, where $\Upsilon$ is obtained by considering formal linear combinations of vertices.

In fact, the analogy between Riemannian geometry and weighted graphs can be made quite precise.  We shall sketch the connection here in a similar way to \cite[\S 2.1]{BakerFaber} and \cite{Mercat}.  We remark also that \cite{DKM} has generalized the critical group to higher dimensions using an analogue of the Hodge Laplacian.

Given a $\partial$-graph $G$ and a commutative ring $R$, we define chain groups
\[
C_0:=RV,\qquad C_1:=RE/\{-e=\bar e\}_{e\in E},
\]
that is, the free $R$-modules on the vertex and edge sets respectively, after identifying the negative of an oriented edge with its reverse orientation.  Dual to chains, we have modules $\Omega^j(G,L,M)$ consisting of $M$-valued $j$-forms:
\[
\Omega^j(G,L,M):=\Hom(C_j, M),\qquad j=0,1.
\]
The boundary map $\partial\colon C_1 \to C_0$ given by $\partial e = e_+ - e_-$ induces the discrete gradient $d\colon \Omega^0 \to \Omega^1$ given by $df(e) = f(e_+) - f(e_-)$.  The coboundary map $\partial^*\colon C_0 \to C_1$ given by $x \mapsto \sum_{e: e_+ = x} e$ induces the discrete divergence $d^*\colon \Omega^1 \to \Omega^0$ given by $d^* \omega(x) = \sum_{e: e_+ = x} \omega(e)$.  The weighted chain Laplacian $\partial w \partial^*\colon C_0 \to C_0$ induces the weighted Laplacian on cochains or functions $d^* w d\colon \Omega^0 \to \Omega^0$.  (Here $w$ denotes the map $e \mapsto w(e)e$.)

For a graph without boundary, the module $\c U(G,L,M)$ arises from (weighted) cohomology theory as the kernel of $d^* w d\colon \Omega^0 \to \Omega^0$.  On the other hand, $\Upsilon(G,L)$ arises from (weighted) homology theory as the cokernel of $\partial w \partial^*\colon C_0 \to C_0$.  In this discrete setting, the de-Rham-like duality $\c U(G,L,M) \cong \Hom_R(\Upsilon(G,L),M)$ follows immediately from properties of quotient modules (see Lemma \ref{lem:hom}).

There is an even better developed analogy between graphs and Riemann surfaces \cite{bakerNor1, Urakawa, Mercat, Perry, BobenkoGunther}, and we will continue to draw inspiration from complex analysis and topology even as we build a purely combinatorial and algebraic theory.  We shall describe analogues of holomorphic maps (\S \ref{subsec:categorydgraphs}), harmonic continuation (\S \ref{sec:layering}), and harmonic conjugates (\S \ref{sec:duality}).

\subsection{Overview}

The paper will be organized as follows:

{\bf 2. The Fundamental Module $\Upsilon(G,L)$:}  This section will define the generalized critical group $\Upsilon(G,L)$ and interpret $\Hom(\Upsilon(G,L),-)$ and $\Tor_1(\Upsilon(G,L),-)$ in terms of harmonic functions (Lemma \ref{lem:hom} and Proposition \ref{prop:tor}).  We give applications to the special case of principal ideal domains and $\Crit(G)$.

{\bf 3. Chain Link Fence Networks:}  We compute $\Upsilon$ for an infinite family of networks with nontrivial boundary which played a key role in the electrical inverse problem \cite{LPcyl}.  This computation illustrates and motivates ideas we develop systematically later (harmonic continuation, covering spaces, sub-$\partial$-graphs).  As a preview of the computation, Figure \ref{fig:randomHF} shows a $(\Z/64)$-valued harmonic function on one of the chain-link fence networks.  This function has $u = 0$ and $L u = 0$ on the boundary of the network.  One interesting corollary of our analysis is that the $\Z$-module of such harmonic functions breaks up into the direct sum of harmonic functions which are zero on the first column of vertices and harmonic functions which are zero on the second column.

\begin{figure}
        \centering
        \includegraphics[width=10cm]{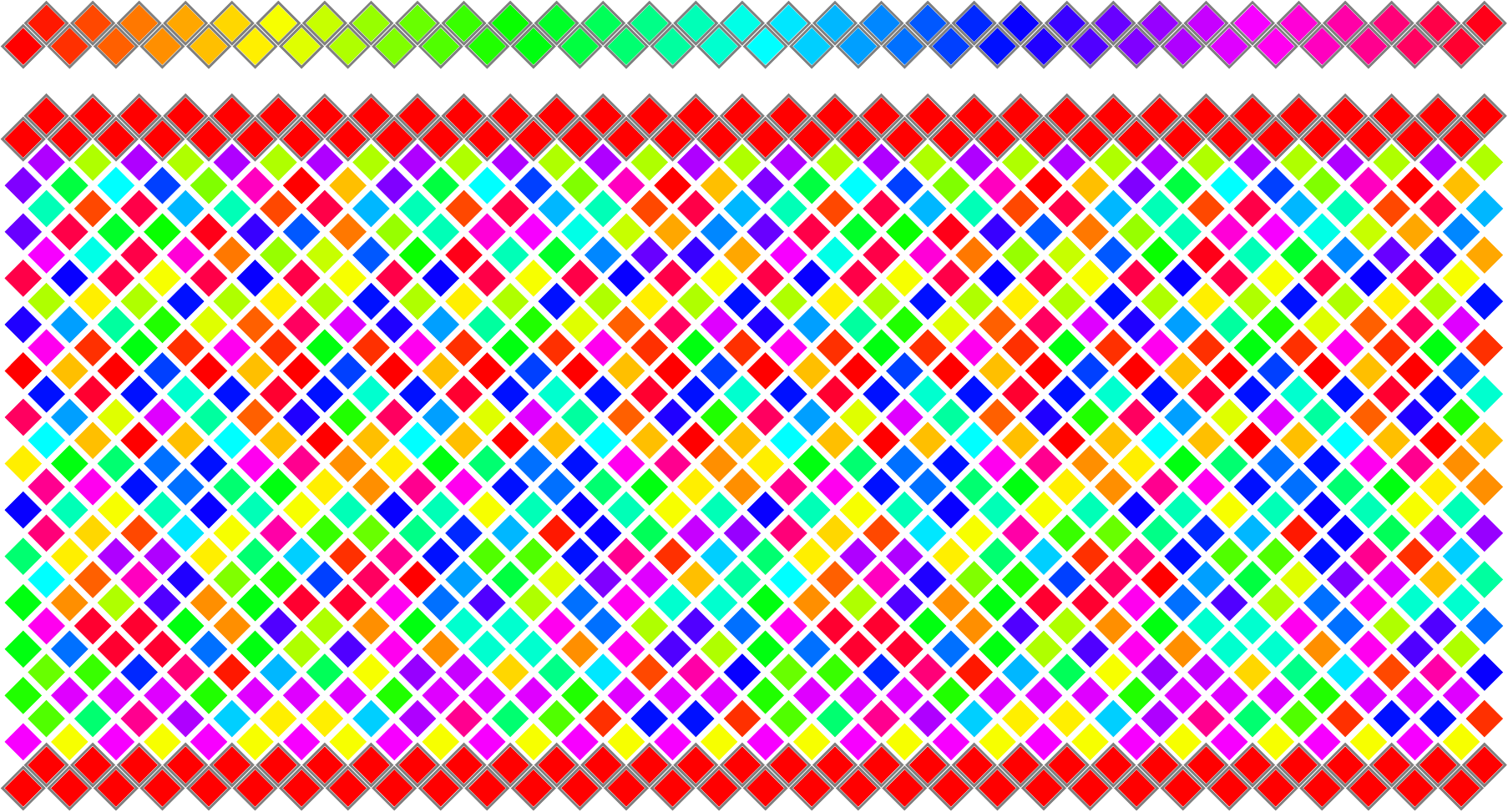}
        \caption{A $\left(\Z/64\right)$-valued harmonic function on a graph we will study in \S \ref{sec:CLF}.  The squares represent vertices, and edges exist between squares which share a side.  The squares on the right and left sides are identified.  The color represents the value of the function; the top bar lists the colors for $0$ through $63$ from left to right.}
        \label{fig:randomHF}
\end{figure}

{\bf 4. The Categories of $\partial$-graphs and $R$-Networks:} We describe categories of $\partial$-graphs and $R$-networks, adapted from the ideas of \cite{Urakawa,bakerNor2,Treumann}.  We show $\Upsilon$ is a covariant functor from $R$-networks to $R$-modules.  We give applications to the critical group and eigenvectors of the Laplacian.

{\bf 5. Layering and Harmonic Continuation:}  We describe the process of layer-stripping a network borrowed from the theory of resistor networks \cite{CIM,dVGV,LPcyl,layering}.  This leads to an algebraic characterization of the finite $\partial$-graphs that can be completely layer-stripped (Theorem \ref{thm:layerabilitycharacterization}).  We use layer-stripping as a geometric model for harmonic continuation.  A systematic approach to harmonic continuation leads to an algorithm for simplifying the computation of $\Upsilon$ (Theorem \ref{thm:explicitalgorithm}).  Corollaries include bounds on the number of invariant factors in the sandpile group and the multiplicity of eigenvalues for graph Laplacians.

{\bf 6. Functorial Properties of Layer-Stripping:}  We relate layer-stripping with the morphisms of $\partial$-graphs from \S \ref{sec:transformations}.  We show that if $f \colon G' \to G$ is a $\partial$-graph morphism and if $G$ can be completely layer-stripped to the empty graph, then so can $G'$ (Lemma \ref{lem:layerabilityPullback}).  The ability to pull back the layer-stripping process leads to a clean description of how far layer-stripping operations can simplify a finite $\partial$-graph in the general case (Theorem \ref{thm:flowerfunctor}).

{\bf 7. Complete Reducibility:} Section \ref{sec:completereducibility} defines a class of completely reducible networks which can be reduced to nothing by layer-stripping operations and splitting apart networks that are glued together at a common boundary vertex.  We prove an algebraic characterization of complete reducibility which is analogous to the one for layerability and apply our theory to boundary-interior bipartite networks.

{\bf 8. Network Duality:} We show that a network and its dual (as in \cite{Perry}) have isomorphic fundamental modules $\Upsilon$, generalizing an earlier duality result for the critical group \cite{Biggs}, \cite{CoriRossin}.  The corresponding statement for harmonic functions is that every harmonic function on $G$ has an essentially unique harmonic conjugate on $G^\dagger$.  Harmonic conjugates provide an alternative approach to a critical group computation of \cite{Biggs} for a simple family of wheel graphs.

{\bf 9. Covering Maps and Symmetry:} We sketch potential applications of symmetry and group actions for understanding the algebraic structure of $\Upsilon$ with special focus on the torsion primes of the critical group.

{\bf 10. Open Problems:} The concluding section hints at further possible applications and generalizations.

\section{The Fundamental Module $\Upsilon(G,L)$} \label{sec:algebra}

We shall generalize the graph Laplacian and critical group in several ways, adapting existing ideas in the literature, especially those of \cite{DKM}, \cite{BakerFaber}, and \cite{CIM}:  Briefly, we will work over an arbitrary ring $R$ rather than $\Z$ or $\R$, assign weights in $R$ to the edges, modify the diagonal terms of $L$ arbitrarily, and choose some \emph{boundary} vertices at which we will not enforce harmonicity.  We will give our general definitions and then describe the examples we have in mind.

We assume familiarity with basic terminology for graphs, categories, rings, and modules, as well as basic homological algebra.  For background, refer to \cite{AtMac}, \cite[Chapters 1-3]{Weibel},\cite[Chapters I, II, III, V]{MacLane}, \cite{Vermani}.  We shall also use theory of modules over a principal ideal domain, including the classification of finitely generated modules and the Smith normal form for morphisms from $R^n \to R^m$ (see \cite[\S 12]{DummitandFoote}).

\subsection{Definitions: $\partial$-graphs, Generalized Laplacians, and the Module $\Upsilon$}

We will take the word {\bf graph} to mean a countable, locally finite, undirected multi-graph. We write $V$ or $V(G)$ for the vertex set of the graph $G$ and $E$ or $E(G)$ for the set of {\bf oriented} edges.  If $e$ is an oriented edge, $e_+$ and $e_-$ refer to its starting and ending vertices, and $\overline{e}$ refers to its reverse orientation.  We use the notation $\mathcal{E}(x) = \{e: e_+ = x\}$ for the set of oriented edges exiting $x$.  The {\bf degree} of a vertex $x$ is the number of such edges, that is, $\deg(x) = |\mathcal{E}(x)|$.

A {\bf graph with boundary} (abbreviated to {\bf $\partial$-graph}) is a graph with a specified partition of $V$ into two sets $V^\circ$ and $\partial V$, called the {\bf interior} and {\bf boundary vertices} respectively.  We will use the letter $G$ to denote $\partial$-graphs as well as graphs.  We will sometimes view a graph without boundary as a $\partial$-graph by taking $V^\circ = V$ and $\partial V = \varnothing$.

Let $G$ be a $\partial$-graph and $R$ a commutative ring.  Then $RV$ will denote the free $R$-module with basis $V$; in the language of topology, $RV$ is the module of $0$-chains or formal $R$-linear combinations of vertices.  Similarly, $RV^\circ$ will denote the free module with basis $V^\circ$, which is a submodule of $RV$.

\begin{definition}
A {\bf generalized Laplacian for $G$ over $R$} is an $R$-module morphism $L: RV \to RV$ of the form
\[
L x = d(x) x + \sum_{e \in \mathcal{E}(x)} w(e)(x - e_-) = \text{ for } x \in V,
\]
where $w$ is a function $E \to R$ satisfying $w(\overline{e}) = w(e)$ and $d$ is a function $V \to R$.
\end{definition}

Here $d(x)$ does not represent the diagonal entry of $L$ at $x$, but rather the difference of the diagonal entry from the standard weighted Laplacian.   Note that $Lx$ can also be written
\[
Lx = \left( d(x) + \sum_{e \in \mathcal{E}(x)} w(e) \right) x - \sum_{e \in \mathcal{E}(x)} w(e) e_-.
\]
Our usage of the term ``generalized Laplacian'' is consistent with \cite[\S 13.9]{GodsilRoyle}.

\begin{definition}
An {\bf $R$-network} is a pair $(G,L)$, where $G$ is a $\partial$-graph and $L$ is an associated generalized Laplacian over $R$.  We call $(G,L)$ an {\bf $R^\times$-network} if $w(e)$ is in the group of units $R^\times$ for every edge $e$; note we do not assume $d(x) \in R^\times$.
\end{definition}

\begin{definition}
For an $R$-network $(G,L)$, we define the {\bf fundamental $R$-module} $\Upsilon_R(G,L)$ as the $R$-module
\[
\Upsilon(G,L) = RV / L(RV^\circ).
\]
When it is helpful to emphasize the ring $R$, we will write $\Upsilon_R(G,L)$.
\end{definition}

\begin{example}
For a $\partial$-graph $G$, the standard graph Laplacian $L_{\std}$ over $R = \Z$ corresponds to the case where $d(x) = 0$ and $w(e) = 1$.  Let $G$ be a finite connected graph (without boundary) considered as a $\partial$-graph by setting $V^\circ = V$.  Then $\Upsilon_{\Z}(G,L_{\std})$ is the cokernel of $L_{\std}: \Z V \to \Z V$, which is known to be isomorphic to $\Crit(G) \oplus \Z$. See \cite[Theorem 4.2]{Biggs2}, \cite[Theorem 14.13.3]{GodsilRoyle}, and \cite[\S 2]{DKM}.
\end{example}

\begin{example}
Lorenzini \cite[pp.\ 481-481]{Lor2} considers the generalized critical group of \emph{arithmetical graphs} constructed by taking $R = \Z$, taking $w(e) = 1$, and choosing $d(x)$ such that the diagonal entries of $L$ are positive and $\ker L$ contains some vector $r: V \to \N$ with positive entries.  Such graphs arise in algebraic geometry.
\end{example}

\begin{example}
For a weighted graph or resistor network as in \cite[\S 3.1]{CM} \cite{CIM}, we consider $R = \R$, let $w(e) > 0$ be the conductance of the edge, and let $d(x) = 0$.  Then $L$ represents the linear map from a potential function in $\R^V$ to the function giving the net current induced at each vertex.  If $G$ is connected and has at least one boundary vertex, then the submatrix of $L$ with rows and columns indexed by the interior vertices will be invertible \cite[Lemma 3.8]{CM}.  Therefore, $L: \R V^\circ \to \R V$ has the maximal rank $|V^\circ|$, so $\Upsilon(G,L)$ will be a vector space over $\R$ of dimension $|\partial V|$.
\end{example}

\begin{example} \label{ex:characteristicpolynomial}
Let $G$ is a finite graph without boundary and $R = \C[z]$.  Then $zI - L_{\std}$ is obtained by taking $w(e) = -1$ and $d(x) = z$.  We can relate $\Upsilon_{\C[z]}(G, zI - L_{\std})$ to the eigenspaces and characteristic polynomial of $L_{\std}$ as follows:  Recall that $L_{\std}$ is symmetric and hence can be written as $S\Lambda S^{-1}$, where $S$ is unitary and $\Lambda$ is a diagonal matrix with diagonal entries given by the eigenvalues $\lambda_1$, \dots, $\lambda_n$ of $L_{\std}$.  Then we have
\begin{align*}
\Upsilon_{\C[z]}(G, zI - L_{\std}) &= \coker_{\C[z]}(zI - L_{\std}) \\
&= \coker_{\C[z]}(S(zI - \Lambda)S^{-1}) \\
&\cong \bigoplus_{j=1}^n \C[z] / (z - \lambda_j).
\end{align*}
The summands $\C[z] / (z - \lambda_j)$ correspond to the eigenspaces of $L_{\std}$.  In the theory of modules over a principal ideal domain, this is an elementary-divisor decomposition of $\Upsilon_{\C[z]}(G, zI - L_{\std})$ over $\C[z]$ (for further algebraic explanation see \cite[\S 12]{DummitandFoote}).  The product of the elementary divisors $(z - \lambda_j)$ is the characteristic polynomial $\det(zI - L_{\std})$.   The characteristic polynomial of the adjacency matrix relates to our theory in a similar way.  We will develop this example further in Example \ref{ex:characteristicpolynomial2} and Proposition \ref{prop:characteristicpolynomial}.
\end{example}

\subsection{Duality between $\Upsilon(G,L)$ and Harmonic Functions}

We mentioned earlier that $\R / \Z$-valued harmonic functions for $L_{\std}$ are related to $\Crit(G)$ through Pontryagin duality.  This will easily generalize to our setting, allowing us to interpret $\Hom(\Upsilon,-)$ and $\Tor_1(\Upsilon,-)$ in terms of harmonic functions.

Recall that if $M$ and $N$ are $R$-modules, then $\Hom_R(M,N)$ is the set of $R$-module morphisms $M \to N$.  For an $R$-module $M$, let $M^V$ be the $R$-module of functions $V \to M$ (or $0$-cochains in the language of topology).  Recall $M^V$ is naturally isomorphic to $\Hom_R(RV, M)$.  The map $L: RV \to RV$ induces a map in the reverse direction $L^* = \Hom(L,M): M^V \to M^V$.  Explicitly, if $u\colon V \to M$, then $L^* u\colon V \to M$ is given by
\[
L^*u(x) = u(L x) = d(x) u(x) + \sum_{e \in \mathcal{E}(x)} w(e) (u(x) - u(e_-)).
\]
Observe that $L_{\std}^*u$ corresponds to the standard Laplacian on functions $V \to \Z$.

If we express $L$ with a matrix using the standard basis for $RV$, then the $L^*: M^V \to M^V$ is given by the transposed matrix.  However, the matrix of $L$ in the standard basis is symmetric, and thus $L^*: M^V \to M^V$ is given by the same matrix as $L$.  Hence, for a finite $\partial$-graph, if we identify $R^V$ with $RV$, then $L$ and $L^*$ are the same operator.

In light of this fact, it does not seem necessary for our notation to distinguish between $L$ and $L^*$.  Henceforth, we will denote them both by $L$.  However, we will preserve the distinction between the chain module $RV$ and the cochain module $R^V$ (and of course the cochain module $M^V$ for each $R$-module $M$).  The domain and codomain for the various operators denoted by $L$ will be made clear in context.

\begin{definition}
Let $(G,L)$ be an $R$-network.  We say that $u: V \to M$ is {\bf harmonic} if $L u(x) = 0$ for every $x \in V^\circ$.  We denote the $R$-module of harmonic functions by
\[
\c U(G,L,M) = \{u \in M^V\colon L u |_{V^\circ} \equiv 0\}.
\]
\end{definition}

Note that $\c U(G,L,-)$ is a covariant functor $R\text{\cat{-mod}} \to R\text{\cat{-mod}}$.  The significance of harmonic functions to the study of $\Upsilon$ comes from the following module-theoretic duality between $\Upsilon(G,L)$ and $\c U(G,L,M)$:

\begin{lemma} \label{lem:hom}
For every $R$-network $(G,L)$, there is a natural $R$-module isomorphism
\[
\c U(G,L,M) \cong \Hom_R(\Upsilon(G,L),M).
\]
\end{lemma}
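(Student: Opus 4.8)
The plan is to exhibit the isomorphism directly from the universal property of cokernels, exploiting the fact that $L^* = L$ under the symmetry of the matrix. Start with the defining presentation $\Upsilon(G,L) = RV / L(RV^\circ)$, so that there is a short exact sequence $RV^\circ \xrightarrow{L} RV \to \Upsilon(G,L) \to 0$. Applying the contravariant functor $\Hom_R(-,M)$, which is left-exact, yields an exact sequence
\[
0 \to \Hom_R(\Upsilon(G,L), M) \to \Hom_R(RV, M) \xrightarrow{\Hom(L,M)} \Hom_R(RV^\circ, M).
\]
Thus $\Hom_R(\Upsilon(G,L),M)$ is naturally identified with the kernel of $\Hom(L,M)$, i.e.\ with those $\phi \in \Hom_R(RV,M)$ that vanish on $L(RV^\circ)$.

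Next I would translate this kernel into the language of harmonic functions. Under the canonical identification $\Hom_R(RV, M) \cong M^V$ sending $\phi$ to the function $x \mapsto \phi(x)$, the map $\Hom(L, M)$ becomes precisely the operator $L^* = L$ acting on $M^V$ described just before the lemma statement, namely $L^*u(x) = d(x)u(x) + \sum_{e \in \mathcal{E}(x)} w(e)(u(x) - u(e_-))$. A homomorphism $\phi$ lies in the kernel of $\Hom(L,M)$ iff $\phi \circ L$ vanishes on $RV^\circ$, which says exactly that $(L^*u)(x) = 0$ for every $x \in V^\circ$ where $u$ is the function corresponding to $\phi$; that is, $u \in \c U(G,L,M)$. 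Hence the kernel of $\Hom(L,M)$ is canonically $\c U(G,L,M)$, giving the desired isomorphism.

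The last point to check is naturality in $M$: given an $R$-module morphism $g\colon M \to N$, the square relating $\c U(G,L,M) \to \c U(G,L,N)$ (postcomposition with $g$) and $\Hom_R(\Upsilon(G,L),M) \to \Hom_R(\Upsilon(G,L),N)$ must commute. This follows from the naturality of the identification $\Hom_R(RV,-) \cong (-)^V$ and of the left-exact sequence above in the module argument, all of which are standard. I would note that one must be slightly careful about the infinite case: since graphs are only assumed countable and locally finite, $RV$ may be infinitely generated, but $\Hom_R(RV,M) \cong M^V$ (the full product) still holds, and left-exactness of $\Hom$ is valid for arbitrary modules, so nothing changes.

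The main obstacle is essentially bookkeeping rather than depth: one must be scrupulous about the direction of the induced maps (covariant $\c U$ versus contravariant $\Hom(-,M)$, with the variance flip coming from $L$ itself rather than from the functor) and about the identification of $\Hom(L,M)$ with the operator $L$ on cochains using the symmetry of $L$'s matrix. Once the conventions established in the paragraph preceding the lemma are in hand, the proof is a direct unwinding of definitions with no real computation.
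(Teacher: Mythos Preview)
Your proposal is correct and follows essentially the same approach as the paper: both identify $\Hom_R(\Upsilon(G,L),M)$ with the maps $RV\to M$ vanishing on $L(RV^\circ)$ and then translate that condition into $Lu|_{V^\circ}=0$. The paper's version is a bit more terse---it simply invokes the universal property of the quotient rather than phrasing it via the left-exact sequence---but the content is identical.
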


\begin{proof}
A function $u\colon V \to M$ is equivalent to an $R$-module morphism $u\colon RV \to M$, and $(L u)(x) = (L^*u)(x)= u(L x)$.  Thus, $u$ is harmonic if and only if
\[
(L u)(x) = u(L x) = 0 \text{ for each } x \in V^\circ.
\]
In other words, $u$ is harmonic if and only if it vanishes on $L(RV^\circ)$.  Thus, harmonic functions are equivalent to $R$-module morphisms $RV / L(RV^\circ) \to M$, and $\Upsilon(G,L)$ was defined as $RV / L(RV^\circ)$.
\end{proof}

\subsection{Torsion and Degeneracy} \label{subsec:degeneracy}

A standard way to measure the torsion of an $R$-module $N$ is to use the functors $\Tor_j(N,-)$, which are the left-derived functors of the tensor-product functor $N \otimes -$.  An $R$-module $N$ is called {\bf flat} if $N \otimes -$ is exact, which is equivalent to $\Tor_j(N,-) = 0$ for $j > 0$ (see \cite[Exercise 2.25]{AtMac}).  If $R$ is a principal ideal domain (PID), then $N$ is flat if and only if it is torsion-free (see \cite[Exercise 10.4.26]{DummitandFoote}).

The functor $\Tor_1(\Upsilon(G,L),-)$ turns out to have an easy description in terms of harmonic functions.

\begin{definition}
Define
\[
\c U_0(G,L,M) = \{\text{finitely supported } u \in M^V\colon L u \equiv 0 \text{ and } u|_{\partial V} \equiv 0\}.
\]
\end{definition}

\begin{proposition} \label{prop:tor}
Suppose $R$ is commutative and $(G,L)$ is an $R$-network.  If $\c U_0(G,L,R) = 0$, then we have a natural $R$-module isomorphism
\[
\c U_0(G,L,M) \cong \Tor_1^R(\Upsilon(G,L), M),
\]
and $\Tor_j^R(\Upsilon(G,L), M) = 0$ for $j > 1$.  In the case where $\c U_0(G,L,R) \neq 0$, we still have a natural surjection
\[
\c U_0(G,L,M) \twoheadrightarrow \Tor_1^R(\Upsilon(G,L),M).
\]
\end{proposition}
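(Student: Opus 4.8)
The plan is to construct a short free resolution of $\Upsilon(G,L)$ and read off $\Tor$ directly from it. The starting point is the presentation $RV^\circ \xrightarrow{L} RV \to \Upsilon(G,L) \to 0$ coming from the very definition $\Upsilon(G,L) = RV/L(RV^\circ)$. First I would recast the hypothesis module-theoretically: a finitely supported function $V \to R$ is the same as an element of the free module $RV$, such a function vanishing on $\partial V$ lies in the submodule $RV^\circ$, and — because the matrix of $L$ in the standard basis is symmetric — the cochain operator $L^*$ applied to a finitely supported function agrees with the chain operator $L\colon RV \to RV$ applied to the corresponding element. Hence $\c U_0(G,L,R) = \ker\bigl(L\colon RV^\circ \to RV\bigr)$, and the assumption $\c U_0(G,L,R) = 0$ says exactly that $L|_{RV^\circ}$ is injective.

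Granting that, $0 \to RV^\circ \xrightarrow{L} RV \to \Upsilon(G,L) \to 0$ is a free resolution of length one. Tensoring the deleted complex $0 \to RV^\circ \to RV \to 0$ with $M$ and taking homology gives $\Tor_j^R(\Upsilon(G,L),M) = 0$ for $j > 1$ and $\Tor_1^R(\Upsilon(G,L),M) = \ker\bigl(L \otimes \id_M\colon RV^\circ \otimes_R M \to RV \otimes_R M\bigr)$. Under the canonical identification of $RV \otimes_R M$ (resp.\ $RV^\circ \otimes_R M$) with the module of finitely supported $M$-valued functions on $V$ (resp.\ $V^\circ$), the map $L \otimes \id_M$ becomes the Laplacian acting on $M$-valued cochains, so this kernel is precisely $\c U_0(G,L,M)$. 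Since $\Tor_1^R(\Upsilon(G,L),-)$, the homology of a fixed deleted resolution tensored with $-$, and the identifications above are all functorial in $M$, the resulting isomorphism is natural.

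For the general case I would drop injectivity and pass to the image $I := L(RV^\circ) \subseteq RV$. The short exact sequence $0 \to I \to RV \to \Upsilon(G,L) \to 0$, together with flatness of the free module $RV$ (so $\Tor_1^R(RV,M) = 0$) and the long exact sequence for $\Tor$, yields a natural isomorphism $\Tor_1^R(\Upsilon(G,L),M) \cong \ker\bigl(I \otimes_R M \to RV \otimes_R M\bigr)$. On the other hand $L\colon RV^\circ \to I$ is surjective, so by right-exactness of the tensor product $L \otimes \id_M\colon RV^\circ \otimes_R M \to I \otimes_R M$ is surjective; composing with the inclusion $I \otimes_R M \hookrightarrow RV \otimes_R M$ recovers $L \otimes \id_M\colon RV^\circ \otimes_R M \to RV \otimes_R M$, whose kernel is $\c U_0(G,L,M)$ as before. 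A one-line diagram chase — the image under a surjection $\phi$ of the kernel of $\psi \circ \phi$ equals the kernel of $\psi$ — then shows that $L \otimes \id_M$ restricts to a surjection $\c U_0(G,L,M) \twoheadrightarrow \Tor_1^R(\Upsilon(G,L),M)$, again natural in $M$.

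The homological algebra here is routine; the one place to be careful is the bookkeeping for infinite (locally finite) graphs. Specifically, one must check that $L$ carries finitely supported functions to finitely supported functions, that $RV^\circ \otimes_R M$ really is the module of finitely supported $M$-valued functions supported on $V^\circ$ with $L \otimes \id_M$ the expected Laplacian, and that the condition ``$Lu \equiv 0$ on all of $V$'' in the definition of $\c U_0(G,L,M)$ matches the kernel of $L \otimes \id_M\colon RV^\circ \otimes_R M \to RV \otimes_R M$ (rather than some version of the Laplacian restricted to $V^\circ$). Getting these identifications exactly right, more than any delicate homological step, is the main obstacle.
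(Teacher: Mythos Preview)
Your proof is correct and follows essentially the same route as the paper: identify $\c U_0(G,L,R)$ with $\ker(L\colon RV^\circ \to RV)$, use the resulting length-one free resolution to read off $\Tor_j$, and identify $\ker(L\otimes\id_M)$ with $\c U_0(G,L,M)$. For the degenerate case the paper simply extends to a longer free resolution $\cdots \to F_2 \to RV^\circ \to RV \to \Upsilon \to 0$ and observes that $\Tor_1$ is then a quotient of $\ker(L\otimes\id_M)$; your argument via the short exact sequence $0 \to I \to RV \to \Upsilon \to 0$ and the long exact $\Tor$ sequence is an equivalent, slightly more explicit way of saying the same thing.
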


\begin{proof}
Note that $RV^\circ$ can be interpreted as the module of finitely-supported $R$-valued functions that vanish on $\partial V$.  Thus, $\c U_0(G,L,R)$ is the kernel of the map $L\colon RV^\circ \to RV$.  Since we assumed $\c U_0(G,L,R) = 0$, we know that
\[
\dots \to 0 \to RV^\circ \xrightarrow{L} RV \to \Upsilon \to 0
\]
is a free resolution of $\Upsilon$.  Thus, $\Tor_j(\Upsilon,M)$ is the homology of the sequence
\[
\dots \to 0 \to RV^\circ \otimes M \xrightarrow{L \otimes \id} RV \otimes M \to 0.
\]
Thus, $\Tor_j(\Upsilon,M) = 0$ for $j > 1$, and $\Tor_1(\Upsilon,M)$ is the kernel of the map $L \otimes \id\colon RV^\circ \otimes M \to RV \otimes M$.  We can identify $RV^\circ \otimes M$ with the module of finitely supported functions $u\colon V \to M$ with $u|_{\partial V} = 0$, and then $L \otimes \id$ is simply the generalized Laplacian.  Hence, the kernel of $L \times \id: RV^\circ \otimes M \to RV \otimes M$ is precisely $\c U_0(G,L,M)$.  Thus, $\Tor_1^R(\Upsilon(G,L)) \cong \c U_0(G,L,M)$, and the naturality of the isomorphism with respect to $M$ is easy to verify from the construction.

In the general case, we have a free resolution
\[
\dots \to F_3 \to F_2 \to RV^\circ \xrightarrow{L} RV \to \Upsilon \to 0.
\]
Then $\Tor_1(\Upsilon(G,L),M)$ is obtained as a quotient of the kernel of $L \times \id: RV^\circ \otimes M \to RV \otimes M$, so there is a surjection $\c U_0(G,L,M) \twoheadrightarrow \Tor_1^R(\Upsilon(G,L),M)$.
\end{proof}

We will call a network {\bf non-degenerate} if $\c U_0(G,L,R) = 0$, and {\bf degenerate} if $\c U_0(G,L, R) \neq 0$.  Thus, Proposition \ref{prop:tor} shows that if $(G,L)$ is non-degenerate, then $\c U_0(G,L,M) \cong \Tor_1^R(\Upsilon(G,L),M)$.  We shall now show how non-degeneracy holds whenever the edge weights satisfy the same positivity conditions as resistor networks over $\R_+$.

\begin{definition}
An {\bf ordered ring} \cite[Chapter 6]{lamRing} is a ring $R$ together with a (transitive) total order $<$ given on $R$ such that, for all elements $a,b,c\in R$,
\begin{align*}
  &a<b\implies a+c<b+c,\\
  &0<a\text{ and }0<b\implies 0<ab.
\end{align*}
\end{definition}

\begin{proposition} \label{prop:maxPrin}
Suppose $R$ is an ordered commutative ring and $(G,L)$ is an $R$-network.  Assume $w(e) > 0$ for all $e \in E$ and $d(x) \geq 0$ for all $x \in V$.  Assume $G$ is connected and one of the following holds: (A) $\partial V \neq \varnothing$, (B) $V$ is infinite, or (C) there exists $x \in V$ with $d(x) > 0$.
\begin{enumerate}
	\item If $u$ is a finitely supported harmonic function and $u|_{\partial V} = 0$, then $u = 0$.
	\item If $u$ is a finitely supported harmonic function and $Lu|_{\partial V} = 0$, then $u$ is constant.  Moreover, if (B) or (C) holds, then $u = 0$.
	\item We have $\c U_0(G,L,R) = 0$, so the network is non-degenerate.
\end{enumerate}
\end{proposition}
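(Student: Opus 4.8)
The plan is to run a discrete maximum principle, using that an ordered commutative ring $R$ has no zero divisors: if $0<a$ and $0<b$ then $0<ab$ by the order axioms, and a short sign analysis extends this to $ab\neq 0$ for all nonzero $a,b$, so $R$ is an integral domain; moreover $a\geq 0,\ b\geq 0,\ ab=0$ forces $a=0$ or $b=0$, and a finite sum of non-negative elements of $R$ that equals $0$ must have every summand equal to $0$. These are the only algebraic facts I will use. I will also note that a finitely supported $u\colon V\to R$ has finite range (it lies in $\{0\}$ together with the finitely many values on the support of $u$), so $M^*:=\max_V u$ and $m^*:=\min_V u$ are attained at vertices, and if $u\not\equiv 0$ then $M^*>0$ or $m^*<0$.

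The core is a propagation claim stated once and reused. Suppose $u$ is finitely supported, $M^*>0$, and $Lu(x)=0$ at every vertex of the level set $S:=\{y\in V: u(y)=M^*\}$. At such an $x$, expand
\[
0=Lu(x)=d(x)\,u(x)+\sum_{e\in\mathcal{E}(x)}w(e)\bigl(u(x)-u(e_-)\bigr).
\]
Here $d(x)u(x)\geq 0$ because $d(x)\geq 0$ and $u(x)=M^*>0$, and each $w(e)(u(x)-u(e_-))\geq 0$ because $w(e)>0$ and $u(x)$ is maximal; since the sum is $0$, every summand vanishes. As $w(e)\neq 0$ this gives $u(e_-)=M^*$ for every neighbor, and as $u(x)>0$ and $d(x)\geq 0$ it gives $d(x)=0$. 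Hence $S$ is closed under adjacency, so it is a union of connected components of $G$; since $G$ is connected and $S\neq\varnothing$, $S=V$, i.e.\ $u\equiv M^*$ on $V$ and $d\equiv 0$. The mirror statement holds with $m^*<0$ in place of $M^*>0$ (now every summand is $\leq 0$).

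For (1), suppose $u$ is harmonic and finitely supported with $u|_{\partial V}=0$ and $u\not\equiv 0$; then $M^*>0$ or $m^*<0$, say $M^*>0$ (the other case is symmetric). Every vertex of $S=\{u=M^*\}$ lies in $V^\circ$, since $u$ vanishes on $\partial V$ and $M^*\neq 0$, so harmonicity gives $Lu=0$ on $S$ and the propagation claim yields $u\equiv M^*$ and $d\equiv 0$ on $V$. This contradicts each alternative: under (A) a boundary vertex would satisfy $u=M^*>0$ rather than $0$; under (B) the nonzero constant $u\equiv M^*$ is not finitely supported on the infinite set $V$; under (C) we have contradicted $d(x)>0$. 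Hence $u\equiv 0$, proving (1). Statement (3) follows at once: every element of $\c U_0(G,L,R)$ is a finitely supported $u$ with $Lu\equiv 0$ (hence harmonic) and $u|_{\partial V}=0$, so $u=0$ by (1).

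For (2), harmonicity together with $Lu|_{\partial V}=0$ gives $Lu\equiv 0$ on all of $V$, so the propagation claim is available at every vertex. If $u$ were non-constant then $M^*>m^*$, hence $M^*>0$ or $m^*<0$, and in either case the claim forces $u$ to be constant -- a contradiction; so $u\equiv c$ for some $c$. If in addition (B) or (C) holds and $c\neq 0$: under (B) the nonzero constant $u$ is not finitely supported on the infinite $V$; under (C) the identity $0=Lu(x)=d(x)c$ for all $x$, with $c\neq 0$ in the integral domain $R$, forces $d\equiv 0$, contradicting $d(x)>0$. Hence $c=0$, i.e.\ $u\equiv 0$. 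I expect the only real obstacle to be bookkeeping: arranging the order-theoretic preliminaries so that the three vanishing-product deductions are all justified in one place, and tracking which of (A), (B), (C) supplies the contradiction in each branch, since the roles of the three alternatives differ between (1) and (2).
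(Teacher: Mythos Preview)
Your proof is correct, but it takes a different route from the paper's. The paper uses an \emph{energy} argument: it expands the quadratic form
\[
\ip{u,Lu}=\sum_{x\in V}d(x)\,u(x)^2+\sum_{e\in E'}w(e)\bigl(u(e_+)-u(e_-)\bigr)^2,
\]
observes this is a sum of non-negative terms, and shows that in each of (1) and (2) the hypothesis forces $\ip{u,Lu}=0$, hence every term vanishes, hence $u$ is constant. You instead run a \emph{maximum principle}: you look at the level set of the maximum value, use the sign structure of the single equation $Lu(x)=0$ at a maximizing vertex to propagate the maximum to all neighbors, and conclude by connectedness. Both are standard PDE strategies transplanted to the discrete setting. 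The energy method handles (1) and (2) in one stroke (both hypotheses kill the pairing $\ip{u,Lu}$), at the cost of needing the symmetric rewriting of the quadratic form. Your maximum-principle argument is more local and avoids that rewriting entirely, which would make it more robust if one later wanted non-symmetric generalized Laplacians; on the other hand you have to do slightly more case-tracking (which of (A), (B), (C) is violated, and whether you are at the max or the min).
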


\begin{proof}
This is a standard argument; one version can be found in \cite{CM}[\S 3.4 and Lemma 3.8].  For finitely supported functions $u, v: V \to R$, denote $\ip{u,v} = \sum_{x \in V} u(x) v(x)$.  Observe that
\[
\ip{u,Lu} = \sum_{x \in V} u(x)\left( d(x) u(x) +  \sum_{e \in \mathcal{E}(x)} w(e)[u(x) - u(e_-)] \right).
\]
If we let $E'$ be a set of oriented edges containing exactly one of the two orientations for each edge, then algebraic manipulation yields
\[
\ip{u,Lu} = \sum_{x \in V} d(x) u(x)^2 + \sum_{e \in E'} w(e)[u(e_+) - u(e_-)]^2.
\]
This is a sum of all nonnegative terms.  Thus, if $\ip{u,Lu} = 0$, we must have $u(e_+) - u(e_-) = 0$ for all $e \in E'$, which implies $u$ is constant since $G$ is connected.  Moreover, if (B) holds, then since $u$ is finitely supported, $u$ must be zero at some vertex and so $u \equiv 0$.  If (C) holds and $d(x) > 0$, then $u(x) = 0$ as well and hence $u \equiv 0$.

To prove (1), suppose $u$ is harmonic (that is, $Lu|_{V^\circ} = 0$) and $u|_{\partial V} = 0$.  Then
\[
\ip{u,Lu} = \sum_{x \in V^\circ} u(x) Lu(x) + \sum_{x \in \partial V} u(x) Lu(x) = 0.
\]
This implies $u$ is constant.  If (A) holds, then $u|_{\partial V} = 0$ implies that $u \equiv 0$, and if (B) or (C) holds, then the preceding argument already implies $u \equiv 0$.

To prove (2), suppose $u$ is harmonic and $Lu|_{\partial V} = 0$.  This amounts to saying $Lu = 0$ on all of $V$, which of course implies that $\ip{u,Lu} = 0$.  Thus, (2) follows from the preceding argument.  Moreover, (3) is an immediate consequence of either (1) or (2).
\end{proof}


In the language of PDE, (1) is a uniqueness principle for the Dirichlet problem and (2) is a uniqueness principle for the Neumann problem.  For a function $u$ to be in $\c U_0$, it must violate \emph{both} uniqueness principles simultaneously.  While this is impossible for $R$-valued functions under the hypotheses of Proposition \ref{prop:maxPrin}, it is entirely possible for functions which take values in a torsion module $M$.  In fact, Proposition \ref{prop:tor} says that torsion of $\Upsilon(G,L)$ corresponds to failure of the uniqueness principles for $M$-valued harmonic functions.  In electrical language, for a non-degenerate network, $\Tor_1(\Upsilon, M) \neq 0$ if and only if there are harmonic $M$-valued functions that are not detectable from boundary measurements of potential and current.

\begin{example}
Figure \ref{fig:torExample} shows a non-degenerate $\Z$-network such that
\[
\Tor_1(\Upsilon_\Z(G,L_{\std}), \Z / 2) = \c U_0(G,L_{\std}, \Z / 2) \neq 0.
\]
Non-degeneracy follows from Proposition \ref{prop:maxPrin}, and a nonzero element of $\c U_0(G,L_{\std})$ is shown in Figure \ref{fig:torExample}.  In fact, $\Upsilon_\Z(G,L_{\std}) \cong \Z^2 \oplus \Z/2$ (see Example \ref{ex:completebipartite}).
\end{example}

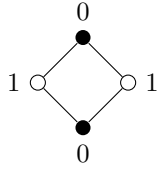
\begin{figure}
\centering
\begin{tikzpicture}[scale = 0.6]
	\node[bd] (A) at (0,0) [label=above:$0$] {};
	\node[int] (B) at (-1,-1) [label=left:$1$] {};
	\node[int] (C) at (1,-1) [label=right:$1$] {};
	\node[bd] (D) at (0,-2) [label=below:$0$] {};
	
	\draw (A) to (B);
	\draw (B) to (D);
	\draw (D) to (C);
	\draw (C) to (A);
\end{tikzpicture}
\caption{A $\Z$-network with $2$-torsion; $\bullet\in \partial V$, $\circ\in V^{\circ}$, and $w(e)=1$.  The numbers depict a nonzero element of $\c U_0(G,L_{\std}, \Z / 2)$.}
\label{fig:torExample}
\end{figure}

Since torsion of $\Upsilon(G,L)$ and degeneracy of $(G,L)$ are both measured by conditions of the form $\c U_0(G,L,M) \neq 0$, it is not surprising that they are related.  As a corollary of Proposition \ref{prop:tor}, we can show that \emph{torsion} of $\Upsilon$ for a non-degenerate network over $R$ is equivalent to \emph{degeneracy} of networks over quotient rings of $R$.  Given an $R$-network $(G,L)$ and an ideal $\mathfrak{a} \subset R$, define $(G,L /\mathfrak{a})$ as the $R / \mathfrak{a}$-network obtained by reducing the edge weights modulo $\mathfrak{a}$.

\begin{corollary} \label{cor:quotientnetwork}
Let $(G,L)$ be a non-degenerate $R$-network.  If $\mathfrak{a}$ is an ideal of $R$, then $\Tor_1(\Upsilon(G,L), R / \mathfrak{a}) = 0$ if and only if $(G, L / \mathfrak{a})$ is non-degenerate.  Hence, $\Upsilon(G,L)$ is flat if and only if $(G, L / \mathfrak{a})$ is non-degenerate for every proper ideal $\mathfrak{a}$.  Moreover, it suffices to check prime ideals or maximal ideals.
\end{corollary}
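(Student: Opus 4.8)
The plan is to deduce everything from Proposition~\ref{prop:tor}, applied with suitable choices of coefficient module. Since $(G,L)$ is non-degenerate, that proposition gives, for every $R$-module $M$, a natural isomorphism $\c U_0(G,L,M) \cong \Tor_1^R(\Upsilon(G,L),M)$, and it exhibits $0 \to RV^\circ \xrightarrow{L} RV \to \Upsilon(G,L) \to 0$ as a length-one free resolution, so $\Tor_j^R(\Upsilon(G,L),-)$ vanishes for $j \geq 2$. To handle a single ideal $\mathfrak a$, I would take $M = R/\mathfrak a$, obtaining $\Tor_1^R(\Upsilon(G,L), R/\mathfrak a) \cong \c U_0(G,L,R/\mathfrak a)$. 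Now a finitely supported $u\colon V \to R/\mathfrak a$ with $u|_{\partial V} = 0$ satisfies $Lu = 0$ if and only if it satisfies $(L/\mathfrak a)u = 0$, because the coefficients $d(x)$ and $w(e)$ act on an $R/\mathfrak a$-valued function only through their residues modulo $\mathfrak a$; hence $\c U_0(G,L,R/\mathfrak a) = \c U_0(G, L/\mathfrak a, R/\mathfrak a)$, and this vanishes exactly when the $R/\mathfrak a$-network $(G,L/\mathfrak a)$ is non-degenerate. This proves the first equivalence.

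For the flatness statement, recall that $\Upsilon := \Upsilon(G,L)$ is flat if and only if $\Tor_1^R(\Upsilon, M) = 0$ for all $M$, and by the standard ideal-theoretic criterion for flatness (a module $N$ is flat iff $\Tor_1^R(N, R/\mathfrak a) = 0$ for every ideal $\mathfrak a$, equivalently iff $\mathfrak a \otimes N \to N$ is injective for all $\mathfrak a$, and it even suffices to test finitely generated $\mathfrak a$) this is equivalent to $\Tor_1^R(\Upsilon, R/\mathfrak a) = 0$ for all ideals, hence by the first part to non-degeneracy of $(G, L/\mathfrak a)$ for all ideals $\mathfrak a$. Only proper ideals matter, since $\mathfrak a = R$ yields $R/\mathfrak a = 0$.

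Finally, the reduction to prime, or maximal, ideals. For primes I would argue by contradiction: if $\Tor_1^R(\Upsilon, R/\mathfrak p) = 0$ for every prime $\mathfrak p$ but $\Tor_1^R(\Upsilon, R/\mathfrak a) \neq 0$ for some ideal, choose (using the ascending chain condition, hence assuming $R$ Noetherian) an ideal $\mathfrak a$ maximal with this property; then $\mathfrak a$ cannot be prime, so pick $b, c \notin \mathfrak a$ with $bc \in \mathfrak a$, note that $\mathfrak a : b$ and $\mathfrak a + (b)$ both strictly contain $\mathfrak a$ and hence have vanishing $\Tor_1$ against $\Upsilon$, and run the long exact $\Tor$ sequence of $0 \to R/(\mathfrak a : b) \xrightarrow{\,\cdot b\,} R/\mathfrak a \to R/(\mathfrak a + (b)) \to 0$ to conclude $\Tor_1^R(\Upsilon, R/\mathfrak a) = 0$, a contradiction. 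For maximal ideals, when $G$ is finite $\Upsilon$ is finitely presented, so the local criterion for flatness gives that $\Upsilon$ is flat iff $\Upsilon_{\mathfrak m}$ is free over $R_{\mathfrak m}$ for every maximal $\mathfrak m$, iff $\Tor_1^{R_{\mathfrak m}}(\Upsilon_{\mathfrak m}, R_{\mathfrak m}/\mathfrak m R_{\mathfrak m}) = 0$; since $\Tor_1^R(\Upsilon, R/\mathfrak m)$ is already an $R/\mathfrak m$-module, this localized $\Tor$ agrees with $\Tor_1^R(\Upsilon, R/\mathfrak m)$, so maximal ideals suffice too. The homological bookkeeping is routine; the delicate point is this last step, which invokes that $R$ is Noetherian (for primes) and that $\Upsilon$ is finitely presented, i.e.\ that $G$ is finite (for maximal ideals), whereas the single-ideal equivalence and the first flatness statement need only that $R$ is commutative --- so the main obstacle I anticipate is phrasing the reduction at exactly the intended level of generality rather than any genuine difficulty.
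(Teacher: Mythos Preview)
Your argument for the single-ideal equivalence and for the characterization of flatness via all ideals is exactly the paper's proof: apply Proposition~\ref{prop:tor} with $M = R/\mathfrak a$, observe that $\c U_0(G,L,R/\mathfrak a) = \c U_0(G,L/\mathfrak a,R/\mathfrak a)$ because $L$ acts on $R/\mathfrak a$-valued functions through its residues, and invoke the ideal-theoretic flatness criterion.

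For the reduction to prime or maximal ideals, the paper gives no argument whatsoever; it simply cites Atiyah--Macdonald (Exercise~2.26) and Weibel (Corollary~3.2.13) as black boxes. Your explicit Noetherian induction for primes and local-criterion argument for maximals are correct under the hypotheses you state, and your concern about the level of generality is legitimate --- the cited references, read literally, give the criterion in terms of \emph{all} (finitely generated) ideals, and the further reduction to primes or maximals does typically require Noetherianness or finite presentation. You are not missing any idea the paper supplies; you have in fact done more than the paper here. In practice the corollary is only invoked later for finite $\partial$-graphs (Proposition~\ref{prop:genericfieldnetwork}, Theorem~\ref{thm:layerabilitycharacterization}), where $\Upsilon(G,L)$ is finitely presented and your maximal-ideal argument applies without reservation.
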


\begin{proof}
Note that for every function $u\colon V \to R / \mathfrak{a}$, we have $Lu = (L / \mathfrak{a})u$, and hence
\[
\Tor_1^R(\Upsilon_R(G,L), R / \mathfrak{a}) \cong \c U_0^R(G,L,R /\mathfrak{a}) = \c U_0^{R / \mathfrak{a}} (G,L / \mathfrak{a}, R / \mathfrak{a}).
\]
The first claim follows.  For the second claim, recall that an $R$-module $N$ is flat if and only if $\Tor_1(N, R / \mathfrak{a}) = 0$ for all proper ideals $\mathfrak{a} \subset R$, and it suffices to check prime ideals or maximal ideals \cite[Exercise 2.26]{AtMac}, \cite[Corollary 3.2.13]{Weibel}.
\end{proof}

\begin{example} \label{ex:characteristicpolynomial2}
As in Example \ref{ex:characteristicpolynomial}, let $G$ be a finite graph without boundary and consider the $\C[z]$-network $(G, zI - L_{\std})$.  Since $\det(zI - L_{\std}) \neq 0$, we know $zI - L_{\std}$ is an injective map $\C[z]^V \to \C[z]^V$ and hence the network is non-degenerate.  Recall that the maximal ideals of $\C[z]$ are $\{(z - \lambda): \lambda \in \C\}$.  For each $\lambda \in \C$, the quotient $\C[z] / (z - \lambda)$ is a field isomorphic to $\C$ via the obvious map $\C \to \C[z] / (z - \lambda)$.  For each $\lambda$,
\[
\Tor_1(\Upsilon_{\C[z]}(G, zI - L_{\std}), \C[z] / (z - \lambda)) \cong \c U_0(G, zI - L_{\std}, \C[z] / (z - \lambda)).
\]
If we reinterpret the right hand side in terms of the quotient network over $\C[z] / (z - \lambda)$ and apply the standard isomorphism $\C[z] / (z - \lambda) \cong \C$, we see that there is a vector space isomorphism
\[
\c U_0(G, zI - L_{\std}, \C[z] / (z - \lambda)) \cong \c U_0(G, \lambda I - L_{\std}, \C),
\]
where the right hand side is computed over $R = \C$.  This is precisely the $\lambda$-eigenspace of $L_{\std}$ if $\lambda$ is an eigenvalue and zero otherwise.
\end{example}

In the next example, for a finite $\partial$-graph $G$ and a field $F$, we will model a general $F^\times$-network by assigning indeterminates as edge weights.  This construction will be used in our algebraic characterization of layerability (Theorem \ref{thm:layerabilitycharacterization}).

\begin{definition} \label{def:genericfieldnetwork}
Let $G$ be a finite $\partial$-graph and let $F$ be a field.  We define the ring $R^*(G,F)$ and generalized Laplacian $L^*(G,F)$ as follows.  Let $R^*(G,F)$ be the polynomial algebra over $F$ generated by indeterminates $\{t_x: x \in V\}$ and $\{t_e^{\pm 1}: e \in E\}$, where $t_e = t_{\overline{e}}$.  The generalized Laplacian $L^*$ over $R^*$ is given by the functions $w^*(e) = t_e$ and $d^*(x) = t_x$.
\end{definition}

\begin{proposition} \label{prop:genericfieldnetwork}
The $(R^*)^\times$-network $(G,L^*)$ defined above is non-degenerate.  If $\Upsilon(G,L^*)$ is a flat $R^*$-module, then every $F^\times$-network on the $\partial$-graph $G$ is non-degenerate.  Moreover, the converse holds if $F$ is algebraically closed.
\end{proposition}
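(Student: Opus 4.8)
The plan is to prove the three assertions in turn: non-degeneracy of $(G,L^*)$ by a direct determinant computation, the implication ``$\Upsilon(G,L^*)$ flat $\Rightarrow$ all $F^\times$-networks on $G$ non-degenerate'' by feeding Corollary~\ref{cor:quotientnetwork} the maximal ideals arising from specializations $R^*\to F$, and the converse by using the Nullstellensatz to see that over an algebraically closed $F$ \emph{every} maximal ideal of $R^*$ arises in this way.

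First, non-degeneracy. Since $G$ is finite, $\c U_0(G,L^*,R^*)$ is the kernel of $L^*\colon R^*V^\circ\to R^*V$, so it suffices to show the principal submatrix $M$ of $L^*$ on rows and columns indexed by $V^\circ$ is injective over the domain $R^*$, i.e.\ that $\det M\neq 0$. The diagonal entry of $M$ at $x$ is $t_x+\sum_{e\in\mathcal{E}(x)}t_e$, whereas every off-diagonal entry lies in the subring generated by the edge indeterminates alone. Grading $R^*$ by total degree in the vertex indeterminates $\{t_x\colon x\in V\}$, I observe that in the Leibniz expansion $\det M=\sum_\sigma\sgn(\sigma)\prod_{x\in V^\circ}M_{\sigma(x),x}$ a factor $M_{\sigma(x),x}$ has positive vertex-degree only when $\sigma(x)=x$; hence the monomial $\prod_{x\in V^\circ}t_x$, which has vertex-degree $|V^\circ|$, can be produced only by $\sigma=\id$, where it appears with coefficient $1$. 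Thus $\det M\neq 0$, $L^*$ is injective, and $(G,L^*)$ is non-degenerate (the case $V^\circ=\varnothing$ being trivial).

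Second, suppose $\Upsilon(G,L^*)$ is flat, and let $(G,L)$ be an $F^\times$-network on $G$, given by $w\colon E\to F^\times$ and $d\colon V\to F$. Because each $w(e)$ is a unit, $t_e\mapsto w(e)$, $t_x\mapsto d(x)$ extends to an $F$-algebra homomorphism $\phi\colon R^*\to F$; its kernel $\mathfrak{m}$ is maximal with $R^*/\mathfrak{m}\cong F$, and under this identification $(G,L^*/\mathfrak{m})$ is exactly $(G,L)$. Since $(G,L^*)$ is non-degenerate, Corollary~\ref{cor:quotientnetwork} applies and shows $(G,L)$ is non-degenerate if and only if $\Tor_1^{R^*}(\Upsilon(G,L^*),R^*/\mathfrak{m})=0$; flatness forces the latter, so $(G,L)$ is non-degenerate. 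Finally, assume $F$ is algebraically closed and every $F^\times$-network on $G$ is non-degenerate. By Corollary~\ref{cor:quotientnetwork} (again using non-degeneracy of $(G,L^*)$) it suffices to prove $(G,L^*/\mathfrak{m})$ is non-degenerate for every maximal ideal $\mathfrak{m}$ of $R^*$. Here $R^*$ is a finitely generated $F$-algebra --- a polynomial ring localized at $\prod_e t_e$, equivalently $F[\{t_x\},\{t_e\},s]/(s\prod_e t_e-1)$ --- so the Nullstellensatz gives $R^*/\mathfrak{m}\cong F$ for every maximal $\mathfrak{m}$; the quotient map sends each $t_e$ to a unit $b_e\in F^\times$ and each $t_x$ to some $a_x\in F$, so $(G,L^*/\mathfrak{m})$ is precisely the $F^\times$-network with weights $b_e$ and diagonal shifts $a_x$, which is non-degenerate by hypothesis.

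The two ``dictionary'' verifications --- that $\phi$ is well defined and that $L^*/\mathfrak{m}$ reduces to the asserted network --- are routine. The only substantive steps are the determinant computation in the first part, where one must check carefully that no cancellation across permutations can destroy the monomial $\prod_{x\in V^\circ}t_x$ (the vertex-variable grading is precisely what makes this transparent), and the appeal to the Nullstellensatz in the last part, which is exactly where algebraic closedness of $F$ enters: without it, maximal ideals of $R^*$ with residue field a proper extension of $F$ need not correspond to $F^\times$-networks, and the converse can fail.
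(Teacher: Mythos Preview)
Your proof is correct and follows essentially the same route as the paper. The only cosmetic difference is in the non-degeneracy step: the paper observes that the full matrix $L^*\colon R^*V\to R^*V$ has nonzero determinant (implicitly by the same vertex-variable grading argument you spell out) and then uses that $u\in\c U_0$ satisfies $L^*u=0$ on all of $V$, whereas you work with the $V^\circ\times V^\circ$ principal submatrix directly; both are valid and rest on the same idea.
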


\begin{proof}
First we prove non-degeneracy.  Note that $\det L^*$ is clearly a nonzero polynomial, hence $L^*: R^*V \to R^*V$ is injective.  If $u \in \c U_0(G,L^*,R^*)$, then we have $L^* u \equiv 0$ (that is, $L^*u(x) = 0$ both for $x \in V^\circ$ and $x \in \partial V$), and therefore, $u \equiv 0$ by injectivity of $L^*$.

Next, we show that flatness of $\Upsilon(G,L^*)$ implies non-degeneracy of every $F^\times$-network $(G,L)$.  Suppose that $L$ is a generalized Laplacian over $F$ given by $w: E \to F^\times$ and $d: V \to F$.  Let $\mathfrak{a}$ be the ideal in $R^*$ generated by $t_e - w(e)$ for $e \in E$ and $t_x - d(x)$ for $x \in V$.  Then $\mathfrak{a}$ is a maximal ideal and $R^* / \mathfrak{a}$ is a field isomorphic to $F$, where $t_e$ is identified with $w(e) \in F^\times$ and $t_x$ is identified with $d(x)$.  Therefore, $(G, L^* / \mathfrak{a})$ corresponds to the $F^\times$-network $(G,L)$.  Thus, by Corollary \ref{cor:quotientnetwork}, we have a vector space isomorphism
\[
\Tor_1^{R^*}(\Upsilon(G,L^*), R^* / \mathfrak{a}) \cong \c U_0(G, L^* / \mathfrak{a}, R^* / \mathfrak{a}) \cong \c U_0(G,L,F).
\]
In particular, flatness of $\Upsilon_{R^*}(G,L^*)$ implies that every $F^\times$-network on the $\partial$-graph $G$ is non-degenerate.

Furthermore, the converse holds if $F$ is algebraically closed.  Indeed, in this case, every maximal ideal $\mathfrak{a}$ of $R^*$ has the form
\[
\mathfrak{a} = (t_e - w(e) \colon  e \in E; t_x - d(x) \colon x \in V)
\]
for some $w: E \to F$ and $d: V \to F$.  (This can be deduced by noting that $R^*$ is a localization of the polynomial algebra $F[t_e: e \in E; t_x: x \in V]$, and every proper ideal in the polynomial algebra $F[x_1,\dots,x_n]$ must have a common zero by Hilbert's Nullstellensatz \cite[Exercise 5.17]{AtMac}, \cite[Corollary 33 of \S 15.3]{DummitandFoote}.) Then since $t_e$ is a unit in $R^*$ by construction, we deduce that $w(e)$ is nonzero.  Thus, $w$ and $d$ define an $F^\times$-network on $G$, which corresponds to $(G, L / \mathfrak{a})$.  Hence, if every $F^\times$-network on $G$ is non-degenerate, then $\Tor_1^{R^*}(\Upsilon(G,L^*), R^* / \mathfrak{a}) = 0$ for every maximal ideal and hence $\Upsilon(G,L^*)$ is flat.
\end{proof}

\subsection{Exactness of $\c U(G,L,-)$}

Another way to measure the torsion of $\Upsilon$ is to test whether $\Upsilon$ is \emph{projective}.  Recall that for an $R$-module $N$, $\Hom(N,-)$ is always left exact, and $N$ is called {\bf projective} if it is also right exact.  The failure of $N$ to be projective is measured by the functors $\Ext^j(N,-)$, which are the right-derived functors of $\Hom(N,-)$, and $N$ is projective if and only if $\Ext^1(N,-) = 0$.  Free modules are always projective.  If $R$ is a PID and $N$ is a finitely generated $R$-module, then $N$ is torsion-free if and only if it is projective (as one can deduce from the classification of finitely generated modules over a PID).

The fundamental module $\Upsilon$ is projective if and only if $\Hom(\Upsilon(G,L), -) = \c U(G,L, -)$ is right exact.  Concretely, right exactness asks:\ given a surjective map $M \to N$ between  $R$-modules, is ${\c U(G,L,M)\to \c U(G,L,N)}$ a surjection?  In other words, does every $N$-valued harmonic function on $(G,L)$ lift to an $M$-valued harmonic function?

\begin{example}
$\c U(G,L, -)$ fails to be right exact for the $\Z$-network in Figure \ref{fig:notExact}.  Consider the surjection ${\Z / 4 \to \Z / 2}$.  The corresponding map $\c U(G,L, \Z / 4) \to \c U(G,L, \Z / 2)$ is not surjective.  If $u \in \c U(G,L, \Z / 4)$, then $2 u(B) = u(A) + u(D) = 2 u(C)$ mod $4$, and hence $u(B) = u(C)$ mod $2$.  However, not all $\Z/2$-valued harmonic functions satisfy $u(B) = u(C)$; for instance, the indicator function $1_B: V \to \Z / 2$ is harmonic.
\end{example}

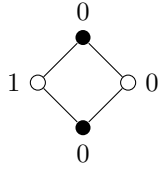
\begin{figure}

\centering
\begin{tikzpicture}[scale = 0.6]
	\node[bd] (A) at (0,0) [label=above:$0$] {};
	\node[int] (B) at (-1,-1) [label=left:$1$] {};
	\node[int] (C) at (1,-1) [label=right:$0$] {};
	\node[bd] (D) at (0,-2) [label=below:$0$] {};
	
	\draw (A) to (B);
	\draw (B) to (D);
	\draw (D) to (C);
	\draw (C) to (A);
\end{tikzpicture}

\caption{A $\Z$-network with $2$-torsion; $\bullet\in \partial V$, $\circ\in V^{\circ}$, and $w(e)=1$.  The numbers depict a $\Z / 2$-valued harmonic function that does not lift to a $\Z / 4$-valued harmonic function.} \label{fig:notExact}

\end{figure}

\subsection{Summary of Homological Properties} \label{subsec:algebrasummary}

Our results thus far provide a lexicon giving ``harmonic'' or ``electrical'' interpretations of the homological properties of $\Upsilon$ for non-degenerate networks:
\begin{enumerate}
	\item As remarked in the proof of Proposition \ref{prop:tor}, $\Upsilon(G,L)$ has a free resolution given by $0 \to RV^\circ \xrightarrow{L} RV \to \Upsilon(G,L) \to 0$.
	\item $\Hom(\Upsilon(G,L),M) = \mathcal{U}(G,L,M)$ is the module of $M$-valued harmonic functions.
	\item $\Tor_1(\Upsilon(G,L),M) = \c U_0(G,L,M)$ is the module of finitely supported harmonic functions with vanishing potential and current on the boundary.
	\item $\Ext^1(\Upsilon(G,L),-)$ is the right-derived functor of $\c U(G,L, -) = \Hom_R(\Upsilon(G,L), -)$. It measures the failure of $N$-valued harmonic functions to lift to $M$-valued harmonic functions when $M \to N$ is surjective.
	\item Using our free resolution of $\Upsilon(G,L)$, we can also compute $\Ext^1(\Upsilon(G,L),M)$ as the cokernel of $L\colon M^V \to M^{V^\circ}$.  In other words, it is the module of $M$-valued functions on $V^\circ$ modulo those that arise as the generalized Laplacian (or net current) of $M$-valued potentials on $V$.
\end{enumerate}

These observations lead to many different ways of computing $\Upsilon$ when the ring $R$ is a principal ideal domain (PID) such as $\Z$ or $\C[z]$ (Proposition \ref{prop:pidnon-degenerate}).  We recall the following terminology and facts about PIDs:  A ring is called a {\bf principal ideal domain (PID)} if every ideal is generated by a single element.  The classification of finite abelian groups (or $\Z$-modules) generalizes to PIDs: If $R$ is a PID, then any finitely generated $R$-module is isomorphic to one of the form
\[
R^{\oplus m} \oplus \bigoplus_{j=1}^n R / f_j,
\]
where $f_j | f_{j+1}$.  The numbers of $f_j$ are called the {\bf invariant factors} of $M$.  We call $R^{\oplus m}$ and $\bigoplus_{j=1}^n R / f_j$ respectively the {\bf free submodule} and {\bf torsion submodule} of $M$.  For details, see \cite[\S 12.1]{DummitandFoote}.

\begin{proposition} \label{prop:pidnon-degenerate}
Let $R$ be a PID, $F$ its field of fractions, $(G,L)$ a finite non-degenerate $R$-network.  Then the free submodule of $\Upsilon(G,L)$ has rank $|\partial V|$.  Moreover, the following are (non-canonically) isomorphic:
\begin{enumerate}
	\item The torsion submodule of $\Upsilon(G,L)$.
	\item The cokernel of $L\colon R^V \to R^{V^\circ}$.
	\item $\c U(G,L, F / R)$ modulo the image of $\c U(G,L, F)$.
	\item $\c U_0(G,L, F / R) = \Tor_1(\Upsilon(G,L), F / R)$.
\end{enumerate}
Thus, for instance,
\[
\Upsilon(G,L) \cong R^{|\partial V|} \oplus \c U_0(G,L, F / R).
\]
\end{proposition}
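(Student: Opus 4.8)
The plan is to set up the short exact sequence from the free resolution $0 \to RV^\circ \xrightarrow{L} RV \to \Upsilon(G,L) \to 0$ (available by non-degeneracy, as noted in item (1) of the summary) and extract all four descriptions of the torsion submodule from it. First I would base-change to the fraction field $F$: tensoring the resolution with $F$ gives $0 \to F^{V^\circ} \xrightarrow{L} F^V \to \Upsilon \otimes_R F \to 0$ (exactness on the left because $F$ is flat and $\c U_0(G,L,F) = 0$, the latter because $\c U_0(G,L,R) = 0$ forces $L$ to be injective as a matrix over $F$). Since $\dim_F F^{V^\circ} = |V^\circ|$ and $\dim_F F^V = |V|$, the quotient has $F$-dimension $|\partial V|$; as $\Upsilon \otimes_R F$ is precisely $F^{|\partial V|}$ tensored up from the free part of $\Upsilon$, the free rank of $\Upsilon(G,L)$ is $|\partial V|$. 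Write $T$ for the torsion submodule, so $\Upsilon(G,L) \cong R^{|\partial V|} \oplus T$.

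Next I would identify $T$ with (2), the cokernel of $L\colon R^V \to R^{V^\circ}$. Here I transpose: in a chosen basis $L$ is a symmetric matrix, and the map $L\colon RV^\circ \to RV$ is given by the $|V| \times |V^\circ|$ submatrix obtained by keeping all columns but only the $V^\circ$-rows of the full symmetric matrix; its transpose is the $|V^\circ| \times |V|$ matrix representing $L\colon R^V \to R^{V^\circ}$ in item (5) of the summary. Over a PID the Smith normal form of a matrix and of its transpose have the same invariant factors, and the invariant factors of the injective map $L\colon RV^\circ \to RV$ are exactly the invariant factors of the torsion part of its cokernel $\Upsilon(G,L)$, i.e.\ of $T$; meanwhile the same invariant factors are the nonzero ones for $L\colon R^V \to R^{V^\circ}$, whose cokernel therefore is $R^{|V|-|V^\circ|}\oplus(\text{torsion}) = R^{|\partial V|}\oplus T'$ with $T'\cong T$. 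One has to be slightly careful that the ``extra'' part of the cokernel of the transpose is free of the expected rank; this follows from a dimension count after tensoring with $F$ exactly as above. I expect this transpose/Smith-normal-form bookkeeping to be the main technical point, though it is standard.

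Finally I would handle (3) and (4) using the long exact sequences attached to $0 \to R \to F \to F/R \to 0$. Applying $\Hom(\Upsilon(G,L), -) = \c U(G,L, -)$ gives
\[
0 \to \c U(G,L,R) \to \c U(G,L,F) \to \c U(G,L,F/R) \to \Ext^1(\Upsilon(G,L),R) \to \Ext^1(\Upsilon(G,L),F),
\]
and since $F$ is a flat (indeed injective, as a field of fractions of a PID which gives a divisible module) $R$-module one checks $\Ext^1(\Upsilon(G,L),F) = 0$, so $\c U(G,L,F/R)$ modulo the image of $\c U(G,L,F)$ is $\Ext^1(\Upsilon(G,L),R)$; but from the free resolution $\Ext^1(\Upsilon(G,L),R)$ is the cokernel of $L\colon R^V \to R^{V^\circ}$, which is (2), and for a finitely generated module over a PID $\Ext^1(\Upsilon(G,L),R) \cong T$ by the classification. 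For (4), Proposition \ref{prop:tor} gives $\c U_0(G,L,F/R) \cong \Tor_1(\Upsilon(G,L), F/R)$ directly (the network is non-degenerate); applying $\Upsilon(G,L) \otimes -$ to the same short exact sequence and using that $F$ is flat yields $\Tor_1(\Upsilon(G,L),F/R) \cong \Upsilon(G,L)\otimes_R (F/R)$ up to the relevant kernel/cokernel, which one computes from $\Upsilon(G,L) \cong R^{|\partial V|}\oplus T$ to be $T$ (the free part contributes $(F/R)^{|\partial V|}$ to the tensor and $0$ to $\Tor_1$; the torsion part $R/f$ contributes $\Tor_1(R/f, F/R) \cong R/f$). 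Chaining these isomorphisms and substituting into $\Upsilon(G,L) \cong R^{|\partial V|} \oplus T$ gives the displayed formula $\Upsilon(G,L) \cong R^{|\partial V|} \oplus \c U_0(G,L,F/R)$.
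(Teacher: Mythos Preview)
Your approach is essentially the same as the paper's: tensor the free resolution with $F$ to read off the free rank, recognize (2) and (3) as two computations of $\Ext^1(\Upsilon(G,L),R)$ (via the projective resolution $0 \to RV^\circ \to RV \to \Upsilon \to 0$ and via the injective resolution $0 \to R \to F \to F/R \to 0$ respectively), invoke Proposition~\ref{prop:tor} for (4), and use the cyclic decomposition of $\Upsilon$ to identify the torsion submodule with $\Ext^1(\Upsilon,R)$ and $\Tor_1(\Upsilon,F/R)$.

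One slip to correct: the cokernel of $L\colon R^V \to R^{V^\circ}$ is \emph{not} $R^{|\partial V|}\oplus T'$. After tensoring with $F$, the map $F^V \to F^{V^\circ}$ has rank $|V^\circ|$ (same rank as its transpose $L\colon FV^\circ \to FV$, which is injective by non-degeneracy), hence is surjective; so the cokernel of $L\colon R^V \to R^{V^\circ}$ has free rank zero and is pure torsion. This is exactly what is needed for (2) to be isomorphic to (1). Your Smith-normal-form argument still correctly identifies the torsion part---you just placed the extra free summand on the wrong side (it lives in the kernel of the transpose, not the cokernel). The paper sidesteps this bookkeeping by observing directly that the cokernel of $L\colon R^V \to R^{V^\circ}$ \emph{is} $\Ext^1(\Upsilon(G,L),R)$ computed from the projective resolution, which immediately links (2) to (3) without any matrix manipulation.
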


\begin{proof}
To show that the free rank is $|\partial V|$, note that since $(G,L)$ is non-degenerate, we have a short exact sequence
\[
0 \to RV^\circ \xrightarrow{L} RV \to \Upsilon(G,L) \to 0.
\]
Let $\Upsilon(G,L) \cong R^n \oplus N$, where $N$ is a torsion $R$-module.  Since $F$ is a flat $R$-module, we have a short exact sequence
\[
0 \to F \otimes RV^\circ \xrightarrow{L} \to F \otimes RV \to (F \otimes R^n) \oplus (F \otimes N) \to 0.
\]
However, $F \otimes N = 0$.  Thus, our sequence becomes $0 \to FV^\circ \to FV \to F^n \to 0$, and the rank-nullity theorem implies $n = |V| - |V^\circ| = |\partial V|$.

Next, we prove that (1) -- (4) are isomorphic.  Note (2) and (3) are two different ways of evaluating $\Ext^1(\Upsilon(G,L),R)$; (2) uses the projective resolution of $\Upsilon(G,L)$ and (3) uses the injective resolution of $R$ given by $0 \to R \to F \to F / R \to 0$.  To show that the torsion submodule of $\Upsilon(G,L)$ is isomorphic to $\Tor_1(\Upsilon(G,L), F/R)$ and $\Ext^1(\Upsilon(G,L),R)$, decompose $\Upsilon(G,L)$ as the direct sum of cyclic modules.

The last equation follows because $\Upsilon(G,L) \cong R^n \oplus N$, where $n = |\partial V|$ and $N \cong \c U_0(G,L,F/R)$ by the isomorphism between (1) and (4).
\end{proof}

\begin{figure}
\begin{center}
	\begin{tikzpicture}[scale=0.8]
		\node[int] (A) at (-0.5,1) {};
		\node[int] (B) at (0.5,1) {};
		\node[bd] (1) at (-1,0) {};
		\node[bd] (2) at (0,0) {};
		\node[bd] (3) at (1,0) {};
		
		\draw (B) to (1) to (A) to (2) to (B) to (3) to (A);
	\end{tikzpicture}
\end{center}

\caption{The complete boundary-interior bipartite $\partial$-graph $K_{3,2}$.} \label{fig:K32}

\end{figure}
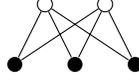

\begin{example}[Complete Bipartite Graphs] \label{ex:completebipartite}
Consider the complete bipartite graph $K_{m,n}$ whose partite sets consist of $m$ boundary vertices and $n$ interior vertices respectively (see Figure \ref{fig:K32}).  The standard Laplacian $L_{\std} \colon R^V \to R^{V^\circ}$ is
\[
    L_{\std} = \left(\begin{array}{cccc|cccc}
      -1 &\cdots&\cdots & -1 & m & & &\\
      \vdots &\ddots&\ddots & \vdots &  & \ddots& &\\
      \vdots &\ddots&\ddots & \vdots &  & &\ddots&\\
      \vspace{2mm}\undermat{\partial V}{-1 &\cdots&\cdots & -1} & \undermat{V^{\circ}}{\hspace{4mm} & \hspace{4mm} &\hspace{4mm} &m\\}
    \end{array}\right)\\
\]
\newline
and its cokernel is isomorphic to $(\Z/m)^{n-1}$.  Dually,
\[
\c U_0(G,L_{\std}, \Q / \Z) = \left\{u \in (\Q / \Z)^{V^\circ}: m u = 0, \sum_{x \in V^\circ} u(x) = 0 \right\} \cong (\Z / m)^{n-1}.
\]
Hence, by Proposition \ref{prop:pidnon-degenerate}, we have
\[
\Upsilon(G,L_{\std}) \cong \Z^m \oplus (\Z / m)^{n-1}.
\]
\end{example}

\subsection{Application to Critical Group} \label{subsec:criticalgroupalgebra}

Let us examine how our algebraic constructions work out in the case of the critical group.

\begin{proposition} \label{prop:criticalgroupnoboundary}
Let $G$ be a connected graph without boundary, considered as a $\partial$-graph with zero boundary vertices.
\begin{enumerate}
	\item $\Crit(G)$ is the torsion submodule of $\Upsilon_{\Z}(G,L_{\std})$.
	\item For every vertex $x$, the map $\Z x \to \Upsilon_{\Z}(G,L_{\std})$ is injective and we have an internal direct sum $\Upsilon_{\Z}(G,L_{\std}) = \Z x \oplus \Crit(G)$.
	\item We have $\c U(G,L_{\std},\Q / \Z) \cong \Crit(G) \times \Q / \Z$, where $\Q / \Z$ represents the constant functions $u: V \to \Q / \Z$.
	\item We have $\c U_0(G,L_{\std}, \Q / \Z) = \c U(G,L_{\std}, \Q / \Z)$.
\end{enumerate}
\end{proposition}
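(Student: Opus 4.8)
The plan is to derive all four parts from a single structural fact: for any vertex $x$, the class $[x]$ generates a free rank-one direct summand of $\Upsilon := \Upsilon_\Z(G, L_{\std})$, the complementary summand being the torsion. Here $G$ is finite (as is implicit in the definition of $\Crit(G)$), so, since $G$ has no boundary, $V^\circ = V$ and $\Upsilon = \Z V / L_{\std}(\Z V) = \coker(L_{\std})$. Part (1) is then immediate from the standard isomorphism $\coker(L_{\std}) \cong \Crit(G) \oplus \Z$ recalled above: $\Crit(G)$ is finite, hence it is the torsion submodule of $\Upsilon$.

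For (2), I would introduce the augmentation $\sigma \colon \Z V \to \Z$, $\sum_x a_x x \mapsto \sum_x a_x$. Each column of $L_{\std}$ sums to zero --- explicitly $\sigma(L_{\std} x) = \deg(x) - \deg(x) = 0$ --- so $\sigma$ descends to a surjection $\bar\sigma \colon \Upsilon \to \Z$. For any vertex $x$ we have $\bar\sigma([x]) = 1$, so the composite $\Z x \hookrightarrow \Z V \twoheadrightarrow \Upsilon \xrightarrow{\bar\sigma} \Z$ is the identity of $\Z$; this simultaneously shows that $\Z x \to \Upsilon$ is injective and that $\Upsilon = \Z x \oplus \ker\bar\sigma$ as an internal direct sum. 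The one point requiring genuine care is the identification $\ker\bar\sigma = \Crit(G)$: the torsion submodule $T$ of $\Upsilon$ lies in $\ker\bar\sigma$ because $\Z$ is torsion-free, while $\ker\bar\sigma \cong \Upsilon / \Z x$ has rank $0$ (since $\Upsilon$ has rank $1$ and $\Z x$ is a rank-one summand), so $\ker\bar\sigma$ is a finitely generated torsion group, whence $\ker\bar\sigma \subseteq T$. Therefore $\ker\bar\sigma = T = \Crit(G)$ by (1).

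For (3), I would invoke Lemma \ref{lem:hom} to get $\c U(G, L_{\std}, \Q/\Z) \cong \Hom_\Z(\Upsilon, \Q/\Z)$ and then combine it with the splitting from (2):
\[
\Hom_\Z(\Upsilon, \Q/\Z) \cong \Hom_\Z(\Z x, \Q/\Z) \oplus \Hom_\Z(\Crit(G), \Q/\Z) \cong \Q/\Z \oplus \Crit(G),
\]
where the last step is Pontryagin duality for the finite abelian group $\Crit(G)$. To identify the $\Q/\Z$-summand with the constant functions, I would note that every constant function is harmonic for $L_{\std}$ and that, under Lemma \ref{lem:hom}, the constant function with value $c$ corresponds to the morphism $\Upsilon \xrightarrow{\bar\sigma} \Z \xrightarrow{\,\cdot c\,} \Q/\Z$; these are exactly the morphisms $\Upsilon \to \Q/\Z$ vanishing on $\ker\bar\sigma = \Crit(G)$, i.e.\ the $\Hom_\Z(\Z x, \Q/\Z) \cong \Q/\Z$ summand.

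Finally, (4) is an unwinding of the definitions using $\partial V = \varnothing$: for $u \in \c U_0$, finite support is automatic because $G$ is finite, the condition $u|_{\partial V} \equiv 0$ is vacuous, and the requirement that $L_{\std} u$ vanish on all of $V$ coincides with harmonicity since $V^\circ = V$; hence $\c U_0(G, L_{\std}, \Q/\Z) = \c U(G, L_{\std}, \Q/\Z)$. As for the expected difficulty: (1) and (4) are essentially formal, (3) follows from (2) together with Lemma \ref{lem:hom} and Pontryagin duality, and the real content lies in the splitting argument of (2), whose crux is the pair of identities $\bar\sigma([x]) = 1$ and $\ker\bar\sigma = \Crit(G)$.
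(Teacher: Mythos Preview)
Your proposal is correct and follows essentially the same approach as the paper: both use the augmentation map (the paper calls it $\epsilon$, you call it $\sigma$) to split off a rank-one free summand generated by any vertex $x$, identify the complement with the torsion part $\Crit(G)$, and then deduce (3) from Lemma~\ref{lem:hom} and (4) from the vacuity of the boundary conditions. The only cosmetic difference is that the paper splits $\Z V = \Z x \oplus \ker\epsilon$ before passing to the quotient, whereas you pass to the quotient first and split via the induced surjection $\bar\sigma$; these are equivalent.
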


\begin{proof}
Let $\epsilon: \Z V \to \Z$ be the map given by $x \mapsto 1$ for every $x \in V$.  Note that $L_{\std}(\Z V) \subseteq \ker \epsilon$.  Moreover, it is well-known that $L_{\std}$ has rank $|V| - 1$ when $G$ is connected (see e.g.\ \cite[Lemma 3.8]{CM}), so that $\ker \epsilon / \im L_{\std}$ is a torsion $\Z$-module, and it this is known to be isomorphic to $\Crit(G)$ \cite[Theorem 4.2]{Biggs2} \cite[Definition 2.2]{DKM}.  For each vertex $x$, we have $\Z V = \Z x \oplus \ker \epsilon$, which implies that
\[
\Z V / \im L_{\std} = \Z x \oplus \Crit(G).
\]
This establishes (1) and (2).  Next, (3) follows by applying $\Hom(-,\Q/\Z)$ using Lemma \ref{lem:hom}, and (4) follows from the definition of $\c U$ and $\c U_0$ because there are no boundary vertices.
\end{proof}

Several constructions of the critical group involve designating a ``sink'' vertex $x$.  In a similar way, we can choose a boundary vertex $x$ when computing $\Crit(G)$.

\begin{proposition} \label{prop:criticalgrouponeboundary}
Let $G$ be a connected graph without boundary, and let $G'$ be obtained from $G$ by assigning one boundary vertex $x$.
\begin{enumerate}
	\item The network $(G',L_{\std})$ is non-degenerate.
	\item We have $\Upsilon_{\Z}(G',L_{\std}) = \Upsilon_{\Z}(G,L_{\std})$, hence (1), (2), (3) of Proposition \ref{prop:criticalgroupnoboundary} hold with $G$ replaced by $G'$.
	\item We have $\c U_0(G,L_{\std}, \Q / \Z) \cong \Tor_1(\Upsilon(G',L_{\std}), \Q / \Z) \cong \Crit(G)$.
\end{enumerate}
\end{proposition}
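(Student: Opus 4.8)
The plan is to reduce all three parts to a single identity for the standard graph Laplacian on a graph $G$ without boundary, namely
\[
\sum_{x\in V} L_{\std}(x) = 0 \quad\text{in } \Z V,
\]
equivalently, that $L_{\std}$ kills the all-ones vector (each row of $L_{\std}$ has diagonal entry $\deg(x)$ and off-diagonal entries summing to $-\deg(x)$). Everything else is an application of Propositions~\ref{prop:maxPrin}, \ref{prop:tor}, and \ref{prop:criticalgroupnoboundary}.

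For part (1), I would apply Proposition~\ref{prop:maxPrin} to the $\Z$-network $(G',L_{\std})$: the ring $\Z$ is ordered, $w(e)=1>0$ and $d(x)=0\ge 0$ for every edge and vertex, $G$ is connected, and $\partial V(G')=\{x\}\ne\varnothing$, so hypothesis (A) is satisfied and conclusion (3) gives $\c U_0(G',L_{\std},\Z)=0$ --- which is exactly non-degeneracy. For part (2), the displayed identity shows $L_{\std}(x)=-\sum_{y\in V\setminus\{x\}}L_{\std}(y)$, so $L_{\std}(x)\in L_{\std}(\Z V^\circ(G'))$, where $V^\circ(G')=V\setminus\{x\}$; hence $L_{\std}(\Z V)=L_{\std}(\Z V^\circ(G'))$ as submodules of $\Z V$, and passing to the quotient by this common submodule gives
\[
\Upsilon_\Z(G',L_{\std}) = \Z V / L_{\std}(\Z V^\circ(G')) = \Z V / L_{\std}(\Z V) = \Upsilon_\Z(G,L_{\std}),
\]
literally the same $\Z$-module. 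Since parts (1)--(3) of Proposition~\ref{prop:criticalgroupnoboundary} concern only $\Upsilon_\Z(G,L_{\std})$ and $\c U(G,L_{\std},\Q/\Z)=\Hom_\Z(\Upsilon_\Z(G,L_{\std}),\Q/\Z)$ via Lemma~\ref{lem:hom}, they transfer verbatim with $G$ replaced by $G'$. (Alternatively, applying any $u\colon\Z V\to M$ to the identity shows $\sum_x (L_{\std}u)(x)=0$, so vanishing of $L_{\std}u$ on $V\setminus\{x\}$ forces vanishing at $x$ too, giving $\c U(G',L_{\std},M)=\c U(G,L_{\std},M)$ directly --- the familiar ``sink vertex'' phenomenon.)

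For part (3), non-degeneracy from part (1) lets me invoke Proposition~\ref{prop:tor}, which provides a natural isomorphism $\c U_0(G',L_{\std},\Q/\Z)\cong\Tor_1^\Z(\Upsilon_\Z(G',L_{\std}),\Q/\Z)$. By part (2) together with Proposition~\ref{prop:criticalgroupnoboundary}(1)--(2) we have $\Upsilon_\Z(G',L_{\std})=\Upsilon_\Z(G,L_{\std})\cong\Z\oplus\Crit(G)$, and by additivity of $\Tor$,
\[
\Tor_1^\Z(\Upsilon_\Z(G',L_{\std}),\Q/\Z)\cong\Tor_1^\Z(\Z,\Q/\Z)\oplus\Tor_1^\Z(\Crit(G),\Q/\Z)\cong\Crit(G),
\]
using that $\Z$ is free (so the first summand vanishes) and that $\Tor_1^\Z(\Z/n,\Q/\Z)\cong\ker(n\colon\Q/\Z\to\Q/\Z)\cong\Z/n$, applied to each invariant factor of the finite abelian group $\Crit(G)$. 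Composing the isomorphisms yields $\c U_0(G',L_{\std},\Q/\Z)\cong\Crit(G)$.

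I do not expect a real obstacle: the argument is an assembly of three propositions already established. The one point that deserves a moment's care is the claim in part (2) that moving the distinguished vertex $x$ from $V^\circ$ into $\partial V$ does not change the image $L_{\std}(RV^\circ)$, and hence leaves $\Upsilon$ and the harmonic-function modules unchanged; this is precisely the Laplacian row-sum identity and is the discrete analogue of the freedom to choose any ``sink'' when computing the sandpile group.
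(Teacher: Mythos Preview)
Your proof is correct and follows essentially the same approach as the paper. For (1) both invoke Proposition~\ref{prop:maxPrin}; for (2) the paper phrases the key identity as ``the constant vector lies in $\ker L_{\std}$'' and subtracts a multiple of it to zero out the $x$-coordinate, which is exactly your row-sum identity $\sum_y L_{\std}(y)=0$ read in the other direction; for (3) the paper cites Proposition~\ref{prop:pidnon-degenerate} as a black box, whereas you inline the relevant piece (Proposition~\ref{prop:tor} plus the invariant-factor computation of $\Tor_1^{\Z}(\Crit(G),\Q/\Z)$), which amounts to the same thing. Note also that you have (correctly) read the $\c U_0(G,\dots)$ in the statement as $\c U_0(G',\dots)$; the paper's own proof establishes the latter via Proposition~\ref{prop:pidnon-degenerate} applied to the non-degenerate network $(G',L_{\std})$.
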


\begin{proof}
Non-degeneracy follows from Proposition \ref{prop:maxPrin}.  Let $V^\circ$ denote $V^\circ(G') = V(G) \setminus \{x\}$.  To prove that $\Upsilon(G,L_{\std}) = \Upsilon(G',L_{\std})$, it suffices to show that $L_{\std}(\Z V^\circ) = L_{\std}(\Z V)$.  Recall that the constant vector $c_0$ is in the kernel of $L_{\std}$.  If $w \in L_{\std}(\Z V)$, then $w = L_{\std} z$ for some $z \in \Z V$.  By subtracting a multiple of the $c_0$, we can assume that the coordinate of $z$ corresponding to the vertex $x$ is zero.  This means $z \in \Z V^\circ$, so $w \in L_{\std}(\Z V^\circ)$.

From $\Upsilon(G,L_{\std}) = \Upsilon(G',L_{\std})$, it immediately follows that $\Crit(G)$ is the torsion submodule of $\Upsilon(G',L_{\std})$.  From the application of $\Hom_{\Z}(-,M)$, we see that $\c U(G,L_{\std},M) = \c U(G',L_{\std},M)$ for every $\Z$-module $M$, and hence $\c U(G',L_{\std},\Q / \Z) \cong \c U(G,L_{\std},\Q / \Z) \cong \Crit(G) \times \Q / \Z$.  Finally, (3) follows from Proposition \ref{prop:pidnon-degenerate}.
\end{proof}

\section{A Family of `Chain Link Fence' Networks} \label{sec:CLF}

\subsection{Motivation and Set-Up} \label{subsec:CLFsetup}

To date, the theory of the critical group has mainly focused on graphs without boundary.  In this section, we will analyze an infinite family of $\partial$-graphs with nontrivial boundary.  These $\partial$-graphs resemble a chain-link fence which embeds either on the cylinder or on the M\"obius band (depending on parity).  This family is a variant of the ``purely cylindrical'' graphs described in \cite{LPcyl} which play a key role in the electrical inverse problem.  Though self-contained, our computation here will illustrate and motivate techniques that we will develop systematically later in the paper--including discrete harmonic continuation, symmetry and covering spaces, and subgraphs.

  \begin{figure}
    \begin{center}
      \begin{tikzpicture}[font=\fontsize{8}{8.5}\selectfont,inner sep=1pt,scale = 0.7]
        \draw[dashed, line width = .3mm, blue] (0,0) -- (0,-5);
        \draw[dashed, line width = .3mm, blue] (8,0) -- (8,-5);

        \foreach \j in {0,2,4,6} {
          \foreach \k in {0,1,2} {
            \draw (\j,-2 * \k) -- ({\j+1},{(-2 * \k) -1});
          }
          \foreach \k in {1,2} {
            \draw (\j,-2 * \k) -- ({\j+1},{(-2 * \k) + 1});
          }
        }
        
        \foreach \j in {1,3,5,7} {
          \foreach \k in {0,1,2} {
            \draw (\j, {2 * \k - 5}) -- ({\j+1},{2 * \k - 4});
          }
          \foreach \k in {1,2} {
            \draw (\j,{2 * \k - 5}) -- ({\j+1},{2 * \k - 6});
          }
        }
        
        \foreach \j in {0,2,4,6,8} {
          \draw
            let \n1={int(mod(\j,8))}
            in (\j,0)
              node[circle, fill]
              {\color{white}  ${\bf(\n1,0)}$};
          \foreach \k in {1,2} {
            \draw
              let \n1={int(mod(\j,8))}
              in (\j,{-2 * \k})
                node[circle, draw, fill=white]
                { ${\bf (\n1,\k)}$};
          }
        }
        
        \foreach \j in {1,3,5,7} {
          \node[circle,fill] at (\j,-5)
            {\color{white} ${\bf (\j,0)}$};
          \foreach \k in {1,2} {
            \node[circle,draw,fill=white] (v{\j}{\k}) at (\j,{-5 + 2 * \k})
              { ${\bf (\j,\k)}$};
          }
        }   
      \end{tikzpicture}

      \caption{\label{fig:CLFpicture}
        The $\partial$-graph $\CLF(8,2)$. Boundary vertices are black, interior vertices are white, and the vertices on the left and right sides are identified along the dashed lines.}

    \end{center}
  \end{figure}
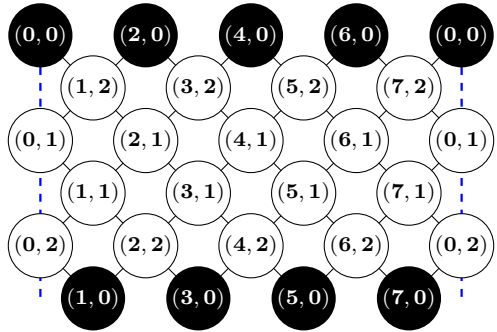

\begin{figure}

\newcommand\lam[3]{ 
	\begin{scope}[scale = #3]
		\draw[red,dashed] (0,0) circle (1);
		\draw[red,dashed] (0,0) circle ({#2 + 1});
	
		\foreach \i in {1,...,#1} {
			\foreach \j in {1,...,#2} {
				\draw ({(\i + 0.5 * \j - 0.5) * 360 / #1}:\j) -- ({(\i + 0.5 * \j) * 360 / #1}:{\j + 1});
				\draw ({(\i - 0.5 * \j + 0.5) * 360 / #1}:\j) -- ({(\i - 0.5 * \j) * 360 / #1}:{\j + 1});
			}
		}
		\foreach \i in {1,...,#1} {
			\fill ({\i * 360 / #1}: 1) circle (0.2);
			\foreach \j in {2,...,#2} {
				\path[draw=black, fill=white] ({(\i + 0.5 * \j - 0.5) * 360 / #1}:\j) circle (0.2);
			}
			\fill ({(\i + 0.5 * #2) * 360 / #1}:{#2 + 1}) circle (0.2);
		}
	\end{scope}
}

\begin{center}
	\begin{tikzpicture}
		\lam{6}{3}{0.7}
	\end{tikzpicture}
	
	\caption{The $\partial$-graph $\CLF(12,1)$ embedded in the annulus rather than the cylinder.}  \label{fig:CLFflower}
\end{center}
\end{figure}

Consider a $\partial$-graph $\CLF(m,n)$ with $V = \Z / m \times \{0, \dots, n\}$ and $\partial V = \Z / m \times \{0\}$ and edges defined by
\begin{align*}
(j,k) &\sim (j+1,n-k+1) \text{ for } k \geq 1 \\
(j,k) &\sim (j+1,n-k) \text{ for } k \geq 0,
\end{align*}
as shown in Figure \ref{fig:CLFpicture}.  If $m$ is even then the network is one of Lam and Pylyavksyy's `purely cylindrical' graphs \cite{LPcyl}.  If $m$ is odd, then it resembles a chain-link fence twisted into a M\"obius band.

Consider the $\Z$-network $(\CLF(m,n),L_{\std})$.  Since the network is non-degenerate (Proposition \ref{prop:maxPrin}), we have by Proposition \ref{prop:pidnon-degenerate} that
\[
\Upsilon_{\Z}(\CLF(m,n), L_{\std}) \cong \Z^{|\partial V|} \oplus \c U_0(\CLF(m,n), L_{\std}, \Q / \Z).
\]
We will compute the torsion summand $\c U_0(\CLF(m,n), L_{\std}, \Q / \Z)$, showing that

\begin{theorem} \label{thm:CLF}
For the $\Z$-network $(\CLF(m,n),L_{\std})$, we have
\[
\c U_0(\CLF(m,n), L_{\std}, \Q / \Z) \cong
\begin{cases}
	(\Z/2)^n,& m\text{ odd} \\
	(\Z/2)^{2n},& m\equiv 2\mod 4 \\
	\displaystyle \left( \bigoplus_{j=1}^n \Z / \gcd(4^j,2m) \right)^{\oplus 2}, & m\equiv 0\mod 4.
\end{cases}
\]
\end{theorem}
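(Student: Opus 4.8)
The strategy is to exploit the cyclic symmetry $\Z/m$ acting on $\CLF(m,n)$ to diagonalize the problem over roots of unity, just as one would analyze a circulant matrix via Fourier analysis. Concretely, we seek $u \in \c U_0 = \c U_0(\CLF(m,n), L_{\std}, \Q/\Z)$, i.e.\ a finitely supported harmonic function vanishing (together with $L_{\std}u$) on $\partial V = \Z/m \times \{0\}$. Because all of $V$ is finite, $\c U_0$ is just $\ker(L_{\std}|_{RV^\circ} \to RV)$ tensored up, or equivalently (via Proposition~\ref{prop:tor} and Proposition~\ref{prop:pidnon-degenerate}) the torsion of $\coker(L_{\std}\colon \Z V^\circ \to \Z V)$. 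I would compute this directly as the cokernel of an explicit integer matrix. The key structural observation to extract first is the one flagged in the introduction: the harmonicity/boundary equations decouple into two independent systems — one for functions supported on ``odd'' columns and one for ``even'' columns — because the bipartite-like edge structure of $\CLF$ connects column $k$ only to columns $n-k$ and $n-k+1$, and the boundary conditions at $k=0$ split the unknowns into two non-interacting families. This gives the $(\;\cdot\;)^{\oplus 2}$ shape in the $m \equiv 0 \bmod 4$ case and, after accounting for the M\"obius twist when $m$ is odd, explains why the two cases with smaller answers arise.

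**Carrying it out.** First, set up coordinates: index the interior unknowns by $(j,k)$ with $1 \le k \le n$, write each harmonic equation at an interior vertex and each boundary equation ($u=0$ and $Lu=0$ at $(j,0)$) explicitly in terms of the neighbor relations $(j,k)\sim(j+1,n-k+1)$, $(j,k)\sim(j+1,n-k)$. Second, apply the discrete Fourier transform in $j$: write $u(j,k) = \zeta^{j} v_k(\zeta)$ for $\zeta^m = 1$ and observe the equations become, for each fixed $\zeta$, a small linear recursion/system in the $n$ (or $2n$) variables $v_k(\zeta)$ with coefficients that are Laurent polynomials in $\zeta$. Third — this is the layer-stripping idea in disguise — solve the recursion by harmonic continuation: the boundary data at $k=0$ together with the recursion propagates inward, and the requirement that the solution close up consistently after $n$ steps produces a single scalar compatibility condition of the form $p_n(\zeta)\cdot(\text{free parameter}) = 0$ in $\Q/\Z$, where $p_n(\zeta)$ is an explicit polynomial; I expect $p_n$ to be something like $\zeta^{\pm 1}$ times a power of $2$ plus corrections, so that over $\Q/\Z$ the solution space at frequency $\zeta$ has size $\gcd$ of that value with the relevant modulus. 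Fourth, reassemble: sum the contributions over all $m$-th roots of unity (being careful that for $m$ odd the twist replaces $\zeta^m=1$ by $\zeta^m=-1$, i.e.\ $(2m)$-th roots of unity that are not $m$-th roots), and bookkeep how many frequencies contribute and with what torsion, yielding the three cases. The appearance of $\gcd(4^j, 2m)$ strongly suggests that after $j$ layer-stripping steps the relevant quantity is $4^j$ (each step squaring or doubling a factor of $2$ coming from the two edges per vertex), reduced modulo $2m$ — so I would aim the recursion computation squarely at producing the value $4^j \bmod 2m$ at depth $j$.

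**Main obstacle.** The conceptual steps are clean, but the real work — and the place errors hide — is the explicit solution of the per-frequency recursion and the parity/reindexing bookkeeping. The recursion couples $v_k$ to $v_{n-k}$ and $v_{n-k+1}$ rather than to adjacent indices, so it is not a plain linear recursion; one must either relabel the columns into the ``serpentine'' order that makes the dependence local (this is exactly the combinatorial reason $\CLF$ layer-strips nicely) or diagonalize a $2\times 2$ transfer matrix whose entries involve $\zeta + \zeta^{-1}$. Tracking how the $\gcd$ with $2m$ interacts with which roots of unity $\zeta$ actually occur — and in particular the three-way split according to $m \bmod 4$ coming from whether $-1$, $i$, etc.\ are available as values of $\zeta^{2^j}$ — is the delicate part. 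I would organize this by first doing the $m \equiv 0 \bmod 4$ case in full (where all of $\mu_{2m} \supseteq \mu_m$ behaves uniformly and the $\gcd(4^j,2m)$ formula emerges cleanly), then deriving the odd and $m\equiv 2\bmod 4$ cases as degenerations where the torsion at most frequencies collapses to $2$-torsion because the relevant powers of $2$ exceed the $2$-adic valuation of $2m$. Throughout, Proposition~\ref{prop:pidnon-degenerate} lets me pass freely between the $\Q/\Z$-harmonic-function description (convenient for the Fourier/continuation argument) and the cokernel-of-an-integer-matrix description (convenient for reading off invariant factors at the end).
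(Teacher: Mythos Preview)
Your Fourier-in-$j$ strategy is genuinely different from the paper's, but it has a real type mismatch that you do not address. Writing $u(j,k)=\zeta^{j}v_k(\zeta)$ requires $\zeta$ to act on the coefficient module, and $\Q/\Z$ has no action of a primitive $m$-th root of unity. Your compatibility condition ``$p_n(\zeta)\cdot(\text{free parameter})=0$ in $\Q/\Z$'' is not well-defined: $p_n(\zeta)$ lives in $\Z[\zeta_m]$, not in $\Z$, so it does not act on $\Q/\Z$. Fourier diagonalization over $\C$ or $\Q(\zeta_m)$ yields eigenvalues and hence the determinant, but \emph{not} the Smith normal form over $\Z$; to extract invariant factors you would have to work over the Dedekind domain $\Z[\zeta_m]$, analyze the primes above $2$, and descend---none of which is sketched. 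Your appeal to Proposition~\ref{prop:pidnon-degenerate} does not bridge this gap: it translates between $\Q/\Z$-harmonic functions and integer cokernels, but both live over $\Z$, not over a cyclotomic ring. Two smaller points: the ``M\"obius twist $\zeta^m=-1$ for odd $m$'' is wrong, since $(j,k)\mapsto(j+1,k)$ is a genuine $\partial$-graph automorphism for every $m$ (the edge relations are translation-invariant in $j$), so the Fourier condition is $\zeta^m=1$ regardless of parity; and the claimed a~priori decoupling into ``odd/even columns'' does not correspond to any obvious structural splitting of the equations.

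For comparison, the paper avoids Fourier entirely. It uses a $2n\times 2n$ integer transfer matrix $\mathbf{T}$ propagating $(a_j,a_{j-1})\mapsto(a_{j+1},a_j)$, and splits $\c U_0$ as the intersection of two submodules of $(\Q/\Z)^{2n}$: one ($M_1$) encoding the boundary-current conditions $Lu|_{\partial V}=0$ at \emph{all} $j\in\Z$, and one ($M_2$) encoding periodicity $\mathbf{T}^m\mathbf{a}=\mathbf{a}$. The key identity $\mathbf{T}+\mathbf{T}^{-1}=4\,\mathrm{diag}(E^{-1},E^{-1})$ lets one write down an explicit $\Z$-basis showing $M_1\cong\bigl(\bigoplus_{j=1}^n\Z/4^j\bigr)^{\oplus 2}$; since $M_1$ is $2$-primary, only the $2$-torsion of $M_2$ matters, and that is read off from a short $2$-adic expansion of $\mathbf{T}^{m}$. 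The direct-sum decomposition you allude to (into functions vanishing on column $0$ versus column $1$, not odd versus even columns) is then a \emph{consequence} of this computation (Lemma~\ref{lem:CLFdirectsum}), not an input to it.
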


In \S \ref{subsec:CLFcontinuation}, we use harmonic continuation to write our module in a simple form in terms of a $2n \times 2n$ matrix $\mathbf{T}$(Lemma \ref{lem:precomputation}).  Then in \S \ref{subsec:algebraiccomputation}, we work algebraically to find the invariant factor decomposition.  Finally, in \S \ref{subsec:CLFvariants}, we will bootstrap our computation to handle a slightly different family of $\partial$-graphs.

\subsection{Harmonic Continuation Computation} \label{subsec:CLFcontinuation}  Since we will deal with vectors in $\Z^n$ as well as $\Z^{2n}$, we establish the following notational conventions:
\begin{itemize}
	\item Vectors in $\Z^n$ or $(\Q / \Z)^n$ will be lowercase regular type.
	\item $n \times n$ matrices will be uppercase regular type.
	\item Vectors in $\Z^{2n}$ or $(\Q / \Z)^{2n}$ will be lowercase bold.
	\item $2n \times 2n$ matrices will be uppercase bold.
	\item ``$\cdot$'' denotes the dot product.
	\item $e_1$, \dots, $e_n$ and $\mathbf{e}_1$, \dots, $\mathbf{e}_{2n}$ denote the standard basis vectors.
	\item Vectors are assumed to be column vectors by default.
\end{itemize}
Moreover, we will abbreviate $\c U_0(\CLF(m,n), L_{\std}, \Q / \Z)$ to $U(m,n)$.

Our goal is to compute the $\Q / \Z$-valued harmonic functions with $u = L u = 0$ on $\partial V$.  We start by understanding the harmonic functions with $u = 0$ on the boundary using harmonic continuation around the circumference of the cylinder.  Assume $u(j,0) = 0$ and let
\[
a_j = \begin{pmatrix} u(j,1) \\ \vdots \\ u(j,n) \end{pmatrix} \in (\Q/\Z)^n.
\]
The idea is to solve for $a_{j+1}$ in terms of $a_j$ and $a_{j-1}$, such that partial function defined by $a_{j-1}$, $a_j$, and $a_{j+1}$ will be harmonic on the $j$th column of vertices.  Thus, we start with $a_1$ and $a_0$, then find $a_2$, $a_3$, \dots.  Recall the index $j$ for vertices in the graph is reduced modulo $m$.  The $a_j$'s to yield a well-defined harmonic function on $\CLF(m,n)$, we require that $a_m = a_0$ and $a_{m+1} = a_1$.

In terms of the $a_j$'s, harmonicity amounts to
\[
4a_j = E a_{j-1} + E a_{j+1},
\]
where $E$ is the $n \times n$ matrix with $1$'s on and directly above the skew-diagonal and zeros elsewhere--for instance,
\[
E = \begin{pmatrix} 0 & 0 & 0 & 1 & 1 \\ 0 & 0 & 1 & 1 & 0 \\ 0 & 1 & 1 & 0 & 0 \\ 1 & 1 & 0 & 0 & 0 \\ 1 & 0 & 0 & 0 & 0 \end{pmatrix}, \quad n = 5.
\]
Thus, the vectors $a_j$ satisfy the recurrence relation
\[
\begin{pmatrix} a_{j+1} \\ a_j \end{pmatrix} = \begin{pmatrix} 4E^{-1} & -I \\ I & 0 \end{pmatrix} \begin{pmatrix} a_j \\ a_{j-1} \end{pmatrix}.
\]

Let $\mathbf{T}$ be the $2n \times 2n$ ``propagation matrix'' of harmonic continuation, that is,
\[
\mathbf{T} = \begin{pmatrix} 4E^{-1} & -I \\ I & 0 \end{pmatrix}.
\]
Note that $\det \mathbf{T} = \pm 1$, so $\mathbf{T}$ is invertible over $\Z$.  Multiplying by $\mathbf{T}^{-1}$ corresponds to harmonic continuation in the opposite direction around the circumference of the cylinder.

Let us denote $\mathbf{a} = (a_1, a_0)^T \in (\Q / \Z)^{2n}$.  Through harmonic continuation, we can see that
\[
\{u \in \c U(\CLF(m,n), \Q / \Z): u|_{\partial V} = 0\} \cong \{ \mathbf{a} \in (\Q / \Z)^{2n}: \mathbf{T}^m \mathbf{a} = \mathbf{a}\}.
\]
Next, we must determine when a fixed point of $\mathbf{T}^m$ will yield a harmonic function $u$ with $L u|_{\partial V} = 0$, which amounts to writing all the net current conditions in terms of the first two columns of vertices.  The net current at a boundary vertex $(j,0)$ is
\[
L u(j,0) = -u(j-1,n) - u(j+1,n) = -\mathbf{e}_{2n} \cdot \mathbf{T}^{j-1} \mathbf{a} - \mathbf{e}_{2n} \cdot \mathbf{T}^{j+1} \mathbf{a}.
\]
We need to choose $\mathbf{a}$ so that this holds for $j = 1, \dots, m$, but since we also require $\mathbf{a}$ to be a fixed point of $\mathbf{T}^m$, we might as well require $\mathbf{e}_{2n} \cdot (\mathbf{T}^{j-1} + \mathbf{T}^{j+1}) \mathbf{a} = 0$ for all $j \in \Z$.  Therefore, we have
\begin{lemma} \label{lem:precomputation}
\begin{align*}
U(m,n) \cong \{\mathbf{a} \in (\Q / \Z)^{2n}: \mathbf{T}^m \mathbf{a} = \mathbf{a} \text{ and } \mathbf{e}_{2n} \cdot (\mathbf{T}^{j-1} + \mathbf{T}^{j+1}) \mathbf{a} = 0 \text{ for } j \in \Z \}.
\end{align*}
\end{lemma}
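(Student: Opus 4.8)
The plan is to realize the isomorphism by the explicit map $\Phi$ that simply records the interior values in the first two columns: for $u\in U(m,n)$ put $(a_j)_k:=u(j,k)$ for $1\le k\le n$ and $\mathbf a:=(a_1,a_0)^T\in(\Q/\Z)^{2n}$, and set $\Phi(u):=\mathbf a$. This map is visibly $\Z$-linear, and the right-hand set is a sub-$\Z$-module of $(\Q/\Z)^{2n}$ (both defining conditions are $\Z$-linear in $\mathbf a$), so it suffices to prove $\Phi$ is injective with image exactly that set.

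First I would pin down the dictionary between harmonicity and the recurrence. Reading off the adjacency rules $(j,k)\sim(j+1,n-k+1)$ and $(j,k)\sim(j+1,n-k)$, one checks that every interior vertex $(j,k)$ with $1\le k\le n$ has all four of its neighbours in columns $j-1$ and $j+1$ (at heights $n-k+1$ and $n-k$), so that $L u$ vanishes on the entire $j$-th interior column if and only if $4a_j=E(a_{j-1}+a_{j+1})$, the convention $u(\cdot,0)=0$ exactly absorbing the summand missing when $k=n$. Since $\mathbf T$ is invertible over $\Z$, this is equivalent to $(a_{j+1},a_j)^T=\mathbf T(a_j,a_{j-1})^T$, hence to $(a_{j+1},a_j)^T=\mathbf T^{\,j}\mathbf a$ for all $j\in\Z$. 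In particular $\mathbf a$ together with the recurrence determines every $a_j$, and then $u$ (using $u|_{\partial V}=0$), which gives injectivity of $\Phi$.

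Next I would check $\Phi(u)$ lies in the right-hand set when $u\in U(m,n)$. Well-definedness of $u$ on $\Z/m$ forces $a_{j+m}=a_j$, so comparing $(a_{m+1},a_m)^T=\mathbf T^m\mathbf a$ with $(a_1,a_0)^T=\mathbf a$ gives $\mathbf T^m\mathbf a=\mathbf a$. The only remaining membership condition for $U(m,n)$ is $Lu=0$ at the boundary vertices; since $Lu(j,0)=-u(j-1,n)-u(j+1,n)$ and $u(j\pm1,n)$ is the last coordinate of $\mathbf T^{\,j\pm1}\mathbf a$, this says $\mathbf e_{2n}\cdot(\mathbf T^{j-1}+\mathbf T^{j+1})\mathbf a=0$ for $j=1,\dots,m$, and by $\mathbf T^m\mathbf a=\mathbf a$ this quantity is $m$-periodic in $j$, hence vanishes for all $j\in\Z$. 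Conversely, given $\mathbf a$ in the right-hand set, define $a_j$ for $j\in\Z$ by $(a_{j+1},a_j)^T=\mathbf T^{\,j}\mathbf a$; the relation $\mathbf T^m\mathbf a=\mathbf a$ makes $j\mapsto a_j$ have period $m$, so $u(j,0):=0$, $u(j,k):=(a_j)_k$ is a well-defined function on $\CLF(m,n)$, harmonic on the interior by the recurrence, vanishing on $\partial V$ by construction, and with $Lu(j,0)=-\mathbf e_{2n}\cdot(\mathbf T^{j-1}+\mathbf T^{j+1})\mathbf a=0$; thus $u\in U(m,n)$ with $\Phi(u)=\mathbf a$, proving surjectivity.

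The main obstacle is purely the indexing in the second step: one must verify carefully that the two adjacency rules, together with the vanishing boundary layer $u(\cdot,0)=0$, are encoded precisely by the skew-diagonal matrix $E$, so that the whole system of harmonicity equations collapses to the two-term recurrence governed by $\mathbf T$. Once that identification is secured, the remainder is formal manipulation with the integer-invertible matrix $\mathbf T$ and the periodicity forced by $\mathbf T^m\mathbf a=\mathbf a$.
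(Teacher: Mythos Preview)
Your proposal is correct and follows essentially the same approach as the paper: the paper derives the lemma in the running text of \S\ref{subsec:CLFcontinuation} by setting up the column vectors $a_j$, deriving the recurrence $4a_j=E(a_{j-1}+a_{j+1})$ and hence the propagation by $\mathbf T$, identifying periodicity with $\mathbf T^m\mathbf a=\mathbf a$, and reading off the boundary net-current condition as $\mathbf e_{2n}\cdot(\mathbf T^{j-1}+\mathbf T^{j+1})\mathbf a=0$. Your version simply packages this as an explicit bijection $\Phi$ with injectivity and surjectivity checked separately, which is a slightly more formal presentation of the same argument.
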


\subsection{An Explicit Basis for $\c U_0$.} \label{subsec:algebraiccomputation}

A key insight in the remaining computation is to consider the two conditions $\mathbf{T}^m \mathbf{a} = \mathbf{a}$ and $\mathbf{e}_{2n} \cdot (\mathbf{T}^{j-1} + \mathbf{T}^{j+1}) \mathbf{a} = 0$ separately.  We denote
\begin{align*}
	M_1 &= \{\mathbf{a} \in (\Q / \Z)^{2n}: \mathbf{e}_{2n} \cdot (\mathbf{T}^{j-1} + \mathbf{T}^{j+1}) \mathbf{a} = 0 \text{ for } j \in \Z\}, \\
	M_2 &= \{\mathbf{a} \in (\Q / \Z)^{2n}: \mathbf{T}^m \mathbf{a} = \mathbf{a} \}.
\end{align*}
Observe that $U(m,n) \cong M_1 \cap M_2$.  

\begin{remark} \label{rem:CLFcoveringspace}
Here is a geometric interpretation of $M_1$ and $M_2$.  Note that the universal cover of the cylinder or M\"obius band is an infinite strip, and $\CLF(m,n)$ is covered by a corresponding graph $\CLF(\infty,n)$ with vertex set $\Z \times \{0,\dots,n\}$.  The condition defining $M_1$ says that harmonic continuation with initial values $\mathbf{a}$ defines a harmonic function on $\CLF(\infty,n)$ with $u = 0$ and $L_{\std} u = 0$ on the boundary.  The condition defining $M_2$ says that $u(j,k)$ is periodic in $j$, and hence $u$ corresponds to a harmonic function on $\CLF(m,n)$.
\end{remark}

We will compute $M_1$ first, using two auxiliary lemmas.  In the following, $\Z[4E^{-1}]$ will denote the sub-ring of $M_{n \times n}(\Z)$ generated by $4E^{-1}$ and $\Z[\mathbf{T}, \mathbf{T}^{-1}]$ will denote the sub-ring of $M_{2n \times 2n}(\Z)$ generated by $\mathbf{T}$ and $\mathbf{T}^{-1}$.  We use $(I, 0) \cdot \Z[\mathbf{T}, \mathbf{T}^{-1}]$ to denote the $\Z$-submodule of $M_{n \times 2n}(\Z)$ consisting of matrices of the form $(I, 0) \cdot \mathbf{S}$, where $\mathbf{S} \in \Z[\mathbf{T}, \mathbf{T}^{-1}]$ and $(I, 0) \in M_{n \times 2n}(\Z)$ is written as a matrix with two $n \times n$ blocks.  The notation $\Z[4E^{-1}] \cdot (I,0)$ is to be interpreted similarly.

\begin{lemma} \label{lem:matrixalgebra1}
We have an equality of $\Z$-modules
\[
(I, 0) \cdot \Z[\mathbf{T}, \mathbf{T}^{-1}] = \Z[4E^{-1}] \cdot (I, 0) + \Z[4E^{-1}] \cdot (0, I).
\]
\end{lemma}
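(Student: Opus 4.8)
The plan is to reduce everything to one clean block identity. Set $A := 4E^{-1} \in M_{n\times n}(\Z)$, so that $\mathbf{T} = \begin{pmatrix} A & -I \\ I & 0 \end{pmatrix}$. A one-line block computation gives $\mathbf{T}^{-1} = \begin{pmatrix} 0 & I \\ -I & A \end{pmatrix}$, and therefore
\[
\mathbf{T} + \mathbf{T}^{-1} = \begin{pmatrix} A & 0 \\ 0 & A \end{pmatrix} =: \mathbf{A},
\]
which lies in $\Z[\mathbf{T},\mathbf{T}^{-1}]$ (this ring is, as a $\Z$-algebra, just the Laurent polynomials in $\mathbf{T}$, so it is the $\Z$-span of $\{\mathbf{T}^k : k \in \Z\}$). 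Everything below is manipulation of $\mathbf{A}$ together with the block rows $(I,0),(0,I) \in M_{n\times 2n}(\Z)$.

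For the inclusion $\supseteq$, I would note that $\mathbf{A}^j = \begin{pmatrix} A^j & 0 \\ 0 & A^j \end{pmatrix}$, hence $(I,0)\mathbf{A}^j = (A^j, 0)$ and $(I,0)\mathbf{T}^{-1}\mathbf{A}^j = (0,I)\mathbf{A}^j = (0, A^j)$ for every $j \geq 0$. Since $\mathbf{A}^j$ and $\mathbf{T}^{-1}\mathbf{A}^j$ lie in $\Z[\mathbf{T},\mathbf{T}^{-1}]$, both $A^j(I,0)$ and $A^j(0,I)$ lie in $(I,0)\cdot\Z[\mathbf{T},\mathbf{T}^{-1}]$. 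Because $\Z[4E^{-1}] = \Z[A]$ is by definition the $\Z$-span of $\{A^j : j \geq 0\}$, taking $\Z$-linear combinations gives $\Z[4E^{-1}]\cdot(I,0) + \Z[4E^{-1}]\cdot(0,I) \subseteq (I,0)\cdot\Z[\mathbf{T},\mathbf{T}^{-1}]$.

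For the inclusion $\subseteq$, since $\Z[\mathbf{T},\mathbf{T}^{-1}]$ is the $\Z$-span of $\{\mathbf{T}^k : k \in \Z\}$, it suffices to show $(I,0)\mathbf{T}^k \in \Z[A]\cdot(I,0) + \Z[A]\cdot(0,I)$ for all $k \in \Z$, which I would prove by induction upward and downward from $(I,0)\mathbf{T}^0 = (I,0)$. Writing a general element of $\Z[A]\cdot(I,0) + \Z[A]\cdot(0,I)$ as $(P,Q)$ with $P,Q \in \Z[A]$, one computes $(P,Q)\mathbf{T} = (PA + Q, -P)$ and $(P,Q)\mathbf{T}^{-1} = (-Q, P + QA)$; since $\Z[A]$ is closed under addition and multiplication, the new block entries again lie in $\Z[A]$, so the induction goes through in both directions.

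The only real content is the identity $\mathbf{T} + \mathbf{T}^{-1} = \operatorname{diag}(4E^{-1}, 4E^{-1})$; once that is in hand, neither inclusion is hard, and the one thing to be careful about is the bookkeeping of which blocks of the $n\times 2n$ row vectors are produced at each multiplication by $\mathbf{T}^{\pm 1}$.
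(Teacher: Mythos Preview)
Your proof is correct and follows essentially the same approach as the paper: both hinge on the identity $\mathbf{T} + \mathbf{T}^{-1} = \operatorname{diag}(4E^{-1},4E^{-1})$ for the inclusion $\supseteq$, and both observe that the block entries of $\mathbf{T}^{\pm 1}$ lie in $\Z[4E^{-1}]$ for the inclusion $\subseteq$. Your explicit induction for $\subseteq$ simply spells out what the paper calls ``straightforward.''
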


\begin{proof}
The inclusion $\subseteq$ is straightforward since $\mathbf{T}$ and $\mathbf{T}^{-1}$ are block $2 \times 2$ matrices with block entries in $\Z[4E^{-1}]$.  To prove the opposite inclusion, first observe that
\[
\mathbf{T}^{-1} = \begin{pmatrix} 0 & I \\ -I & 4E^{-1} \end{pmatrix},
\]
Then note that
\begin{align*}
(I, 0) = (I, 0) \mathbf{I} \in (I, 0) \cdot \Z[\mathbf{T}, \mathbf{T}^{-1}]
(0, I) = (I, 0) \mathbf{T}^{-1} \in (I, 0) \cdot \Z[\mathbf{T}, \mathbf{T}^{-1}].
\end{align*}
Moreover,
\[
\mathbf{T} + \mathbf{T}^{-1} = \begin{pmatrix} 4E^{-1} & 0 \\ 0 & 4E^{-1} \end{pmatrix} = 4 \mathbf{F},
\]
where $\mathbf{F} := \text{diag}(E^{-1}, E^{-1})$.  Thus, $4 \mathbf{F} \in \Z[\mathbf{T}, \mathbf{T}^{-1}]$.  This implies
\begin{align*}
(4^jE^{-j}, 0) &= (I, 0) 4^j \mathbf{F}^j \in (I, 0) \cdot \Z[\mathbf{T}, \mathbf{T}^{-1}] \\
(0, 4^jE^{-j}) &= (0, I) 4^j \mathbf{F}^j \in (I, 0) \cdot \Z[\mathbf{T}, \mathbf{T}^{-1}],
\end{align*}
and thus all of $\Z[4E^{-1}] \cdot (I, 0) + \Z[4E^{-1}] \cdot (0, I)$ is contained in $(I, 0) \cdot \Z[\mathbf{T}, \mathbf{T}^{-1}]$.
\end{proof}

\begin{lemma} \label{lem:matrixalgebra2}
The row vectors $\{e_n^t E^{-j}\}_{j=0}^{n-1}$ are a basis for $\Z^n$.
\end{lemma}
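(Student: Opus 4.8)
The plan is to reduce the claim to a single determinant computation and then to exploit the simple ``bidiagonal propagation'' structure of the positive powers of $E$. First, $E$ is invertible over $\Z$: if $J$ denotes the $n\times n$ exchange matrix (so that left multiplication by $J$ reverses the order of the rows), then $JE$ is the lower bidiagonal matrix with $1$'s on the main diagonal and on the subdiagonal, so $\det(JE)=1$ and $\det E=\pm1$; in particular $E^{-1}$ has integer entries, so the statement makes sense. Next, right multiplication by the unimodular matrix $E^{-(n-1)}$ is a $\Z$-module automorphism of $\Z^n$ that carries $e_n^tE^{\,n-1-j}$ to $e_n^tE^{-j}$; hence $\{e_n^tE^{-j}\}_{j=0}^{n-1}$ is a $\Z$-basis of $\Z^n$ if and only if the ``Krylov matrix'' $K$ whose rows are $e_n^t, e_n^tE, e_n^tE^2, \dots, e_n^tE^{n-1}$ has $\det K=\pm1$. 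This trade is the crux: the positive powers of $E$ propagate in a far more controlled way than the negative ones.

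To compute the rows of $K$, note that $e_k^tE$ is just the $k$-th row of $E$, so $e_k^tE=e_{n-k}^t+e_{n-k+1}^t$ for $1\le k\le n-1$ and $e_n^tE=e_1^t$. Starting from $e_n^tE^0=e_n^t$ and applying these two formulas term by term, I would prove by induction on $j$ that for each $l\ge0$ with $2l\le n-1$ the row $e_n^tE^{2l}$ is supported on the columns $\{n-l,\dots,n\}$ with coefficient $1$ in column $n-l$, and that for each $l\ge0$ with $2l+1\le n-1$ the row $e_n^tE^{2l+1}$ is supported on the columns $\{1,\dots,l+1\}$ with coefficient $1$ in column $l+1$. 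Each inductive step is immediate: applying the formulas to a vector supported on $\{1,\dots,l+1\}$ (resp.\ $\{n-l,\dots,n\}$) yields a vector supported on $\{1,\dots,l+2\}$ (resp.\ $\{n-l-1,\dots,n\}$), and the newly reached extreme column inherits exactly the old leading coefficient.

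These support facts show that in $K$ the even-indexed rows involve only the columns $\{\lfloor n/2\rfloor+1,\dots,n\}$ and the odd-indexed rows only the columns $\{1,\dots,\lfloor n/2\rfloor\}$, and that these two column blocks partition $\{1,\dots,n\}$ (each block containing exactly as many columns as there are rows of the corresponding parity). Consequently, after reordering the rows of $K$ as $e_n^tE, e_n^tE^3, e_n^tE^5,\dots$ followed by $e_n^t, e_n^tE^2, e_n^tE^4,\dots$ and reordering the columns as $1,2,3,\dots$ followed by $n,n-1,n-2,\dots$, the matrix becomes block diagonal with each block lower triangular and with all diagonal entries equal to $1$ (the diagonal entry of the row $e_n^tE^{2l+1}$ is its coefficient in column $l+1$, and that of $e_n^tE^{2l}$ is its coefficient in column $n-l$). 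Therefore $\det K=\pm1$, so $K\in GL_n(\Z)$, and the lemma follows.

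The only real obstacle I anticipate is clerical: organizing the induction so that the asserted column ranges for $e_n^tE^j$ never run past column $n$ or below column $1$ (they do not, since $l$ never exceeds $\lfloor n/2\rfloor$ in the relevant range), and keeping the exceptional row $e_n^tE=e_1^t$ separate from the generic rows of $E$ when carrying out the inductive step. Once the support-and-leading-coefficient claim is in hand, the determinant evaluation is a one-line triangularity argument.
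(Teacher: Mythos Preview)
Your proof is correct and follows essentially the same approach as the paper: both reduce to the positive powers $\{e_n^t E^{j}\}_{j=0}^{n-1}$ via the unimodularity of $E$, and both exploit the key structural fact that each successive $e_n^t E^{j}$ reaches exactly one new column (alternating between the right end and the left end) with leading coefficient $1$. The only difference is packaging---the paper phrases this as the span $W_k$ picking up one new standard basis vector at each step, while you phrase it as block-triangularity of the Krylov matrix after a row/column permutation---but the underlying computation is identical.
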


\begin{proof}
Because $E$ is invertible over $\Z$, it suffices to show that $\Z^n$ is spanned by $\{e_n^t E^{-j} E^{n-1}\}_{j=0}^{n-1} = \{e_n^t E^j\}_{j=0}^{n-1}$.  Let $W_k$ be the $\Z$-span of $e_n^t, e_n^t E, \dots, e_n^t E^{k-1}$.  We can show by induction on $k$ that $W_k$ includes the first $k$ vectors from the ordered basis
\[
e_n^1, e_1^t, e_{n-1}^t, e_2^t, e_{n-2}^t, e_3^t, \dots 
\]
The general procedure is clear from the first few steps:
\begin{itemize}
	\item We have $e_n^t \in W_1$ trivially.
	\item Because $e_n^t \in W_1$ and $e_n^t E = e_1^t$, we have $e_1^t \in W_2$.
	\item Next, because $e_1^t \in W_2$, we have $e_1^t E = e_n^t + e_{n-1}^t \in W_3$.  Moreover, $e_n^t \in W_1 \subseteq W_3$, so that $e_{n-1}^t \in W_3$.
	\item Next, because $e_{n-1}^t \in W_3$, we have $e_{n-1}^t E = e_1^t + e_2^t \in W_4$, which implies that $e_2^t \in W_4$.
\end{itemize}
At the last step of the induction, we obtain $W_n = \Z^n$ as desired.
\end{proof}

\begin{lemma} \label{lem:M1computation}
\[
M_1 \cong \left( \bigoplus_{j=1}^n \Z / 4^j \right)^{\oplus 2}
\]
\end{lemma}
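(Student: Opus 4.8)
The plan is to convert the infinitely many relations defining $M_1$ into a single finitely generated subgroup $P\subseteq\Z^{2n}$, identify $M_1$ with $\Hom_\Z(\Z^{2n}/P,\Q/\Z)$, and then compute $\Z^{2n}/P$ explicitly from Lemmas~\ref{lem:matrixalgebra1} and~\ref{lem:matrixalgebra2}. The first step is a factorization: for every $j\in\Z$,
\[
\mathbf{T}^{j-1}+\mathbf{T}^{j+1}=\mathbf{T}^{j-1}(\mathbf{I}+\mathbf{T}^2)=\mathbf{T}^{j}(\mathbf{T}+\mathbf{T}^{-1})=4\,\mathbf{T}^{j}\mathbf{F},
\]
where $\mathbf{F}=\text{diag}(E^{-1},E^{-1})$ and $\mathbf{T}+\mathbf{T}^{-1}=4\mathbf{F}$ as computed in the proof of Lemma~\ref{lem:matrixalgebra1}. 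Writing $N:=\Span_\Z\{\mathbf{e}_{2n}^t\mathbf{T}^{j}:j\in\Z\}$ and $P:=\Span_\Z\{\mathbf{e}_{2n}^t(\mathbf{T}^{j-1}+\mathbf{T}^{j+1}):j\in\Z\}$, both subgroups of $\Z^{2n}$ since $\mathbf{T}$ and $\mathbf{T}^{-1}$ are integral, the factorization gives $P=4N\mathbf{F}$. Since $E$ is invertible over $\Z$, we have $\mathbf{F}\in GL_{2n}(\Z)$, so right multiplication by $\mathbf{F}$ is an automorphism of $\Z^{2n}$.

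Next I would dualize. Under the standard perfect pairing $\Z^{2n}\times(\Q/\Z)^{2n}\to\Q/\Z$, the conditions defining $M_1$ say exactly that $\mathbf{a}$ is annihilated by every element of $P$, so $M_1=\Hom_\Z(\Z^{2n}/P,\Q/\Z)$. Since the automorphism $\mathbf{F}$ carries $4N$ onto $P$, we have $\Z^{2n}/P\cong\Z^{2n}/4N$; and once $\Z^{2n}/4N$ is seen to be finite, this forces $M_1\cong\Z^{2n}/P\cong\Z^{2n}/4N$ non-canonically, as $\Q/\Z$ is an injective cogenerator for finite abelian groups. So it remains to compute $\Z^{2n}/4N$.

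To compute $N$, write $\mathbf{e}_{2n}^t=e_n^t(0,I)$; using $(0,I)=(I,0)\mathbf{T}^{-1}$ (from the proof of Lemma~\ref{lem:matrixalgebra1}) and that $\mathbf{T}^{-1}$ is a unit of $\Z[\mathbf{T},\mathbf{T}^{-1}]$, one gets $N=e_n^t\cdot(I,0)\Z[\mathbf{T},\mathbf{T}^{-1}]$. Lemma~\ref{lem:matrixalgebra1} rewrites $(I,0)\Z[\mathbf{T},\mathbf{T}^{-1}]$ as $\Z[4E^{-1}](I,0)+\Z[4E^{-1}](0,I)$; applying $e_n^t$ and reading off the two $n$-blocks gives $N=A\times A$, where $A:=e_n^t\Z[4E^{-1}]\subseteq\Z^n$. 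As a $\Z$-module, $A$ is spanned by $\{e_n^t(4E^{-1})^{j}:j\ge0\}=\{4^{j}f_j:j\ge0\}$ with $f_j:=e_n^tE^{-j}$; multiplying the monic characteristic relation of the integer matrix $E^{-1}$ by $E^{-(j-n)}$ and by $4^{j}$ and inducting on $j\ge n$ shows $4^{j}f_j\in\Span_\Z\{4^{i}f_i:0\le i\le n-1\}$. Hence $A=\Span_\Z\{4^{j}f_j:0\le j\le n-1\}$, which by Lemma~\ref{lem:matrixalgebra2} is \emph{free} of rank $n$ with that basis. Consequently $4A$ is free with basis $\{4^{j+1}f_j:0\le j\le n-1\}$, and relative to the basis $\{f_j\}_{j=0}^{n-1}$ of $\Z^n$ we get $\Z^n/4A\cong\bigoplus_{j=0}^{n-1}\Z/4^{j+1}=\bigoplus_{k=1}^{n}\Z/4^{k}$, which is finite; therefore $\Z^{2n}/4N=(\Z^n/4A)^{\oplus2}$ and
\[
M_1\cong\Z^{2n}/P\cong\Z^{2n}/4N\cong\Bigl(\bigoplus_{k=1}^{n}\Z/4^{k}\Bigr)^{\oplus2}.
\]

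The only delicate point is the freeness claim for $A$ in the third step: one must check that, when $4^{j}f_j$ with $j\ge n$ is reduced via the characteristic polynomial of $E^{-1}$, the extra powers of $4$ introduced by the recursion stay integral, so that $A$ is genuinely the $\Z$-span of exactly the $n$ scaled vectors $4^{0}f_0,\dots,4^{n-1}f_{n-1}$ (no fewer, no more). Granting this, Lemma~\ref{lem:matrixalgebra2} pins down the invariant factors of $\Z^n/4A$ to be precisely $4^{1},\dots,4^{n}$ in each of the two blocks, which is the content of the lemma. Everything else is block-matrix bookkeeping together with standard properties of $\Q/\Z$.
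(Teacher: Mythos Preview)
Your proof is correct and follows essentially the same route as the paper: factor out $4\mathbf{F}$, use Lemma~\ref{lem:matrixalgebra1} to reduce to $e_n^t\Z[4E^{-1}]$ in each block, and invoke Lemma~\ref{lem:matrixalgebra2} to read off the invariant factors. The point you flag as ``delicate'' is not: since $4E^{-1}$ is itself an integer matrix, Cayley--Hamilton applied to $4E^{-1}$ (rather than to $E^{-1}$) immediately gives that $\Z[4E^{-1}]$ is $\Z$-spanned by $I,4E^{-1},\dots,(4E^{-1})^{n-1}$, with no bookkeeping of extra powers of $4$ required.
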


\begin{proof}
Noting that $\mathbf{e}_{2n}^t \mathbf{T} = \mathbf{e}_n^t$, we have
\[
M_1 = \{\mathbf{a} \in (\Q / \Z)^{2n}: \mathbf{e}_n \cdot (\mathbf{T}^{j-1} + \mathbf{T}^{j+1}) \mathbf{a} = 0 \text{ for } j \in \Z\}.
\]
As in the proof of Lemma \ref{lem:matrixalgebra1}, we have $\mathbf{T} + \mathbf{T}^{-1} = 4 \mathbf{F}$, and hence,
\[
M_1 = \{\mathbf{a} \in (\Q / \Z)^{2n}: 4 \mathbf{e}_n \cdot \mathbf{T}^j \mathbf{F} \mathbf{a} = 0 \text{ for } j \in \Z\}.
\]
Since $\mathbf{F}$ is invertible, we can view it as a change of coordinates on $(\Q / \Z)^{2n}$ and replace $\mathbf{F} \mathbf{a}$ by $\mathbf{a}$, so that
\[
M_1 \cong \{\mathbf{a} \in (\Q / \Z)^{2n}: 4 \mathbf{e}_n^t \mathbf{T}^j \mathbf{a} = 0 \text{ for } j \in \Z\}.
\]
We can rewrite $\mathbf{e}_n^t \mathbf{T}^j$ as $e_n^t (I, 0) \mathbf{T}^j$, where $I$ and $0$ are $n \times n$ identity and zero matrices respectively as mentioned above.  Let $N \subseteq \Z^{2n}$ be the module of row vectors in $\Z^{2n}$ given by
\[
N = 4 e_n^t (I, 0) (\Z[\mathbf{T}, \mathbf{T}^{-1}]) = \{ 4 e_n^t (I, 0) \mathbf{S}: \mathbf{S} \in \Z[\mathbf{T}, \mathbf{T}^{-1}]\}.
\]
Then we have
\[
M_1 \cong \{\mathbf{a} \in (\Q / \Z)^{2n}: \mathbf{n}^t \mathbf{a} = 0 \text{ for all } \mathbf{n}^t \in N\}.
\]
The remainder of the proof will use Lemmas \ref{lem:matrixalgebra1} and \ref{lem:matrixalgebra2} to exhibit a convenient basis for $N$ from which the invariant factors decomposition of $M_1$ will be obvious.

From Lemma \ref{lem:matrixalgebra2}, we deduce that
\[
\{e_n^t E^{-j} (I, 0)\}_{j=0}^{n-1} \cup \{e_n^t E^{-j} (0, I)\}_{j=0}^{n-1} \text{ is a basis for } \Z^{2n},
\]
where vectors in $\Z^{2n}$ are viewed as row vectors.  Denote this new basis by $\{\mathbf{w}_1^t, \dots, \mathbf{w}_{2n}^t\}$.  Meanwhile, substituting the result of Lemma \ref{lem:matrixalgebra1} into the definition of $N$ shows that
\[
N \text{ is spanned by } \{4e_n^t (4E^{-1})^j(I, 0)\}_{j=0}^{n-1} \cup \{4e_n^t (4E^{-1})^j (0, I)\}_{j=0}^{n-1},
\]
These vectors are scalar multiples of the basis vectors for $\Z^{2n}$ given in the previous equation, hence independent, and thus
\[
\{4 \mathbf{w}_1^t, \dots, 4^n \mathbf{w}_n^t, 4 \mathbf{w}_{n+1}^t, \dots, 4 \mathbf{w}_{2n}^t\}  \text{ is a basis for } N.
\]
Let $\mathbf{S}: \Z^{2n} \to \Z^{2n}$ be the change of basis matrix such that $\mathbf{w}_j^t \mathbf{S} = \mathbf{e}_j^t$.  Then changing coordinates by $\mathbf{S}$ on $(\Q / \Z)^{2n}$ yields
\begin{align*}
M_1 &\cong \{\mathbf{a} \in (\Q / \Z)^{2n} \colon \mathbf{n}^t \mathbf{a} = 0 \text{ for all } \mathbf{n}^t \in N\} \\
&\cong \{\mathbf{a} \in (\Q / \Z)^{2n} \colon \mathbf{n}^t \mathbf{S} \mathbf{a} = 0 \text{ for all } \mathbf{n}^t \in N\} \\
&=\{\mathbf{a} \in (\Q / \Z)^{2n} \colon 4 \mathbf{e}_1^t \mathbf{a} = 0, \dots, 4^n \mathbf{e}_n^t \mathbf{a} = 0, 4 \mathbf{e}_{n+1}^t \mathbf{a}, \dots, 4^n \mathbf{e}_{2n} \mathbf{a} \} \\
&\cong \left( \bigoplus_{j=1}^n (\Z / 4^j) \right)^{\oplus 2} \qedhere
\end{align*}
\end{proof}

Having computed $M_1$, we now turn to $M_2$.  Although $M_2$ itself is difficult to compute, we now know that $M_1$ is a $2$-torsion module.  Thus, we only have to compute the $2$-torsion submodule of $M_2$, which greatly simplifies matters.  Let $\Z[1/2]$ denote sub-ring of $\Q$ generated by $\Z$ and $1/2$, viewed as a $\Z$-module, or equivalently the $\Z$-module of rational numbers whose denominators are powers of $2$.  Let
\[
M_2' = M_2 \cap (\Z[1/2] / \Z)^{2n} = \{\mathbf{a} \in (\Z[1/2] / \Z)^{2n}: (\mathbf{T}^m - \mathbf{I})\mathbf{a} = \mathbf{0} \}.
\]
Since $M_1 \subseteq (\Z[1/2] / \Z)^{2n}$, we know that
\[
\c U_0(\CLF(m,n), \Q / \Z) \cong M_1 \cap M_2 = M_1 \cap M_2'.
\]
We will determine the $2$-torsion properties of $\mathbf{T}^m - \mathbf{I}$ by finding an accurate enough $2$-adic expansion of it.

\begin{lemma} \label{lem:periodicity}
Suppose $m = r2^s$ with $r$ odd.  Then
\[
M_2' \cong \begin{cases} (\Z/2)^n, & m \text{ odd} \\ (\Z/2)^{2n}, & m \equiv 2 \text{ mod } 4 \\ (\Z/2^{s+1})^{2n}, & m \equiv 0 \text{ mod } 4. \end{cases}
\]
\end{lemma}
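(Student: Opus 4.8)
The plan is to study the matrix $\mathbf{T}^m - \mathbf{I}$ two-adically, computing enough terms of its expansion in powers of $2$ to pin down $M_2' = \ker(\mathbf{T}^m - \mathbf{I})$ as a module over $\Z[1/2]/\Z$. First I would recall that $\mathbf{T} = \begin{pmatrix} 4E^{-1} & -I \\ I & 0 \end{pmatrix}$, so that modulo $2$ we have $\mathbf{T} \equiv \begin{pmatrix} 0 & -I \\ I & 0 \end{pmatrix} =: \mathbf{J}$, a matrix satisfying $\mathbf{J}^2 = -\mathbf{I}$ and hence $\mathbf{J}^4 = \mathbf{I}$. The key point is that $M_2'$ is a $2$-torsion module, so its structure is governed entirely by the behavior of $\mathbf{T}^m - \mathbf{I}$ after reduction modulo successive powers of $2$. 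Since $4E^{-1}$ has all entries divisible by $4$ over $\Z[1/4]$... wait, more precisely $4E^{-1} \in \Z[4E^{-1}]$ and $(4E^{-1})^j$ is divisible by $4^j$ in an appropriate sense; I would make this precise by writing $\mathbf{T} = \mathbf{J} + 2\mathbf{R}$ for a suitable integer matrix and tracking powers of $2$.

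The main computation is to expand $\mathbf{T}^m$. Writing $m = r2^s$ with $r$ odd, I would argue in cases. When $m$ is odd ($s=0$), $\mathbf{T}^m \equiv \mathbf{J}^m = \mathbf{J}^{m \bmod 4}$; since $m$ is odd, $\mathbf{J}^m = \pm\mathbf{J}$, so $\mathbf{T}^m - \mathbf{I} \equiv \pm\mathbf{J} - \mathbf{I} \pmod 2$, which as a matrix over $\Z/2$ has rank $n$ (because $\mathbf{J} \pm \mathbf{I}$ is invertible modulo $2$ up to the nilpotent structure — one checks $(\mathbf{J}-\mathbf{I})(\mathbf{J}+\mathbf{I}) = \mathbf{J}^2 - \mathbf{I} = -2\mathbf{I} \equiv 0$, so $\mathbf{J} - \mathbf{I}$ has a $2n$-dimensional... actually rank exactly $n$ since $\ker(\mathbf{J}-\mathbf{I}) = \ker(\mathbf{J}+\mathbf{I})$ both have dimension $n$). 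This forces $M_2'[2] = \ker(\mathbf{T}^m-\mathbf{I}) \cap (\tfrac12\Z/\Z)^{2n} \cong (\Z/2)^n$, and then a Hensel/lifting argument shows there is no higher $2$-torsion, giving $M_2' \cong (\Z/2)^n$. When $m \equiv 2 \bmod 4$, we have $\mathbf{J}^m = \mathbf{J}^2 = -\mathbf{I}$, so $\mathbf{T}^m - \mathbf{I} \equiv -2\mathbf{I} \equiv 0 \pmod 2$; the whole space $(\tfrac12\Z/\Z)^{2n}$ lies in the kernel modulo the next level, and one checks the $2^2$-level term is a unit times $\mathbf{I}$, yielding $M_2' \cong (\Z/2)^{2n}$. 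When $m \equiv 0 \bmod 4$, $\mathbf{J}^m = \mathbf{I}$, and I would need a finer expansion: $\mathbf{T}^m - \mathbf{I}$ vanishes modulo a higher power of $2$, and I claim $\mathbf{T}^m \equiv \mathbf{I} + 2^{s+1}\mathbf{U} \pmod{2^{s+2}}$ with $\mathbf{U}$ invertible modulo $2$, which gives $M_2' \cong (\Z/2^{s+1})^{2n}$.

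The hard part will be the case $m \equiv 0 \bmod 4$: establishing the exact two-adic valuation $s+1$ of $\mathbf{T}^m - \mathbf{I}$ and showing the leading matrix coefficient is invertible modulo $2$. The natural tool is a lifting-the-exponent style argument for matrices: starting from $\mathbf{T}^4 = \mathbf{I} + 2^?\,(\text{invertible})$ — one computes $\mathbf{T}^2 = \begin{pmatrix} 16E^{-2} - I & -4E^{-1} \\ 4E^{-1} & -I\end{pmatrix}$ and then $\mathbf{T}^4$ explicitly, finding $\mathbf{T}^4 \equiv \mathbf{I} + 4\mathbf{U}_0 \pmod 8$ with $\mathbf{U}_0$ invertible mod $2$ — and then iterating: if $\mathbf{T}^{2^k} = \mathbf{I} + 2^{k+1}\mathbf{U}_k$ with $\mathbf{U}_k$ a unit, squaring gives $\mathbf{T}^{2^{k+1}} = \mathbf{I} + 2^{k+2}\mathbf{U}_k + 2^{2k+2}\mathbf{U}_k^2 = \mathbf{I} + 2^{k+2}(\mathbf{U}_k + 2^k\mathbf{U}_k^2)$, and the parenthesized matrix is again a unit modulo $2$. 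Then $\mathbf{T}^m = (\mathbf{T}^{2^s})^r = (\mathbf{I} + 2^{s+1}\mathbf{U}_s)^r \equiv \mathbf{I} + r\,2^{s+1}\mathbf{U}_s \pmod{2^{s+2}}$, and since $r$ is odd this is $\mathbf{I} + 2^{s+1}(\text{unit})$. Finally, for a matrix of the form $\mathbf{I} + 2^{\ell}\mathbf{U}$ with $\mathbf{U}$ invertible over $\Z/2$ (and with appropriate control over higher terms so that $\mathbf{T}^m - \mathbf{I} = 2^\ell \mathbf{V}$ with $\mathbf{V}$ having determinant an odd multiple of a power of $2$... actually $\mathbf{V}$ invertible over $\Z[1/2]$ with $\mathbf{V} \bmod 2$ of full rank), one concludes $\ker(\mathbf{T}^m - \mathbf{I})$ on $(\Z[1/2]/\Z)^{2n}$ is exactly $(2^{-\ell}\Z/\Z)^{2n} \cong (\Z/2^\ell)^{2n}$; plugging in $\ell = s+1$ finishes the $m \equiv 0 \bmod 4$ case, and the same principle (with $\mathbf{V}$ of rank $n$ mod $2$, respectively a unit times $2$) handles the odd and $m\equiv 2\bmod 4$ cases uniformly. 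I would verify the $\mathbf{U}_k$ computations are clean by working in the commutative ring $\Z[4E^{-1}]$ throughout, where everything reduces to $2 \times 2$ block manipulations with commuting entries.
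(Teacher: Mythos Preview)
Your approach is essentially the same as the paper's: reduce $\mathbf{T}$ modulo a small power of $2$ to the matrix $\mathbf{J}=\begin{pmatrix}0&-I\\I&0\end{pmatrix}$, split into cases according to $m\bmod 4$, and for $m\equiv 0\bmod 4$ run a lifting-the-exponent argument (compute $\mathbf{T}^4$, induct by squaring to $\mathbf{T}^{2^s}$, then binomially expand $(\mathbf{T}^{2^s})^r$). One small slip: your base case ``$\mathbf{T}^4\equiv \mathbf{I}+4\mathbf{U}_0\pmod 8$'' is off by a power of $2$; in fact $\mathbf{T}^4\equiv \mathbf{I}+8\begin{pmatrix}0&E^{-1}\\-E^{-1}&0\end{pmatrix}\pmod{16}$, which is exactly what your induction formula $\mathbf{T}^{2^k}=\mathbf{I}+2^{k+1}\mathbf{U}_k$ predicts at $k=2$, so the rest of your argument goes through unchanged. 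The paper also works modulo $4$ rather than $2$ from the start (since $4E^{-1}\equiv 0\bmod 4$), which makes the odd and $m\equiv 2$ cases immediate and avoids the separate Hensel step you invoke.
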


\begin{proof}
First, consider the case where $m$ is not divisible by $4$, or equivalently $s \leq 2$.  Note that
\[
\mathbf{T} = \begin{pmatrix} 0 & -I \\ I & 0 \end{pmatrix} \text{ mod } 4.
\]
Hence, when $m = 1$ mod $4$,
\[
\mathbf{T}^m - \mathbf{I} = \begin{pmatrix} -I & -I \\ I & -I \end{pmatrix} \text{ mod } 4.
\]
The kernel of this map on $(\Z[1/2] / \Z)^{2n}$ is therefore isomorphic to $(\Z / 2)^n$.  The case where $m = 3$ mod $4$ is similar.  When $m = 2$ mod $4$, then $\mathbf{T}^m - \mathbf{I} = -2 \mathbf{I}$ mod $4$, so we get $M_2' \cong (\Z/2)^{2n}$.

To handle the case where $m = 0$ mod $4$, we compute by hand that
\[
\mathbf{T}^4 = \begin{pmatrix} -I & -4E^{-1} \\ 4E^{-1} & -I \end{pmatrix}^2 = \begin{pmatrix} I & 8E^{-1} \\ -8E^{-1} & I \end{pmatrix} \text{ mod } 16.
\]
From here, one can verify by induction that
\[
\mathbf{T}^{2^s} = \mathbf{I} + 2^{s+1} \begin{pmatrix} 0 & E^{-1} \\ -E^{-1} & 0 \end{pmatrix} \text{ mod } 2^{s+2} \text{ for } s \geq 2.
\]
Hence, if $r$ is odd, then using binomial expansion, we obtain
\[
\mathbf{T}^{r2^s} = \mathbf{I} + r2^{s+1} \begin{pmatrix} 0 & E^{-1} \\ -E^{-1} & 0 \end{pmatrix} = \mathbf{I} + 2^{s+1} \begin{pmatrix} 0 & E^{-1} \\ -E^{-1} & 0 \end{pmatrix} \text{ mod } 2^{s+2} \text{ for } s \geq 2.
\]
Since $E^{-1}$ is invertible, this implies that the kernel of $\mathbf{T}^{r2^s} - \mathbf{I}$ over $\Z[1/2] / \Z$ is isomorphic to $(\Z / 2^{s+1})^{2n}$.
\end{proof}

\begin{proof}[Proof of Theorem \ref{thm:CLF}]
Recall that $U(m,n) \cong M_1 \cap M_2' \subseteq (\Q / \Z)^{2n}$.  Note that $(\Q / \Z)^{2n}$ has a unique submodule isomorphic to $(\Z / 2)^{2n}$, so we can regard $(\Z / 2)^{2n} \subseteq (\Q / \Z)^{2n}$.  In the case where $m$ is not divisible by $4$, we have
\[
M_2' \subseteq (\Z / 2)^{2n} \subseteq M_1,
\]
and therefore, $M_1 \cap M_2' = M_2'$, and this yields the asserted formula in Theorem \ref{thm:CLF}.  Now suppose that $m$ is divisible by $4$, and $m = r2^s$, where $r$ is odd.  Then $M_2'$ is the unique submodule of $(\Q / \Z)^{2n}$ isomorphic to $(\Z / 2^{s+1})^{2n}$, while $M_1 \cong \left( \bigoplus_{j=1}^n \Z / 4^j \right)^{\oplus 2}$.  Thus, the only possibility is that
\[
M_1 \cap M_2' \cong \left( \bigoplus_{j=1}^n \Z / \gcd(4^j,2^{s+1}) \right)^{\oplus 2} = \left( \bigoplus_{j=1}^n \Z / \gcd(4^j,2m) \right)^{\oplus 2}. \qedhere
\]
\end{proof}

Our proof technique in this section allows us to analyze other algebraic properties of $U(m,n)$.  For instance, we have the following lemma, which we will use in the next section:  Let $U_1 = U_1(m,n)$ be the submodule of $\c U_0(\CLF(m,n),L_{\std}, \Q / \Z)$ consisting of functions that vanish on the vertices $(j,0)$, and let $U_2$ be the submodule of functions vanishing on the vertices $(j,1)$.  Then we have

\begin{lemma} \label{lem:CLFdirectsum}
If $m$ is even, then
\[
U(m,n) = U_1(m,n) \oplus U_2(m,n),
\]
where
\[
U_1(m,n) \cong U_2(m,n) \cong
\begin{cases} (\Z / 2)^n, & m \equiv 2 \text{ mod } 4 \\ \bigoplus_{j=1}^n \Z / \gcd(2m,4^j), & m \equiv 0 \text{ mod } 4. \end{cases}
\]
\end{lemma}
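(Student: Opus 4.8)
The plan is to push everything through the harmonic-continuation identification of \S\ref{subsec:CLFcontinuation}--\S\ref{subsec:algebraiccomputation} and show that the coordinate-block splitting $(\Q/\Z)^{2n}=(\Q/\Z)^n\oplus(\Q/\Z)^n$ coming from $\mathbf a=(a_1,a_0)$ restricts to the asserted decomposition of $U(m,n)\cong M_1\cap M_2'$. Under this identification a function in $U(m,n)$ vanishes on a given one of the two distinguished columns of $\CLF(m,n)$ exactly when the corresponding one of $a_0$, $a_1$ vanishes, i.e.\ exactly when $\mathbf a$ lies in one of the two coordinate summands; so $U_1(m,n)$ and $U_2(m,n)$ are $M_1\cap M_2'$ intersected with the two summands, and it is enough to prove: (a) $M_1$ is the internal direct sum of its intersections with the two summands, these being isomorphic; (b) for $m$ even, $M_2'$ is also a direct sum of coordinate pieces; and then (c) intersect and identify the isomorphism types.

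For (a) I would observe that the module of $\Z$-linear functionals cutting out $M_1$ already respects the splitting. Using $\mathbf T^{j-1}+\mathbf T^{j+1}=\mathbf T^{j}(\mathbf T+\mathbf T^{-1})=4\mathbf T^{j}\mathbf F$ with $\mathbf F=\operatorname{diag}(E^{-1},E^{-1})$ (from the proof of Lemma~\ref{lem:matrixalgebra1}) and $\mathbf e_{2n}^{t}\mathbf T=\mathbf e_{n}^{t}=e_{n}^{t}(I,0)$, this module equals $4e_{n}^{t}(I,0)\,\Z[\mathbf T,\mathbf T^{-1}]\,\mathbf F$, which Lemma~\ref{lem:matrixalgebra1} rewrites as $\mathcal R\cdot(I,0)+\mathcal R\cdot(0,I)$ for the single row module $\mathcal R=4e_{n}^{t}\,\Z[4E^{-1}]\,E^{-1}\subseteq\Z^{n}$. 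A functional $(\rho,\rho')$ with $\rho,\rho'\in\mathcal R$ annihilates $(a,b)$ iff $\rho a=0$ for all $\rho\in\mathcal R$ and $\rho'b=0$ for all $\rho'\in\mathcal R$; hence $M_1=A\oplus A$, where $A=\{x\in(\Q/\Z)^{n}:\mathcal R x=0\}$ is placed in the first and second summand respectively, and the two copies of $A$ are canonically isomorphic. The computation behind Lemma~\ref{lem:M1computation} (Lemma~\ref{lem:matrixalgebra2} gives the $\Z$-basis $\{e_{n}^{t}E^{-i}\}_{i=1}^{n}$ of $\Z^{n}$, in which $\mathcal R$ has basis $\{4^{i}e_{n}^{t}E^{-i}\}_{i=1}^{n}$) identifies $A\cong\bigoplus_{j=1}^{n}\Z/4^{j}$.

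For (b) I would extract from the proof of Lemma~\ref{lem:periodicity} the slightly stronger fact that, for $m$ even, $\mathbf T^{m}-\mathbf I=2^{t}\mathbf H$ for an integer matrix $\mathbf H$ of odd determinant, where $2^{t}=2$ if $m\equiv 2\bmod 4$ and $2^{t}=2^{s+1}$ if $m=r2^{s}$ with $r$ odd and $s\ge 2$; this is precisely what the mod-$2^{t+1}$ computations there establish (in the second case $\mathbf H\equiv\left(\begin{smallmatrix}0&E^{-1}\\-E^{-1}&0\end{smallmatrix}\right)$, which has odd determinant because $E$ is invertible over $\Z$). Since $\det\mathbf H$ is odd, multiplication by $\mathbf H$ is injective on $(\Z[1/2]/\Z)^{2n}$ (clear denominators and apply the adjugate), so
\[
M_2'=\{\mathbf a\in(\Z[1/2]/\Z)^{2n}:\mathbf H(2^{t}\mathbf a)=\mathbf 0\}=\{\mathbf a:2^{t}\mathbf a=\mathbf 0\}=\bigl(\tfrac{1}{2^{t}}\Z/\Z\bigr)^{2n},
\]
which is a product over the $2n$ coordinates and so compatible with the block splitting. (For $m$ odd this collapses: $\mathbf T^{m}-\mathbf I$ has rank $n$ mod $2$, and $M_2'$ is then a ``diagonal'' copy of $(\Z/2)^{n}$ which is not block-compatible -- this is exactly why evenness of $m$ is needed.)

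For (c), with $M_1=A\oplus A$ and $M_2'=\bigl(\tfrac{1}{2^{t}}\Z/\Z\bigr)^{n}\oplus\bigl(\tfrac{1}{2^{t}}\Z/\Z\bigr)^{n}$ both compatible with the splitting, $M_1\cap M_2'=A[2^{t}]\oplus A[2^{t}]$, giving $U(m,n)=U_1(m,n)\oplus U_2(m,n)$ with $U_1\cong U_2\cong A[2^{t}]\cong\bigl(\bigoplus_{j=1}^{n}\Z/4^{j}\bigr)[2^{t}]=\bigoplus_{j=1}^{n}\Z/\gcd(4^{j},2^{t})$. Since $2m=r\cdot 2^{s+1}$ has the same $2$-part as $2^{t}$, we get $\gcd(4^{j},2^{t})=\gcd(4^{j},2m)$ when $m\equiv 0\bmod 4$, while for $m\equiv 2\bmod 4$ this reads $\gcd(4^{j},2)=2$ and the sum is $(\Z/2)^{n}$, as claimed. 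The isomorphism $U_1\cong U_2$ can also be obtained geometrically, since $(j,k)\mapsto(1-j,k)$ is an automorphism of the $\partial$-graph $\CLF(m,n)$ interchanging the two distinguished columns (compare Remark~\ref{rem:CLFcoveringspace}). The step I expect to require the most care is (b) -- recognizing that $M_2'$, though defined by the decidedly non-block-diagonal operator $\mathbf T^{m}-\mathbf I$, is nonetheless exactly the coordinate-wise $2^{t}$-torsion subgroup when $m$ is even -- together with the bookkeeping in (a) that it is the functional module itself, and not merely $M_1$ up to isomorphism, that splits along the two coordinate blocks.
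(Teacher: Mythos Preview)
Your proof is correct and follows essentially the same route as the paper: both arguments show that $M_1$ and (for $m$ even) $M_2'$ are compatible with the block decomposition $(\Q/\Z)^{2n}=(\Q/\Z)^n\oplus(\Q/\Z)^n$ coming from $\mathbf a=(a_1,a_0)$, and then intersect. Your presentation is slightly more direct in two places --- you show the defining functional module itself splits as $\mathcal R(I,0)+\mathcal R(0,I)$ rather than tracking block-diagonality of the coordinate changes $\mathbf F,\mathbf S$, and you make explicit the odd-determinant argument for why $M_2'$ equals the full $2^t$-torsion subgroup --- but the underlying idea is the same.
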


\begin{proof}
Recall that we expressed a function $u \in \c U_0(\CLF(m,n),L_{\std}, \Q / \Z)$ in terms of the two vectors $a_0$ and $a_1$, representing its values on the first two columns of vertices.  The submodules $U_1$ and $U_2$ correspond to the conditions $a_0 = 0$ and $a_1 = 0$ respectively.  From the proof of Lemma \ref{lem:M1computation}, we have
\[
M_1 = \left\{\mathbf{a} = \begin{pmatrix} a_1 \\ a_0 \end{pmatrix} \in (\Q / \Z)^{2n}: \mathbf{n}^t \cdot \mathbf{F} \mathbf{a} = 0 \text{ for } \mathbf{n}^t \in N \right\}.
\]
Then we used the change of coordinates $\mathbf{S}$ to write the basis for $N$ in a simpler form.  The proof thus showed that
\[
\mathbf{F}^{-1} \mathbf{S}^{-1} M_1 = \{\mathbf{a} \in (\Q / \Z)^{2n} \colon 4 \mathbf{e}_1^t \mathbf{a} = 0, \dots, 4^n \mathbf{e}_n^t \mathbf{a} = 0, 4 \mathbf{e}_{n+1}^t \mathbf{a}, \dots, 4^n \mathbf{e}_{2n} \mathbf{a} \}.
\]
The latter module clearly decomposes as the direct sum of the submodule where $a_1 = 0$ and the submodule where $a_0 = 0$.  However, the changes of coordinates $\mathbf{F}$ and $\mathbf{S}$ both respect the decomposition of $(\Q / \Z)^{2n}$ into $(\Q / \Z)^n \times 0^n$ and $0^n \times (\Q / \Z)^n$.  Thus, $M_1$ has the same direct sum decomposition.  If $m$ is even, then by Lemma \ref{lem:periodicity} we know that for some $k$, $M_2'$ is the unique submodule of $(\Q / \Z)^{2n}$ isomorphic to $(\Z / 2^k)^{2n}$ which is invariant under change of coordinates.  Hence,
\[
M_1 \cap M_2' = [M_1 \cap ((\Z/2^k)^n \times 0^n)] \oplus [M_1 \cap (0^n \times (\Z / 2^k)^n)].
\]
In other words, the submodule of $\mathbf{a} \in (\Q / \Z)^{2n}$ corresponding to $\c U_0$ breaks up into a direct sum of vectors with $a_0 = 0$ and vectors with $a_1 = 0$.  The same argument as in the proof of Theorem \ref{thm:CLF} shows that each summand is isomorphic to $(\Z / 2)^n$ when $m \equiv 2$ mod $4$ and otherwise, it is $\bigoplus_{j=1}^n \Z  / \gcd(4^j,2m)$.
\end{proof}

\subsection{Chain-Link Fence Variants} \label{subsec:CLFvariants}

The family $\CLF(m,n)$ is a variant of the family of $\partial$-graphs described in \cite{LPcyl}.   We let $\CLF'(m,n)$ be the graph from \cite{LPcyl} described as follows:  The vertex set is
\[
V(\CLF'(m,n)) = \{(x,y) \in \Z / 2m \times \{0,\dots,n+1\} \colon x + y \text{ is even} \}.
\]
In the condition ``$x + y$ is even,'' we are implicitly reducing $x$ and $y$ mod $2$ using the canonical maps $\Z / 2m \to \Z / 2$ and $\Z \to \Z / 2$.  The boundary vertices are
\[
\partial V(\CLF'(m,n)) = V(\CLF'(m,n)) \cap \Z / 2m \times \{0,n+1\}.
\]
The edges are given by $(x, y) \sim (x + 1, y \pm 1)$ whenever $y$ and $y \pm 1$ are both in $\{0,\dots,n\}$.  See Figure \ref{fig:CLFprime}.

\newcommand\makeCLFprime[2] { 
	\pgfmathtruncatemacro{\nn}{#2 + 1}
	\pgfmathtruncatemacro{\mm}{2 * #1}
	
	\draw[blue,dashed,thick] (0,0) to (0,\nn);
	\draw[blue,dashed,thick] (\mm,0) to (\mm,\nn);
	
	\foreach \j in {0,...,\nn} {
		\foreach \i in {0,...,\mm} {
			\pgfmathtruncatemacro{\t}{(-1)^(\i + \j)}
			\ifthenelse{
				\t = 1
			}{
				\ifthenelse{
					\j = 0 \OR \j = \nn
				}{
					\node[bd] (\i x\j) at (\i,\j) {};
				}{
					\node[int] (\i x\j) at (\i,\j) {};
				}
			}{}
		}
	}
	\foreach \j in {0,...,\nn} {
		\foreach \i in {1,...,\mm} {
			\pgfmathtruncatemacro{\t}{(-1)^(\i + \j)}
			\pgfmathtruncatemacro{\A}{\i - 1}
			\pgfmathtruncatemacro{\B}{\j + 1}
			\pgfmathtruncatemacro{\C}{\j - 1}
			\ifthenelse{
				\t = 1
			}{
				\ifthenelse{ \j = \nn }{}{ \draw (\i x\j) to (\A x\B); }
				\ifthenelse{ \j = 0 }{}{ \draw (\i x\j) to (\A x\C); }
			}{}
		}
	}
}

\begin{figure}
	\begin{center}
	
		\begin{tikzpicture}[scale=0.5]
			\makeCLFprime{5}{5}
		\end{tikzpicture}
	
		\caption{The $\partial$-graph $\CLF'(5,5)$.} \label{fig:CLFprime}
	
	\end{center}
\end{figure}
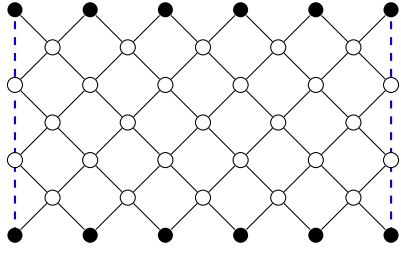

Observe that there is a $\partial$-graph isomorphism $\CLF(2m,n) \to \CLF'(m,2n)$ given by
\[
(j,k) \mapsto \begin{cases} (j,k), & i \text{ is even,} \\ (j,n-k), & i \text{ is odd.} \end{cases}
\]
Thus, we have already computed $\c U_0(\CLF'(m,n), L_{\std}, \Q / \Z)$ for even values of $n$ in Theorem \ref{thm:CLF}.  We will show that
\begin{theorem} \label{thm:CLF2}
Denote $U'(m,n) = \c U_0(\CLF'(m,n), L_{\std}, \Q/\Z)$.  Then
\[
U'(m,n) \cong \begin{cases} (\Z/2)^n, & m \text{ odd} \\ \bigoplus_{j=1}^{\lceil n/2 \rceil} \Z / \gcd(4^j,m) \oplus \bigoplus_{j=1}^{\lfloor n/2 \rfloor} \Z / \gcd(4^j,m), & m \text{ even.} \end{cases}
\]
\end{theorem}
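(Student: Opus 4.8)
The plan is to split on the parity of $n$. For $n$ even, write $n=2n_0$; the $\partial$-graph isomorphism $\CLF(2m,n_0)\cong\CLF'(m,n)$ of \S\ref{subsec:CLFvariants} gives $U'(m,n)\cong U(2m,n_0)$, and since the circumference $2m$ is even the claimed formula follows from Theorem \ref{thm:CLF} applied to $\CLF(2m,n_0)$ (here $\lceil n/2\rceil=\lfloor n/2\rfloor=n_0$). Thus the real content is the case $n$ odd, which I would attack by running the machinery of \S\ref{sec:CLF} directly on $\CLF'(m,n)$.

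For $n$ odd, the key structural features of $\CLF'(m,n)$ are that it has \emph{two} boundary circles, $y=0$ and $y=n+1$, and that its interior columns $\{(x,y):x+y\text{ even},\ 1\le y\le n\}$ have size $\lceil n/2\rceil$ for $x$ of one parity and $\lfloor n/2\rfloor$ for $x$ of the other. Given a harmonic $u$ with $u|_{\partial V}\equiv0$, I would encode its values on two consecutive columns as a vector $\mathbf a\in(\Q/\Z)^n$ and solve the harmonicity equations column by column; because the column sizes alternate, the resulting propagation operator $\mathbf T'$ on $(\Q/\Z)^n$ is naturally a two-step object, and it is cleanest to work with $(\mathbf T')^2$, whose block structure is governed by a single integer matrix $E'$ in the role of the matrix $E$ from \S\ref{subsec:CLFcontinuation}. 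As in Lemma \ref{lem:precomputation}, periodicity of $u$ in the $x$-coordinate, which runs over $\Z/2m$, amounts to $(\mathbf T')^{2m}\mathbf a=\mathbf a$, while the conditions $L_{\std}u=0$ at the two boundary circles become two families of linear conditions $\mathbf n^{t}\mathbf a=0$; collecting these yields $U'(m,n)\cong N_1\cap N_2$, where $N_1$ is cut out by the boundary-current conditions and $N_2=\{\mathbf a:(\mathbf T')^{2m}\mathbf a=\mathbf a\}$, in analogy with the modules $M_1$ and $M_2'$ of \S\ref{subsec:algebraiccomputation}.

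Next I would compute $N_1$, where the two pieces $\lceil n/2\rceil$ and $\lfloor n/2\rfloor$ enter. Following Lemmas \ref{lem:matrixalgebra1} and \ref{lem:matrixalgebra2}, I would identify the $\Z$-module spanned by the relevant rows of powers of $\mathbf T'$ with a sum of two summands of the form $\Z[4(E')^{-1}]\cdot(\text{row selector})$, one per boundary circle, and then show that iterating $(E')^{-1}$ on the appropriate standard row vector sweeps out all of $\Z^{\lceil n/2\rceil}$ (resp.\ $\Z^{\lfloor n/2\rfloor}$). After a change of coordinates this exhibits
\[
N_1\ \cong\ \Big(\bigoplus_{j=1}^{\lceil n/2\rceil}\Z/4^{j}\Big)\ \oplus\ \Big(\bigoplus_{j=1}^{\lfloor n/2\rfloor}\Z/4^{j}\Big).
\]
In particular $N_1$ is $2$-torsion, so only the $2$-primary part of $N_2$ is relevant, and this I would extract from a sufficiently accurate $2$-adic expansion of $(\mathbf T')^{2m}-\mathbf I$, exactly as in Lemma \ref{lem:periodicity}: for $m$ odd one gets $(\Z/2)^{n}$ and for $m$ even one gets $(\Z/2^{k})^{n}$ with $k=k(m)$ explicit. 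Since $N_2$ is the unique submodule of $(\Q/\Z)^{n}$ of its isomorphism type, the intersection $N_1\cap N_2$ is forced, and truncating each $4^{j}$ against $2^{k}$ produces the stated $\gcd(4^{j},m)$'s (and the $(\Z/2)^{n}$ in the odd-$m$ case).

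The main obstacle is the unequal column sizes. In \S\ref{subsec:CLFcontinuation} the propagation matrix is a clean $2\times2$ block matrix with block entries in $\Z[4E^{-1}]$; here it interchanges two lattices of different ranks, so pinning down the right matrix $E'$ and---above all---the change of coordinates that makes the elementary divisors of $N_1$ visible takes genuine care. Once that is arranged, the $2$-adic periodicity step and the final intersection argument transfer from \S\ref{subsec:algebraiccomputation} with only notational changes, and specializing the whole argument to even $n$, where the two column sizes coincide and the alternation disappears, recovers the formula already obtained from the isomorphism with $\CLF(2m,n/2)$, a useful consistency check.
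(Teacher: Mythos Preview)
Your handling of the even-$n$ case matches the paper exactly: both invoke the isomorphism $\CLF'(m,2n_0)\cong\CLF(2m,n_0)$ and read off the answer from Theorem~\ref{thm:CLF}.

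For odd $n$, however, the paper takes a different and considerably shorter route than the direct recomputation you propose. Rather than rebuilding the propagation matrix to accommodate alternating column sizes, the paper bootstraps from the already-established even case via the inclusion maps $f_{m,n}\colon\CLF'(m,n)\hookrightarrow\CLF'(m,n+1)$ (Lemma~\ref{lem:CLFinclusion}) together with the direct-sum decomposition $U'(m,n)=U_1'(m,n)\oplus U_2'(m,n)$ (Lemma~\ref{lem:CLFdirectsum2}). The key observation is that the last claim of Lemma~\ref{lem:CLFinclusion}---vanishing at one vertex in the top row forces vanishing on the whole top row---yields isomorphisms $U_1'(m,n)\cong U_1'(m,n+1)$ and $U_2'(m,n)\cong U_2'(m,n-1)$, so that $U'(m,n)\cong U_1(2m,\tfrac{n+1}{2})\oplus U_2(2m,\tfrac{n-1}{2})$, and Lemma~\ref{lem:CLFdirectsum} finishes the computation. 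This completely sidesteps the ``main obstacle'' you identify: no new matrix $E'$, no new change of coordinates, no repetition of the $2$-adic analysis.

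Your approach is not wrong in spirit, and would likely succeed with enough care, but the paper's argument is both shorter and structurally cleaner: it explains \emph{why} the two summands have sizes $\lceil n/2\rceil$ and $\lfloor n/2\rfloor$ (they literally come from the two adjacent even cases $n\pm1$), whereas in your approach this asymmetry would emerge only after a nontrivial linear-algebra computation whose details you have not supplied.
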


The reader should verify that this agrees with Theorem \ref{thm:CLF} when $n$ is even.  We will deduce the odd case directly from Lemma \ref{lem:CLFdirectsum} using elementary reasoning with subgraphs.

There is also a canonical inclusion $f_{m,n}: \CLF'(m,n) \to \CLF'(m, n+1)$ given by mapping a vertex in $\CLF'(m,n)$ to the vertex in $\CLF'(m,n+1)$ with the same coordinates.  Thus, we can think of $\CLF'(m,n+1)$ as being obtained from $\CLF'(m,n)$ by adding another row of vertices at the top and changing the previous top row to interior vertices.  Next, if $u \in \c U_0(\CLF'(m,n), L_{\std}, \Q/\Z)$, then define $(f_{m,n})_* u$ on $\CLF'(m,n+1)$ by extending $u$ to be zero on the top row (or row $n + 2$) of vertices in $\CLF'(m,n+1)$.  Then

\begin{lemma} \label{lem:CLFinclusion}
The map $(f_{m,n})_*$ defines an injection $U'(m,n) \to  U'(m,n+1)$.  The image of consists of functions $v$ which vanish on row $n + 1$ in $\CLF'(m,n+1)$.  Moreover, if $v \in U'(m,n+1)$ vanishes on one vertex in row $n + 1$, then it vanishes on all vertices in row $n + 1$.
\end{lemma}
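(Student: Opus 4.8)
The plan is to verify each of the four assertions directly from the definition of $\c U_0$, using that the top boundary row $n+1$ of $\CLF'(m,n)$ consists of degree-two vertices, each joined to exactly two vertices, both lying in row $n$. Write $u' := (f_{m,n})_* u$ for $u \in U'(m,n)$; by construction $u'$ agrees with $u$ on rows $0,\dots,n+1$ of $\CLF'(m,n+1)$ and vanishes on the new top row $n+2$. Under $f_{m,n}$, the only vertices whose interior/boundary status changes are those of row $n+1$, which are boundary in $\CLF'(m,n)$ but interior in $\CLF'(m,n+1)$; these, together with the new boundary row $n+2$, are the only places where anything new must be checked.

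First I would show $u' \in U'(m,n+1)$. At a vertex $x$ of row $n+1$, membership of $u$ in $U'(m,n)$ gives $u(x) = 0$ and $L_{\std}u(x) = -\sum_{y \sim x} u(y) = 0$, the sum ranging over the two neighbors of $x$ in row $n$. In $\CLF'(m,n+1)$ the vertex $x$ acquires two further neighbors, both in row $n+2$, on which $u' \equiv 0$; hence $L_{\std}u'(x) = -\sum_{y \sim x,\ y \in \text{row }n} u(y) = 0$, so $u'$ is harmonic at $x$. Harmonicity on rows $1,\dots,n$ is inherited verbatim from $u$. On the new boundary row $n+2$ we have $u' \equiv 0$ and, since the neighbors of such a vertex all lie in row $n+1$, $L_{\std}u'(x) = -\sum_{y\sim x} u(y) = 0$ because $u$ vanishes on row $n+1$; and the conditions at row $0$ are unchanged from those for $u$. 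This proves $u' \in U'(m,n+1)$, and it also shows $u'$ vanishes on rows $n+1$ and $n+2$, which is one half of the description of the image.

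Injectivity of $(f_{m,n})_*$ is immediate, since $u$ is recovered as the restriction of $u'$ to rows $0,\dots,n+1$. For the reverse inclusion in the image description, take $v \in U'(m,n+1)$ with $v \equiv 0$ on row $n+1$, set $u := v|_{\text{rows }0,\dots,n+1}$, and run the previous computation backwards to check $u \in U'(m,n)$: $u$ vanishes on rows $0$ and $n+1$ (the latter by the hypothesis on $v$); $u$ is harmonic on rows $1,\dots,n$ because $v$ is; and $L_{\std}u = 0$ on the boundary rows, using the boundary condition on $v$ at row $0$ and, at row $n+1$, using harmonicity of $v$ there together with $v(x) = 0$ and $v \equiv 0$ on row $n+2$. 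Since also $v \equiv 0$ on row $n+2$, we get $(f_{m,n})_* u = v$.

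The final claim carries the only real content. Suppose $v \in U'(m,n+1)$ and $v(p) = 0$ for a single vertex $p$ of row $n+1$. Applying the boundary condition $L_{\std}v = 0$ at an arbitrary vertex $x$ of row $n+2$, where $v(x) = 0$ and whose two neighbors $y_1, y_2$ both lie in row $n+1$, yields $v(y_1) + v(y_2) = 0$. As $x$ ranges over row $n+2$, these relations link the $m$ vertices of row $n+1$ into a single $m$-cycle -- consecutive row-$(n+1)$ vertices, whose $x$-coordinates differ by $2$ in $\Z/2m$, share a common neighbor in row $n+2$ -- along which $v$ must alternate in sign; hence $v(q) = \pm v(p) = 0$ for every vertex $q$ of row $n+1$. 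I expect the only step requiring genuine thought to be this identification of the cycle structure on row $n+1$; everything else is routine bookkeeping with the definition of $\c U_0$ and the local geometry of the fence near its top.
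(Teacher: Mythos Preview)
Your proof is correct and follows essentially the same approach as the paper's: verify $u' \in U'(m,n+1)$ by checking the conditions row by row, note injectivity is trivial, characterize the image by restricting back, and for the final propagation claim use that each boundary vertex in row $n+2$ has two neighbors in row $n+1$ whose values must sum to zero. Your treatment of the last step via the $m$-cycle and alternating signs is slightly more explicit than the paper's, but the argument is the same.
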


\begin{proof}
We must verify that $v := (f_{m,n})_* u$ is actually in $\c U_0(\CLF'(m,n+1), L_{\std}, \Q/\Z)$.  By construction $v = 0$ on the boundary rows $0$ and $n+2$ in $\CLF'(m,n+1)$.  We also have $L_{\std} v = L_{\std} u = 0$ on rows $0$ through $n$.  Because $v$ is zero on rows $n + 1$ and $n + 2$ we have $L_{\std} v = L_{\std} u = 0$ on row $n + 1$.  And finally, $v$ being zero on rows $n + 1$ and $n+2$ implies that the Laplacian is zero on row $n + 2$.

The injectivity of $(f_{m,n})_*$ is obvious, and clearly the image functions all vanish on row $n$.  Conversely, suppose $v \in \c U_0(\CLF'(m,n+1), L_{\std}, \Q/\Z)$ vanishes on the $n$th row, and let $u$ be the restriction to $\CLF'(m,n)$.  Since $v$ vanishes on rows $n + 1$ and $n + 2$, the edges between these rows make no contribution to $L_{\std} v$, and hence $L_{\std} u = L_{\std} v = 0$ on the $n$th row.  This shows $u \in \c U_0(\CLF'(m,n), L_{\std}, \Q/\Z)$ as desired.

For the final claim, suppose $v \in U'(m,n+1)$ vanishes on a vertex $(i,n+1)$ in row $n + 1$.  Then the conditions $v(i+1,n+2) = 0$ and $L_{\std} v(i+1,n+2) = 0$ force $v(i+2,n + 1) = 0$.  Similarly, $v(i+2,n+1) = 0$ implies $v(i+4,n+1) = 0$ and so on, so that $v$ vanishes on all of row $n + 1$.  (Recall there are no vertices at coordinates $(i+1,n+1)$, $(i+3,n+1)$, \dots)
\end{proof}

\begin{figure}
	\begin{center}
	
		\begin{tikzpicture}[scale=0.4]
			\begin{scope}
				\makeCLFprime{3}{2}
			\end{scope}
			
			\draw[->] (7,2) to node[auto] {$f_{3,2}$} (9,2);
			
			\begin{scope}[shift={(10,0)}]
				\makeCLFprime{3}{3}
			\end{scope}
			
			\draw[->] (17,2) to node[auto] {$f_{3,3}$} (19,2);
			
			\begin{scope}[shift={(20,0)}]
				\makeCLFprime{3}{4}
			\end{scope}		
		\end{tikzpicture}
	
		\caption{Inclusion maps $\CLF'(3,2) \to \CLF'(3,3) \to \CLF'(3,4)$.} \label{fig:CLFprimeinclusion}
	
	\end{center}
\end{figure}

As in Lemma \ref{lem:CLFdirectsum}, define
\begin{align*}
U_1'(m,n) &= \{u \in U'(m,n) \colon u(0,j) = 0 \text{ for all } j\} \\
U_2'(m,n) &= \{u \in U'(m,n) \colon u(1,j) = 0 \text{ for all } j\} .
\end{align*}

\begin{lemma} \label{lem:CLFdirectsum2}
We have $U'(m,n) = U_1'(m,n) \oplus U_2'(m,n)$.
\end{lemma}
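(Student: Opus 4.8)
The plan is to deduce the lemma from Lemmas~\ref{lem:CLFdirectsum} and~\ref{lem:CLFinclusion}, handling the cases $n$ even and $n$ odd separately.

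In the even case $n=2n'$, I would use the $\partial$-graph isomorphism $\CLF(2m,n')\to\CLF'(m,2n')$ recorded above. It has the form $(j,k)\mapsto(j,\psi(j,k))$, so it preserves the $\Z/2m$-index and therefore carries each column $\{j\}\times\{0,\dots,n'\}$ of $\CLF(2m,n')$ onto the column $\{x=j\}$ of $\CLF'(m,2n')$; in particular it matches up columns $0$ and $1$. Pulling back $\Q/\Z$-valued functions then gives an isomorphism $\c U_0(\CLF'(m,2n'),L_{\std},\Q/\Z)\cong\c U_0(\CLF(2m,n'),L_{\std},\Q/\Z)$ which restricts to $U_1'(m,2n')\cong U_1(2m,n')$ and $U_2'(m,2n')\cong U_2(2m,n')$, where $U_1,U_2$ are, as in the proof of Lemma~\ref{lem:CLFdirectsum}, the submodules of functions vanishing on column $0$ and column $1$. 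Since $2m$ is even, Lemma~\ref{lem:CLFdirectsum} gives $U(2m,n')=U_1(2m,n')\oplus U_2(2m,n')$, and transporting this internal direct sum across the isomorphism yields the claim for $n=2n'$.

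For the odd case I would bootstrap off the even case via the inclusion of the previous lemma: if $n$ is odd then $n+1$ is even, and by Lemma~\ref{lem:CLFinclusion} the map $(f_{m,n})_*$ identifies $U'(m,n)$ with the submodule $W=\{v\in U'(m,n+1):v\text{ vanishes on row }n+1\}$. Since $(f_{m,n})_*u$ agrees with $u$ at every vertex of height at most $n+1$ and is identically zero on the new top row $n+2$, it vanishes on column $0$ (resp.\ column $1$) of $\CLF'(m,n+1)$ exactly when $u$ vanishes on the corresponding column of $\CLF'(m,n)$; hence $(f_{m,n})_*$ carries $U_1'(m,n)$ onto $W\cap U_1'(m,n+1)$ and $U_2'(m,n)$ onto $W\cap U_2'(m,n+1)$. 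So it would suffice to show that the decomposition $U'(m,n+1)=U_1'(m,n+1)\oplus U_2'(m,n+1)$ from the even case is compatible with the submodule $W$. Given $v\in W$, write $v=v_1+v_2$ with $v_i\in U_i'(m,n+1)$; because $n+1$ is even, the vertex $(0,n+1)$ lies in both column $0$ and row $n+1$, so $v_1$ (which vanishes on column $0$) vanishes at $(0,n+1)$, and then the last assertion of Lemma~\ref{lem:CLFinclusion} forces $v_1$ to vanish on all of row $n+1$, i.e.\ $v_1\in W$. Consequently $v_2=v-v_1\in W$ as well, so $W=(W\cap U_1'(m,n+1))\oplus(W\cap U_2'(m,n+1))$, which transports back via $(f_{m,n})_*$ to $U'(m,n)=U_1'(m,n)\oplus U_2'(m,n)$.

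The routine part is the even case, which is pure transport of structure along the stated isomorphism. The step that needs care is the odd case: one has to notice that for $v_1$ vanishing on column $0$ the parity of $n+1$ already places a vertex of row $n+1$ inside column $0$, and then upgrade ``$v_1$ vanishes at one vertex of row $n+1$'' to ``$v_1$ vanishes on all of row $n+1$'' using the harmonic-continuation-around-the-cycle argument from the proof of Lemma~\ref{lem:CLFinclusion}. That is exactly what makes the image of $(f_{m,n})_*$ respect the $U_1'\oplus U_2'$ splitting. (Note the argument imposes no parity constraint on $m$, since it only uses $\CLF(2m,\cdot)$ and the parity-free Lemma~\ref{lem:CLFinclusion}.)
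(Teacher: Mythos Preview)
Your proposal is correct and follows essentially the same approach as the paper: the even case is handled by transporting Lemma~\ref{lem:CLFdirectsum} across the isomorphism $\CLF(2m,n')\cong\CLF'(m,2n')$, and the odd case is reduced to the even case via $(f_{m,n})_*$, using the observation that $v_1\in U_1'(m,n+1)$ vanishes at $(0,n+1)$ (which is a vertex since $n+1$ is even) and hence on all of row $n+1$ by the last claim of Lemma~\ref{lem:CLFinclusion}. The only cosmetic difference is that you package the odd case as ``$W$ is compatible with the $U_1'\oplus U_2'$ splitting,'' whereas the paper verifies $U_1'\cap U_2'=0$ and $U_1'+U_2'=U'$ separately; the content is identical.
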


\begin{proof}
The case for even $n$ follows from Lemma \ref{lem:CLFdirectsum}.  Suppose $n$ is odd.  To show that $U_1'(m,n) \cap U_2'(m,n) = 0$, note that $(f_{m,n})_*$ maps $U_j'(m,n)$ into $U_j'(m,n+1)$.  Since $U_1'(m,n+1) \cap U_2'(m,n+1) = 0$ by the even case and since $(f_{m,n})_*$ is injective, we deduce that $U_1'(m,n) \cap U_2'(m,n) = 0$.

To show that $U'(m,n) = U_1'(m,n) + U_2'(m,n)$, let $u \in U'(m,n)$ and let $v = (f_{m,n})_* u$.  From the even case, we know $v = v_1 + v_2$, where $v_1 \in U_1'(m,n+1)$ and $v_2 \in U_2'(m,n+1)$.  Now $v_1$ vanishes on $(0,n+1)$ by definition of $U_1'(m,n+1)$; then the last claim of Lemma \ref{lem:CLFinclusion} implies that $v_1$ vanishes on row $n+1$.  Since $v$ vanishes on row $n + 1$ by assumption, we know $v_2 = v - v_1$ also vanishes on row $n + 1$.  Therefore, $v_1$ and $v_2$ are in the image of $(f_{m,n})_*$, that is, $v_1 = (f_{m,n})_* u_1$ and $v_2 = (f_{m,n})_* u_2$ for some $u_1$, $u_2 \in U'(m,n)$.  Then clearly $u = u_1 + u_2$ and $u_1 \in U_1'(m,n)$ and $u_2 \in U_2'(m,n)$.
\end{proof}

\begin{proof}[Proof of Theorem \ref{thm:CLF2}]  The case where $n$ is even has already been handled in Theorem \ref{thm:CLF}.  Now suppose $n$ is odd.  Note that $(f_{m,n})_*$ gives an injection $U_1'(m,n) \to U_1'(m,n+1)$.  But in fact, this is an isomorphism because any function $v \in U_1'(m,n+1)$ vanishes on $(0,n+1)$, hence vanishes on all of row $n + 1$, hence comes from a function $u$ in $U_1'(m,n)$.  A similar argument shows that $U_2'(m,n) \cong U_2'(m,n-1)$.  Therefore,
\begin{align*}
U'(m,n) &= U_1'(m,n) \oplus U_2'(m,n)  \\
&= U_1'(m,n+1) \oplus U_2'(m,n-1)  \\
&= U_1(2m, \tfrac{n+1}{2}) \oplus U_2(2m, \tfrac{n-1}{2}),
\end{align*}
and the proof is completed by applying Lemma \ref{lem:CLFdirectsum}.
\end{proof}

\section{The Categories of $\partial$-Graphs and $R$-Networks} \label{sec:transformations}

\subsection{Motivation}

The example of the $\CLF$ networks already illustrated the usefulness of covering spaces (Remark \ref{rem:CLFcoveringspace}) and subgraphs (Lemmas \ref{lem:CLFinclusion} and \ref{lem:CLFdirectsum2}, proof of Theorem \ref{thm:CLF2}).  We will now describe general morphisms of $R$-networks, adapting the ideas of \cite{Urakawa,bakerNor1,Treumann} to $\partial$-graphs, as well as giving applications to spanning tree counts and eigenvectors.

Harmonic morphisms of graphs were defined by Urakawa \cite[Definition 2.2]{Urakawa}.  Baker and Norine showed that a harmonic morphism $\phi: G' \to G$ defines a map $\phi_*$ from the critical group of $G'$ to that of $G$ as well as a map $\phi^*$ from the critical group of $G$ to that of $G'$ \cite[\S 2.3]{bakerNor1}.  In other words, the critical group (a.k.a.\ sandpile group or Jacobian) can be viewed either as a covariant or as a contravariant functor from the category of graphs and harmonic morphisms to the category of abelian groups.  Special cases of Baker and Norine's construction were defined earlier (2002) in the undergraduate thesis of Treumann \cite{Treumann}.

We will construct categories of $\partial$-graphs and $R$-networks, and show that $\Upsilon(G,L)$ and $\c U_0(G,L,M)$ for fixed $M$ are covariant functors from $R$-networks to $R$-modules, and $\c U(G,L,M)$ is a contravariant functor.  It makes sense for $\c U_0(G,L,M)$ to be covariant and $\c U(G,L,M)$ to be contravariant with respect to $(G,L)$ because $\c U(G,L,M) \cong \Hom(\Upsilon(G,L),M)$ by Lemma \ref{lem:hom} and $\c U_0(G,L,M) \cong \Tor_1(\Upsilon(G,L), M)$ for non-degenerate networks by Proposition \ref{prop:tor}.

\subsection{The Category of $\partial$-Graphs} \label{subsec:categorydgraphs}

Before defining morphisms of $R$-networks and verifying the functorial properties, we must record and explain the purely combinatorial definition of a $\partial$-graph morphism.  For a vertex $x$ in a $\partial$-graph $G$, recall that we use the notation $\mathcal{E}(x) = \{e \in E(G): e_+ = x\}$ for the set of oriented edges exiting $x$.

\begin{definition} \label{def:dgraphmorphism}
A {\bf $\partial$-graph morphism} $f\colon G_1 \to G_2$ is a map ${f\colon V_1 \sqcup E_1 \to V_2 \sqcup E_2}$ such that
\begin{enumerate}
	\item $f$ maps vertices to vertices.
	\item $f$ maps interior vertices to interior vertices.
	\item If $f(e)$ is an oriented edge, then $f(e_+) = (f(e))_+$ and $f(e_-) = (f(e_))_-$ and $f(\overline{e}) = \overline{f(e)}$.
	\item If $f(e)$ is a vertex, then $f(\overline{e}) = f(e)$ and $f(e_{\pm})= f(e)$.
	\item For every $x \in V_1^\circ$, the restricted map $\mathcal{E}(x) \cap f^{-1}(E_2) \xrightarrow{f} \mathcal{E}(f(x))$ has constant fiber size.  In other words, it is $n$-to-$1$ for some integer $n \geq 0$ (depending on $x$).
\end{enumerate}
\end{definition}

The statement of condition (5) implicitly uses the fact that $f$ restricts to a map $\mathcal{E}(x) \cap f^{-1}(E_2) \to \mathcal{E}(f(x))$, which follows from (3).  The integer $n$ associated to a vertex $x \in V_1^\circ$ in condition (5) will be called the {\bf degree of $f$ at $x$} and denoted by $\deg(f,x)$.  Note that (5) implies
\[
\forall x \in V_1^\circ, \forall e \in \mathcal{E}(f(x)), \quad |\mathcal{E}(x) \cap f^{-1}(e)| = \deg(f,x).
\]
It will also be convenient to extend the definition of $\deg(f,x)$ to $x \in \partial V_1$ by setting
\[
\deg(f,x) = \max_{e \in \mathcal{E}(f(x))} |\mathcal{E}(x) \cap f^{-1}(e)|.
\]

Conditions (1), (3), and (4) say that $f$ is a graph homomorphism except that it allows an edge to be collapsed to a vertex as in Figure \ref{fig:projectionGHMexample}.  In other words, if we view $G_1$ and $G_2$ as cell complexes, then $f$ is a continuous cellular map.

Condition (5) says that $f$ restricts to an $n$-fold covering of the neighborhood $\mathcal{E}(x)$ of $x$ onto the neighborhood $\mathcal{E}(f(x))$ of $f(x)$, after ignoring collapsed edges (note that ignoring collapsed edges is exactly the effect of the taking the intersection of $\mathcal{E}(x)$ with $f^{-1}(E_2)$ in (5)).  For example, see Figure \ref{fig:modifiedprojectionGHMexample}.  The next lemma is the first step in establishing our functoriality properties.

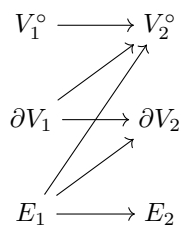
\begin{figure}
  \centering
  \begin{tikzcd}%
    V_1^{\circ} \arrow{r} &  V_2^{\circ}
        \\%
    \partial V_1 \arrow{r} \arrow{ru}&  \partial V_2\\%
    E_1 \arrow{r}\arrow{ru}\arrow{ruu} &  E_2\\%
  \end{tikzcd}
  \vspace{-5mm}
  \caption{Where a harmonic morphism is allowed to map the sets $V^\circ$, $\partial V$, and $E$.}
  \label{fig:harmonicDfn}
\end{figure}

\begin{figure}
	\begin{center}
	\begin{tikzpicture}[scale=0.8]
		\begin{scope}
			\node[int] (0) at (0,0) [label=above:$0$] {};
			\node[bd] (1) at (0:1) [label=0:$1$] {};
			\node[bd] (2) at (60:1) [label=60:$2$] {};
			\node[bd] (3) at (120:1) [label=120:$3$] {};
			\node[bd] (4) at (180:1) [label=180:$1$] {};
			\node[bd] (5) at (240:1) [label=240:$2$] {};
			\node[bd] (6) at (300:1) [label=300:$3$] {};
			
			\draw (0) to (1); \draw (0) to (2); \draw (0) to (3);
			\draw (0) to (4); \draw (0) to (5); \draw (0) to (6);
		\end{scope}
		
		\draw[->] (2,0) -- (4,0);
		
		\begin{scope}[shift={(6,0)}]
			\node[int] (0) at (0,0) [label=above:$0$] {};
			\node[bd] (1) at (30:1) [label=30:$1$] {};
			\node[bd] (2) at (150:1) [label=150:$2$] {};
			\node[bd] (3) at (270:1) [label=270:$3$] {};
			
			\draw (0) to (1); \draw (0) to (2); \draw (0) to (3);
		\end{scope}
	\end{tikzpicture}
	
	\caption{A $\partial$-graph morphism.  The numbers show where each vertex is mapped.} \label{fig:branchedGHMexample}
	
	\end{center}
\end{figure}
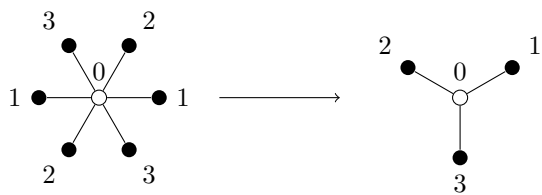

\begin{figure}
	\begin{center}
	\begin{tikzpicture}[scale=0.8]
		\begin{scope}
			\node[bd] (1A) at (0,1) [label=left:$1$] {};
			\node[int] (2A) at (0,0) [label=left:$2$] {};
			\node[bd] (3A) at (0,-1) [label=left:$3$] {};
			\node[bd] (1B) at (1,1) [label=right:$1$] {};
			\node[int] (2B) at (1,0) [label=right:$2$] {};
			\node[bd] (3B) at (1,-1) [label=right:$3$] {};
			
			\draw (1A) to (2A) to (3A);
			\draw (1B) to (2B) to (3B);
			\draw (1A) to (1B); \draw (2A) to (2B); \draw (3A) to (3B);	
		\end{scope}
		
		\draw[->] (2,0) -- (4,0);
		
		\begin{scope}[shift={(5,0)}]
			\node[bd] (1) at (0,1) [label=left:$1$] {};
			\node[int] (2) at (0,0) [label=left:$2$] {};
			\node[bd] (3) at (0,-1) [label=left:$3$] {};
			
			\draw (1) to (2) to (3);
		\end{scope}
	\end{tikzpicture}
	
	\caption{A $\partial$-graph morphism.  The horizontal edges are collapsed and mapped to the vertices $1$, $2$, $3$ on the right.  The vertical edges on the left graph are mapped to the vertical edges on the right.} \label{fig:projectionGHMexample}
	
	\end{center}
\end{figure}
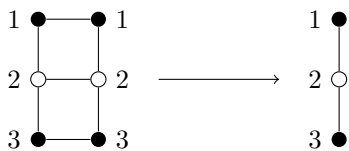

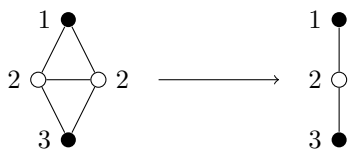
\begin{figure}
	\begin{center}
	\begin{tikzpicture}[scale=0.8]
		\begin{scope}
			\node[bd] (1) at (0.5,1) [label=left:$1$] {};
			\node[int] (2A) at (0,0) [label=left:$2$] {};
			\node[bd] (3) at (0.5,-1) [label=left:$3$] {};
			\node[int] (2B) at (1,0) [label=right:$2$] {};
			
			\draw (1) to (2A) to (3);
			\draw (1) to (2B) to (3);
			\draw (2A) to (2B);
		\end{scope}
		
		\draw[->] (2,0) -- (4,0);
		
		\begin{scope}[shift={(5,0)}]
			\node[bd] (1) at (0,1) [label=left:$1$] {};
			\node[int] (2) at (0,0) [label=left:$2$] {};
			\node[bd] (3) at (0,-1) [label=left:$3$] {};
			
			\draw (1) to (2) to (3);
		\end{scope}
	\end{tikzpicture}
	
	\caption{A $\partial$-graph morphism.  The horizontal edge is collapsed into the vertex $2$ on the right, whilte the slanted edges on the left map to the vertical edges on the right.} \label{fig:modifiedprojectionGHMexample}
	
	\end{center}
\end{figure}

\begin{lemma} \label{lem:categorydegree}
$\partial$-graphs form a category.  Moreover, if $f \colon G_1 \to G_2$ and $g: G_2 \to G_3$ are $\partial$-graph morphisms, then
\begin{align*}
\deg(g \circ f, x) &= \deg(f,x) \deg(g,f(x)) \text{ for all } x \in V_1^\circ \\
\deg(g \circ f, x) &\leq \deg(f,x) \deg(g,f(x)) \text{ for all } x \in \partial V_1.
\end{align*}
\end{lemma}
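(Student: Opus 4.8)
The plan is to verify that the composite $g \circ f$ satisfies Definition~\ref{def:dgraphmorphism}, and then to read off the two degree formulas from the verification of condition~(5). Conditions (1)--(4) for $g \circ f$ are a routine chase through the definitions: $g\circ f$ sends vertices to vertices and interior vertices to interior vertices because $f$ and $g$ each do, and compatibility of $g\circ f$ with $e_\pm$ and $\overline{e}$ follows by applying (3) and (4) for $f$ and then for $g$, distinguishing cases according to whether $f(e)$ is an edge or a vertex and whether $g(f(e))$ is an edge or a vertex. Once composition is known to be well defined, associativity is inherited from composition of maps of the underlying sets $V\sqcup E$, and $\id_G$ is manifestly a $\partial$-graph morphism (with $\deg(\id_G,x)=1$), so the category axioms hold.

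The core of the argument is a single double count. Fix $x\in V_1^\circ$, so that $f(x)\in V_2^\circ$ and $(g\circ f)(x)=g(f(x))\in V_3^\circ$, and fix $\epsilon\in\mathcal{E}\bigl((g\circ f)(x)\bigr)$. An edge $e\in\mathcal{E}(x)$ has $(g\circ f)(e)=\epsilon$ precisely when $f(e)$ is an \emph{edge} with $g(f(e))=\epsilon$ --- if $f(e)$ were a vertex, then so would $g(f(e))$ be, not $\epsilon$ --- and in that case $f(e)\in\mathcal{E}(f(x))$ by condition (3), since $(f(e))_+=f(e_+)=f(x)$. Grouping such $e$ by the value $\eta:=f(e)$ gives the disjoint decomposition
\[
\mathcal{E}(x)\cap(g\circ f)^{-1}(\epsilon)\;=\;\bigsqcup_{\eta}\;\mathcal{E}(x)\cap f^{-1}(\eta),
\]
where $\eta$ runs over $\mathcal{E}(f(x))\cap g^{-1}(\epsilon)$. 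By condition~(5) for $g$ at the interior vertex $f(x)$ there are exactly $\deg(g,f(x))$ indices $\eta$, and by condition~(5) for $f$ at $x$ each block $\mathcal{E}(x)\cap f^{-1}(\eta)$ has size $\deg(f,x)$. Hence $\bigl|\mathcal{E}(x)\cap(g\circ f)^{-1}(\epsilon)\bigr|=\deg(f,x)\deg(g,f(x))$, independent of $\epsilon$; this simultaneously verifies condition~(5) for $g\circ f$ at $x$ and yields $\deg(g\circ f,x)=\deg(f,x)\deg(g,f(x))$.

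For $x\in\partial V_1$ the same decomposition holds verbatim for each $\epsilon\in\mathcal{E}\bigl((g\circ f)(x)\bigr)$, but now $f(x)$ may be a boundary vertex, so the extended definition of degree only gives $\bigl|\mathcal{E}(f(x))\cap g^{-1}(\epsilon)\bigr|\le\deg(g,f(x))$ and $\bigl|\mathcal{E}(x)\cap f^{-1}(\eta)\bigr|\le\deg(f,x)$. Multiplying these bounds and taking the maximum over $\epsilon$ gives $\deg(g\circ f,x)\le\deg(f,x)\deg(g,f(x))$.

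The only step that is not purely formal is the bookkeeping of this double count: one must check that the blocks indexed by distinct $\eta$ are disjoint (immediate, since $f$ is a function) and, more to the point, that the decomposition is \emph{exhaustive} --- that every $e\in\mathcal{E}(x)$ with $(g\circ f)(e)=\epsilon$ really contributes an index $\eta\in\mathcal{E}(f(x))\cap g^{-1}(\epsilon)$ rather than merely an element of $E_2$, which is exactly where condition~(3) for $f$ enters. With that in hand the degree formulas follow at once, and they in turn certify condition~(5), completing the proof that $\partial$-graphs form a category.
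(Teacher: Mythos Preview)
Your proof is correct and follows essentially the same approach as the paper: both verify (1)--(4) by straightforward casework and then establish (5) together with the degree formulas via the same disjoint decomposition $\mathcal{E}(x)\cap(g\circ f)^{-1}(\epsilon)=\bigsqcup_{\eta}\mathcal{E}(x)\cap f^{-1}(\eta)$ over $\eta\in\mathcal{E}(f(x))\cap g^{-1}(\epsilon)$. Your write-up is in fact slightly more explicit than the paper's about why $f(e)$ must be an edge lying in $\mathcal{E}(f(x))$ and about the category axioms (identity, associativity), but the core argument is identical.
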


\begin{proof}
To verify the category axioms, it suffices to show that if $f \colon G_1 \to G_2$ and $g: G_2 \to G_3$ are $\partial$-graph morphisms, then so is $g \circ f$.  Clearly, (1) and (2) are preserved by composition.  To check $g \circ f$ satisfies (3), note that if $g \circ f(e)$ is an oriented edge, then $f(e)$ must be an oriented edge by (1), and hence we can apply (3) to $f$ at the edge $e$ and (3) to $g$ at the edge $f(e)$.

To check (4), suppose $g \circ f(e)$ is a vertex.  If $f(e)$ is a vertex, then apply (4) to $f$.  If $f(e)$ is an edge, then apply (3) to $f$ and (4) to $g$.

To check (5), suppose $x \in V_1^\circ$ and $e \in \mathcal{E}(g \circ f(x))$.  Any element of $f^{-1}(g^{-1}(e))$ must be mapped into $\mathcal{E}(f(x))$ by $f$, and thus
\[
\mathcal{E}(x) \cap f^{-1}(g^{-1}(e)) = \sqcup_{e' \in \mathcal{E}(f(x)) \cap g^{-1}(e)} \mathcal{E}(x) \cap f^{-1}(e').
\]
Using the fact that $f(x) \in V_2^\circ$, we see that this is a disjoint union of $\deg(g,f(x))$ sets of size $\deg(f,x)$.  This implies that
\[
|\mathcal{E}(x) \cap f^{-1}(g^{-1}(e))| = \deg(f,x) \deg(g,f(x)) \text{ for all } e \in \mathcal{E}(g \circ f(x)),
\]
and hence $g \circ f$ satisfies (5) and hence is a $\partial$-graph morphism.

Moreover, the last computation showed that $\deg(g \circ f, x) = \deg(f,x) \deg(g,f(x))$ and a similar argument shows that $\deg(g \circ f, x) \leq \deg(f,x) \deg(g,f(x))$ for $x \in \partial V_1$.
\end{proof}

As in \cite{bakerNor1, Urakawa, Mercat}, we can think of $\partial$-graph morphisms as a discrete analogue of holomorphic maps between Riemann surfaces with boundary.  The first, perhaps trivial, analogy is that both $\partial$-graph morphisms and holomorphic functions are closed under composition.  Moreover, in the next section, we will show that if $f \colon G_1 \to G_2$ is a $\partial$-graph morphism and $u$ is harmonic on $G_2$, then $u \circ f$ is harmonic on $G_1$.

Just as with Riemann surfaces, the simplest type of $\partial$-graph morphism is a covering map, which completely preserves local structure.  In the discrete setting, we define covering maps as follows.  Note that this agrees with topological definition if we view $\partial$-graphs as a cell complexes and forget the distinction between interior and boundary vertices.

\begin{definition} \label{def:coveringmap}
A {\bf covering map} is a $\partial$-graph morphism $f \colon \tilde{G} \to G$ such that $f$ defines a surjection $V(\tilde{G}) \sqcup E(\tilde{G}) \to V(G) \sqcup E(G)$, $f$ maps interior vertices to interior vertices, $f$ maps boundary vertices to boundary vertices, $f$ maps edges to edges, and the restricted map $\mathcal{E}(x) \to \mathcal{E}(f(x))$ is a bijection for every $x \in \tilde{V}$.
\end{definition}

We have already seen a covering map in Remark \ref{rem:CLFcoveringspace}.  Moreover, the standard construction of the bipartite double cover for a graph easily adapts to $\partial$-graphs.  We shall say more about covering spaces in \S \ref{sec:openproblems}.

Another important type of morphism is the inclusion of sub-$\partial$-graphs.  We have already used sub-$\partial$-graphs in \S \ref{subsec:CLFvariants}.  Later, in \S \ref{sec:layering}, we will consider restricting harmonic functions to sub-$\partial$-graphs, and extending them from sub-$\partial$-graphs using a discrete analogue of harmonic continuation.  We now record the precise definition of a sub-$\partial$-graph for future use:

\begin{definition} \label{def:sub-d-graph}
Assume that $G_1$ and $G_2$ are $\partial$-graphs, such that $(V_1,E_1)$ is a subgraph of $(V_2,E_2)$.  Then we say $G_1$ is a {\bf sub-$\partial$-graph} of $G_2$ if the inclusion map $G_1 \to G_2$ is a $\partial$-graph morphism.  One can verify from Definition \ref{def:dgraphmorphism} that a subgrpah $G_1$ will be a sub-$\partial$-graph if and only if $x \in V_1^\circ$ implies that $x \in V_2^\circ$ and $\mathcal{E}_{G_1}(x) = \mathcal{E}_{G_2}(x)$.
\end{definition}

Like a holomorphic function, a $\partial$-graph morphism $f \colon G_1 \to G_2$ may exhibit {\em ramification} when a star $\mathcal{E}(x)$ in $G_1$ is an $n$-fold cover of a star $\{e: e_+ = f(x)\}$ in $G_2$ for $n > 1$. For example, see Figure \ref{fig:branchedGHMexample}.  This is a discrete model of the behavior of the map $z \mapsto z^n$ in a neighborhood of the origin in $\C$.  The formula $\deg(g \circ f, x) = \deg(f,x) \deg(g,f(x))$ also mimics the way that local degrees of holomorphic maps are multiplicative under composition.  The behavior of a $\partial$-graph morphism is unconstrained by condition (5) at the boundary, just as an analytic function on a Riemann surface need not be $n$-to-$1$ in the neighborhood of a boundary point.

Recall that for compact connected Riemann surfaces without boundary, every non-constant holomorphic map is surjective as a consequence of the open mapping theorem.  Now we will prove an analogous statement in the discrete case, which applies even to infinite $\partial$-graphs.  We will view graphs without boundary as the subclass of $\partial$-graphs with no boundary vertices.  Continuing the terminology of \cite{Urakawa,bakerNor1}, we will refer to $\partial$-graph morphisms of boundary graphs without boundary as {\bf harmonic morphisms}.  An example of such a morphism is shown in \cite[Figure 1]{bakerNor1}.  An alternative proof of the following Proposition can also be found in \cite[Lemmas 2.4 and 2.7]{bakerNor1}.

\begin{proposition} \label{prop:harmonicmorphismsurjective}
Let $f \colon G_1 \to G_2$ be a harmonic morphism of nonempty connected graphs without boundary.  Either $f$ maps $V_1 \sqcup E_1$ to a single vertex $x$ of $G_2$, or $f$ is a surjection $V_1 \sqcup E_1 \to V_2 \sqcup E_2$.
\end{proposition}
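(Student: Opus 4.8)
The plan is to split into cases according to whether $f$ collapses \emph{every} edge of $G_1$ to a vertex. If it does, then for each $e\in E_1$ conditions (3)--(4) of Definition \ref{def:dgraphmorphism} give $f(e_+)=f(e_-)=f(e)$, so $f$ takes equal values on adjacent vertices; since $G_1$ is nonempty and connected, $f$ is constant on $V_1$, say $f\equiv x$, and then $f(e)=x$ for every edge as well. This is the first alternative, so from now on we may assume some edge $e_0\in E_1$ has $f(e_0)\in E_2$.

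Set $x_0=(e_0)_+$ and $v_0=f(x_0)$. Since $e_0\in \mathcal{E}(x_0)\cap f^{-1}(E_2)\neq\varnothing$ and $x_0\in V_1^\circ$ (there are no boundary vertices), condition (5) says $f$ restricts to a $\deg(f,x_0)$-to-one map of $\mathcal{E}(x_0)\cap f^{-1}(E_2)$ onto $\mathcal{E}(v_0)$ with $\deg(f,x_0)\ge 1$; every fiber is then nonempty, so every edge of $\mathcal{E}(v_0)$ lies in $f(E_1)$. I would then introduce the set
\[
U=\{\, v\in V_2 : v\in f(V_1)\ \text{and}\ \mathcal{E}(v)\subseteq f(E_1)\,\},
\]
so that $v_0\in U$ and $U\neq\varnothing$, and the whole proof reduces to showing $U=V_2$.

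The heart of the argument is that $U$ is closed under adjacency in $G_2$. Suppose $v\in U$ and $v'$ is joined to $v$ by an edge $e'\in\mathcal{E}(v)$. Since $v\in U$ we have $e'=f(e)$ for some $e\in E_1$, and then by (3), $f(e_+)=(e')_+=v$, $f(e_-)=(e')_-=v'$, and $f(\overline e)=\overline{e'}$. Put $x'=e_-$; then $\overline e\in\mathcal{E}(x')$ and $f(\overline e)\in E_2$, so $\mathcal{E}(x')\cap f^{-1}(E_2)\neq\varnothing$, giving $\deg(f,x')\ge 1$. Applying condition (5) at $x'$ (again an interior vertex) shows $f$ maps $\mathcal{E}(x')\cap f^{-1}(E_2)$ onto $\mathcal{E}(f(x'))=\mathcal{E}(v')$, so $\mathcal{E}(v')\subseteq f(E_1)$; together with $v'=f(x')\in f(V_1)$ this gives $v'\in U$.

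Finally, since $G_2$ is connected and $U$ is a nonempty subset of $V_2$ closed under adjacency, $U=V_2$; hence $V_2\subseteq f(V_1)$, and any edge $e'\in E_2$ is incident to a vertex $v=(e')_+\in U$, so $e'\in\mathcal{E}(v)\subseteq f(E_1)$. Thus $f\colon V_1\sqcup E_1\to V_2\sqcup E_2$ is onto, which is the second alternative. I expect the only genuinely delicate point to be the orientation bookkeeping in the closure step: one must reverse the chosen preimage edge $e$ to land it in $\mathcal{E}(x')$ so that condition (5) can be invoked at $x'$, and Definition \ref{def:dgraphmorphism}(3) is exactly what licenses this via $f(\overline e)=\overline{f(e)}$.
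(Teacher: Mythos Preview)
Your proof is correct and follows essentially the same approach as the paper's: both arguments identify a subset of $V_2$ that is closed under adjacency and invoke connectedness of $G_2$. The paper uses $A=\{x\in f(V_1):\deg(f,y)>0\text{ for some }y\in f^{-1}(x)\}$ and splits into the cases $A=V_2$ versus $A=\varnothing$ at the end, while you split at the outset into ``all edges collapsed'' versus ``some edge survives'' and then work with $U=\{v\in f(V_1):\mathcal{E}(v)\subseteq f(E_1)\}$; in the non-collapsed case these sets coincide, and the closure-under-adjacency step is identical.
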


\begin{proof}
Let $A = \{x \in f(V_1): \deg(f,y) > 0 \text{ for some } y \in f^{-1}(x)\}$.  We claim that if $x \in A$, then all neighbors of $x$ are also in $A$.  Suppose $x \in A$ and $x'$ is joined to $x$ by an edge $e$.  Since $x \in A$, there exists $y \in f^{-1}(x)$ with $\deg(f,y) > 0$.  This implies that $y$ has some edge $\tilde{e}$ which maps to $e$, so $y$ has some neighbor $y'$ which maps to $x'$.  Since the edge $\tilde{e}$ incident to $y'$ maps to an edge in $G_2$, we must have $\deg(f,y') > 0$.  Therefore, $x' \in A$.

Since $G_2$ is connected, either $A = V_2$ or $A = \varnothing$.  In the first case, $f$ must be surjective onto $V_2$, and then by definition of $A$ and $\deg(f,y)$, we deduce that $f$ is surjective onto $E_2$.  In the second case, we have $\deg(f,y) = 0$ for all $y \in V_1$, which implies that all edges in $G_2$ are collapsed to vertices.  Then since $G_2$ is connected, $f$ must be constant.
\end{proof}

Though $\partial$-graph morphisms are much like holomorphic maps, the ability of $\partial$-graph morphisms to collapse an edge into a vertex seems to have no direct analogue in complex analysis.  In a neighborhood of a vertex $x$ where $\deg(f,x) = 1$ and some edges in $\mathcal{E}(x)$ are collapsed, a $\partial$-graph $f$ behaves more like an orthogonal projection (recall that that if $f \colon \R^m \to \R^n$ is an orthogonal projection and $u: \R^n \to \R$ is harmonic, then $u \circ f$ is harmonic).  A more precise analogy is between the projection maps associated to a product of Riemannian manifolds and the projection maps associated to a \emph{box product} of $\partial$-graphs (also known as the \emph{Cartesian product}), which is defined as follows:

Let $G_1$ and $G_2$ be $\partial$-graphs.  Then we define the {\bf box product} $G = G_1 \square G_2$ by
\[
V = V_1 \times V_2, \quad E = E_1 \times V_2 \cup V_1 \times E_2, \quad V^\circ = V_1^\circ \times V_2^\circ.
\]
Then if $(e,x) \in E_1 \times V_2$, we define $\overline{(e,x)} = (\overline{e},x)$, $(e,x)_+ = (e_+,x)$, and $(e,x)_- = (e_-,x)$, and make a similar definition for $(x,e) \in V_1 \times V_2$.  In particular, two vertices $(x,y)$ and $(x',y')$ are adjacent if $x = x'$ and $y \sim y'$ or if $x \sim x'$ and $y = y'$.  Then the obvious projection map $f_1: G \to G_1$ is a $\partial$-graph morphism.  Note that $f_1$ has degree $1$ at each vertex and collapses all edges in $V_1 \times E_2$ to vertices.

In general, the local behavior of a $\partial$-graph morphism at an interior vertex combines ramification and collapsing, that is, it combines the behavior of branched covering maps and projections.  What is unique about the discrete setting is that $f$ may behave like a holomorphic map $\R^2 \to \R^2$ near one vertex and behave like a projection map $\R^3 \to \R^2$ at another vertex; this cannot happen for manifolds because the dimension does not vary from point to point.

\subsection{The Category of $R$-Networks} \label{subsec:categoryRnetworks}

Recall that an $R$-network is given by a pair $(G,L)$, where the off-diagonal terms of $L$ are given by a weight function $w: E \to R$ and the diagonal terms are given by $d: V \to R$.

\begin{definition} \label{def:Rnetworkmorphism}
An $R$-network morphism $f \colon (G_1,L_1) \to (G_2,L_2)$ is given by a $\partial$-graph morphism $f \colon G_1 \to G_2$ satisfying
\[
w_1(e) = w_2(f(e)) \text{ for every } e \in f^{-1}(E_2)
\]
and
\[
d_1(x) = \deg(f,x) d_2(f(x)) \text{ for every } x \in V_1^\circ.
\]
\end{definition}

It is straightforward to verify that $R$-networks form a category, using the fact that $\partial$-graphs form a category and $\deg(g \circ f,x) = \deg(f,x) \deg(g,f(x))$ for an interior vertex $x$.  We denote this category by $R\text{\cat{-net}}$.

\begin{remark}
Note that the second condition of Definition \ref{def:Rnetworkmorphism} is trivially satisfied in the case where $d_j = 0$.  In particular, if $f \colon G_1 \to G_2$ is a $\partial$-graph morphism, then $f$ automatically defines a $\Z$-network morphism $(G_1,L_{\std,1}) \to (G_2, L_{\std,1})$, where $L_{\std}$ is the standard Laplacian with edge weights $1$.  Thus, $G \mapsto (G,L_{\std})$ is a functor from $\partial$-graphs to $\Z$-networks.
\end{remark}

The functoriality properties we will prove rely on the following observation.

\begin{lemma} \label{lem:laplaciandegree}
Let $f \colon (G_1,L_1) \to (G_2,L_2)$ be an $R$-network morphism.  Note that $f$ extends linearly to a map $RV_1 \to RV_2$.  If $x \in V_1^\circ$, then
\[
f(L_1 x) = \deg(f,x) L_2(f(x)).
\]
\end{lemma}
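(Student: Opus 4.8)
The statement is essentially a bookkeeping identity: expand both sides using the definition of the generalized Laplacian and the three defining conditions of an $R$-network morphism (matching of edge weights, matching of diagonal terms up to $\deg(f,x)$, and the covering condition (5) on stars at interior vertices). The plan is to push $f$ through the formula $L_1 x = d_1(x)x + \sum_{e\in\mathcal E(x)} w_1(e)(x-e_-)$, discard the terms coming from collapsed edges, reindex the surviving sum by its image star, and recognize the result as $\deg(f,x)L_2(f(x))$.

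\textbf{Key steps.} First I would apply the linear extension of $f$ to $L_1x$, obtaining
\[
f(L_1 x) = d_1(x)\, f(x) + \sum_{e\in\mathcal E(x)} w_1(e)\bigl(f(x) - f(e_-)\bigr).
\]
Next, I would split the sum according to whether $f(e)$ is an edge or a vertex. If $f(e)$ is a vertex, then condition (4) of Definition \ref{def:dgraphmorphism} forces $f(e_-) = f(e) = f(e_+) = f(x)$ (using $e_+ = x$), so $f(x)-f(e_-)=0$ and collapsed edges contribute nothing. For $e\in\mathcal E(x)\cap f^{-1}(E_2)$, condition (3) gives $f(e_-) = (f(e))_-$ and $(f(e))_+ = f(x)$, i.e.\ $f(e)\in\mathcal E(f(x))$, while the morphism condition gives $w_1(e) = w_2(f(e))$; hence
\[
f(L_1 x) = d_1(x)\, f(x) + \sum_{e\in\mathcal E(x)\cap f^{-1}(E_2)} w_2(f(e))\bigl(f(x) - (f(e))_-\bigr).
\]
Then, since $x\in V_1^\circ$, condition (5) says that for each $e'\in\mathcal E(f(x))$ the fiber $\mathcal E(x)\cap f^{-1}(e')$ has exactly $\deg(f,x)$ elements, so grouping the sum by $e' = f(e)$ replaces it by $\deg(f,x)\sum_{e'\in\mathcal E(f(x))} w_2(e')(f(x)-e'_-)$. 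Finally, using $d_1(x) = \deg(f,x)\, d_2(f(x))$ (again because $x\in V_1^\circ$), the diagonal term matches and we conclude
\[
f(L_1 x) = \deg(f,x)\Bigl(d_2(f(x))\,f(x) + \sum_{e'\in\mathcal E(f(x))} w_2(e')(f(x)-e'_-)\Bigr) = \deg(f,x)\, L_2(f(x)).
\]

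\textbf{Main obstacle.} There is no serious difficulty here; the only subtlety is organizational. The one place to be careful is the reindexing by the image star: one must invoke condition (5) in the precise form that \emph{every} edge $e'$ out of $f(x)$ is hit exactly $\deg(f,x)$ times (not merely that nonempty fibers have constant size), which is exactly what condition (5) and the remark following Definition \ref{def:dgraphmorphism} guarantee for $x\in V_1^\circ$. Handling the collapsed edges correctly — noting they drop out rather than needing a separate weight convention — is the other point worth stating explicitly. Everything else is a routine manipulation of finite sums in the free module $RV_2$.
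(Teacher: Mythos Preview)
Your proposal is correct and follows essentially the same route as the paper's proof: expand $f(L_1 x)$, drop the collapsed-edge terms via condition (4), replace $w_1(e)$ by $w_2(f(e))$ on the surviving edges, regroup by image edge using the constant-fiber-size condition (5), and match the diagonal term via $d_1(x)=\deg(f,x)\,d_2(f(x))$. Your explicit remark that condition (5) guarantees every $e'\in\mathcal E(f(x))$ is hit exactly $\deg(f,x)$ times (not merely that nonempty fibers agree) is a good point of care and is precisely what the paper also uses.
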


\begin{proof}
Note that
\[
f(L_1x) = d_1(x) f(x) + \sum_{e \in \mathcal{E}(x)} w_1(e)(f(x) - f(e_-)).
\]
By Definition \ref{def:Rnetworkmorphism}, $d_1(x) = \deg(f,x) d_2(x)$.  Moreover, by Definition \ref{def:dgraphmorphism}, each $e \in \mathcal{E}(x)$ will either map to $f(x)$ or to some $e' \in \mathcal{E}(f(x))$.  Thus, the sum over the edges becomes
\[
\sum_{e \in \mathcal{E}(x) \cap f^{-1}(f(x))} w_1(e)(f(x) - f(e_-)) + \sum_{e' \in \mathcal{E}(f(x))} \sum_{e \in \mathcal{E}(x) \cap f^{-1}(e')} w_1(e)(f(x) - f(e_-)).
\]
The first term vanishes since if $e$ is collapsed into $x$, then $f(e_-) = f(x)$.  In the second term, note $w_1(e) = w_2(f(e))$ by Definition \ref{def:Rnetworkmorphism} and the number of terms corresponding to each $e'$ is $\deg(f,x)$.  Hence,
\begin{align*}
\sum_{e \in \mathcal{E}(x)} w_1(e)(f(x) - f(e_-)) &= \sum_{e' \in \mathcal{E}(f(x))} \sum_{e \in \mathcal{E}(x) \cap f^{-1}(e')} w_2(e')(f(x) - f(e_-)) \\
&= \sum_{e' \in \mathcal{E}(f(x))} \deg(f,x) w_2(e')(f(x) - e_-').
\end{align*}
Therefore,
\begin{align*}
f(L_1x) &= \deg(f,x) d_2(f(x)) f(x) + \deg(f,x) \sum_{e' \in \mathcal{E}(f(x))}  w_2(e')(f(x) - e_-') \\
&= \deg(f,x) L_2(f(x)). \qedhere
\end{align*}
\end{proof}

Now we can establish the promised functoriality properties.  We remark that special cases of Lemma \ref{lem:upsilonfunctor} were proved in \cite{Treumann} and the case of Lemma \ref{lem:ufunctor} where $R = \Z$ and $L = L_{\std}$ was proved in \cite[Proposition 2.8]{bakerNor1}.

\begin{lemma} \label{lem:upsilonfunctor}
The map $(G,L) \mapsto \Upsilon(G,L)$ is a functor $R\text{\cat{-net}} \to R\text{\cat{-mod}}$, where the definition on morphisms is as follows:  If $f: (G_1,L_1) \to (G_2,L_2)$ is an $R$-network morphism, then $\Upsilon f$ is given by
\[
\Upsilon f \colon \Upsilon(G_1,L_1) \to \Upsilon(G_2,L_2) \colon  x + L(RV_1^\circ) \mapsto f(x) + L(RV_2^\circ).
\]
\end{lemma}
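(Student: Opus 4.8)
The plan is to realize $\Upsilon f$ as the map induced on cokernels by the linear extension of $f$ to vertices. A $\partial$-graph morphism $f$ restricts to a map $V_1 \to V_2$, hence extends uniquely to an $R$-module homomorphism $f\colon RV_1 \to RV_2$; composing with the quotient $RV_2 \twoheadrightarrow \Upsilon(G_2,L_2)$ gives an $R$-module homomorphism $RV_1 \to \Upsilon(G_2,L_2)$. The whole statement will follow once I show this homomorphism annihilates $L_1(RV_1^\circ)$: for then it descends, by the universal property of the quotient, to the desired $R$-linear map $\Upsilon(G_1,L_1) = RV_1/L_1(RV_1^\circ) \to \Upsilon(G_2,L_2)$ sending $x + L_1(RV_1^\circ)$ to $f(x) + L_2(RV_2^\circ)$, and this map is automatically $R$-linear because all maps in sight are.

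The key step is the containment $f(L_1(RV_1^\circ)) \subseteq L_2(RV_2^\circ)$. Since $RV_1^\circ$ is the free $R$-module on $V_1^\circ$ and $f$, $L_1$ are $R$-linear, it suffices to check that $f(L_1 x) \in L_2(RV_2^\circ)$ for each generator $x \in V_1^\circ$. By Lemma \ref{lem:laplaciandegree}, $f(L_1 x) = \deg(f,x)\, L_2(f(x))$; and by condition (2) of Definition \ref{def:dgraphmorphism} we have $f(x) \in V_2^\circ$, so $\deg(f,x)\, L_2(f(x)) = L_2\bigl(\deg(f,x)\, f(x)\bigr) \in L_2(RV_2^\circ)$ (this includes the degenerate case $\deg(f,x) = 0$, where the element is simply $0$). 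Hence $\Upsilon f$ is well-defined and $R$-linear.

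Finally I would check functoriality. For the identity morphism $\id_{(G,L)}$ the induced linear map $RV \to RV$ is the identity, so $\Upsilon(\id) = \id$. For composable $R$-network morphisms $f\colon (G_1,L_1)\to(G_2,L_2)$ and $g\colon (G_2,L_2)\to(G_3,L_3)$, we have $(g\circ f)|_{V_1} = g|_{V_2}\circ f|_{V_1}$, so the linear extensions agree as maps $RV_1\to RV_3$; passing to cokernels and reading off the explicit formulas gives $\Upsilon(g\circ f) = \Upsilon g\circ \Upsilon f$.

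There is no serious obstacle here: once Lemma \ref{lem:laplaciandegree} is available the statement is a formal consequence of the universal property of quotient modules. The only point demanding a little care is that the verification of the containment uses $f$ only on the generating set $V_1^\circ$ and relies on $f$ sending interior vertices to interior vertices, so that the image lands in $L_2(RV_2^\circ)$ rather than merely in $L_2(RV_2)$.
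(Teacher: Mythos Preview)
Your proof is correct and follows essentially the same approach as the paper: extend $f$ linearly to $RV_1 \to RV_2$, invoke Lemma~\ref{lem:laplaciandegree} (together with the fact that $f$ sends interior vertices to interior vertices) to show $f(L_1(RV_1^\circ)) \subseteq L_2(RV_2^\circ)$, and pass to the quotient. You are simply more explicit than the paper about the universal property, the reduction to generators, and the verification of functoriality on identities and compositions.
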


\begin{proof}
Let us verify that the map $\Upsilon f$ is well-defined.  Recall
\[
\Upsilon(G,L) = RV(G) / L(RV^\circ(G)).
\]
The map $f \colon (G_1,L_1) \to (G_2,L_2)$ defines a map $f \colon RV_1 \to RV_2$.  By Lemma \ref{lem:laplaciandegree}, $f$ maps $L_1(RV_1^\circ)$ into $L_2(RV_2^\circ)$.  This implies $f$ yields a well-defined map on the quotient.  Checking that $\Upsilon(g \circ f) = \Upsilon g \circ \Upsilon f$ is straightforward.
\end{proof}

\begin{lemma} \label{lem:ufunctor}
The map $(G,L), M \mapsto \c U(G,L,M)$ is a functor $R\text{\cat{-net}}^{\text{op}} \times R\text{\cat{-mod}} \to R\text{\cat{-mod}}$, where the definition on morphisms is given as follows:  If $f \colon (G_1,L_1) \to (G_2,L_2)$ and $\phi: M \to M'$ is an $R$-module morphism, then we have
\[
\c U(f,\phi) \colon \c U(G_2,L_2,M) \to \c U(G_1,L_1,M') \colon u \mapsto \phi \circ u \circ f.
\]
The isomorphism $\c U(G,L,M) \cong \Hom_R(\Upsilon(G,L),M)$ given by Lemma \ref{lem:hom} is natural in both variables.
\end{lemma}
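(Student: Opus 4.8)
The plan is to establish the lemma in three stages: that $\c U(f,\phi)$ is a well-defined $R$-module morphism with the asserted target, that the assignment respects identities and composition, and that the isomorphism of Lemma \ref{lem:hom} is natural in both variables.

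First I would check well-definedness. Fix an $R$-network morphism $f \colon (G_1,L_1) \to (G_2,L_2)$, an $R$-module morphism $\phi \colon M \to M'$, and $u \in \c U(G_2,L_2,M)$; viewing $u$ as a morphism $RV_2 \to M$ and $f$ as its linear extension $RV_1 \to RV_2$, I must show $\phi \circ u \circ f \colon RV_1 \to M'$ vanishes on $L_1(RV_1^\circ)$. For $x \in V_1^\circ$, Lemma \ref{lem:laplaciandegree} gives $f(L_1 x) = \deg(f,x)\, L_2(f(x))$, hence
\[
(\phi\circ u\circ f)(L_1 x) = \deg(f,x)\,\phi\bigl(u(L_2 f(x))\bigr) = \deg(f,x)\,\phi\bigl((L_2 u)(f(x))\bigr).
\]
Since $f$ sends interior vertices to interior vertices, $f(x) \in V_2^\circ$, so harmonicity of $u$ forces $(L_2 u)(f(x)) = 0$ and therefore $(\phi\circ u\circ f)(L_1 x)=0$; thus $\phi\circ u\circ f$ is harmonic on $(G_1,L_1)$. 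Additivity and $R$-linearity of $u \mapsto \phi\circ u\circ f$ are immediate from the corresponding properties of $\phi$ together with the bilinearity of composition.

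Next I would verify the functor axioms. The identity law $\c U(\id_{(G,L)},\id_M) = \id$ is immediate. For composition, given $f \colon (G_1,L_1)\to(G_2,L_2)$, $g \colon (G_2,L_2)\to(G_3,L_3)$ in $R\text{\cat{-net}}$ and $\psi\colon M\to M'$, $\phi\colon M'\to M''$ in $R\text{\cat{-mod}}$, the claim $\c U(g\circ f,\phi\circ\psi)=\c U(f,\phi)\circ\c U(g,\psi)$ reduces, after evaluating on a harmonic $u$, to the identity $\phi\circ(\psi\circ u\circ g)\circ f=(\phi\circ\psi)\circ u\circ(g\circ f)$, which is just associativity of composition; one must take a moment to check the variances line up, since $\c U$ is contravariant in the network variable and covariant in $M$. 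Hence $\c U$ is a functor $R\text{\cat{-net}}^{\text{op}}\times R\text{\cat{-mod}}\to R\text{\cat{-mod}}$.

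Finally, for naturality, write $\Theta_{(G,L),M}\colon \c U(G,L,M)\xrightarrow{\sim}\Hom_R(\Upsilon(G,L),M)$ for the isomorphism of Lemma \ref{lem:hom}, which sends $u$ (a morphism $RV\to M$ killing $L(RV^\circ)$) to the induced map $\bar u$ on $\Upsilon(G,L)=RV/L(RV^\circ)$; the target functor sends $(f,\phi)$ to $h\mapsto \phi\circ h\circ\Upsilon f$, with $\Upsilon f$ as in Lemma \ref{lem:upsilonfunctor}. To see the naturality square commutes I would evaluate both composites on $u\in\c U(G_2,L_2,M)$ and then on a generator $x+L_1(RV_1^\circ)$ of $\Upsilon(G_1,L_1)$: one way yields $\phi\bigl(\bar u(\Upsilon f(x+L_1(RV_1^\circ)))\bigr)=\phi\bigl(\bar u(f(x)+L_2(RV_2^\circ))\bigr)=\phi(u(f(x)))$ by the formula for $\Upsilon f$, while the other yields $\overline{\phi\circ u\circ f}\,(x+L_1(RV_1^\circ))=(\phi\circ u\circ f)(x)=\phi(u(f(x)))$; since these agree and such cosets generate $\Upsilon(G_1,L_1)$, the two maps coincide. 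Together with the fact that each $\Theta_{(G,L),M}$ is an isomorphism, this exhibits $\c U$ and $\Hom_R(\Upsilon(-),-)$ as naturally isomorphic. I do not anticipate a serious obstacle: the only step with genuine content is the preservation of harmonicity, handled by Lemma \ref{lem:laplaciandegree} plus the fact that $f$ preserves interiority, and the rest is formal bookkeeping — the likeliest place to slip is tracking the contravariance in $(G,L)$ when checking the composition law and the naturality square.
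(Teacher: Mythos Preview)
Your proposal is correct and follows essentially the same approach as the paper: the key step is using Lemma~\ref{lem:laplaciandegree} together with the fact that $f$ preserves interior vertices to see that $\phi\circ u\circ f$ is harmonic, after which functoriality and naturality are formal. Your write-up is in fact more detailed than the paper's, which simply declares the functoriality and naturality checks to be straightforward.
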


\begin{proof}
To check that the map $\c U(f,\phi)$ actually maps into $\c U(G_1,L_1,M')$, let $u \in \c U(G_2,L_2,M)$.  If $x \in V_1^\circ$, then $f(x) \in V_2^\circ$ and by Lemma \ref{lem:laplaciandegree}, we have
\[
L_1(\phi \circ u \circ f)(x) = \phi \circ u \circ f(L_1x) = \deg(f,x) \phi \circ u(L_2 f(x)) = 0.
\]
Functoriality is straightforward to check, and the naturality of the isomorphism $\c U(G,L,M) \cong \Hom_R(\Upsilon(G,L),M)$ is checked easliy from the proof of Lemma \ref{lem:hom}.
\end{proof}

\begin{lemma} \label{lem:u0functor}
The map $(G,L), M \mapsto \c U_0(G,L,M)$ is a functor $R\text{\cat{-net}} \times R\text{\cat{-mod}} \to R\text{\cat{-mod}}$, where the definition on morphisms is given as follows:  If $f \colon (G_1,L_1) \to (G_2,L_2)$ and $\phi: M \to M'$, then we have
\[
[\c U_0(f,\phi) u](y) = \sum_{x \in f^{-1}(y) \cap V_1} \deg(f,x) \phi \circ u(x).
\]
The surjection $\c U_0(G,L,M) \to \Tor_1^R(\Upsilon(G,L), M)$ given by Proposition \ref{prop:tor} is natural in both variables.
\end{lemma}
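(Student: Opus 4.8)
The plan is to verify the three assertions of the lemma in turn---that the displayed formula defines an $R$-module map into $\c U_0(G_2,L_2,M')$, that the resulting assignment is a bifunctor, and that it is compatible with the surjection onto $\Tor_1$ of Proposition~\ref{prop:tor}---the substantive input throughout being Lemma~\ref{lem:laplaciandegree}. \emph{Well-definedness:} given $u\in\c U_0(G_1,L_1,M)$, put $\bar u:=\c U_0(f,\phi)u$. Since $u|_{\partial V_1}\equiv 0$, only vertices $x\in\operatorname{supp} u\subseteq V_1^\circ$ contribute to the sum defining $\bar u(y)$, so $\bar u$ is supported in the finite set $f(\operatorname{supp} u)$; and by part~(2) of Definition~\ref{def:dgraphmorphism}, which forces $f^{-1}(\partial V_2)\cap V_1\subseteq\partial V_1$, it vanishes on $\partial V_2$. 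For harmonicity, regard $u$ as the finite $M$-chain $\sum_{x\in V_1^\circ}u(x)\,x$, so that $\bar u=\sum_{x\in V_1^\circ}\deg(f,x)\,\phi(u(x))\,f(x)$, and compute
\[
L_2\bar u \;=\; \sum_{x\in V_1^\circ}\deg(f,x)\,\phi(u(x))\,L_2\bigl(f(x)\bigr) \;=\; \sum_{x\in V_1^\circ}\phi(u(x))\,f_*(L_1x) \;=\; f_*\bigl(\phi\circ(L_1u)\bigr) \;=\; 0,
\]
where $f_*$ denotes the plain linear pushforward of chains, the second equality is Lemma~\ref{lem:laplaciandegree}, the third is $R$-linearity of $f_*$, and the last uses $L_1u\equiv 0$. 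The same chain-level description makes $R$-linearity of $u\mapsto\bar u$ immediate.

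\emph{Functoriality:} one has $\c U_0(\id,\id)=\id$ because $\deg(\id,x)=1$ and $f^{-1}(y)\cap V=\{y\}$. For composites $f\colon G_1\to G_2$, $g\colon G_2\to G_3$ and module maps $\phi,\psi$, fix a vertex $z$ of $G_3$; then $(g\circ f)^{-1}(z)\cap V_1=\bigsqcup_{y\in g^{-1}(z)\cap V_2}\bigl(f^{-1}(y)\cap V_1\bigr)$, and $\deg(g\circ f,x)=\deg(f,x)\deg(g,f(x))$ for $x\in V_1^\circ$ by Lemma~\ref{lem:categorydegree}. The terms with $x\in\partial V_1$ contribute zero---so the convention chosen for $\deg(f,\cdot)$ on the boundary is irrelevant---and splitting the sum over the fibre of $z$ into this double sum, then pulling $\deg(g,y)$ and the $R$-linear map $\psi$ through the inner sum, identifies $\c U_0(g\circ f,\psi\circ\phi)\,u$ with $\c U_0(g,\psi)\bigl(\c U_0(f,\phi)u\bigr)$. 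Since $\c U_0(f,\phi)$ visibly factors, in either order, as $\c U_0(f,\id)$ composed with $\c U_0(\id,\phi)$, this exhibits a bifunctor $R\text{\cat{-net}}\times R\text{\cat{-mod}}\to R\text{\cat{-mod}}$.

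\emph{Naturality of the surjection:} recall from the proof of Proposition~\ref{prop:tor} that $\Tor_1(\Upsilon(G,L),M)$ is computed from a free resolution $\cdots\to F_2\to RV^\circ\xrightarrow{L}RV\to\Upsilon(G,L)\to 0$, and that the surjection $\c U_0(G,L,M)\twoheadrightarrow\Tor_1(\Upsilon(G,L),M)$ is the passage from $\ker(L\otimes\id_M)=\c U_0(G,L,M)$ to its quotient by $\im(F_2\otimes\id_M)$. The pair consisting of $\lambda\colon RV_1^\circ\to RV_2^\circ$, $x\mapsto\deg(f,x)f(x)$, and of $f_*\colon RV_1\to RV_2$ is a partial chain map between these resolutions: it commutes with $L$ precisely by Lemma~\ref{lem:laplaciandegree} (the computation used above), and it covers $\Upsilon f$ since $\Upsilon f$ is by definition induced by $f_*$ on the quotients. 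By the comparison theorem for projective resolutions it extends to a chain map lifting $\Upsilon f$, so the induced map $H_1(P_\bullet(G_1)\otimes M)\to H_1(P_\bullet(G_2)\otimes M')$ is, by construction, $\Tor_1(\Upsilon f,\phi)$; but in degree one the chain map of these complexes is $\lambda\otimes\phi$, whose restriction to $\ker(L\otimes\id_M)$ is exactly $\c U_0(f,\phi)$, so passing to the quotient shows the naturality square commutes. I expect this last step to require the most care: the whole argument rests on recognizing that the degree-weighting built into $\c U_0(f,\phi)$ is exactly the chain-map condition furnished by Lemma~\ref{lem:laplaciandegree}, while the first two parts are routine bookkeeping with finitely supported chains.
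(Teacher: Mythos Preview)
Your proof is correct and follows essentially the same approach as the paper: both identify $\c U_0(G,L,M)$ with $\ker(L\otimes\id\colon RV^\circ\otimes M\to RV\otimes M)$ and use Lemma~\ref{lem:laplaciandegree} to show that the pair $(\deg(f,\cdot)f,\,f)$ gives a commutative square between the resolutions, from which the induced map on kernels and the naturality with $\Tor_1$ follow. The paper phrases this via a single commutative diagram and then declares functoriality and naturality ``straightforward,'' whereas you spell out the fibre decomposition for composites and invoke the comparison theorem explicitly; the content is the same.
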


\begin{proof}
As in the proof of Proposition \ref{prop:tor}, for an $R$-network $(G,L)$, we can view $RV^\circ \otimes M$ as the module of finitely-supported functions $u: V^\circ \to M$ by the identification
\[
u \leftrightarrow \sum_{x \in V^\circ} x \otimes u(x).
\]
Then
\[
\c U_0(G,L,M) = \ker(L \otimes \id: RV^\circ \otimes M \to RV \otimes M).
\]
By Lemma \ref{lem:laplaciandegree}, if $f \colon (G_1,L_1) \to (G_2,L_2)$, then the left diagram below commutes and hence the right diagram below commutes:
\[
\begin{tikzcd}
RV_1^\circ \arrow{r}{L_1} \arrow[swap]{d}{\deg(f,\cdot) f} & RV_1 \arrow{d}{f} & RV_1^\circ \otimes M \arrow{r}{L_1} \arrow[swap]{d}{\deg(f,\cdot) f \otimes \phi} & RV_1 \otimes M \arrow{d}{f \otimes \phi} \\
RV_2^\circ \arrow{r}{L_2} & RV_2 & RV_2^\circ \otimes M' \arrow{r}{L_2} & RV_2 \otimes M'
\end{tikzcd}
\]
In the diagram at right, the kernels of the horizontal maps are $\c U_0(G_1,L_1,M)$ and $\c U_0(G_2,L_2,M')$ respectively, so we obtain a map $\c U_0(G_1,L_1,M) \to \c U_0(G_2,L_2,M')$ satisfying
\[
\sum_{x \in V_1^\circ} x \otimes u(x) \mapsto \sum_{x \in V_1^\circ} \deg(f,x) f(x) \otimes \phi(u(x)).
\]
The asserted formula follows from grouping the terms in the right-hand sum by the value of $f(x)$.  Again, functoriality of the construction is straightforward to check, and so is the naturality of the transformation in Proposition \ref{prop:tor}.
\end{proof}

\subsection{Applications of Functoriality}

We now give some immediate applications of functoriality to the critical group and the characteristic polynomial of $L$.  The following application to the critical group and spanning trees is a generalization of \cite[Theorem 5.7]{Berman} and \cite[Proposition 19 and Corollary 20]{Treumann}.  We do not know of a combinatorial proof of Corollary \ref{cor:spanningtreecover} below and suggest it as a question for future research.

\begin{proposition}
Viewed as the torsion submodule of $\Upsilon_{\Z}(G,L_{\std})$, the critical group is a covariant functor from finite connected graphs to $\Z$-modules, where the morphisms between graphs are the harmonic morphisms.  If $f \colon G_1 \to G_2$ is a non-constant harmonic morphism between connected graphs, then $\Crit(f): \Crit(G_1) \to \Crit(G_2)$ is surjective, and if $f$ is constant, then it is zero.
\end{proposition}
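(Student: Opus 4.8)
The plan is to realize $\Crit$ as a composition of two functors. By Lemma~\ref{lem:upsilonfunctor}, $(G,L)\mapsto\Upsilon(G,L)$ is a covariant functor $\Z\text{\cat{-net}}\to\Z\text{\cat{-mod}}$, and by the remark following Definition~\ref{def:Rnetworkmorphism} a harmonic morphism $f\colon G_1\to G_2$ automatically gives a $\Z$-network morphism $(G_1,L_{\std})\to(G_2,L_{\std})$; thus $\Upsilon$ restricts to a functor from finite connected graphs and harmonic morphisms to $\Z$-modules. Composing with the torsion-submodule functor $T$ on $\Z\text{\cat{-mod}}$ — which is well defined because any $\Z$-module homomorphism carries torsion elements to torsion elements — and invoking Proposition~\ref{prop:criticalgroupnoboundary} (which identifies $T(\Upsilon_\Z(G,L_{\std}))$ with $\Crit(G)$), we get that $\Crit(f):=\Upsilon f|_{\Crit(G_1)}$ is a well-defined $\Z$-module morphism $\Crit(G_1)\to\Crit(G_2)$ and that $\Crit$ is a covariant functor. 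This handles functoriality with essentially no work beyond assembling cited results.

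For the remaining two assertions I would bring in the augmentation $\epsilon\colon\Z V\to\Z$, $x\mapsto1$. Since $L_{\std}(\Z V)\subseteq\ker\epsilon$, it descends to $\bar\epsilon\colon\Upsilon_\Z(G,L_{\std})\to\Z$ with $\ker\bar\epsilon=\Crit(G)$ by Proposition~\ref{prop:criticalgroupnoboundary}, giving a short exact sequence $0\to\Crit(G)\to\Upsilon_\Z(G,L_{\std})\xrightarrow{\bar\epsilon}\Z\to0$. If $f$ is constant with image the vertex $x$, then since $\Upsilon f$ is the descent of the linear map $f\colon\Z V_1\to\Z V_2$, it sends $\sum_i c_iy_i+L_{\std}(\Z V_1)$ to $\bigl(\sum_i c_i\bigr)x+L_{\std}(\Z V_2)$; an element of $\Crit(G_1)=\ker\bar\epsilon_1$ satisfies $\sum_i c_i=0$ and hence maps to $0$, so $\Crit(f)=0$.

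For the non-constant case, Proposition~\ref{prop:harmonicmorphismsurjective} tells us $f$ is surjective from $V_1\sqcup E_1$ onto $V_2\sqcup E_2$; in particular $f\colon V_1\to V_2$ is onto, so its linear extension $f\colon\Z V_1\to\Z V_2$ is onto, and therefore $\Upsilon f\colon\Upsilon(G_1,L_{\std})\to\Upsilon(G_2,L_{\std})$ is onto. Checking on vertices gives $\bar\epsilon_2\circ\Upsilon f=\bar\epsilon_1$, so $\Upsilon f$ is a morphism of the two short exact sequences above inducing the identity on the $\Z$ factor. A one-line diagram chase then concludes: given $t\in\Crit(G_2)$, choose $s\in\Upsilon(G_1,L_{\std})$ with $\Upsilon f(s)=t$; then $\bar\epsilon_1(s)=\bar\epsilon_2(t)=0$, so $s\in\ker\bar\epsilon_1=\Crit(G_1)$ and $\Crit(f)(s)=t$.

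I do not expect a genuine obstacle: the whole statement falls out of the functoriality of $\Upsilon$, the identification $\Crit=T\circ\Upsilon$, and the surjectivity of non-constant harmonic morphisms. The only point requiring a little care is keeping the augmentations $\bar\epsilon_i$ straight and verifying their compatibility with $\Upsilon f$ — this is precisely what upgrades ``$\Upsilon f$ surjective'' to ``$\Crit(f)$ surjective.''
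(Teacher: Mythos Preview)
Your proof is correct and takes essentially the same approach as the paper. The only cosmetic difference is that the paper uses the explicit splitting $\Upsilon_\Z(G_j,L_{\std})=\Crit(G_j)\oplus\Z x$ from Proposition~\ref{prop:criticalgroupnoboundary}(2) and argues that $\Upsilon f$ respects both summands, whereas you work with the (unsplit) short exact sequence $0\to\Crit(G)\to\Upsilon_\Z(G,L_{\std})\xrightarrow{\bar\epsilon}\Z\to0$ and the compatibility $\bar\epsilon_2\circ\Upsilon f=\bar\epsilon_1$; these are two packagings of the same idea.
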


\begin{proof}
Note that $f$ induces a map $\Upsilon_{\Z} f \colon \Upsilon_{\Z}(G_1,L_{\std,1}) \to \Upsilon_{\Z}(G_2,L_{\std,2})$.  Also, recall from Proposition \ref{prop:criticalgroupnoboundary} that $\Crit(G_j)$ is the torsion part of $\Upsilon_{\Z}(G_j, L_{\std,j})$, and for each vertex $x \in V_1$, we have a internal direct sums
\begin{align*}
\Upsilon_{\Z}(G_1, L_{\std,1}) &= \Crit(G_1) \oplus \Z x \\
\Upsilon_{\Z}(G_2, L_{\std,2}) &= \Crit(G_2) \oplus \Z f(x).
\end{align*}
The map $\Upsilon f$ maps the first summand into the first summand and the second summand into the second summand.  In particular, $f$ restricts to a map $\Crit(f): \Crit(G_1) \to \Crit(G_2)$.

If $f$ is non-constand, then it is surjective by Proposition \ref{prop:harmonicmorphismsurjective}.  In this case, $f$ induces a surjection $RV_1 \to RV_2$, and hence by passing to the quotient, $\Upsilon f \colon \Upsilon_{\Z}(G_1, L_{\std,1}) \to \Upsilon_{\Z}(G_2, L_{\std,2})$ is surjective.  It follows that in the direct sum decomposition above, $\Upsilon f$ must be surjective on each summand.  On the other hand, if $f$ is constant, then $f$ maps every vertex to $f(x)$ and hence the image of $\Upsilon f$ is completely contained in $\Z f(x)$.  Thus, $\Upsilon f$ must map $\Crit(G_1)$ to zero.
\end{proof}

\begin{corollary} \label{cor:spanningtreecover}
Suppose $f \colon G_1 \to G_2$ is a non-constant harmonic morphism of graphs.  Then the number of spanning trees on $G_2$ divides the number of spanning trees on $G_1$.
\end{corollary}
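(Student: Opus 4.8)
The plan is to deduce this purely group-theoretically from the Proposition just proved, together with the classical identification of $|\Crit(G)|$ with the number of spanning trees of a connected graph $G$ (the matrix--tree theorem; recall that the critical group acts freely and transitively on the set of spanning trees by \cite[Theorem 7.3]{Biggs2}, \cite[\S 7]{HLMPPW}, so the two counts coincide). Nothing beyond this and the functoriality already established is needed.

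First I would reduce to the connected case. Since $f$ is non-constant, Proposition \ref{prop:harmonicmorphismsurjective} tells us $f$ is a surjection $V_1 \sqcup E_1 \to V_2 \sqcup E_2$; in particular $G_2$ is connected, and $G_1$ is connected by hypothesis. Hence $\Crit(G_1)$ and $\Crit(G_2)$ are \emph{finite} abelian groups whose orders are exactly the spanning-tree counts of $G_1$ and $G_2$ respectively. (The connectedness hypothesis is essential here: for a disconnected graph the torsion submodule of $\Upsilon$ need not control the spanning-tree count, and indeed the free rank of $\Upsilon$ jumps.)

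Next I would invoke the Proposition: for a non-constant harmonic morphism $f$, the induced map $\Crit(f)\colon \Crit(G_1) \to \Crit(G_2)$ is a \emph{surjective} homomorphism of finite abelian groups. By the first isomorphism theorem $\Crit(G_2) \cong \Crit(G_1)/\ker\Crit(f)$, so $|\Crit(G_2)|$ divides $|\Crit(G_1)|$; translating via the matrix--tree theorem, the number of spanning trees of $G_2$ divides the number of spanning trees of $G_1$, as claimed.

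There is really no obstacle: the corollary is a formal consequence of the surjectivity of $\Crit(f)$, which was the substance of the preceding Proposition, and the only point requiring attention — connectedness, so that the critical groups are finite and count spanning trees — is supplied for free by Proposition \ref{prop:harmonicmorphismsurjective}. The genuinely hard problem lurking behind the statement, as the authors remark, is to give a \emph{combinatorial} proof, i.e.\ an explicit map or fibration on the sets of spanning trees realizing this divisibility; that we do not attempt.
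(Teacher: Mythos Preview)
Your proof is correct and follows essentially the same route as the paper's: invoke the surjectivity of $\Crit(f)\colon \Crit(G_1)\to\Crit(G_2)$ from the preceding Proposition, and combine it with the identification $|\Crit(G)|=\tau(G)$ to conclude that $\tau(G_2)\mid \tau(G_1)$. One small quibble: you write ``$G_1$ is connected by hypothesis,'' but the Corollary as stated carries no such hypothesis; connectedness of both graphs is really inherited from the setting of the preceding Proposition (and the result is vacuous anyway when $G_1$ is disconnected, since then $\tau(G_1)=0$).
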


\begin{proof}
It is well-known that the number of spanning trees $\tau(G_j)$ equals the size of the critical group $\Crit(G_j)$; see for instance \cite[\S 3]{HLMPPW}. Since $\Crit(f)$ is surjective, the order of $\Crit(G_2)$ divides the order of $\Crit(G_1)$.
\end{proof}

\begin{proposition} \label{prop:characteristicpolynomial}
Suppose $f \colon G_1 \to G_2$ is a non-constant harmonic morphism of connected graphs which satisfies $\deg(f,x) = n$ for all $x \in V_1$.  Then $f$ induces a $\C[z]$-network morphism
\[
f_*: (G_1, nzI - L_{\std,1}) \to (G_2, zI - L_{\std,2}).
\]
Moreover the induced map $\Upsilon_{\C[z]} f_*$ is surjective.  In particular, if $G_1$ and $G_2$ are finite, then the characteristic polynomials of the standard Laplacians for the two graphs are related by
\[
\det(zI - L_{\std,2}) | \det(nzI - L_{\std,1}).
\]
\end{proposition}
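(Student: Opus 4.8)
The plan is to treat the three assertions of the proposition in turn; each follows rather directly from the machinery already in place. First, I would check that $f$ is a $\C[z]$-network morphism. Recall from Example~\ref{ex:characteristicpolynomial} that $zI - L_{\std,2}$ is the generalized Laplacian with edge weights $w_2 \equiv -1$ and diagonal function $d_2 \equiv z$, and likewise $nzI - L_{\std,1}$ has $w_1 \equiv -1$ and $d_1 \equiv nz$ (both are legitimate generalized Laplacians, the symmetry $w(\bar e)=w(e)$ being trivial). Then the two conditions of Definition~\ref{def:Rnetworkmorphism} become $w_1(e)=w_2(f(e))$, i.e.\ $-1=-1$, and $d_1(x)=\deg(f,x)\,d_2(f(x))$ for $x\in V_1^\circ=V_1$, i.e.\ $nz = n\cdot z$, which holds by the hypothesis $\deg(f,x)=n$. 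Hence $f$ defines the asserted $R$-network morphism $f_*$ (and, since $f$ is non-constant, $n\ge 1$).

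Second, for the surjectivity of $\Upsilon_{\C[z]}f_*$ I would repeat the argument used in the preceding proposition on critical groups: being non-constant, $f$ is a surjection $V_1\sqcup E_1\to V_2\sqcup E_2$ by Proposition~\ref{prop:harmonicmorphismsurjective}, so in particular $\C[z]V_1\to\C[z]V_2$ is onto; passing to the quotients $\Upsilon_{\C[z]}(G_i)=\C[z]V_i/L_i(\C[z]V_i)$ (there are no boundary vertices) shows $\Upsilon_{\C[z]}f_*$ is onto, using Lemma~\ref{lem:upsilonfunctor}.

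Third, assume $G_1,G_2$ finite. Since $\det(nzI-L_{\std,1})$ and $\det(zI-L_{\std,2})$ are nonzero polynomials (their leading terms in $z$ are $(nz)^{|V_1|}$ and $z^{|V_2|}$), both networks are non-degenerate, and by Proposition~\ref{prop:pidnon-degenerate} (empty boundary) the two fundamental modules are finitely generated torsion $\C[z]$-modules. Smith normal form gives $\Upsilon_{\C[z]}(G_1,nzI-L_{\std,1})\cong\bigoplus_i\C[z]/(g_i)$ with $\prod_i g_i$ an associate of $\det(nzI-L_{\std,1})$, while Example~\ref{ex:characteristicpolynomial} gives $\Upsilon_{\C[z]}(G_2,zI-L_{\std,2})\cong\bigoplus_j\C[z]/(z-\lambda_j)$ with $\prod_j(z-\lambda_j)=\det(zI-L_{\std,2})$. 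From the short exact sequence
\[
0\longrightarrow \ker(\Upsilon_{\C[z]}f_*)\longrightarrow \Upsilon_{\C[z]}(G_1,nzI-L_{\std,1})\longrightarrow \Upsilon_{\C[z]}(G_2,zI-L_{\std,2})\longrightarrow 0
\]
and multiplicativity of the order ideal (the product of the invariant factors) along exact sequences of finitely generated torsion modules over a PID, the order of $\Upsilon_{\C[z]}(G_2,\cdot)$ divides that of $\Upsilon_{\C[z]}(G_1,\cdot)$, which is exactly $\det(zI-L_{\std,2})\mid\det(nzI-L_{\std,1})$ in $\C[z]$, the spurious nonzero constants not affecting divisibility.

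The hard part will be this last step: making rigorous the passage from a surjection of fundamental modules to divisibility of the two determinants. The cleanest formulation is via $0$-th Fitting ideals — $\mathrm{Fitt}_0$ can only enlarge under a surjection and $\mathrm{Fitt}_0(\coker M)=(\det M)$ for a square matrix $M$ — but one can equally argue, as above, from multiplicativity of the order ideal in short exact sequences of torsion $\C[z]$-modules together with the identification of that order ideal with the relevant characteristic polynomial. Everything else is bookkeeping: confirming that the two Laplacians have the claimed $(w,d)$ data and that $f$ satisfies the network-morphism conditions.
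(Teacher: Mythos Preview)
Your proof is correct, and the first two steps (verifying the network-morphism conditions and deducing surjectivity of $\Upsilon_{\C[z]}f_*$ from surjectivity of $f$ on vertices) match the paper exactly.

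The divergence is in the final divisibility step. The paper does not invoke order ideals or Fitting ideals; instead it applies $\Hom_{\C[z]}(-,\C[z]/(z-\lambda))$ to the surjection $\Upsilon_{\C[z]}f_*$ to obtain, for each $\lambda\in\C$, an \emph{injection}
\[
\c U(G_2, zI - L_{\std,2}, \C[z]/(z-\lambda)) \hookrightarrow \c U(G_1, nzI - L_{\std,1}, \C[z]/(z-\lambda)),
\]
which (via the identification of Example~\ref{ex:characteristicpolynomial2}) says precisely that the $\lambda$-eigenspace of $L_{\std,2}$ injects into the $n\lambda$-eigenspace of $L_{\std,1}$. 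Comparing multiplicities eigenvalue-by-eigenvalue then gives the divisibility of characteristic polynomials. Your route via the order ideal (or $\mathrm{Fitt}_0$) is cleaner module theory and avoids the pointwise eigenspace comparison; the paper's route is more elementary and has the side benefit of making explicit the eigenvector pullback $u\mapsto u\circ f$, which the paper uses in the subsequent remark about pushforward of eigenvectors. Both arguments are short once the surjectivity of $\Upsilon_{\C[z]}f_*$ is in hand.
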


\begin{proof}
Observe that $f$ defines a $\C[z]$-network morphism $(G_1, nzI - L_{\std,1}) \to (G_2, zI - L_{\std,2})$; the second condition of Definition \ref{def:Rnetworkmorphism} holds because for $L_1 = nzI - L_{\std,1}$ and $L_2 = zI - L_{\std,2}$, we have
\[
d_1(x) = nz = \deg(f,x) z = \deg(f,x) \cdot d_2(f(x)).
\]
By Proposition \ref{prop:harmonicmorphismsurjective}, $f$ is surjective on vertices on edges.  By  Lemma \ref{lem:upsilonfunctor}, we have a surjective $\C[z]$-module morphism
\[
\Upsilon_{\C[z]}(G_1, nzI - L_{\std,1}) \to \Upsilon_{\C[z]}(G_2, zI - L_{\std,2}),
\]
which by application of the $\Hom$ functor induces an injective map
\[
\c U(G_2, zI - L_{\std,2}, \C[z] / (z - \lambda)) \to \c U(G_1, nzI - L_{\std,1}, \C[z] / (z - \lambda)).
\]
Similar to Example \ref{ex:characteristicpolynomial2}, this means that $\lambda$-eigenvectors of $G_2$ pull back to $n \lambda$-eigenvectors of $G_1$.  Thus, if $\lambda$ is an eigenvalue of $L_{\std,2}$, then $n \lambda$ is also an eigenvalue of $L_{\std,1}$ with the same or greater multiplicity.  This implies $\det(zI - L_{\std,2}) | \det(nzI - L_{\std,1})$.
\end{proof}

\begin{remark}
In addition, Lemma \ref{lem:u0functor} yields a map
\[
\c U_0(G_1, nzI - L_{\std,1}, \C[z] / (z - \lambda)) \to \c U_0(G_2, zI - L_{\std,2}, \C[z] / (z - \lambda)),
\]
which implies that $n\lambda$-eigenvectors of $L_{\std,1}$ push forward to $\lambda$-eigenvectors of $L_{\std,2}$.  In particular, if $n \lambda$ is an eigenvalue of $G_1$ but $\lambda$ is not an eigenvalue for $G_2$, then any $n \lambda$-eigenvector of $G_1$ will push forward to zero; in other words, the values on each fiber will add up to zero.  These results on the characteristic polynomial and eigenvectors generalize in a straightforward way to weighted graphs as well.
\end{remark}

\section{Layer-Stripping and Harmonic Continuation} \label{sec:layering}

\subsection{Layerable Extensions}

Based on the technique of layer-stripping from the electrical inverse problem (see \S \ref{subsec:motivationlayering} and \cite{CIM,CM,dVGV,layering}), we shall now describe three operations (simple layerable extensions), which add a vertex or edge onto the boundary of a $\partial$-graph.  Individually, these modifications are simple enough that their effect on $\Upsilon$, $\c U$, and $\c U_0$ is easy to understand, but when applied in sequence, they provide nontrivial information about our algebraic invariants.  We will show, for instance, that if $(G',L')$ is obtained from $(G,L)$ by a sequence of simple layerable extensions, then any harmonic function on $(G,L)$ extends to a harmonic function on $(G',L')$.  For simplicity, we focus on the case of finite $\partial$-graphs first and then consider infinite $\partial$-graphs in \S \ref{subsec:infinitefiltration}.

\begin{definition} \label{def:adjointIBV}
An {\bf isolated boundary vertex} $x$ of a $\partial$-graph is a boundary vertex with no neighbors incident to it.  We say $G'$ is obtained from $G$ by {\bf adjoining the isolated boundary vertex} $x$ if $x$ is an isolated boundary vertex in $G'$ and
\[
V(G') = V(G) \sqcup \{x\}, \quad E(G') = E(G), \quad V^\circ(G') = V^\circ(G).
\]
Equivalently, we say that $G$ is obtained from $G'$ by {\bf deleting the isolated boundary vertex} $x$.  See Figure \ref{fig:adjoinisolatedboundaryvertex}.
\end{definition}

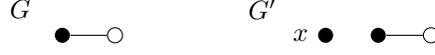
\begin{figure}
\begin{center}

\begin{tikzpicture}[scale=0.7]
	\begin{scope}
		\node at (-0.8,0.5) {$G$};
		\node[bd] (1) at (0,0) {};
		\node[int] (2) at (1,0) {};
		\draw (1) to (2);
	\end{scope}

	\begin{scope}[shift={(6,0)}]
		\node at (-2.2,0.5) {$G'$};
		\node[bd] (0) at (-1,0) [label = left:$x$] {};
		\node[bd] (1) at (0,0) {};
		\node[int] (2) at (1,0) {};
		\draw (1) to (2);
	\end{scope}
\end{tikzpicture}

\caption{Adjoining or deleting an isolated boundary vertex $x$.}  \label{fig:adjoinisolatedboundaryvertex}

\end{center}
\end{figure}

\begin{definition} \label{def:adjoinspike}
A {\bf boundary spike} $e$ of a $\partial$-graph is an edge $e$ such that one endpoint $e_-$ is an interior vertex and the other $e_+$ is a boundary vertex with no other edges incident to it.  We say $G'$ is obtained from $G$ by {\bf adjoining the boundary spike} $e$ if $e$ is a boundary spike in $G'$ and we have
\[
V(G') = V(G) \sqcup \{e_+\}, \quad E(G') = E(G) \sqcup \{e,\overline{e}\}, \quad V^\circ(G') = V^\circ(G) \sqcup \{e_-\}.
\]
Equivalently, we say that $G$ is obtained from $G'$ by {\bf contracting the boundary spike} $e$.  See Figure \ref{fig:adjoinboundaryspike}.
\end{definition}

\begin{figure}
\begin{center}

\begin{tikzpicture}[scale=0.7]
	\begin{scope}
		\node at (-0.8,1.5) {$G$};
	
		\node[bd] (1) at (0,0) {};
		\node[bd] (2) at (0,1) {};
		\node[int] (3) at (1,0) {};
		\node[int] (4) at (1,1) {};
		
		\draw (1) to (2) to (4) to (3) to (1);
	\end{scope}
	
	\begin{scope}[shift = {(6,0)}]
		\node at (-1,1.5) {$G'$};
		
		\node[bd] (0) at (-1,0) [label = below:$e_+$] {};
		\node[int] (1) at (0,0) [label = below:$e_-$] {};
		\node[bd] (2) at (0,1) {};
		\node[int] (3) at (1,0) {};
		\node[int] (4) at (1,1) {};
		
		\draw (0) to node[auto] {$e$} (1) to (2) to (4) to (3) to (1);
	\end{scope}
\end{tikzpicture}

\caption{Adjoining or contracting a boundary spike $e$.} \label{fig:adjoinboundaryspike}

\end{center}
\end{figure}
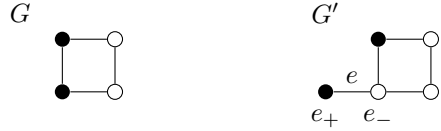

\begin{definition} \label{def:adjoinboundaryedge}
A {\bf boundary edge} $e$ of a $\partial$-graph is an edge $e$ such that both endpoints are boundary vertices.  We say that $G'$ is obtained from $G$ by {\bf adjoining a boundary edge} $e$ if $e$ is a boundary edge in $G'$ and
\[
V(G') = V(G), \quad E(G') = E(G) \sqcup \{e,\overline{e}\}, \quad V^\circ(G') = V^\circ(G).
\]
Equivalently, we say that $G$ is obtained from $G'$ by {\bf deleting the boundary edge} $e$.
\end{definition}

\begin{figure}
\begin{center}

\begin{tikzpicture}[scale=0.7]
	\begin{scope}
		\node at (-0.8,1.5) {$G$};
	
		\node[bd] (1) at (0,0) {};
		\node[bd] (2) at (0,1) {};
		\node[int] (3) at (1,0) {};
		\node[int] (4) at (1,1) {};
		
		\draw (2) to (4) to (3) to (1);
	\end{scope}
	
	\begin{scope}[shift = {(5,0)}]
		\node at (-1.2,1.5) {$G'$};
		
		\node[bd] (1) at (0,0) [label = below:$e_-$] {};
		\node[bd] (2) at (0,1) [label = above:$e_+$] {};
		\node[int] (3) at (1,0) {};
		\node[int] (4) at (1,1) {};
		
		\draw (1) to node[auto] {$e$} (2) to (4) to (3) to (1);
	\end{scope}
\end{tikzpicture}

\caption{Adjoining or deleting a boundary edge $e$.} \label{fig:adjoinboundaryedge}

\end{center}
\end{figure}
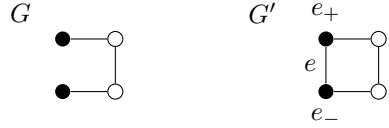

\begin{definition} \label{def:simplelayerableextension}
If $G'$ is obtained from $G$ by adjoining an isolated boundary vertex, boundary spike, or boundary edge, then we say that $G'$ is a {\bf simple layerable extension} of $G$.  We will equivalently say that $G$ is obtained from $G'$ by a {\bf layer-stripping operation}.
\end{definition}

\begin{observation} \label{obs:layeringsubdgraph}
If $G'$ is a simple layerable extension of $G$, then $G$ is a sub-$\partial$-graph of $G'$, as one can check by straightforward casework.
\end{observation}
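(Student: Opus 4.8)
The plan is to reduce the claim to the combinatorial criterion for sub-$\partial$-graphs recorded in Definition~\ref{def:sub-d-graph}: a subgraph $G$ of $G'$ is a sub-$\partial$-graph precisely when every $x \in V^\circ(G)$ satisfies $x \in V^\circ(G')$ and $\mathcal{E}_G(x) = \mathcal{E}_{G'}(x)$. So I would first observe that in each of the three defining cases of a simple layerable extension the passage from $G$ to $G'$ only adds vertices and/or edges, so that $(V(G),E(G))$ is literally a subgraph of $(V(G'),E(G'))$; the one point to note here is that every edge retained in $E(G)$ has both of its endpoints in $V(G)$, which is immediate since the only vertex possibly absent from $G$ (namely $e_+$ in the boundary-spike case) is a leaf in $G'$ and hence incident to no edge of $E(G)$.

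Then I would dispatch the three cases of Definition~\ref{def:simplelayerableextension} in turn. For an adjoined isolated boundary vertex we have $E(G') = E(G)$ and $V^\circ(G') = V^\circ(G)$, so both conditions are trivial. For an adjoined boundary spike $e$ we have $V^\circ(G') = V^\circ(G) \sqcup \{e_-\}$ and $E(G') = E(G) \sqcup \{e,\overline{e}\}$; any $x \in V^\circ(G)$ is distinct from $e_-$ and from $e_+$, hence lies in $V^\circ(G')$ and is incident to neither $e$ nor $\overline{e}$, giving $\mathcal{E}_G(x) = \mathcal{E}_{G'}(x)$. For an adjoined boundary edge $e$ we have $V^\circ(G') = V^\circ(G)$ and $E(G') = E(G) \sqcup \{e,\overline{e}\}$, and since both endpoints of $e$ are boundary vertices, no interior vertex of $G$ is incident to the new edges, so again $\mathcal{E}_G(x) = \mathcal{E}_{G'}(x)$ for every $x \in V^\circ(G)$.

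I do not expect any genuine obstacle here: the statement is pure bookkeeping against the definitions, and the only point that even warrants a sentence is checking that the newly adjoined edges (when there are any) never touch a vertex that is interior in $G$ --- which is built into the definitions of boundary spike (the new boundary vertex $e_+$ is a leaf) and boundary edge (both endpoints lie on $\partial V$). Once the incidence and interior-vertex conditions are verified in all three cases, the inclusion $G \hookrightarrow G'$ is a $\partial$-graph morphism by the criterion in Definition~\ref{def:sub-d-graph}, which is exactly the assertion.
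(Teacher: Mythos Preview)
Your proposal is correct and is precisely the ``straightforward casework'' the paper alludes to without writing out; the paper gives no proof beyond that phrase, and your verification against the criterion in Definition~\ref{def:sub-d-graph} is exactly what is intended.
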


\begin{definition}
We say $(G',L')$ is obtained from $(G,L)$ by adjoining an isolated boundary vertex, boundary spike, or boundary edge, if $G'$ is obtained from $G$ by the corresponding operation and in addition $w'|_{E(G)} = w$ and $d'|_{V(G)} = d$.  (Note that given the geometric setup, $w'|_{E(G)} = w$ and $d'|_{V(G)} = d$ is a necessary and sufficient condition to make the inclusion $(G,L) \to (G',L')$ an $R$-network morphism.)
\end{definition}

\begin{lemma} \label{lem:layeringUpsilon1}
Let $(G',L')$ be an $R^\times$-network.  If $(G',L')$ is obtained from $(G,L)$ by adjoining a boundary spike or boundary edge, then the induced maps $\Upsilon(G,L) \to \Upsilon(G',L')$, $\c U(G,L,M) \to \c U(G',L',M)$ are isomorphisms.

If $(G',L')$ is obtained from $(G,L)$ by adjoining an isolated boundary vertex $x$, then these maps furnish isomorphisms
\[
\Upsilon(G',L') \cong \Upsilon(G,L) \oplus Rx
\]
and
\[
\c U(G',L',M) \cong \c U(G,L,M) \times M^{\{x\}}.
\]
\end{lemma}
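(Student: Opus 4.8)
The plan is to prove the three assertions about $\Upsilon$ directly from the definition $\Upsilon(G,L)=RV/L(RV^\circ)$, and then to deduce the assertions about $\c U$ by applying the contravariant functor $\Hom_R(-,M)$ together with the natural isomorphism $\c U(G,L,M)\cong\Hom_R(\Upsilon(G,L),M)$ of Lemma~\ref{lem:hom}. Write $\iota\colon(G,L)\to(G',L')$ for the inclusion, which is an $R$-network morphism by the parenthetical remark preceding the lemma, so that by Lemma~\ref{lem:upsilonfunctor} the map $\Upsilon\iota$ is the one induced by the inclusion $RV(G)\hookrightarrow RV(G')$.

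First I would dispose of the two easy cases. If $(G',L')$ is obtained by adjoining a boundary edge $e$, then $RV(G')=RV(G)$, $RV^\circ(G')=RV^\circ(G)$, and $L'(y)=L(y)$ for every $y\in V^\circ(G)$, because the new edge is incident only to boundary vertices and hence does not alter $\mathcal E(y)$; consequently $L'(RV^\circ(G'))=L(RV^\circ(G))$ and $\Upsilon(G',L')=\Upsilon(G,L)$ with $\Upsilon\iota=\id$. If instead $(G',L')$ is obtained by adjoining an isolated boundary vertex $x$, then $RV(G')=RV(G)\oplus Rx$ while $RV^\circ(G')=RV^\circ(G)$, and again $L'|_{RV^\circ(G)}=L|_{RV^\circ(G)}$ since no new edges are created; therefore $\Upsilon(G',L')=(RV(G)\oplus Rx)/L(RV^\circ(G))\cong\Upsilon(G,L)\oplus Rx$, the first summand being the image of the injection $\Upsilon\iota$.

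The boundary spike is the case that requires real work, and it is the only place the hypothesis $w(e)\in R^\times$ is used. Here $RV(G')=RV(G)\oplus Re_+$ and $RV^\circ(G')=RV^\circ(G)\oplus Re_-$; one computes $L'(y)=L(y)$ for $y\in V^\circ(G)$ and $L'(e_-)=L(e_-)+w'(e)(e_- - e_+)$, where $L(e_-)\in RV(G)$ is a well-defined element even though $e_-\in\partial V(G)$. The idea is to use invertibility of $w'(e)$ to define a homomorphism $\psi\colon RV(G')\to\Upsilon(G,L)$ that is the canonical projection on $RV(G)$ and sends $e_+\mapsto [e_-]+w'(e)^{-1}[L(e_-)]$; a short check shows $\psi$ annihilates the generators $L'(y)$ ($y\in V^\circ(G)$) and $L'(e_-)$ of $L'(RV^\circ(G'))$, hence descends to $\overline\psi\colon\Upsilon(G',L')\to\Upsilon(G,L)$. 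Then $\overline\psi\circ\Upsilon\iota=\id$ is immediate on the generators coming from $V(G)$, while $\Upsilon\iota\circ\overline\psi=\id$ reduces, on the one remaining generator $e_+$, to the identity $[e_-]+w'(e)^{-1}[L(e_-)]=[e_+]$ in $\Upsilon(G',L')$, which holds because $w'(e)^{-1}L'(e_-)=e_- - e_+ + w'(e)^{-1}L(e_-)$ lies in $L'(RV^\circ(G'))$. This shows $\Upsilon\iota$ is an isomorphism.

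Finally, applying $\Hom_R(-,M)$ and Lemma~\ref{lem:hom} turns the isomorphism (resp.\ the split injection with complement $Rx$) on $\Upsilon$ into the stated isomorphisms for $\c U$: in the spike and boundary-edge cases $\c U(G',L',M)\cong\c U(G,L,M)$, and in the isolated-vertex case $\c U(G',L',M)\cong\Hom_R(\Upsilon(G,L)\oplus Rx,M)\cong\c U(G,L,M)\times\Hom_R(Rx,M)$ with $\Hom_R(Rx,M)\cong M=M^{\{x\}}$. The map $\c U(G,L,M)\to\c U(G',L',M)$ appearing in the statement is then the inverse of the restriction map $\c U(\iota,\id)$ of Lemma~\ref{lem:ufunctor}: concretely it is the unique harmonic extension, which in the spike case assigns $u(e_+)=u(e_-)+w'(e)^{-1}Lu(e_-)$, in the boundary-edge case is the identity, and in the isolated-vertex case is extension by zero at $x$ (the factor $M^{\{x\}}$ recording the free choice of $u(x)$). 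The only delicate point anywhere is the spike computation, and there the only subtlety is keeping track of the element $L(e_-)$ and inverting $w'(e)$; I do not anticipate a genuine obstacle.
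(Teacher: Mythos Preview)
Your proof is correct. The two easy cases (boundary edge, isolated boundary vertex) are handled identically to the paper. For the boundary spike, however, the approaches differ: the paper sets up the commutative diagram
\[
\begin{tikzcd}
0 \arrow{r} & RV^\circ(G) \arrow{r} \arrow{d}{L} & RV^\circ(G') \arrow{r} \arrow{d}{L'} & Re_- \arrow{r} \arrow{d}{\psi} & 0 \\
0 \arrow{r} & RV(G) \arrow{r} & RV(G') \arrow{r} & Re_+ \arrow{r} & 0
\end{tikzcd}
\]
with $\psi(e_-)=-w'(e)e_+$, verifies commutativity, and then invokes the Snake Lemma (since $\psi$ is an isomorphism, the induced map on cokernels $\Upsilon(G,L)\to\Upsilon(G',L')$ is an isomorphism). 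You instead construct the inverse explicitly, defining $\overline\psi$ on the extra generator by $e_+\mapsto[e_-]+w'(e)^{-1}[L(e_-)]$ and checking the two compositions by hand. Your route is more elementary and has the pleasant feature that it makes visible the harmonic-extension formula $u(e_+)=u(e_-)+w'(e)^{-1}Lu(e_-)$ that the paper derives separately in \S\ref{subsec:layeringcoordinates}; the paper's Snake Lemma argument is slicker and more in keeping with the homological point of view, and would generalize more readily to the non-unit case mentioned in Remark~\ref{rem:nonunitlayering}. Either way the only genuine input is the invertibility of $w'(e)$, and you have located it correctly.
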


\begin{proof}
Suppose $G'$ is obtained from $G$ by adjoining an isolated boundary vertex $x$.  Then we have
\begin{align*}
RV(G') &= RV(G) \oplus Rx \\
L'(RV^\circ(G')) &= L(RV^\circ(G)) \subseteq RV(G).
\end{align*}
Taking the quotient of the top row by the bottom row yields $\Upsilon(G',L') \cong \Upsilon(G,L) \oplus Rx$.  Then applying $\Hom(-,M)$ yields the desired equation for $\c U(-,M)$.

Suppose $G'$ is obtained from $G$ by adjoining a boundary edge.  Then $V(G') = V(G)$ and $V^\circ(G') = V^\circ(G)$.  Since the endpoints of $e$ are boundary vertices, the two Laplacians $L'$ and $L$ agree on $RV^\circ(G)$.  Hence, $\Upsilon(G',L') = \Upsilon(G,L)$, and application of $\Hom(-,M)$ yields the corresponding statement for $\c U(-,M)$.

Finally, suppose $G'$ is obtained from $G$ by adjoining a boundary spike $e$ with $x = e_+ \in \partial V(G')$ and $y = e_- \in V^\circ(G')$.  Then consider the commutative diagram:
\[
\begin{tikzcd}
0 \arrow{r} & RV^\circ(G) \arrow{r} \arrow{d}{L} & RV^\circ(G') \arrow{r} \arrow{d}{L'} & Ry \arrow{r} \arrow{d}{\psi} & 0 \\
0 \arrow{r} & RV(G) \arrow{r} & RV(G') \arrow{r} & Rx \arrow{r} & 0,
\end{tikzcd}
\]
where the horizontal arrows are given by the direct sum decompositions induced by $V^\circ(G') = V^\circ(G) \sqcup \{y\}$ and $V(G') = V(G) \sqcup \{x\}$, and the vertical arrow $\psi: Ry \to Rx$ is given by $y \mapsto -w'(e)x$.  Commutativity of the left square follows from the fact that $w'|_G = w$ and $d'|_G = d$.  To check commutativity of the right square, start with an arbitrary element of $RV^\circ(G')$ written in the form $z + ry$, where $z \in RV^\circ(G)$ and $r \in R$.  Going right and then down produces $-rw'(e)x$.  In the other direction, going down from $RV^\circ(G')$ to $RV(G')$ yields
\[
L'(z + ry) = L'z + r \left(d'(y)y + \sum_{e':e_+' = y} w'(e')(y - e_-)\right) \in RV(G) - r w'(e)x,
\]
and then following the diagram right to $Rx$ yields $-rw'(e)x$.

The rows are clearly exact.   Hence, the Snake Lemma yields an exact sequence
\[
\dots \to \ker \psi \to \Upsilon(G,L) \to \Upsilon(G',L') \to \coker \psi \to 0.
\]
Since $\psi$ is an isomorphism, this shows that $\Upsilon(G,L) \to \Upsilon(G',L')$ is an isomorphism, and application of $\Hom(-,M)$ yields the corresponding statement for $\c U(-,M)$ by Lemma \ref{lem:hom}.
\end{proof}

\begin{lemma} \label{lem:layeringUpsilon2}
If an $R^\times$-network $(G',L')$ is obtained from $(G,L)$ by a simple layerable extension, then the induced map $\c U_0(G,L,M) \to \c U_0(G',L',M)$ is an isomorphism.
\end{lemma}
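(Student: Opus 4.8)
The plan is to realize the statement through the identification $\c U_0(G,L,M)=\ker(L\otimes\id\colon RV^\circ\otimes M\to RV\otimes M)$ already used in the proof of Proposition \ref{prop:tor}, combined with the Snake Lemma argument from the proof of Lemma \ref{lem:layeringUpsilon1}. By the formula of Lemma \ref{lem:u0functor}, the map $\c U_0(G,L,M)\to\c U_0(G',L',M)$ induced by the inclusion $(G,L)\hookrightarrow(G',L')$ is exactly the map on kernels induced by the inclusion $RV^\circ(G)\otimes M\hookrightarrow RV^\circ(G')\otimes M$ (all degree factors $\deg(f,\cdot)$ equal $1$ for an inclusion morphism, including at $e_-$ when $e_-$ changes from a boundary vertex of $G$ to an interior vertex of $G'$). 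So it suffices to check that this map on kernels is bijective in each of the three cases.

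The boundary-edge and isolated-boundary-vertex cases are formal. If $G'$ is obtained by adjoining a boundary edge $e$, then $V^\circ(G')=V^\circ(G)$ and $V(G')=V(G)$, and since $e$ joins two boundary vertices, $L$ and $L'$ coincide as maps $RV^\circ\to RV$; hence $\c U_0(G,L,M)$ and $\c U_0(G',L',M)$ are the same kernel and the induced map is the identity. If $G'$ is obtained by adjoining an isolated boundary vertex $x$, then $RV^\circ(G')=RV^\circ(G)$ and $L'$ factors as $RV^\circ(G)\xrightarrow{L}RV(G)\hookrightarrow RV(G)\oplus Rx=RV(G')$; tensoring with $M$ and using that $RV(G)\otimes M\hookrightarrow RV(G')\otimes M$ is injective gives $\ker(L'\otimes\id)=\ker(L\otimes\id)$, so the two modules again agree (a $\c U_0$-function vanishes at $x$ automatically, since $x$ is a boundary vertex).

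The boundary-spike case carries the content. Here I would reuse the commutative diagram with exact rows from the proof of Lemma \ref{lem:layeringUpsilon1},
\[
\begin{tikzcd}
0 \arrow{r} & RV^\circ(G) \arrow{r} \arrow{d}{L} & RV^\circ(G') \arrow{r} \arrow{d}{L'} & Ry \arrow{r} \arrow{d}{\psi} & 0 \\
0 \arrow{r} & RV(G) \arrow{r} & RV(G') \arrow{r} & Rx \arrow{r} & 0,
\end{tikzcd}
\]
where $x=e_+$, $y=e_-$, and $\psi(y)=-w'(e)x$. Since $Ry$ and $Rx$ are free, hence flat, tensoring the whole diagram over $R$ with $M$ leaves the rows exact, and since $w'(e)\in R^\times$---the one place the $R^\times$-network hypothesis is used---the map $\psi\otimes\id$ remains an isomorphism, so $\ker(\psi\otimes\id)=0$. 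Applying the Snake Lemma to the tensored diagram, with vertical maps $L\otimes\id$, $L'\otimes\id$, $\psi\otimes\id$, the exact sequence $0\to\ker(L\otimes\id)\to\ker(L'\otimes\id)\to\ker(\psi\otimes\id)$ together with $\ker(\psi\otimes\id)=0$ shows that the inclusion-induced map $\c U_0(G,L,M)\to\c U_0(G',L',M)$ is an isomorphism.

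The main obstacle is bookkeeping rather than a substantive difficulty: one must make sure the isomorphism delivered by the Snake Lemma really is the functorial map of Lemma \ref{lem:u0functor}. This amounts to checking that for the inclusion morphism $f\colon G\hookrightarrow G'$ every degree factor $\deg(f,\cdot)$ equals $1$---using the extended definition of $\deg(f,\cdot)$ at the boundary vertex $e_-$ of $G$ that becomes interior in $G'$---so that the left column $RV^\circ(G)\to RV^\circ(G')$ of the diagram, tensored with $M$, is precisely the map underlying $\c U_0(f,\id)$.
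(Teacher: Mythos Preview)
Your proof is correct and takes a genuinely different route from the paper. The paper argues directly and elementarily: it notes that the induced map is extension by zero (hence injective), and then proves surjectivity case by case by showing that any $u'\in\c U_0(G',L',M)$ restricts to an element of $\c U_0(G,L,M)$. In the spike case this is done by the two-line computation $0=L'u'(x)=-w(e)u'(y)$ (forcing $u'(y)=0$) and $0=L'u'(y)=Lu(y)+w(e)(u'(y)-u'(x))=Lu(y)$. Your approach instead tensors the Snake Lemma diagram from Lemma~\ref{lem:layeringUpsilon1} with $M$ and reads off the kernel isomorphism from $\ker(\psi\otimes\id)=0$. This is more systematic: it recycles the exact diagram already set up, makes the parallel with Lemma~\ref{lem:layeringUpsilon1} explicit, and isolates the single use of the unit hypothesis. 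The paper's version, by contrast, is entirely elementary---no tensor products or Snake Lemma---and exhibits concretely how the zero boundary data propagate across the spike, which is the intuition later invoked in Remark~\ref{rem:inwardcontinuation}. Your bookkeeping remark about $\deg(f,\cdot)$ is accurate but could be shortened: since every $u\in\c U_0(G,L,M)$ already vanishes on $\partial V(G)$, the value of $\deg(f,e_-)$ at the boundary vertex $e_-$ is irrelevant, and on $V^\circ(G)$ the degree is trivially $1$.
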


\begin{proof}
Since $(G,L)$ is a subnetwork of $(G',L')$, the map $\c U_0(G,L,M) \to \c U_0(G',L',M)$ defined by Lemma \ref{lem:u0functor} extends a function $u$ on $G$ to a function $u'$ on $G'$ by setting $u'|_{V(G') \setminus V(G)} = 0$.  This map is clearly injective.  We prove surjectivity by cases.

Suppose $G'$ is obtained from $G$ by adjoining an isolated boundary vertex.  If $u' \in \c U_0(G',L',M)$, then clearly $u'(x) = 0$.  Moreover, $L'u'|_{V(G)} = L(u'|_{V(G)})$.  Thus, $u'$ restricts to a function in $u \in \c U_0(G,L,M)$, and $u$ is mapped to $u'$ by the extension map.

Suppose $G'$ is obtained from $G$ by adjoining a boundary edge $e$.  Recall $V(G') = V(G)$, so functions on $G'$ and functions on $G$ are equivalent.  If $u \in \c U_0(G',L',M)$, then $u(e_+) = u(e_-) = 0$ and hence $Lu = L'u$.  Thus, $u \in \c U_0(G',L',M)$ and $u$ is mapped to itself by the extension map.

Suppose $G'$ is obtained from $G$ by adjoining a boundary spike $e$ with boundary endpoint $e_+ = x$ and interior endpoint $e_- = y$.  Suppose that $u' \in \c U_0(G',L',M)$ and let $u = u'|_{V(G)}$.  Note that
\[
0 = L'u'(x) = d(x) u'(x) + w(e)(u'(x) - u'(y)) = 0 + w(e)(0 - u(y)).
\]
Since $w(e)$ is a unit in $R$, this implies $u(y) = 0$.  Moreover, we have
\[
0 = L'u'(y) = Lu(y) + w(e)(u'(y) - u'(x)) = Lu(y).
\]
Thus, $u(y) = Lu(y) = 0$, which shows $u \in \c U_0(G,L,M)$.  Thus, $u'$ is in the image of the extension map.
\end{proof}

\begin{remark} \label{rem:nonunitlayering}
In Lemmas \ref{lem:layeringUpsilon1} and \ref{lem:layeringUpsilon2}, the only place where we used the fact that $w(e) \in R^\times$ was for the case of a boundary spike.  When $w(e)$ is not a unit in $R$, the Snake Lemma still yields an exact sequence relating $\Upsilon(G,L)$ and $\Upsilon(G',L')$.  Though we will focus on the case of unit edge weights in this paper, applying layerable filtrations in the general case seems like a promising avenue for future research.
\end{remark}

Now we will describe layerable extensions formed from a sequence of simple layerable extensions.

\begin{definition}
We say that $G'$ is a {\bf finite layerable extension} of $G$ if there exists a finite of sequence of sub-$\partial$-graphs of $G'$
\[
G = G_0 \subseteq G_1 \subseteq \dots \subseteq G_n = G'
\]
such that $G_j$ is a simple layerable extension of $G_{j-1}$.  We call $\{G_j\}$ a {\bf layerable filtration from $G$ to $G'$}.  We say that $G_j$ is {\bf layerable} if it is a layerable extension of $\varnothing$.
\end{definition}

\begin{figure}
\begin{center}

\begin{tikzpicture}[scale=0.8]

	\node at (-1,0.5) {$G$};
	\node at (11,0.5) {$G'$};

	\begin{scope}
		\node[int] (00) at (0,0) {};
		\node[bd] (01) at (0,1) {};
		\node[bd] (10) at (1,0) {};
		
		\draw (01) to (00) to (10); 
	\end{scope}

	\begin{scope}[shift={(2,0)}]
		\node[int] (00) at (0,0) {};
		\node[bd] (01) at (0,1) {};
		\node[bd] (10) at (1,0) {};
		\node[bd] (11) at (1,1) {};
		
		\draw (01) to (00) to (10);
	\end{scope}

	\begin{scope}[shift={(4,0)}]
		\node[int] (00) at (0,0) {};
		\node[bd] (01) at (0,1) {};
		\node[bd] (10) at (1,0) {};
		\node[bd] (11) at (1,1) {};
		
		\draw (01) to (00) to (10);
		\draw (01) to (11);
	\end{scope}

	\begin{scope}[shift={(6,0)}]
		\node[int] (00) at (0,0) {};
		\node[bd] (01) at (0,1) {};
		\node[bd] (10) at (1,0) {};
		\node[bd] (11) at (1,1) {};
		
		\draw (01) to (00) to (10);
		\draw (01) to (11);
		\draw (10) to (11);
	\end{scope}

	\begin{scope}[shift={(8,0)}]
		\node[int] (00) at (0,0) {};
		\node[bd] (01) at (0,1) {};
		\node[bd] (10) at (1,0) {};
		\node[int] (11) at (1,1) {};
		\node[bd] (21) at (2,1) {};
		
		\draw (01) to (00) to (10);
		\draw (01) to (11) to (21);
		\draw (10) to (11);
	\end{scope}

\end{tikzpicture}

\caption{A layerable filtration from a $\partial$-graph $G$ to a layerable extension $G'$ of $G$.}

\end{center}
\end{figure}
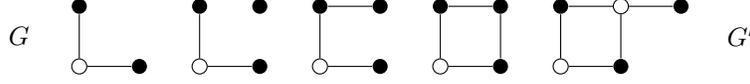

\begin{proposition} \label{prop:filtration}
Suppose that a finite $R^\times$-network $(G',L')$ is a layerable extension of $(G,L)$ through the filtration $\{(G_j,L_j)\}_{j=0}^n$.  Let $S \subseteq V(G')$ be the set of vertices which are adjoined as isolated boundary vertices at some step of the filtration.  Then the inclusion map $(G,L) \to (G',L')$ induces isomorphisms
\begin{align*}
\Upsilon(G',L') &\cong \Upsilon(G,L) \oplus RS \\
\c U(G',L',M) &\cong \c U(G,L,M) \times M^S \\
\c U_0(G',L',M) &\cong \c U_0(G,L,M).
\end{align*}
\end{proposition}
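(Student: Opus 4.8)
The plan is to induct on the length $n$ of the layerable filtration $\{(G_j,L_j)\}_{j=0}^n$, using Lemmas \ref{lem:layeringUpsilon1} and \ref{lem:layeringUpsilon2} to handle one step at a time. Since $(G',L')$ is an $R^\times$-network and each $(G_j,L_j)$ is obtained by restricting $w'$ and $d'$, every $(G_j,L_j)$ is again an $R^\times$-network, so both lemmas apply at each stage; moreover the inclusion $(G_j,L_j)\hookrightarrow (G_{j+1},L_{j+1})$ is an $R$-network morphism (Observation \ref{obs:layeringsubdgraph} together with the matching of $w$ and $d$), hence so is the composite inclusion $(G,L)\hookrightarrow (G',L')$. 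When $n=0$ there is nothing to prove: $G'=G$, $S=\varnothing$, $RS=0$, $M^S=0$, and all three induced maps are the identity.

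For the inductive step, write $S = S'\sqcup S''$, where $S'$ is the set of isolated boundary vertices adjoined in the first $n-1$ steps and $S''$ is adjoined at the last step (so $S''=\{x\}$ if $G'=G_n$ is obtained from $G_{n-1}$ by adjoining the isolated boundary vertex $x$, and $S''=\varnothing$ for a boundary spike or boundary edge); these are disjoint because adjoining a vertex $y$ requires $y\notin V$, so no vertex is adjoined twice. By the inductive hypothesis applied to $\{(G_j,L_j)\}_{j=0}^{n-1}$, the inclusion $(G,L)\hookrightarrow (G_{n-1},L_{n-1})$ induces $\Upsilon(G_{n-1},L_{n-1})\cong \Upsilon(G,L)\oplus RS'$, $\c U(G_{n-1},L_{n-1},M)\cong \c U(G,L,M)\times M^{S'}$, and $\c U_0(G_{n-1},L_{n-1},M)\cong \c U_0(G,L,M)$. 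By Lemmas \ref{lem:layeringUpsilon1} and \ref{lem:layeringUpsilon2} applied to $(G_{n-1},L_{n-1})\hookrightarrow (G',L')$, the inclusion induces $\Upsilon(G',L')\cong \Upsilon(G_{n-1},L_{n-1})\oplus RS''$, $\c U(G',L',M)\cong \c U(G_{n-1},L_{n-1},M)\times M^{S''}$, and an isomorphism on $\c U_0$. Because $\Upsilon$, $\c U(-,-,M)$, and $\c U_0(-,-,M)$ are functorial in the network (Lemmas \ref{lem:upsilonfunctor}, \ref{lem:ufunctor}, \ref{lem:u0functor}), the map induced by the composite inclusion $(G,L)\hookrightarrow (G_{n-1},L_{n-1})\hookrightarrow (G',L')$ is the composite of the two per-stage maps; composing the displayed isomorphisms and using $RS'\oplus RS''\cong RS$ and $M^{S'}\times M^{S''}\cong M^S$ gives the result, and the composite of isomorphisms on $\c U_0$ is again an isomorphism.

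The one point that needs care is that the free summands of $\Upsilon$ accumulate correctly: a summand $Rx$ created when $x$ is adjoined as an isolated boundary vertex must survive every later extension. This is immediate from Lemma \ref{lem:layeringUpsilon1} — a subsequent boundary-spike or boundary-edge extension induces an isomorphism on $\Upsilon$ along inclusion and so carries that summand along unchanged, while a subsequent isolated-boundary-vertex extension merely appends a further free summand without disturbing the earlier ones — so the free summands are indexed exactly by $S$. I do not expect a genuine obstacle here: the substantive content already resides in Lemmas \ref{lem:layeringUpsilon1} and \ref{lem:layeringUpsilon2}, and this proposition is just the routine assembly of those per-step statements over the whole filtration. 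As a remark, the $\c U$-isomorphism can alternatively be obtained by applying $\Hom_R(-,M)$ to the $\Upsilon$-isomorphism via Lemma \ref{lem:hom}, which also exhibits its naturality in $M$.
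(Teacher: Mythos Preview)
Your proof is correct and follows essentially the same approach as the paper: both argue by induction along the filtration, invoking Lemma~\ref{lem:layeringUpsilon1} and Lemma~\ref{lem:layeringUpsilon2} at each step and using functoriality to compose the per-step isomorphisms. The paper organizes the induction slightly differently by setting $S_j = S \cap V(G_j)$ and showing $\Upsilon(G_j,L_j)\oplus R(S_{j+1}\setminus S_j)\to\Upsilon(G_{j+1},L_{j+1})$ is an isomorphism at each stage, but this is cosmetically the same as your split $S=S'\sqcup S''$; your closing remark about deriving the $\c U$-isomorphism via $\Hom_R(-,M)$ also matches the paper's alternative.
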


\begin{proof}
In the first claim, the map $\Upsilon(G,L) \to \Upsilon(G',L')$ is given by functoriality of $\Upsilon$ and the map $RS \to \Upsilon(G',L')$ is given by the composition $RS \to RV(G') \to \Upsilon(G',L')$.  Let $S_j = S \cap V(G_j)$.  The three cases of Lemma \ref{lem:layeringUpsilon1} show that the map
\[
\Upsilon(G_j,L_j) \oplus R(S_{j+1} \setminus S_j) \to \Upsilon(G_{j+1}, L_{j+1})
\]
is an isomorphism (in the case of adjoining a boundary spike or boundary edge, $S_{j+1} \setminus S_j = \varnothing$).  By induction, this implies that
\[
\Upsilon(G,L) \oplus RS_n \to \Upsilon(G_n,L_n) \text{ is an isomorphism,}
\]
which completes the proof because $(G_n,L_n) = (G',L')$.

The second claim follows by application of the functor $\Hom_R(-,M)$ to the first claim, or alternatively by inductive application of the second isomorphism in Lemma \ref{lem:layeringUpsilon1}.  The third claim follows by inductive application of Lemma \ref{lem:layeringUpsilon2}.
\end{proof}

The claim $\c U(G',L',M) \cong \c U(G,L,M) \times M^S$ in Proposition \ref{prop:filtration} has the following interpretation in terms of harmonic extensions:  Any harmonic function $u$ on $(G,L)$ extends to a harmonic function $u'$ on $(G',L')$.  For any $\phi \in M^S$, there is a unique harmonic extension $u'$ of $u$ such that $u'|_S = \phi$.  We can see from the inductive application of Lemma \ref{lem:layeringUpsilon1} that the extension $u'$ can be constructed using step-by-step extensions from $(G_j,L_j)$ to $(G_{j+1},L_{j+1})$.  In other words, a layerable filtration provides a geometric model for discrete harmonic continuation, loosely analogous to the sequences of domains used for harmonic continuation in complex analysis.

The special case of Proposition \ref{prop:filtration} where $G = \varnothing$ deserves special comment:

\begin{proposition} \label{prop:layerablebehavior}
Suppose that $(G',L')$ is a layerable $R^\times$-network, and let $\{(G_j,L_j)\}$ be a layerable filtration from $\varnothing$ to $(G',L')$.  Let $S$ be the set of vertices which are adjoined as isolated boundary vertices at some step of the filration.
\begin{enumerate}
	\item $\Upsilon(G',L')$ is a free $R$-module and a basis is given by $S$.
	\item For every $R$-module $M$, for every $\phi \in M^S$, there is a unique harmonic function $u$ such that $u|_S = \phi$.
	\item The network $(G',L')$ is non-degenerate and $\c U_0(G,L,M) = 0$ for every $R$-module $M$.
\end{enumerate}
\end{proposition}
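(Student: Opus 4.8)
The plan is to deduce all three statements at once from Proposition \ref{prop:filtration}, applied with $(G,L)$ equal to the empty $R^\times$-network $(\varnothing, L_\varnothing)$: the given layerable filtration $\{(G_j,L_j)\}$ from $\varnothing$ to $(G',L')$ is then precisely a filtration exhibiting $(G',L')$ as a layerable extension of $(G,L)$, and $S$ here is the same $S$ as in that proposition. Since $V(\varnothing) = \varnothing$ we have $RV(\varnothing) = 0$, hence $\Upsilon(\varnothing,L_\varnothing) = 0$, $\c U(\varnothing,L_\varnothing,M) = M^\varnothing = 0$, and $\c U_0(\varnothing,L_\varnothing,M) = 0$ for every $R$-module $M$. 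Plugging these trivial modules into the three isomorphisms of Proposition \ref{prop:filtration} yields
\[
\Upsilon(G',L') \cong RS, \qquad \c U(G',L',M) \cong M^S, \qquad \c U_0(G',L',M) \cong 0 .
\]
The first isomorphism is exactly the assertion that $\Upsilon(G',L')$ is free with basis $S$, which is (1); the vanishing $\c U_0(G',L',M) = 0$ for all $M$ — in particular for $M = R$, so that $(G',L')$ is non-degenerate by definition — is (3).

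For (2) I would supplement this with the observation that the isomorphism $\c U(G',L',M) \cong M^S$ obtained above is induced by restriction, i.e. it is the map $u \mapsto u|_S$. This is seen by tracing the induction in the proof of Proposition \ref{prop:filtration}: at each step adjoining an isolated boundary vertex $x$, the isomorphism $\c U(G_{j+1},L_{j+1},M) \cong \c U(G_j,L_j,M) \times M^{\{x\}}$ of Lemma \ref{lem:layeringUpsilon1} is the one induced by the inclusion $(G_j,L_j)\to(G_{j+1},L_{j+1})$, which by contravariant functoriality (Lemma \ref{lem:ufunctor}) sends $u \mapsto (u|_{V(G_j)}, u(x))$; at the steps adjoining a boundary spike or boundary edge the isomorphism on $\c U$ is restriction and $S$ acquires no new vertex. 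Composing all these steps, the total isomorphism $\c U(G',L',M)\xrightarrow{\sim} M^S$ is $u \mapsto u|_S$. Thus restriction to $S$ is a bijection from harmonic $M$-valued functions on $(G',L')$ onto all of $M^S$, which is precisely the existence-and-uniqueness claim in (2).

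I do not expect a real obstacle: everything reduces to Proposition \ref{prop:filtration} once one identifies $\varnothing$ as the base of the filtration. The only point needing a little care is the bookkeeping in the previous paragraph — confirming that the abstract isomorphism coming out of Proposition \ref{prop:filtration} is literally restriction to $S$, rather than some reindexed variant — since that is what licenses the concrete phrasing ``the unique harmonic $u$ with $u|_S = \phi$'' in (2). As an alternative that avoids this identification, one can re-prove (2) directly by induction along the filtration: the empty network has exactly one $M$-valued harmonic function, and each case of Lemma \ref{lem:layeringUpsilon1} shows that a harmonic function on $(G_j,L_j)$ extends uniquely to $(G_{j+1},L_{j+1})$ once its value at the newly adjoined isolated boundary vertex (if there is one at this step) has been prescribed; chaining these extensions proves (2) and simultaneously re-derives the free-basis statement of (1).
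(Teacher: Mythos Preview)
Your proposal is correct and matches the paper's approach exactly: the paper introduces Proposition \ref{prop:layerablebehavior} with the sentence ``The special case of Proposition \ref{prop:filtration} where $G = \varnothing$ deserves special comment,'' and gives no further argument. Your extra care in checking that the abstract isomorphism $\c U(G',L',M)\cong M^S$ is literally restriction to $S$ is more than the paper itself provides, but is indeed what is needed to justify the concrete phrasing of (2).
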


\begin{remark}
If the network in Proposition \ref{prop:layerablebehavior} is finite, then $|S| = |\partial V|$ (one verifies this by induction after observing that adjoining a boundary spike or boundary edge does not change $|\partial V|$).  Thus, $\Upsilon(G,L)$ is a free $R$-module of rank $|\partial V|$.
\end{remark}

\begin{remark} \label{rem:inwardcontinuation}
If we unwind the proof leading up to Proposition \ref{prop:layerablebehavior} (3), it does in fact boil down to the intuitive argument given in \S \ref{subsec:motivationlayering} that we are starting with zero data on the boundary and deducing that $u = 0$ further and further into the network using harmonic continuation.  Indeed, if $(G_0,L_0) \subseteq (G_1,L_1) \subseteq \dots (G_n,L_n) = (G,L)$ is a layerable filtration, then Lemma \ref{lem:layeringUpsilon2} shows
\[
(u|_{\partial V_{j+1}}, L_{j+1} u|_{\partial V_{j+1}}) = 0 \implies (u|_{\partial V_j}, L_j u|_{\partial V_j}) = 0.
\]
Inductive application of this statement showed that zero potential and current conditions propagate inward from $\partial V_n$ to $\partial V_{n-1}$ and so forth to $\partial V_0$.  Since all vertices are contained in some $\partial V_j$, this shows $u \equiv 0$.  By linearity, the same argument shows that a harmonic function is uniquely determined by its boundary data and the values can be deduced using harmonic continuation from the boundary inward.
\end{remark}

\subsection{Algebraic Characterization of Layerability}

We are now ready to give our algebraic characterization of layerability.  We begin by considering networks over a field and then apply our algebraic machinery to state several other equivalent conditions.

\begin{lemma} \label{lem:layerabilityfieldcharacterization}
Let $G$ be a finite $\partial$-graph and let $F$ be a field with at least three elements.  Then $G$ is layerable if and only if every $F^\times$-network on the $\partial$-graph $G$ is non-degenerate.
\end{lemma}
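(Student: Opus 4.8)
The plan is to prove the two implications separately. The direction ``$G$ layerable $\implies$ every $F^\times$-network on $G$ is non-degenerate'' is immediate: if $G$ is a layerable $\partial$-graph then any $F^\times$-network $(G,L)$ on it is a layerable $F^\times$-network, so Proposition \ref{prop:layerablebehavior}(3) gives $\c U_0(G,L,F)=0$, which is exactly non-degeneracy. For the converse I would argue by contraposition: assuming $G$ is not layerable, I will exhibit a degenerate $F^\times$-network on it, and this is where all the work lies.

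First I would reduce to a ``core'' subgraph. Apply layer-stripping operations to $G$ greedily until none is available; since each operation strictly decreases $|V|+|E|$ and $G$ is finite, this terminates at a sub-$\partial$-graph $H$ (using Observation \ref{obs:layeringsubdgraph}) that admits no layer-stripping operation, i.e.\ $H$ has no isolated boundary vertex, no boundary spike, and no boundary edge. Reading the stripping sequence backwards exhibits $G$ as a finite layerable extension of $H$; in particular, if $H$ were empty then $G$ would be layerable, so in fact $H\neq\varnothing$. Now extend any $F^\times$-network $(H,L_H)$ to an $F^\times$-network $(G,L')$ on $G$ by giving the adjoined edges weight $1$ and the adjoined vertices arbitrary diagonal values; then $(G,L')$ is a layerable extension of $(H,L_H)$, and Proposition \ref{prop:filtration} gives $\c U_0(G,L',F)\cong\c U_0(H,L_H,F)$. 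So it suffices to produce a degenerate $F^\times$-network on $H$.

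The heart of the argument is the construction on $H$. From the three forbidden configurations one checks: $V^\circ(H)\neq\varnothing$ (otherwise every vertex would be an isolated boundary vertex); every edge incident to a boundary vertex has its other endpoint interior (no boundary edges, nor loops at the boundary); and every boundary vertex has degree $\geq 2$ (degree $1$ would force a boundary spike or a boundary edge). Take $u=1_{V^\circ(H)}$, the indicator of the interior vertices, a nonzero finitely supported function vanishing on $\partial V(H)$; I will choose $w$ and $d$ so that $Lu\equiv 0$ on all of $V(H)$. For an interior vertex $x$ one computes $Lu(x)=d(x)+\sum_{e\in\mathcal{E}(x),\, e_-\in\partial V} w(e)$, which is killed by an appropriate choice of the (unconstrained) diagonal entry $d(x)$. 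For a boundary vertex $x$ one computes $Lu(x)=-\sum_{e\in\mathcal{E}(x)} w(e)$, a sum over the $\deg(x)\geq 2$ edges at $x$; the decisive point is that because $H$ has no boundary edges the edge sets $\mathcal{E}(x)$ for distinct boundary vertices $x$ are disjoint, so these conditions decouple and can be solved independently by choosing, for each boundary $x$, a tuple of $\deg(x)\geq 2$ nonzero elements of $F$ summing to $0$. All remaining (interior-to-interior) edges get weight $1$. This yields a degenerate $F^\times$-network $(H,L_H)$, hence one on $G$, completing the contrapositive.

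I expect the main obstacle to be exactly the two verifications just flagged: (i) the independence of the boundary-vertex equations, which rests entirely on the absence of boundary edges (and boundary loops) in the stripped graph $H$; and (ii) the existence, for each $k:=\deg(x)\geq 2$, of nonzero $w_1,\dots,w_k\in F$ with $\sum_i w_i=0$ --- for $k=2$ take $\{1,-1\}$, and for $k\geq 3$ take $k-2$ copies of $1$ together with $a$ and $-(k-2)-a$ for any $a\notin\{0,-(k-2)\}$. It is precisely at step (ii) that the hypothesis $|F|\geq 3$ is used; over $\mathbb{F}_2$ one would need $\deg(x)$ even for every boundary vertex of the core, so this construction cannot be pushed to the two-element field.
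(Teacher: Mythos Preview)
Your proof is correct and follows essentially the same route as the paper's. The only cosmetic differences are that the paper passes to a \emph{minimal non-layerable sub-$\partial$-graph} $G'$ rather than the greedily layer-stripped flower $H$ (both yield a nonempty sub-$\partial$-graph with no boundary spikes, edges, or isolated boundary vertices), and the paper transfers degeneracy back to $G$ via the sub-$\partial$-graph inclusion and Lemma~\ref{lem:u0functor} rather than via Proposition~\ref{prop:filtration}; the core construction of $u=1_{V^\circ}$ together with the edge-weight and diagonal choices is identical.
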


\begin{proof}
If $G$ is layerable and $(G,L)$ is an $F^\times$-network on $G$, then $(G,L)$ is non-degenerate by Proposition \ref{prop:layerablebehavior} (3).

To prove the converse, suppose $G$ is not layerable.  Let $G'$ be a minimal non-layerable sub-$\partial$-graph of $G$.  Since $G'$ is minimal, it cannot have a boundary spike, boundary edge, or isolated boundary vertex, since performing a layer-stripping operation on $G'$ would preserve layerability.  Hence, every boundary vertex in $G'$ must have multiple edges incident to it, and all its neighbors are interior vertices.  In particular, $\mathcal{E}(x)$ and $\mathcal{E}(y)$ are disjoint for distinct vertices $x, y \in \partial V(G')$.  Since $F$ has at least three elements, we can write zero as the sum of $n$ nonzero elements for every $n \in \N$.  Thus, we can choose $w': E(G') \to F^\times$ such that $\sum_{e \in \mathcal{E}(x)} w'(e) = 0$ for every $x \in \partial V(G')$.

Let $u: V(G') \to F$ be given by $u = 0$ on $\partial V(G')$ and $u = 1$ on $V^\circ(G')$.  Define $d: V(G') \to F$ such that
\[
d'(x) = - \sum_{\substack{e \in \mathcal{E}(x) \\ e_- \in \partial V(G')}} w'(e).
\]
Then note that
\[
L'u(x) = d'(x) + \sum_{\substack{e \in \mathcal{E}(x) \\ e_- \in \partial V(G')}} w'(e) = 0 \text{ for } x \in V^\circ(G'),
\]
while
\[
L'u(x) = -\sum_{e \in \mathcal{E}(x)} w'(e) = 0 \text{ for } x \in \partial V(G').
\]
Therefore, $u \in \c U_0(G',L',F)$ and clearly $u$ is not identically zero.

Let $w$ and $d$ be any extension of $w'$ and $d'$ to $G$.  Then $(G',L') \to (G,L)$ is an injective $F^\times$-network morphism and hence extension by zero defines an injective map $\c U_0(G',L') \to \c U_0(G,L)$ (as a special case of Lemma \ref{lem:u0functor}).  Thus, $\c U_0(G,L)$ is nonzero, so $(G,L)$ is degenerate as desired.
\end{proof}

Next, we state some other equivalent algebraic conditions.  In (4) below, $R^*(G,F)$ and $L^*(G,F)$ are the ring and network defined in Definition \ref{def:genericfieldnetwork}.  The ring $R^*$ is the polynomial algebra $F(t_e^{\pm 1}, e \in E; t_x, x \in V)$, where $t_{\overline{e}} = t_e$, and $L^*$ is given by setting $w(e) = t_e$ and $d(x) = t_x$.

\begin{theorem} \label{thm:layerabilitycharacterization}
Let $G$ be a finite $\partial$-graph.  The following are equivalent:
\begin{enumerate}
	\item $G$ is layerable.
	\item For every ring $R$, every $R^\times$-network on $G$ is non-degenerate.
	\item For every ring $R$, for every non-degenerate $R^\times$-network $(G,L)$ on the $\partial$-graph $G$, $\Upsilon(G,L)$ is a free $R$-module.
	\item There exists a field $F$ with at least three elements such that $\Upsilon(G, L^*(G,F))$ is a flat $R^*(G,F)$-module.
	\item There exists a field $F$ with at least three elements such that every $F^\times$-network on $G$ is non-degenerate.
\end{enumerate}
\end{theorem}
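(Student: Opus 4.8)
The plan is to prove the theorem by assembling implications already available from the preceding results, routing everything through condition (1). Concretely, I would establish the cycle $(1)\Rightarrow(3)\Rightarrow(4)\Rightarrow(5)\Rightarrow(1)$ and then fold in condition (2) via the two extra arrows $(1)\Rightarrow(2)\Rightarrow(5)$. Since $(5)$ and $(1)$ will be shown equivalent, this closes everything. Almost every arrow is a direct invocation of an earlier statement, so the write-up is mostly bookkeeping.

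Here is how each arrow goes. For $(1)\Rightarrow(2)$ and $(1)\Rightarrow(3)$: if $G$ is layerable, then assigning \emph{any} unit weights $w\colon E\to R^\times$ and \emph{any} diagonal function $d\colon V\to R$ produces a layerable $R^\times$-network $(G,L)$, so Proposition~\ref{prop:layerablebehavior} applies verbatim; part (3) of that proposition gives non-degeneracy (hence (2)) and part (1) gives that $\Upsilon(G,L)$ is a free $R$-module (hence (3), where the non-degeneracy hypothesis is in fact automatic and unused). For $(2)\Rightarrow(5)$: simply take any field $F$ with at least three elements, for instance $F=\Z/3$, and apply (2) with $R=F$. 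For $(4)\Rightarrow(5)$: this is exactly the second assertion of Proposition~\ref{prop:genericfieldnetwork}, namely that flatness of $\Upsilon(G,L^*(G,F))$ forces every $F^\times$-network on $G$ to be non-degenerate. For $(5)\Rightarrow(1)$: this is precisely Lemma~\ref{lem:layerabilityfieldcharacterization}.

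The one arrow that requires an actual idea rather than a citation is $(3)\Rightarrow(4)$, and this is where I expect the main (mild) obstacle to lie: condition (3) only constrains \emph{non-degenerate} networks, whereas over a field every $\Upsilon$ is automatically free, so (3) has no bite over a field and one must manufacture a non-degenerate network over a larger ring to leverage. The fix is to use the generic network: fix any field $F$ with at least three elements, pass to the ring $R^*=R^*(G,F)$ and the Laplacian $L^*=L^*(G,F)$ of Definition~\ref{def:genericfieldnetwork}, and recall from Proposition~\ref{prop:genericfieldnetwork} that the $(R^*)^\times$-network $(G,L^*)$ is non-degenerate. Then hypothesis (3), applied with $R=R^*$, says $\Upsilon(G,L^*)$ is a free $R^*$-module, hence in particular flat, which is exactly the assertion of (4). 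Once this routing is noticed the proof introduces no new mathematics; the remaining care is just to keep track that (3) is being used "for every ring" (here $R^*$) and that (4) and (5) are formulated for "some field with at least three elements," which is automatically supplied since such fields exist.
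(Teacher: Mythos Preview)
Your proposal is correct and essentially identical to the paper's own proof: the paper also runs the two cycles $(1)\Rightarrow(2)\Rightarrow(5)\Rightarrow(1)$ and $(1)\Rightarrow(3)\Rightarrow(4)\Rightarrow(5)\Rightarrow(1)$, citing Proposition~\ref{prop:layerablebehavior}, Lemma~\ref{lem:layerabilityfieldcharacterization}, and Proposition~\ref{prop:genericfieldnetwork} at exactly the same places you do, including the key observation for $(3)\Rightarrow(4)$ that the generic network $(G,L^*)$ is non-degenerate so hypothesis (3) applies to it.
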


\begin{proof}
We prove the implications in two cycles.  First, (1) $\implies$ (2) by Proposition \ref{prop:layerablebehavior}; (2) $\implies$ (5) trivially; and (5) $\implies$ (1) by Lemma \ref{lem:layerabilityfieldcharacterization}.

Next, observe (1) $\implies$ (3) by Proposition \ref{prop:layerablebehavior}; (3) $\implies$ (4) because $(G,L^*)$ is non-degenerate (Proposition \ref{prop:genericfieldnetwork}) and because free modules are automatically flat; (4) $\implies$ (5) was proved in Proposition \ref{prop:genericfieldnetwork}; and we already know (5) $\implies$ (1).
\end{proof}

\begin{remark}
Based on the statement alone, one can come up with several other equivalent conditions:  For instance, ``$\Upsilon(G,L)$ is flat for every non-degenerate $R^\times$-network on $G$ for every ring $R$,'' or ``every $F^\times$-network on $G$ is non-degenerate for every field $F$.''
\end{remark}

\subsection{Transformations of Harmonic Boundary Data} \label{subsec:layeringcoordinates}

We now describe how adjoining a boundary spike or boundary edge affects the boundary potential and current data of harmonic functions.  The simple coordinate system given here will be used later in Theorem \ref{thm:explicitalgorithm}, which gives an algorithm for simplifying computation of $\c U_0(G,L,M)$.  The discussion will also relate layer-stripping to symplectic matrices as in \cite{LP} \cite[\S 12]{WJ}.

For a harmonic function $u \in \c U(G,L,M)$, the term {\bf boundary data} will refer to $(u|_{\partial V}, Lu|_{\partial V})$.  If $(G',L')$ is a simple layerable extension of $(G,L)$ and if $u \in \c U(G,L,M)$ extends to $u' \in \c U(G',L',M')$, then it is easy to explicitly compute the boundary data of $u'$ from that of $u$.

{\bf Adjoining a Boundary Spike:}  Suppose $G'$ is obtained from $G$ by adjoining a boundary spike $e$ with $e_+ \in \partial V(G')$ and $e_- \in V^\circ(G')$, and assume $w(e) = w \in R^\times$ and $d(e_+) = d \in R$.  For explicitness, we will index the boundary vertices of each $\partial$-graph by integers:  Let $[m]$ denote $\{1,\dots,m\}$, and let $\ell: [m] \to \partial V(G)$ and $\ell': [m] \to \partial V(G')$ be bijections labelling the vertices.  We assume the two labellings are {\bf consistent}, meaning that $e_- \in \partial V(G)$ and $e_+ \in \partial V(G')$ have the same index, and every $x \in \partial V(G) \cap \partial V(G')$ has the same index with respect to the two different labellings.

Let $j$ be the index of $e_- \in \partial V(G)$ and $e_+ \in \partial V(G')$, so that $e_+ = \ell'(j)$ and $e_- = \ell(j)$.  Let $E_{p,q}$ denote the matrix with a $1$ in the $(p,q)$ entry and zeros elsewhere.  Then we claim that
\[
\begin{pmatrix} u' \circ \ell' \\ L'u' \circ \ell' \end{pmatrix} = \begin{pmatrix} I & w^{-1}E_{j,j} \\ d E_{j,j} & I + dw^{-1} E_{j,j} \end{pmatrix} \begin{pmatrix} u \circ \ell \\ Lu \circ \ell \end{pmatrix},
\]
where the matrix blocks are $m \times m$, and $u \circ \ell$ is viewed as a vector in $M^m$.  Note that the matrix has a simpler form when $d = 0$.  To verify the matrix formula, note that
\[
0 = L'u'(e_-) = Lu(e_-) + w \cdot (u(e_-) -  u(e_+))
\]
so that
\[
u'(e_+) = u(e_-) + w^{-1} Lu(e_-)
\]
and
\[
L'u'(e_+) = d \cdot u'(e_+) + w(u'(e_+) - u'(e_-)) = d u(e_-) + (d w^{-1} + 1) Lu(e_-)
\]

{\bf Adjoining a Boundary Edge:}  Suppose $G'$ is obtained from $G$ by adjoining a boundary edge $e$ with $w = w(e)$.  In this case, $\partial V(G) = \partial V(G')$, so two indexings $\ell: [m] \to \partial V(G)$ and $\ell': [m] \to \partial V(G')$ are called {\bf consistent} if $\ell = \ell'$.  Assume that the indices of $e_-$ and $e_+$ are $i$ and $j$.  Then a similar computation as before shows that
\[
\begin{pmatrix} u' \circ \ell' \\ L'u' \circ \ell' \end{pmatrix} = \begin{pmatrix} I & 0 \\ w(E_{i,i} + E_{j,j} - E_{i,j} - E_{j,i}) & I \end{pmatrix} \begin{pmatrix} u \circ \ell \\ Lu \circ \ell \end{pmatrix}.
\]

The matrices described above will be called the {\bf boundary data transformations} for adjoining a boundary spike or boundary edge.

{\bf Standard Form for Layerable Filtrations:}  In general a layerable filtration is allowed to mix adjoining boundary spikes, adjoining boundary edges, and adjoining isolated boundary vertices in any order.  However, it is sometimes convenient for the sake of computation to assume that all the isolated boundary vertices are adjoined before the other operations.  For finite networks, we can always arrange this; simply take all the isolated boundary vertices that are adjoined at any step of the filtration, adjoin them at the beginning, and include them in all subsequent $\partial$-graphs in the filtration.

A {\bf standard form layerable filtration} for $(G,L)$ is a sequence of $R$-networks
\[
(G_0,L_0) \subseteq (G_1,L_1) \subseteq \dots \subseteq (G_n,L_n) = (G,L),
\]
where $(G_0,L_0)$ consists entirely of isolated boundary vertices, and $(G_{j+1},L_{j+1})$ is obtained from $(G_j,L_j)$ by adjoining a boundary spike or boundary edge.  A {\bf consistent labelling} for such a filtration consists of (bijective) labelling functions $\ell_j: [m] \to \partial V_j$ such that for each $j$, $\ell_j$ and $\ell_{j+1}$ are consistent in the sense described above for the two cases of adjoining a boundary spike and adjoining a boundary edge.  See Figure \ref{fig:standardformfiltration}.

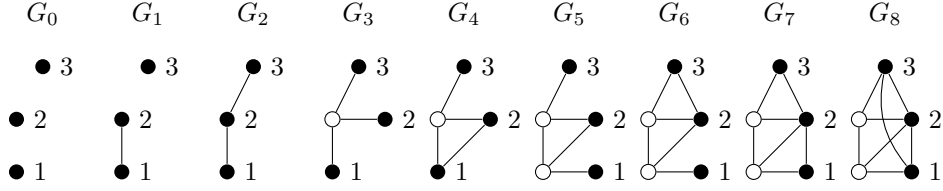
\begin{figure}
\begin{center}
\begin{tikzpicture}[scale=0.7]

	\begin{scope}[shift={(0,0)}]
		\node at (0.5,3) {$G_0$};
	
		\node[bd] (1) at (0,0) [label = right:$1$] {};
		\node[bd] (2) at (0,1) [label = right:$2$] {};
		\node[bd] (3) at (0.5,2) [label = right:$3$] {};
	\end{scope}

	\begin{scope}[shift={(2,0)}]
		\node at (0.5,3) {$G_1$};
	
		\node[bd] (1) at (0,0) [label = right:$1$] {};
		\node[bd] (2) at (0,1) [label = right:$2$] {};
		\node[bd] (3) at (0.5,2) [label = right:$3$] {};
		
		\draw (1) to (2);
	\end{scope}

	\begin{scope}[shift={(4,0)}]
		\node at (0.5,3) {$G_2$};
	
		\node[bd] (1) at (0,0) [label = right:$1$] {};
		\node[bd] (2) at (0,1) [label = right:$2$] {};
		\node[bd] (3) at (0.5,2) [label = right:$3$] {};
		
		\draw (1) to (2);
		\draw (2) to (3);
	\end{scope}

	\begin{scope}[shift={(6,0)}]
		\node at (0.5,3) {$G_3$};
	
		\node[bd] (1) at (0,0) [label = right:$1$] {};
		\node[int] (2) at (0,1) {};
		\node[bd] (3) at (0.5,2) [label = right:$3$] {};
		\node[bd] (5) at (1,1) [label = right:$2$] {};
		
		\draw (1) to (2);
		\draw (2) to (3);
		\draw (2) to (5);
	\end{scope}

	\begin{scope}[shift={(8,0)}]
		\node at (0.5,3) {$G_4$};
	
		\node[bd] (1) at (0,0) [label = right:$1$] {};
		\node[int] (2) at (0,1) {};
		\node[bd] (3) at (0.5,2) [label = right:$3$] {};
		\node[bd] (5) at (1,1) [label = right:$2$] {};
		
		\draw (1) to (2);
		\draw (2) to (3);
		\draw (2) to (5);
		\draw (1) to (5);
	\end{scope}

	\begin{scope}[shift={(10,0)}]
		\node at (0.5,3) {$G_5$};
	
		\node[int] (1) at (0,0) {};
		\node[int] (2) at (0,1) {};
		\node[bd] (3) at (0.5,2) [label = right:$3$] {};
		\node[bd] (4) at (1,0) [label = right:$1$] {};
		\node[bd] (5) at (1,1) [label = right:$2$] {};
		
		\draw (1) to (2);
		\draw (2) to (3);
		\draw (2) to (5);
		\draw (1) to (5);
		\draw (1) to (4);
	\end{scope}

	\begin{scope}[shift={(12,0)}]
		\node at (0.5,3) {$G_6$};
	
		\node[int] (1) at (0,0) {};
		\node[int] (2) at (0,1) {};
		\node[bd] (3) at (0.5,2) [label = right:$3$] {};
		\node[bd] (4) at (1,0) [label = right:$1$] {};
		\node[bd] (5) at (1,1) [label = right:$2$] {};
		
		\draw (1) to (2);
		\draw (2) to (3);
		\draw (2) to (5);
		\draw (1) to (5);
		\draw (1) to (4);
		\draw (3) to (5);
	\end{scope}

	\begin{scope}[shift={(14,0)}]
		\node at (0.5,3) {$G_7$};
	
		\node[int] (1) at (0,0) {};
		\node[int] (2) at (0,1) {};
		\node[bd] (3) at (0.5,2) [label = right:$3$] {};
		\node[bd] (4) at (1,0) [label = right:$1$] {};
		\node[bd] (5) at (1,1) [label = right:$2$] {};
		
		\draw (1) to (2);
		\draw (2) to (3);
		\draw (2) to (5);
		\draw (1) to (5);
		\draw (1) to (4);
		\draw (3) to (5);
		\draw (4) to (5);
	\end{scope}

	\begin{scope}[shift={(16,0)}]
		\node at (0.5,3) {$G_8$};
	
		\node[int] (1) at (0,0) {};
		\node[int] (2) at (0,1) {};
		\node[bd] (3) at (0.5,2) [label = right:$3$] {};
		\node[bd] (4) at (1,0) [label = right:$1$] {};
		\node[bd] (5) at (1,1) [label = right:$2$] {};
		
		\draw (1) to (2);
		\draw (2) to (3);
		\draw (2) to (5);
		\draw (1) to (5);
		\draw (1) to (4);
		\draw (3) to (5);
		\draw (4) to (5);
		\draw (3) to [bend right = 25] (4);
	\end{scope}
	
\end{tikzpicture}

\end{center}

\caption{A layerable filtration in standard form with a consistent labelling.} \label{fig:standardformfiltration}

\end{figure}

\begin{lemma} \label{lem:layeringcoordinates}
Let $(G,L)$ be a layerable $R^\times$-network and let $M$ be an $R$-module.  Let $(G_0,L_0), \dots, (G_n,L_n)$ be a standard form layerable filtration for $(G,L)$, let $\ell_0, \dots, \ell_n$ be a consistent labelling for it.  Let
\[
T_0 = \begin{pmatrix} I & 0 \\ D & I \end{pmatrix},
\]
where $D = \text{diag}(d \circ \ell_0(1), \dots, d \circ \ell_0(m))$.  For $j \geq 0$, let $T_j$ be the boundary data transformation associated to the operation $(G_{j-1},L_{j-1}) \mapsto (G_j,L_j)$.  For every $\phi \in M^m$, there is a unique harmonic function such that $u|_{\partial V_0} \circ \ell_0 = \phi$; and if $u_j = u|_{V_j}$, we have
\[
\begin{pmatrix} u_j \circ \ell_j \\ L_j u_j \circ \ell_j \end{pmatrix} = T_j \dots T_1 T_0 \begin{pmatrix} \phi \\ 0 \end{pmatrix} \text{ for } j = 0, \dots, n.
\]
\end{lemma}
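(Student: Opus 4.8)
The plan is to assemble the statement from results already in hand: existence and uniqueness of $u$ will come from Proposition \ref{prop:layerablebehavior}, and the matrix identity will follow by induction on $j$ from the boundary data transformations computed just above.

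First I would produce $u$. Since the filtration is in standard form, $(G_0,L_0)$ consists entirely of isolated boundary vertices; prepending to it a filtration $\varnothing = H_0 \subseteq \dots \subseteq H_m = G_0$ that adjoins these $m$ vertices one at a time yields a layerable filtration from $\varnothing$ to $(G,L) = (G_n,L_n)$ whose set $S$ of vertices adjoined as isolated boundary vertices is exactly $V(G_0) = \partial V_0$ (no later step adjoins such a vertex). Composing with the bijection $\ell_0 \colon [m] \to \partial V_0$ identifies $M^S$ with $M^m$, so Proposition \ref{prop:layerablebehavior}(2) produces, for each $\phi \in M^m$, a unique harmonic $u$ on $(G,L)$ with $u|_{\partial V_0}\circ\ell_0 = \phi$.

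Next, fixing such a $\phi$ and $u$ and setting $u_j = u|_{V_j}$, I would first check that $u_j$ really is a harmonic function on $(G_j,L_j)$: each $G_j$ is a sub-$\partial$-graph of $G$ (Observation \ref{obs:layeringsubdgraph} plus transitivity), so by Definition \ref{def:sub-d-graph} we have $\mathcal{E}_{G_j}(x) = \mathcal{E}_G(x)$ for $x \in V_j^\circ$, hence $L_j u_j(x) = Lu(x) = 0$ there; and $u_j$ restricts to $u_{j-1}$ on $V_{j-1}$. Then I would prove $\bigl(u_j\circ\ell_j,\ L_j u_j\circ\ell_j\bigr)^T = T_j\cdots T_1 T_0\,(\phi,0)^T$ by induction on $j$. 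The base case $j=0$ is immediate: $G_0$ has no edges, so $L_0 u_0(x) = d(x)u_0(x)$ for every $x \in V_0 = \partial V_0$, giving $L_0 u_0\circ\ell_0 = D\phi$ and hence $(u_0\circ\ell_0,\ L_0 u_0\circ\ell_0)^T = (\phi, D\phi)^T = T_0(\phi,0)^T$.

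For the inductive step, Lemma \ref{lem:layeringUpsilon1} gives an isomorphism between $\c U(G_{j-1},L_{j-1},M)$ and $\c U(G_j,L_j,M)$, so $u_{j-1}$ has a unique harmonic extension to $(G_j,L_j)$, which must be $u_j$; the explicit boundary-spike and boundary-edge computations preceding that lemma then show exactly that $\bigl(u_j\circ\ell_j,\ L_j u_j\circ\ell_j\bigr)^T = T_j\bigl(u_{j-1}\circ\ell_{j-1},\ L_{j-1}u_{j-1}\circ\ell_{j-1}\bigr)^T$, and combining with the inductive hypothesis closes the induction. The one thing to be careful about — essentially the only genuine obstacle — is the bookkeeping with consistent labellings: one must verify that the retained boundary vertices keep their indices (so the unchanged values of $u$ and $Lu$ there land in the right coordinates of $T_j$) and that the newly adjoined or relabelled vertex carries the index used to build $T_j$. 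Since consistency of the labellings is precisely what makes this work and the matrix forms of $T_j$ were already derived in the two cases, the proof is then a matter of assembly.
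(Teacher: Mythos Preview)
Your proposal is correct and follows essentially the same approach as the paper: existence and uniqueness of $u$ from the layerable-filtration machinery, then induction on $j$ with the base case coming from $G_0$ having no edges and the inductive step from the boundary data transformations just computed. The only cosmetic difference is that the paper cites Proposition~\ref{prop:filtration} directly (with $G=\varnothing$) rather than Proposition~\ref{prop:layerablebehavior}, and it omits the explicit verifications that $u_j$ is harmonic on $(G_j,L_j)$ and that $u_j$ is the unique extension of $u_{j-1}$ --- your added care there is welcome but not a different route.
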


\begin{proof}
The existence and uniqueness of $u$ follows from Proposition \ref{prop:filtration}.  To prove the matrix equation, note that $G_0$ consists of isolated boundary vertices.  Thus, all functions $u_0$ are harmonic and have $L_0 u(x) = d(x) u(x)$, which establishes the case $j = 0$.  Then the equation follows for all $j$ by inductive application of the foregoing computation.
\end{proof}

\begin{remark}
In the last lemma, computing the boundary data of $u_j$ at every step of the filtration is sufficient to find the values of $u$ on all of $G$.  This is because every vertex of $G$ must be a boundary vertex at some step of the filtration.
\end{remark}

\begin{remark}
To simplify the statement of Lemma \ref{lem:layeringcoordinates}, we have assumed that each layerable extension adds only one boundary spike or boundary edge.  In general, it may be computationally convenient to adjoin multiple spikes or multiple boundary edges at once.  For instance, if $m = 2$ and one adjoins two boundary spikes with parameters $w_1$, $w_2$ and $d_1$, $d_2$ then the matrix is
\[
\begin{pmatrix}
1 & 0 & w_1^{-1} & 0 \\
0 & 1 & 0 & w_2^{-1}  \\
d_1 & 0 & 1 + d_1 w_1^{-1} & 0 \\
0 & d_2 & 0 & 1 + d_2 w_2^{-1}
\end{pmatrix}.
\]
\end{remark}

\begin{remark} \label{rem:alternativebookkeeping}
Recording boundary potential and current data is not the only feasible bookkeeping method for the harmonic continuation process.  In some situations, it could more convenient to record the values of $u$ on the boundary of $G_j$ and all vertices adjacent to the boundary, rather than the recording $u$ and $Lu$ on the boundary.
\end{remark}

\begin{remark}
Direct computation will verify that the boundary data transformations discussed here are all symplectic matrices, that is, they satisfy
\[
T^t J T = J,
\]
where
\[
J = \begin{pmatrix} 0 & -I \\ I & 0 \end{pmatrix}.
\]
The relationship between layer-stripping and the symplectic group over $\R$ was studied in \cite{LP} from the viewpoint of Lie theory, while our discussion makes the connection in a more elementary and explicit way.  The same matrices were written down in \cite[p.\ 48]{WJ}, but without the interpretation as transformations of boundary behavior.
\end{remark}

\subsection{Using Harmonic Continuation to Compute $\c U_0$}

Harmonic continuation can be applied on non-layerable networks as well.  In \S \ref{subsec:CLFcontinuation}, we used harmonic continuation to compute $\c U_0(\CLF(m,n), L_{\std}, \Q / \Z)$.  Now we generalize this approach to compute $\c U_0(G,L,M)$ for a finite $R^\times$-network $(G,L)$, using layerable filtrations to keep track of the harmonic continuation process.

If $G$ is a $\partial$-graph and $S \subseteq V^\circ(G)$, then we define $G_{S \to \partial}$ as the $\partial$-graph obtained by changing the vertices in $S$ from interior to boundary, so that
\[
V(G_{S \to \partial}) = V(G), \quad E(G_{S \to \partial}) = E(G), \quad \partial V(G_{S \to \partial}) = \partial V(G) \sqcup S.
\]

\begin{theorem} \label{thm:explicitalgorithm}
Let $(G,L)$ be a finite $R^\times$-network, let $S \subseteq V^\circ(G)$, and suppose $G_{S \to \partial}$ is layerable.  Then there exists a matrix $A \in M_{|S| \times (|\partial V| + |S|)}(R)$ such that for every $R$-module $M$
\[
\c U_0(G,L,M) \cong \ker \left(A: M^{|S|} \to M^{|S| + |\partial V|}\right).
\]
The matrix $A$ can be computed explicitly from a given layerable filtration for $G_{S \to \partial}$, as described by equations \eqref{eq:complement} and \eqref{eq:matrixA} below.
\end{theorem}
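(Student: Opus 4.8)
The plan is to realize $\c U_0(G,L,M)$ as a submodule of $M^S$ cut out by $|\partial V|+|S|$ explicit $R$-linear equations, obtained by eliminating the ``deep interior'' variables indexed by $T:=V^\circ(G)\setminus S=V^\circ(G_{S\to\partial})$ from the system $Lu=0$. I would begin by noting that restriction $u\mapsto u|_S$ is injective on $\c U_0(G,L,M)$: if $u\in\c U_0(G,L,M)$ vanishes on $S$ it vanishes on all of $\partial V(G_{S\to\partial})=\partial V(G)\sqcup S$, and since it is harmonic on $G_{S\to\partial}$ with $Lu\equiv 0$ it belongs to $\c U_0(G_{S\to\partial},L,M)$, which is $0$ by Proposition~\ref{prop:layerablebehavior}(3) because $G_{S\to\partial}$ is layerable. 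Thus the theorem is exactly the assertion that the image of this embedding is the kernel of a map $A\colon M^{|S|}\to M^{|S|+|\partial V|}$ given by a matrix over $R$.

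To produce $A$, recall (as in the proof of Proposition~\ref{prop:tor}, using $RV^\circ\otimes M\cong M^{V^\circ}$ for finite $G$) that $\c U_0(G,L,M)=\ker\bigl(L\colon M^{V^\circ(G)}\to M^V\bigr)$, where $L$ denotes the $|V|\times|V^\circ(G)|$ submatrix of the generalized Laplacian. Split the columns according to $V^\circ(G)=S\sqcup T$, writing $L=[\,L_{V,S}\mid L_{V,T}\,]$ and $u=(u_S,u_T)$, so that $Lu=0$ reads $L_{V,T}u_T=-L_{V,S}u_S$. The block $L_{V,T}\colon M^T\to M^V$ is precisely the map whose kernel is $\c U_0(G_{S\to\partial},L,M)$, hence is injective for \emph{every} $M$; by standard commutative algebra this forces $L_{V,T}\colon R^T\to R^V$ to be a split injection, so $\coker(L_{V,T})$ is a projective $R$-module of rank $|V|-|T|=|\partial V|+|S|$. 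Consequently, for every $M$, the equation $L_{V,T}u_T=-L_{V,S}u_S$ has a (then unique) solution $u_T$ if and only if the image of $-L_{V,S}u_S$ in $\coker(L_{V,T})\otimes M$ vanishes, which gives a natural isomorphism $\c U_0(G,L,M)\cong\ker\bigl(\pi\circ(-L_{V,S})\colon M^S\to\coker(L_{V,T})\otimes M\bigr)$ under $(u_S,u_T)\mapsto u_S$, where $\pi\colon R^V\to\coker(L_{V,T})$ is the projection.

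It remains to trivialize $\coker(L_{V,T})$, i.e.\ to choose an explicit free complement to $L_{V,T}(R^T)$ in $R^V$; this is equation~\eqref{eq:complement}, and here the layerable filtration of $G_{S\to\partial}$ does the work. Taking a (standard-form) filtration with a consistent labelling, the boundary-data transformations $T_0,\dots,T_n$ of Lemma~\ref{lem:layeringcoordinates} and their product track the unique harmonic extension on $G_{S\to\partial}$ of prescribed initial data; reading off the appropriate coordinates of this extension identifies $\coker(L_{V,T})$ with $R^{|\partial V|+|S|}$, and composing with $-L_{V,S}$ produces the matrix $A$ of equation~\eqref{eq:matrixA}. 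Naturality in $M$ is then automatic, since each isomorphism above is $(-)\otimes M$ applied to a fixed $R$-linear map, so $\c U_0(G,L,M)\cong\ker\bigl(A\colon M^{|S|}\to M^{|S|+|\partial V|}\bigr)$ for all $M$.

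The main obstacle is carrying out this trivialization over an arbitrary commutative ring. One cannot simply invert the deep-interior block $L_{T,T}$ and take a Schur complement — over $\Z$, for instance, $L_{T,T}$ may have determinant $2$ — so the complement must genuinely exploit the elementary, step-by-step structure of the layerable filtration rather than any abstract splitting of $L_{V,T}$; this is exactly why the explicit bookkeeping of Lemma~\ref{lem:layeringcoordinates} was set up. One should also check that the resulting matrix $A$ is well-defined up to the evident changes of basis when the filtration or labelling is varied.
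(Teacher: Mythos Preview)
Your argument is correct, but it follows a more abstract-algebraic route than the paper's proof, and your guesses about the content of equations~\eqref{eq:complement} and~\eqref{eq:matrixA} are off.

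Your approach is a block-elimination argument: writing $T=V^\circ(G)\setminus S$, you observe that $\coker(L_{V,T})=\Upsilon(G_{S\to\partial},L)$ is free of rank $|\partial V|+|S|$ (this is exactly Proposition~\ref{prop:layerablebehavior}, which you could cite directly rather than arguing via ``injective for all $M$ $\Rightarrow$ flat $\Rightarrow$ projective''; you need freeness, not just projectivity, to get an honest matrix). Then $\c U_0(G,L,M)\cong\ker\bigl(\pi\circ(-L_{V,S})\bigr)$ drops out immediately. This is clean and conceptual, and makes the naturality in $M$ transparent.

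The paper instead works geometrically. Equation~\eqref{eq:complement} does not trivialize the cokernel; it defines a \emph{complementary} layerable filtration $H_n\subseteq H_{n-1}\subseteq\cdots\subseteq H_0$ inside $G'=G_{S\to\partial}$, where $H_j$ is roughly the complement of $G_j$ and $\partial V(H_j)=\partial V(G_j)$. The point is that harmonic continuation \emph{inward} from $\partial V(G')$ along the original filtration is the same as harmonic \emph{extension outward} along this reversed filtration $H_n,\dots,H_0$. One then rewrites $\c U_0(G,L,M)$ as $\{u\in\c U(H_0,L,M):u|_{\partial V(G)}=0,\ Lu|_{\partial V(H_0)}=0\}$, parametrizes such $u$ by $\phi=u|_S$ via Lemma~\ref{lem:layeringcoordinates}, and defines $A\phi=Lu|_{\partial V(H_0)}$. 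Equation~\eqref{eq:matrixA} is the resulting product of boundary-data transformations $T_j$ along the $H$-filtration, sandwiched between the obvious inclusion $M^S\hookrightarrow M^{2m}$ and projection onto the current coordinates. So the paper's $A$ is built step-by-step from the filtration, not as $\pi\circ(-L_{V,S})$ followed by a single choice of basis for the cokernel; the two matrices agree only up to an invertible change of basis on the target. Your version gives a shorter existence proof; the paper's version is what actually makes the algorithm in Example~\ref{ex:explicitalgorithmexample} and the $\CLF$ computation run.
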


\begin{proof}
First, let us motivate the proof in light of earlier results.  Let $G' = G_{S \to \partial}$ for short and recall $\partial V(G') = \partial V(G) \sqcup S$.  Note that since $G'$ and $G$ only differ in the assignment of boundary vertices, we have
\[
\c U_0(G,L,M) = \{u \in \c U(G',L,M) \colon u|_{\partial V(G)} = 0, Lu|_{\partial V(G')} = 0 \}.
\]
In other words, a function $u \in \c U_0(G,L,M)$ is equivalent to a harmonic function on $(G',L)$ satisfying the additional boundary conditions that $u|_{\partial V(G)} = 0$ and $Lu|_{\partial V(G')} = 0$.

Now $G'$ is layerable.  Thus, as explained in Remark \ref{rem:inwardcontinuation}, a harmonic function $u$ on $G'$ is uniquely determined by its boundary data.  Moreover, the values of the function $u$ can be found from the boundary data by harmonically continuing from $\partial V(G')$ inward along a layer-stripping filtration of $G'$.  Thus, to find $\c U_0(G,L,M)$, we need to start with boundary data on $G'$ satisfying our extra boundary conditions, harmonically continue inward, and then ensure that the result we get is actually harmonic on $G'$.

\begin{figure}
\begin{center}

\begin{tikzpicture}[scale = 0.9]
	
	\draw[fill=blue!20] (0,4) rectangle (6,3);
	\draw[fill=green!50!black!20] (0,1) rectangle (6,0);
	
	\draw (3,3) -- (3,4);
	\draw (0,0) rectangle (6,4);
	
	\node[text = blue!90!black!90] at (-1,3.5) {$\partial V(G_n)$};
	\node[text = green!20!black!90] at (-1,0.5) {$\partial V(G_0)$};
	
	\node at (1.5,4.5) {$S$};
	\node at (4.5,4.5) {$\partial V(G)$};
	
	\node at (1.5,3.5) {$u = \phi$};
	\node at (4.5,3.5) {$u = 0$};
	
	\draw[->] (1.5,2.5) -- (1.5,1.5);
	\draw[->] (3,2.5) -- (3,1.5);
	\draw[->] (4.5,2.5) -- (4.5,1.5);
	
\end{tikzpicture}

\caption{Schematic overview of the proof of Theorem \ref{thm:explicitalgorithm}.  Arrows show the direction of harmonic continuation.}

\end{center}
\end{figure}
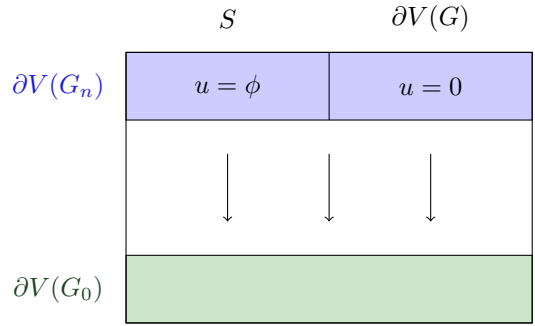

Suppose that we have a standard-form layerable filtration of $G'$ given by
\[
\varnothing \subseteq (G_0,L_0) \subseteq \dots \subseteq (G_n,L_n) = (G',L).
\]
Our harmonic continuation will start with a function defined on $\partial V(G_n)$ and then extend it inward to $\partial V(G_{n-1})$.  At step $j$ (counting backwards from $n$), we will have a partially defined harmonic function whose domain is the roughly the \emph{complement} of $G_j$.

To make this idea precise, we define a complementary sub-$\partial$-graph $H_j$ by
\begin{align}
V(H_n) &= V(G') \setminus V^\circ(G_n) \label{eq:complement} \\
E(H_n) &= E(G') \setminus E(G_n) \nonumber \\
V^\circ(H_n) &= V(G') \setminus V(G_n) \nonumber \\
\partial V(H_n) &= \partial V(G_n). \nonumber
\end{align}
See Figure \ref{fig:complementaryfiltration}.  The following facts follow from direct casework:
\begin{itemize}
	\item If $G_{j+1}$ is obtained from $G_j$ by adjoining a boundary spike, then $H_{j+1}$ is obtained from $H_j$ by contracting a boundary spike.
	\item If $G_{j+1}$ is obtained from $G_j$ by adjoining a boundary edge, then $H_{j+1}$ is obtained from $H_j$ by deleting a boundary edge.
	\item The graph $H_n$ consists of isolated boundary vertices.
	\item The graph $H_0$ has the same vertex and edge sets as $G$ and $\overline{G}$, but a different choice of boundary vertices.  Specifically, $\partial V(H_0)$ is equal to $\partial V(G_0)$ rather than $\partial V(G_n) = \partial V(G)$.
\end{itemize}
This shows that $H_n \subseteq H_{n-1} \subseteq \dots \subseteq H_0$ is a standard-form layerable filtration of $H_0$.

\begin{figure}
\begin{center}
\begin{tikzpicture}[scale=0.7]

	\begin{scope}[shift={(0,0)}]
		\node at (0.5,3) {$G_0$};
	
		\node[bd] (1) at (0,0) {};
		\node[bd] (2) at (0,1) {};
		\node[bd] (3) at (0.5,2) {};
	\end{scope}
	
	\begin{scope}[shift={(0,-4)}]
		\node at (0.5,3) {$H_0$};

		\node[bd] (1) at (0,0) {};
		\node[bd] (2) at (0,1) {};
		\node[bd] (3) at (0.5,2) {};
		\node[int] (4) at (1,0) {};
		\node[int] (5) at (1,1) {};

		\draw (1) to (2);
		\draw (2) to (3);
		\draw (2) to (5);
		\draw (1) to (5);
		\draw (3) to (5);
		\draw (4) to (5);
		\draw (1) to (4);
		\draw (3) to [bend right = 25] (4);
	\end{scope}

	\begin{scope}[shift={(2,0)}]
		\node at (0.5,3) {$G_1$};
	
		\node[bd] (1) at (0,0) {};
		\node[bd] (2) at (0,1) {};
		\node[bd] (3) at (0.5,2) {};
		
		\draw (1) to (2);
	\end{scope}
	
	\begin{scope}[shift={(2,-4)}]
		\node at (0.5,3) {$H_1$};

		\node[bd] (1) at (0,0) {};
		\node[bd] (2) at (0,1) {};
		\node[bd] (3) at (0.5,2) {};
		\node[int] (4) at (1,0) {};
		\node[int] (5) at (1,1) {};

		\draw (2) to (3);
		\draw (2) to (5);
		\draw (1) to (5);
		\draw (3) to (5);
		\draw (4) to (5);
		\draw (1) to (4);
		\draw (3) to [bend right = 25] (4);
	\end{scope}

	\begin{scope}[shift={(4,0)}]
		\node at (0.5,3) {$G_2$};
	
		\node[bd] (1) at (0,0) {};
		\node[bd] (2) at (0,1) {};
		\node[bd] (3) at (0.5,2) {};
		
		\draw (1) to (2);
		\draw (2) to (3);
	\end{scope}
	
	\begin{scope}[shift={(4,-4)}]
		\node at (0.5,3) {$H_2$};

		\node[bd] (1) at (0,0) {};
		\node[bd] (2) at (0,1) {};
		\node[bd] (3) at (0.5,2) {};
		\node[int] (4) at (1,0) {};
		\node[int] (5) at (1,1) {};

		\draw (2) to (5);
		\draw (1) to (5);
		\draw (3) to (5);
		\draw (4) to (5);
		\draw (1) to (4);
		\draw (3) to [bend right = 25] (4);
	\end{scope}

	\begin{scope}[shift={(6,0)}]
		\node at (0.5,3) {$G_3$};
	
		\node[bd] (1) at (0,0) {};
		\node[int] (2) at (0,1) {};
		\node[bd] (3) at (0.5,2) {};
		\node[bd] (5) at (1,1) {};
		
		\draw (1) to (2);
		\draw (2) to (3);
		\draw (2) to (5);
	\end{scope}
	
	\begin{scope}[shift={(6,-4)}]
		\node at (0.5,3) {$H_3$};

		\node[bd] (1) at (0,0) {};
		\node[bd] (3) at (0.5,2) {};
		\node[int] (4) at (1,0) {};
		\node[bd] (5) at (1,1) {};

		\draw (1) to (5);
		\draw (3) to (5);
		\draw (4) to (5);
		\draw (1) to (4);
		\draw (3) to [bend right = 25] (4);
	\end{scope}

	\begin{scope}[shift={(8,0)}]
		\node at (0.5,3) {$G_4$};
	
		\node[bd] (1) at (0,0) {};
		\node[int] (2) at (0,1) {};
		\node[bd] (3) at (0.5,2) {};
		\node[bd] (5) at (1,1) {};
		
		\draw (1) to (2);
		\draw (2) to (3);
		\draw (2) to (5);
		\draw (1) to (5);
	\end{scope}
	
	\begin{scope}[shift={(8,-4)}]
		\node at (0.5,3) {$H_4$};

		\node[bd] (1) at (0,0) {};
		\node[bd] (3) at (0.5,2) {};
		\node[int] (4) at (1,0) {};
		\node[bd] (5) at (1,1) {};
	
		\draw (3) to (5);
		\draw (4) to (5);
		\draw (1) to (4);
		\draw (3) to [bend right = 25] (4);
	\end{scope}

	\begin{scope}[shift={(10,0)}]
		\node at (0.5,3) {$G_5$};
	
		\node[int] (1) at (0,0) {};
		\node[int] (2) at (0,1) {};
		\node[bd] (3) at (0.5,2) {};
		\node[bd] (4) at (1,0) {};
		\node[bd] (5) at (1,1) {};
		
		\draw (1) to (2);
		\draw (2) to (3);
		\draw (2) to (5);
		\draw (1) to (5);
		\draw (1) to (4);
	\end{scope}
	
	\begin{scope}[shift={(10,-4)}]
		\node at (0.5,3) {$H_5$};

		\node[bd] (3) at (0.5,2) {};
		\node[bd] (4) at (1,0) {};
		\node[bd] (5) at (1,1) {};
	
		\draw (3) to (5);
		\draw (4) to (5);
		\draw (3) to [bend right = 25] (4);
	\end{scope}

	\begin{scope}[shift={(12,0)}]
		\node at (0.5,3) {$G_6$};
	
		\node[int] (1) at (0,0) {};
		\node[int] (2) at (0,1) {};
		\node[bd] (3) at (0.5,2) {};
		\node[bd] (4) at (1,0) {};
		\node[bd] (5) at (1,1) {};
		
		\draw (1) to (2);
		\draw (2) to (3);
		\draw (2) to (5);
		\draw (1) to (5);
		\draw (1) to (4);
		\draw (3) to (5);
	\end{scope}
	
	\begin{scope}[shift={(12,-4)}]
		\node at (0.5,3) {$H_6$};

		\node[bd] (3) at (0.5,2) {};
		\node[bd] (4) at (1,0) {};
		\node[bd] (5) at (1,1) {};
		
		\draw (4) to (5);
		\draw (3) to [bend right = 25] (4);
	\end{scope}

	\begin{scope}[shift={(14,0)}]
		\node at (0.5,3) {$G_7$};
	
		\node[int] (1) at (0,0) {};
		\node[int] (2) at (0,1) {};
		\node[bd] (3) at (0.5,2) {};
		\node[bd] (4) at (1,0) {};
		\node[bd] (5) at (1,1) {};
		
		\draw (1) to (2);
		\draw (2) to (3);
		\draw (2) to (5);
		\draw (1) to (5);
		\draw (1) to (4);
		\draw (3) to (5);
		\draw (4) to (5);
	\end{scope}
	
	\begin{scope}[shift={(14,-4)}]
		\node at (0.5,3) {$H_7$};

		\node[bd] (3) at (0.5,2) {};
		\node[bd] (4) at (1,0) {};
		\node[bd] (5) at (1,1) {};
		
		\draw (3) to [bend right = 25] (4);
	\end{scope}
	
	\begin{scope}[shift={(16,0)}]
		\node at (0.5,3) {$G_8$};
	
		\node[int] (1) at (0,0) {};
		\node[int] (2) at (0,1) {};
		\node[bd] (3) at (0.5,2) {};
		\node[bd] (4) at (1,0) {};
		\node[bd] (5) at (1,1) {};
		
		\draw (1) to (2);
		\draw (2) to (3);
		\draw (2) to (5);
		\draw (1) to (5);
		\draw (1) to (4);
		\draw (3) to (5);
		\draw (4) to (5);
		\draw (3) to [bend right = 25] (4);
	\end{scope}
	
	\begin{scope}[shift={(16,-4)}]
		\node at (0.5,3) {$H_8$};

		\node[bd] (3) at (0.5,2) {};
		\node[bd] (4) at (1,0) {};
		\node[bd] (5) at (1,1) {};
	\end{scope}
	
\end{tikzpicture}

\end{center}

\caption{Complementary layerable filtrations as defined by \eqref{eq:complement}.} \label{fig:complementaryfiltration}

\end{figure}
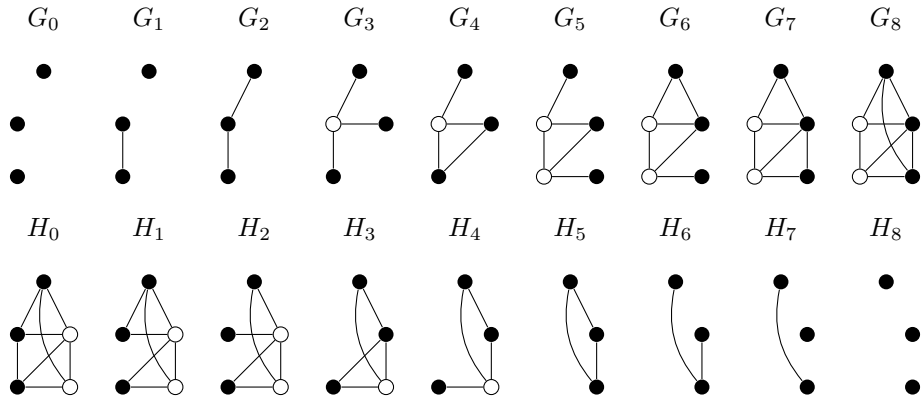

Harmonic continuation inward from the boundary of $G$ toward $\partial V(G_0)$ will correspond to building harmonic extensions through the filtration $H_n$, $H_{n-1}$, \dots   But now that $H_j$ has been defined, we no longer need to work directly with the $\partial$-graphs $G_j$.  We can express $\c U_0(G,L,M)$ in terms of harmonic functions on $H_0$ rather than harmonic functions on $G_n$:  Note that $H_0$ and $G$ only differ in the assignment of boundary vertices; expressing the conditions for $u \in \c U_0(G,L,M)$ in terms of $H_0$ yields
\[
\c U_0(G,L,M) = \{u \in \c U(H_0,L,M) \colon u|_{\partial V(G)} = 0, Lu|_{\partial V(H_0)} = 0\}.
\]
As in Lemma \ref{lem:layeringcoordinates}, we can use the filtration $H_n \subseteq \dots \subseteq H_0$ to parametrize the harmonic functions on $H_0$ in terms of their values on $\partial V(H_n) = \partial V(G) \sqcup S$.  The condition $u|_{\partial V(G)} = 0$ simply says that a subset of our initial parameters will be zero, and thus our harmonic function will be parametrized by the values on $S$ with the values on the rest of $\partial V(H_n)$ set to zero.

Thus, we proceed as follows:  Given a vector $\phi \in M^S$, we extend $\phi$ by zero to a vector in $M^{\partial V(H_n)} = \c U(H_n,L|_{H_n},M)$, then apply the sequence of boundary data transformations associated to the filtration $H_n$, \dots, $H_0$ as in Lemma \ref{lem:layeringcoordinates} to compute a harmonic extension $u$ to $H_0$.  We let $A$ be the transformation $\phi \mapsto Lu|_{\partial V(H_n)}$ that sends $\phi \in M^S$ to the boundary values of $Lu$ for the harmonic extension to $H_0$.  Then the functions $u \in \c U_0(G,L,M)$ correspond to the values of $\phi$ such that $A \phi = 0$, so that
\begin{equation}
\c U_0(G,L,M) \cong \ker(A: M^S \to M^{\partial V(H_n)}).
\end{equation}

The matrix $A$ is given explicitly as follows:  Let $m = |\partial V(H_0)| = |S| + |\partial V(G)|$, choose a consistent labelling for the filtration $\{H_j\}$.  Assume that in the indexing of $\partial V(H_n)$, the vertices in $S$ are indexed first by $1, \dots, s$ and then the vertices of $\partial V(G)$ are indexed by $s + 1$, \dots, $m$.  Let $T_n$, $T_{n-1}$, \dots, $T_0$ be the sequence of boundary data transformations corresponding to the filtration $H_n$, $H_{n-1}$, \dots, $H_0$ (as in Lemma \ref{lem:layeringcoordinates} except with $G_j$ replaced by $H_{n-j}$).  Here $T_n$ is the transformation for the initial network $H_n$, and $T_j$ is the transformation from $H_{j+1}$ to $H_j$.  Then set
\begin{equation}
A = (0_{m \times m}, I_{m \times m}) T_0 T_1 \dots T_n \begin{pmatrix} I_{s \times s} \\ 0_{(2m-s) \times s} \end{pmatrix}. \qedhere \label{eq:matrixA}
\end{equation}
\end{proof}

\begin{figure}
\begin{center}
\begin{tikzpicture}[scale=0.9]
	\begin{scope}[shift={(0,0)}]
		\node[circle,draw] (1) at (0,0) {$v$};
		\node[circle,draw] (2) at (0,1) {$w$};
		\node[circle,draw] (3) at (0.5,2) {$z$};
		\node[circle,draw] (4) at (1,0) {$x$};
		\node[circle,draw] (5) at (1,1) {$y$};
		
		\draw (1) to (2);
		\draw (2) to (3);
		\draw (2) to (5);
		\draw (1) to (5);
		\draw (1) to (4);
		\draw (3) to (5);
		\draw (4) to (5);
		\draw (3) to [bend right = 25] (4);
	\end{scope}

	\begin{scope}[shift={(3,0)}]
		\node at (0.5,2.5) {$G$};
	
		\node[int] (1) at (0,0) {};
		\node[int] (2) at (0,1) {};
		\node[bd] (3) at (0.5,2) {};
		\node[int] (4) at (1,0) {};
		\node[int] (5) at (1,1) {};
		
		\draw (1) to (2);
		\draw (2) to (3);
		\draw (2) to (5);
		\draw (1) to (5);
		\draw (1) to (4);
		\draw (3) to (5);
		\draw (4) to (5);
		\draw (3) to [bend right = 25] (4);
	\end{scope}

	\begin{scope}[shift={(6,0)}]
		\node at (0.5,2.5) {$G'$};
	
		\node[int] (1) at (0,0) {};
		\node[int] (2) at (0,1) {};
		\node[bd] (3) at (0.5,2) {};
		\node[bd] (4) at (1,0) {};
		\node[bd] (5) at (1,1) {};
		
		\node (S) at (2.5,0.5) {$S$};
		\draw[->] (S) to (1.4,0.9);
		\draw[->] (S) to (1.4,0.1);
		
		\draw (1) to (2);
		\draw (2) to (3);
		\draw (2) to (5);
		\draw (1) to (5);
		\draw (1) to (4);
		\draw (3) to (5);
		\draw (4) to (5);
		\draw (3) to [bend right = 25] (4);
	\end{scope}

\end{tikzpicture}

\caption{Graphs for Example \ref{ex:explicitalgorithmexample}.}  \label{fig:explicitalgorithmexample}

\end{center}
\end{figure}
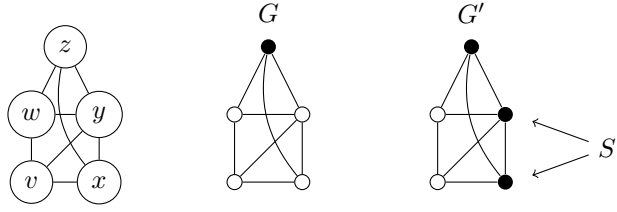

\begin{example} \label{ex:explicitalgorithmexample}
Let us apply Theorem \ref{thm:explicitalgorithm} to the critical group of the graph shown in Figure \ref{fig:explicitalgorithmexample}, left.  By Proposition \ref{prop:criticalgrouponeboundary}, it suffices to compute $\c U_0(G,L_{\std}, \Q / \Z)$, where $G$ is the middle $\partial$-graph in the figure with one boundary vertex.  Let $S = \{x,y\}$; then $G' = G_{S \to \partial}$ is the graph in Figure \ref{fig:explicitalgorithmexample}, right.  We let $G_j$ and $H_j$ be the $\partial$-graphs shown earlier in Figure \ref{fig:complementaryfiltration}.  For each $G_j$ and $H_j$, we define $\ell_j$ by labelling the boundary vertices $1$, $2$, $3$ from bottom to top (thus, $1$ corresponds to $x$ or $v$, $2$ corresponds to $y$ or $w$, and $3$ corresponds to $z$).  To compute $A$ by equation \eqref{eq:matrixA}, we write down the transformations for $H_8$, $H_7$, \dots.  Note that because $d = 0$, the initial transformation $T_8$ is the identity.  The next few are
\begin{align*}
T_7 = \begin{pmatrix} 1 & 0 & 0 & 0 & 0 & 0 \\ 0 & 1 & 0 & 0 & 0 & 0 \\ 0 & 0 & 1 & 0 & 0 & 0 \\ 1 & 0 & -1 & 1 & 0 & 0 \\ 0 & 0 & 0 & 0 & 1 & 0 \\ -1 & 0 & 1 & 0 & 0 & 1 \end{pmatrix},
\qquad &
T_6 = \begin{pmatrix} 1 & 0 & 0 & 0 & 0 & 0 \\ 0 & 1 & 0 & 0 & 0 & 0 \\ 0 & 0 & 1 & 0 & 0 & 0 \\ 1 & -1 & 0 & 1 & 0 & 0 \\ -1 & 1 & 0 & 0 & 1 & 0 \\ 0 & 0 & 0 & 0 & 0 & 1 \end{pmatrix}, \\
T_5 = \begin{pmatrix} 1 & 0 & 0 & 0 & 0 & 0 \\ 0 & 1 & 0 & 0 & 0 & 0 \\ 0 & 0 & 1 & 0 & 0 & 0 \\ 0 & 0 & 0 & 1 & 0 & 0 \\ 0 & 1 & -1 & 0 & 1 & 0 \\ 0 & -1 & 1 & 0 & 0 & 1 \end{pmatrix},
\quad &
T_4 = \begin{pmatrix} 1 & 0 & 0 & 1 & 0 & 0 \\ 0 & 1 & 0 & 0 & 0 & 0 \\ 0 & 0 & 1 & 0 & 0 & 0 \\ 0 & 0 & 0 & 1 & 0 & 0 \\ 0 & 0 & 0 & 0 & 1 & 0 \\ 0 & 0 & 0 & 0 & 0 & 1 \end{pmatrix}, 
\end{align*}
and the rest of the transformations are computed similarly.  After some straightforward computation,
\[
A = \begin{pmatrix} 0_{3 \times 3} & I_{3 \times 3} \end{pmatrix} T_0 \dots T_7 T_8 \begin{pmatrix} I_{2 \times 2} \\ 0_{4 \times 2} \end{pmatrix} = \begin{pmatrix} 12 & -9 \\ -15 & 15 \\ 3 & -6 \end{pmatrix}.
\]
By performing integer row and column operations, we can convert $A$ into the Smith normal form
\[
A' = \begin{pmatrix} 3 & 0 \\ 0 & 15 \\ 0 & 0 \end{pmatrix}.
\]
This implies
\[
\c U_0(G, L_{\std}, \Q / \Z) \cong \Z / 3 \times \Z / 15 \cong \Z / 3 \times \Z / 3 \times \Z / 7.
\]

Harmonic continuation also allows us to compute the harmonic functions explicitly from the parameters on $S$.  We will demonstrate this by computing $\c U_0(G, L_{\std}, \Z / 3)$ and $\c U_0(G, L_{\std}, \Z / 5)$.  For a harmonic function $u$, we have from the last equation of Lemma \ref{lem:layeringcoordinates} that
\[
\begin{pmatrix} u(v) \\ u(w) \\ u(z) \\ 0 \\ 0 \\ 0 \end{pmatrix} = T_0 \dots T_7 T_8 \begin{pmatrix} u(x) \\ u(y) \\ u(z) \\ 0 \\ 0 \\ 0 \end{pmatrix}.
\]
From explicit computation of the first two columns of $T_0 \dots T_8$, we find that when $u(z) = 0$, we have
\[
u(v) = 3u(x) - u(y), \quad u(w) = -4 u(x) + 5 u(y).
\]
To compute $\c U_0(G,L_{\std}, \Z / 3)$, we observe that since $A = 0$ mod $3$, every choice of two parameters in $\Z / 3$ on $S$ yields a harmonic function, and we obtain two generators shown in Figure \ref{fig:z3z5}.  Next, to compute $\c U_0(G,L_{\std}, \Z / 5)$, we read off from the matrix $A$ that $u$ must satisfy $3 u(x) - 6 u(y) = 0$, and so a generator is given by taking $u(x) = 2$ and $u(y) = 1$.  The resulting harmonic function is shown in Figure \ref{fig:z3z5}.
\end{example}

\begin{figure}

\begin{center}
\begin{tikzpicture}[scale=0.7]

	\begin{scope}[shift={(0,0)}]
		\node at (-0.5,2.5) {$\Z / 3$};
	
		\node[int] (1) at (0,0) [label = left:$0$] {};
		\node[int] (2) at (0,1) [label = left:$-1$] {};
		\node[bd] (3) at (0.5,2) [label = above:$0$] {};
		\node[int] (4) at (1,0) [label = right:$1$] {};
		\node[int] (5) at (1,1) [label = right:$0$] {};
		
		\draw (1) to (2);
		\draw (2) to (3);
		\draw (2) to (5);
		\draw (1) to (5);
		\draw (1) to (4);
		\draw (3) to (5);
		\draw (4) to (5);
		\draw (3) to [bend right = 25] (4);
	\end{scope}

	\begin{scope}[shift={(4,0)}]
		\node at (-0.5,2.5) {$\Z / 3$};
		
		\node[int] (1) at (0,0) [label=left:$-1$] {};
		\node[int] (2) at (0,1) [label = left:$-1$] {};
		\node[bd] (3) at (0.5,2) [label = above:$0$] {};
		\node[int] (4) at (1,0) [label = right:$0$] {};
		\node[int] (5) at (1,1) [label = right:$1$] {};
		
		\draw (1) to (2);
		\draw (2) to (3);
		\draw (2) to (5);
		\draw (1) to (5);
		\draw (1) to (4);
		\draw (3) to (5);
		\draw (4) to (5);
		\draw (3) to [bend right = 25] (4);
	\end{scope}

	\begin{scope}[shift={(8,0)}]
		\node at (-0.5,2.5) {$\Z / 5$};
		
		\node[int] (1) at (0,0) [label=left:$0$] {};
		\node[int] (2) at (0,1) [label = left:$2$] {};
		\node[bd] (3) at (0.5,2) [label = above:$0$] {};
		\node[int] (4) at (1,0) [label = right:$2$] {};
		\node[int] (5) at (1,1) [label = right:$1$] {};
		
		\draw (1) to (2);
		\draw (2) to (3);
		\draw (2) to (5);
		\draw (1) to (5);
		\draw (1) to (4);
		\draw (3) to (5);
		\draw (4) to (5);
		\draw (3) to [bend right = 25] (4);
	\end{scope}

\end{tikzpicture}

\caption{Generators for $\c U_0(G, L_{\std}, \Z / 3)$ and $\c U_0(G,L_{\std}, \Z / 5)$ on the network $G$ from Example \ref{ex:explicitalgorithmexample}.} \label{fig:z3z5}

\end{center}

\end{figure}

\begin{example}
In \S \ref{sec:CLF}, we used harmonic continuation to compute $\c U_0(\CLF(m,n), L_{\std}, \Q / \Z)$ without using Theorem \ref{thm:explicitalgorithm}.  But in fact, we \emph{could} have applied Theorem \ref{thm:explicitalgorithm}, and it is instructive to see how our method in \S \ref{sec:CLF} can be derived from the ideas in this section.

Let $G = \CLF(m,n)$.  Using the indexing of the vertices from \S \ref{subsec:CLFsetup}, define $S = \{0,1\} \times \{1,\dots,n\}$ and $S' = \{m-1,0\} \times \{1,\dots,n\}$.  Note $G' = G_{S \to \partial}$ is layerable, and a filtration is shown in Figure \ref{fig:CLFfiltration}.
\begin{align*}
\partial V(G_n) &= \partial V(G) \cup S \\
\partial V(G_0) &= \partial V(G) \cup S'.
\end{align*}
The layer-stripping filtration strips away the graph column by column; for $j = 0, \dots, m-1$, it removes the edges from $\{j\} \times \{0,\dots,n\}$ and $\{j+1\} \times \{0,\dots,n\}$.  In each column, it removes the edges from bottom to top or from top to bottom depending on parity.

The filtration suggests a process of harmonic continuation where the initial parameters are the values of $u$ on $S = \{0,1\} \times \{0,\dots,n\}$ and the harmonic continuation moves column by column from left to right in the picture.  In the notation of \ref{subsec:CLFcontinuation}, this means solving for $u$ in terms of $a_0$ and $a_1$ by finding $a_j$ inductively.  In \ref{subsec:CLFcontinuation}, we did not use the same method of bookkeeping as in \S \ref{subsec:layeringcoordinates} and Theorem \ref{thm:explicitalgorithm}, but rather kept track of potential values on two consecutive columns (see Remark \ref{rem:alternativebookkeeping}).

Using the filtration pictured here, $a_{j+1}$ can be found from $a_0,\dots,a_j$ in $2n - 1$ steps corresponding to the $2n - 1$ edges connecting the $j$th and $(j+1)$th columns.  Our approach in \S \ref{subsec:CLFcontinuation} combined all these operations into one step by writing harmonicity in terms of the matrix $E$.  In fact, executing the $2n - 1$ steps for each column amounts to inverting the matrix $E$ (recall $4E^{-1}$ appears in the upper left block of the transformation $\mathbf{T}$ from \S \ref{subsec:CLFcontinuation}).  The reason we did not have to do this was that we avoided dealing directly with $E^{-1}$ in the proof of Lemma \ref{lem:matrixalgebra2}.

After solving for $a_2$, \dots, $a_{m-1}$ through harmonic continuation, the method of Theorem \ref{thm:explicitalgorithm} requires us to find the values of the intitial parameters $a_0$ and $a_1$ that will guarantee $Lu|_{\partial V(G_0)} = 0$.  Recall $\partial V(G_0) = \partial V(G) \cup S'$.  Because the matrix $\mathbf{T}$ was constructed to check harmonicity on one column of vertices, the condition $L_{\std}u|_{S'} = 0$ is equivalent to
\[
\mathbf{T} \begin{pmatrix} a_{m-1} \\ a_{m-2} \end{pmatrix} = \begin{pmatrix} a_0 \\ a_{m-1} \end{pmatrix}, \qquad 
\mathbf{T} \begin{pmatrix} a_0 \\ a_{m-1} \end{pmatrix} = \begin{pmatrix} a_1 \\ a_0 \end{pmatrix}.
\]
Since the bottom block row of $\mathbf{T}$ is $(I, 0)$, it suffices to check $\mathbf{T}^2 (a_{m-1}, a_{m-2})^t = (a_1,a_0)^t$.  In short, $Lu|_{S'} = 0$ amounts to the fixed-point condition $\mathbf{T}^m (a_1,a_0)^t = (a_1,a_0)^t$ in \S \ref{subsec:CLFcontinuation}.  Meanwhile, the condition $L_{\std}u|_{\partial V(G)} = 0$ was checked through our computation of $M_1$ in \S \ref{subsec:algebraiccomputation}.
\end{example}

\newcommand\makeCLFnode[4] { 
	\node[#4] (#2x#3) at (#2, {(1 + (-1)^#2) * (#1 + 0.5) - 2 * (-1)^#2 * #3}) {};
}

\newcommand\makeCLFgraphstrippedA[4] { 
	\draw[blue,dashed,thick] (0,0) -- (0,2 * #2 + 1);
	\draw[blue,dashed,thick] (#1,0) -- (#1, 2 * #2 + 1);

	\foreach \i [evaluate={\ii = int(\i - 1)}, evaluate={\iii = int(\i + 1)}] in {0,...,#1} {
		\foreach \j [evaluate={\k = int(#2 - \j)}] in {0,...,#2} {
			\ifthenelse{
				\j = 0 \OR
				\i = 0 \OR \( \i = 1 \AND #3 = 0 \) \OR
				\i = #1 \OR
				\( \iii = #1 \AND \i = #3 \) \OR
				\(\i = #3 \AND \( \j < #4 \OR \j = #4\) \) \OR
				\( \ii = #3 \AND \( \k > #4 \OR \k = #4 \)\)
			}{
				\makeCLFnode{#2}{\i}{\j}{bd}
			}{
				\ifthenelse {
					\i > #3
				}{
					\makeCLFnode{#2}{\i}{\j}{int}
				}{}
			}
		}
	}
	\foreach \i [evaluate={\x = int(\i - 1)}] in {1,...,#1} {
		\foreach \j [evaluate={\y = int(#2 - \j)}] in {0,...,#2} {
			\ifthenelse{ \x > #3 \OR \( \x = #3 \AND \(\y < #4 \OR \y = #4\)\) }{ \draw (\i x\j) to (\x x\y); }{}
		}
		\foreach \j [evaluate={\y = int(#2 - \j + 1)}] in {1,...,#2} {
			\ifthenelse{ \x > #3 \OR \( \x = #3 \AND \(\y < #4 \OR \y = #4\)\) }{ \draw (\i x\j) to (\x x\y); }{}
		}
	}
}

\newcommand\makeCLFgraphstrippedB[4] { 
	\draw[blue,dashed,thick] (0,0) -- (0,2 * #2 + 1);
	\draw[blue,dashed,thick] (#1,0) -- (#1, 2 * #2 + 1);

	\foreach \i [evaluate={\ii = int(\i - 1)}, evaluate={\iii = int(\i + 1)}] in {0,...,#1} {
		\foreach \j [evaluate={\k = int(#2 - \j)}] in {0,...,#2} {
			\ifthenelse{
				\j = 0 \OR
				\i = 0 \OR \( \i = 1 \AND #3 = 0 \) \OR
				\i = #1 \OR
				\( \iii = #1 \AND \i = #3 \) \OR
				\(\i = #3 \AND \( \j < #4 \OR \j = #4\) \) \OR
				\( \ii = #3 \AND \( \k > #4 \OR \k = #4 \)\)
			}{
				\makeCLFnode{#2}{\i}{\j}{bd}
			}{
				\ifthenelse {
					\i > #3
				}{
					\makeCLFnode{#2}{\i}{\j}{int}
				}{}
			}
		}
	}
	\foreach \i [evaluate={\x = int(\i - 1)}] in {1,...,#1} {
		\foreach \j [evaluate={\y = int(#2 - \j)}] in {0,...,#2} {
			\ifthenelse{ \x > #3 \OR \( \x = #3 \AND \y < #4\) }{ \draw (\i x\j) to (\x x\y); }{}
		}
		\foreach \j [evaluate={\y = int(#2 - \j + 1)}] in {1,...,#2} {
			\ifthenelse{ \x > #3 \OR \( \x = #3 \AND \(\y < #4 \OR \y = #4\)\) }{ \draw (\i x\j) to (\x x\y); }{}
		}
	}
}

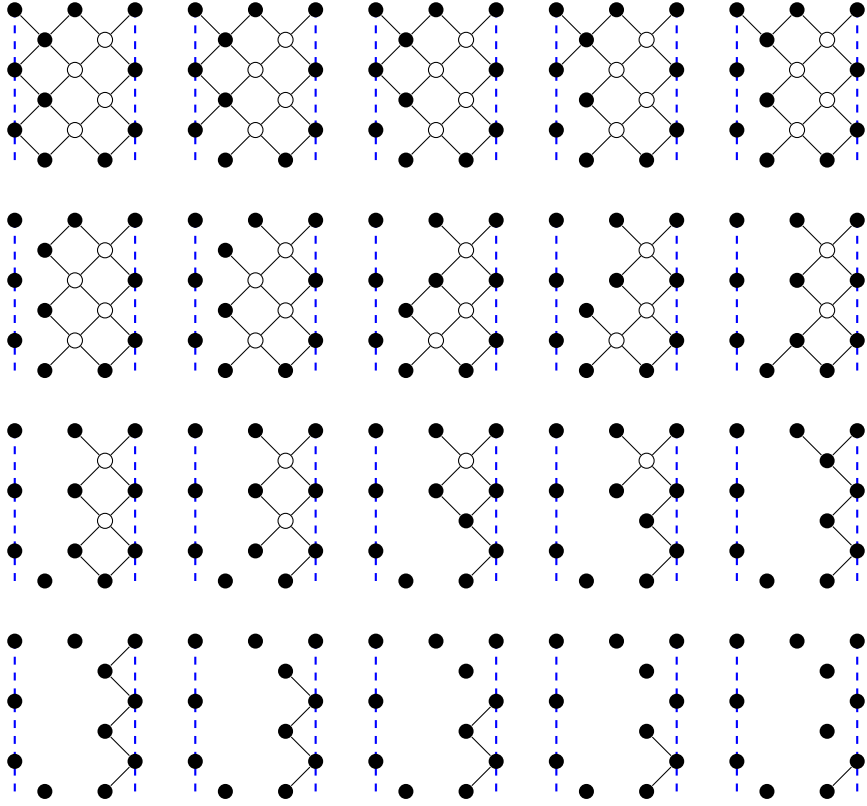
\begin{figure}
\begin{center}

\begin{tikzpicture}[scale=0.4]

	\foreach \p in {0,...,3} {
		\foreach \q in {1,...,2} {
			\begin{scope}[shift={(-12 * \q + 6, -\p * 7)}]
				\makeCLFgraphstrippedB{4}{2}{\p}{\q}
			\end{scope}
		}
		
		\foreach \q in {0,...,2} {
			\begin{scope}[shift={(-12 * \q, -\p * 7)}]
				\makeCLFgraphstrippedA{4}{2}{\p}{\q}
			\end{scope}
		}
	}

\end{tikzpicture}

\caption{A layerable filtration of $\CLF(4,2)$ with $S = \{0,1\} \times \{1,2\}$ changed to boundary vertices.  The vertices on the left and right sides of each picture are identified.} \label{fig:CLFfiltration}

\end{center}
\end{figure}

\subsection{Application to the Critical Group and Eigenvalues}

In the case of graphs without boundary, Theorem \ref{thm:explicitalgorithm} can be applied to compute the critical group or the eigenvalues of the Laplacian.  Even when it is not practical to use Theorem \ref{thm:explicitalgorithm} for the complete computation, it still provides a priori upper bounds on the number of invariant factors of the critical group and the mutliplicity of Laplacian eigenvalues (Corollaries \ref{cor:invariantfactorsbound} and \ref{cor:multiplicitybound} below).  Though we do not claim these bounds are always sharp, they have the advantage of being computed geometrically without writing down any matrices.  Moreover, we will give several infinite families for which they \emph{are} sharp.

\begin{corollary} \label{cor:invariantfactorsbound}
Suppose that $G$ is a graph without boundary and that $G_{S \to \partial}$ is layerable.  Then $\Crit(G)$ has at most $|S| - 1$ invariant factors.
\end{corollary}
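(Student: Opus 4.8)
The plan is to combine Theorem~\ref{thm:explicitalgorithm} with the identification of $\Crit(G)$ as a module of the form $\c U_0$ from \S\ref{subsec:criticalgroupalgebra}, and then read off the number of invariant factors from a Smith normal form. First I would dispose of trivialities: if $V(G)=\varnothing$ there is nothing to prove, so assume $V(G)\neq\varnothing$; then $G_{S\to\partial}$ is a nonempty layerable $\partial$-graph, hence has at least one boundary vertex, so $S\neq\varnothing$ and $|S|-1\geq 0$. Now fix a vertex $x\in S$ and let $G'$ be $G$ regarded as a $\partial$-graph whose only boundary vertex is $x$, as in Proposition~\ref{prop:criticalgrouponeboundary}. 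Put $S'=S\setminus\{x\}$; then $S'\subseteq V^\circ(G')$, $|S'|=|S|-1$, and $(G')_{S'\to\partial}=G_{S\to\partial}$ is layerable by hypothesis.

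Next I would assemble the algebraic dictionary. By Proposition~\ref{prop:criticalgrouponeboundary} the $\Z$-network $(G',L_{\std})$ is non-degenerate, $\Upsilon_\Z(G',L_{\std})=\Upsilon_\Z(G,L_{\std})$, and (by Proposition~\ref{prop:criticalgroupnoboundary}) $\Crit(G)$ is the torsion submodule of $\Upsilon_\Z(G',L_{\std})$; together with Propositions~\ref{prop:tor} and \ref{prop:pidnon-degenerate} this gives $\c U_0(G',L_{\std},\Q/\Z)\cong\Tor_1(\Upsilon(G',L_{\std}),\Q/\Z)\cong\Crit(G)$. I would then apply Theorem~\ref{thm:explicitalgorithm} to the finite $\Z^\times$-network $(G',L_{\std})$ and the set $S'$ (permissible since $(G')_{S'\to\partial}$ is layerable): it produces an integer matrix $A$ inducing a homomorphism $M^{|S'|}\to M^{|S'|+|\partial V(G')|}=M^{|S|}$ with $\c U_0(G',L_{\std},M)\cong\ker(A\colon M^{|S'|}\to M^{|S|})$ for every $\Z$-module $M$. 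Taking $M=\Q/\Z$ yields $\Crit(G)\cong\ker\!\bigl(A\colon(\Q/\Z)^{|S|-1}\to(\Q/\Z)^{|S|}\bigr)$.

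Finally I would run the elementary-divisor computation. Write $A=UDV$ with $U,V$ invertible over $\Z$ and $D$ the rectangular-diagonal Smith normal form, with diagonal entries $d_1\mid d_2\mid\dots\mid d_{|S|-1}$. Since $U$ and $V$ act as automorphisms on the relevant powers of $\Q/\Z$, one gets $\ker(A)\cong\ker(D)=\bigoplus_{i=1}^{|S|-1}(\Q/\Z)[d_i]\cong\bigoplus_{i=1}^{|S|-1}\Z/d_i\Z$, where in fact the $d_i$ are nonzero since $\Crit(G)$ is finite. Thus $\Crit(G)$ is a direct sum of $|S|-1$ cyclic groups, so its invariant-factor decomposition has length at most $|S|-1$ (equivalently, $\Crit(G)$ embeds in $(\Q/\Z)^{|S|-1}$ and is therefore generated by at most $|S|-1$ elements). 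This is exactly the assertion.

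I do not expect a real obstacle here: the one point needing care is the bookkeeping that lets the designated ``sink'' boundary vertex $x$ be chosen inside $S$, so that Theorem~\ref{thm:explicitalgorithm} is applied with a parameter set of size precisely $|S|-1$ rather than $|S|$ --- this is what produces the $-1$ in the bound. Everything else is a routine composition of Propositions~\ref{prop:criticalgroupnoboundary}, \ref{prop:criticalgrouponeboundary}, \ref{prop:tor}, \ref{prop:pidnon-degenerate} with Theorem~\ref{thm:explicitalgorithm} and the Smith normal form over $\Z$.
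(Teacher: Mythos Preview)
Your proposal is correct and follows essentially the same route as the paper's proof: pick $x\in S$, pass to $G'=G_{x\to\partial}$, use Proposition~\ref{prop:criticalgrouponeboundary} to identify $\Crit(G)\cong\c U_0(G',L_{\std},\Q/\Z)$, apply Theorem~\ref{thm:explicitalgorithm} with $S'=S\setminus\{x\}$, and read off the bound from the Smith normal form of the resulting $|S|\times(|S|-1)$ matrix. Your write-up is slightly more careful about edge cases (nonemptiness, nonvanishing of the $d_i$), but the argument is the same.
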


\begin{proof}
Choose some $x \in S$, and let $G' = G_{x \to \partial}$.  Then by Proposition \ref{prop:criticalgrouponeboundary}, we have
\[
\Crit(G) \cong \c U_0(G', L_{\std}, \Q / \Z).
\]
Let $S' = S \setminus \{x\}$.  Then $(G')_{S' \to \partial}$ is layerable, so by Theorem \ref{thm:explicitalgorithm}, there is a matrix $A$ with $|S'|$ columns such that
\[
\c U_0(G', L_{\std}, \Q / \Z) \cong \ker_{\Q / \Z}(A).
\]
Recall that a matrix $A'$ is in Smith normal form if the only nonzero entries are on the diagonal, and the diagonal entries $a_1$, $a_2$, \dots, $a_n$ satisfy $a_j | a_{j+1}$.  It is a standard fact that if $A$ is a matrix with entries in $\Z$, then there exist square matrices $U$ and $V$ invertible over $\Z$ such that $A' = UAV$ is in Smith normal form (see \cite[\S 12.1 Exercises 16-19]{DummitandFoote}).  By interpreting $U$ and $V$ as changes of coordinates, we can see that $\ker_{\Q / \Z}(A) \cong \ker_{\Q / \Z}(A')$.  Thus, the invariant factors for $\Crit(G)$ are found from the diagonal entries of $A'$.  Since $A'$ has $|S'| = |S| - 1$ columns, there are at most $|S| - 1$ invariant factors.
\end{proof}

\begin{example}
Consider the complete graph $K_n$.  Note that we can make the graph $K_n$ into a layerable $\partial$-graph by changing $n - 1$ of the $n$ vertices to boundary vertices.  On the other hand, it is shown in \cite[Lemma, p.\ 278]{Lor1} that $\Crit(K_n) \cong (\Z / n)^{n-2}$, which has $n - 2 = (n - 1) - 1$ invariant factors.
\end{example}

\begin{example}
Let $Q_n$ be the $1$-skeleton of the $k$-dimensional cube, described explicitly by $V = \{0,1\}^n$ with $x \sim y$ if and only if $x$ and $y$ have exactly $n - 1$ coordinates equal to each other.

Let $S = \{0\} \times \{0,1\}^{n-1}$ and $T = \{1\} \times \{0,1\}^{n-1}$.  Then $(Q_n)_{S \to \partial}$ is layerable, as we will verify by reducing it to the empty graph through a sequence of layer-stripping operations.  All the edges between vertices in $S$ are boundary edges, so we can delete them.  After that, all edges from $S$ to $T$ are boundary spikes, so we can contract them, and then the vertices in $T$ become boundary vertices.  Finally, we can delete all the edges between vertices in $T$ and we are left with $T$ as a set of isolated boundary vertices.

Since $(Q_n)_{S \to \partial}$ is layerable, Corollary \ref{cor:invariantfactorsbound} shows that $\Crit(Q_n)$ has at most $|S| - 1 = 2^{n-1} - 1$ invariant factors.  This bound is sharp | it was shown in \cite[Theorem 1.1]{Bai} that there are exactly $2^{n-1} - 1$ invariant factors.
\end{example}

\begin{corollary} \label{cor:multiplicitybound}
Suppose $G$ is a graph without boundary and $G_{S \to \partial}$ is layerable.  Then for any $\R^\times$-network $(G,L)$, every eigenvalue of the generalized Laplacian $L$ has multiplicity at most $|S|$.  In particular, this holds when $L$ is the standard adjacency matrix or standard Laplacian.
\end{corollary}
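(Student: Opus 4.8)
The plan is to identify each eigenspace of $L$ with a module of the form $\c U_0$ and then quote Theorem \ref{thm:explicitalgorithm}. First I would fix $\lambda \in \R$ and observe that $\lambda I - L$ is again a generalized Laplacian for the \emph{same} $\partial$-graph $G$: writing the matrix of $L$ with diagonal entries $d(x) + \sum_{e \in \mathcal{E}(x)} w(e)$ and off-diagonal entries $-w(e)$, one checks directly that $\lambda I - L$ has off-diagonal weights $w'(e) = -w(e)$ and diagonal data $d'(x) = \lambda - d(x)$. Since $-w(e) \in \R^\times$ whenever $w(e) \in \R^\times$, the pair $(G, \lambda I - L)$ is an $\R^\times$-network on the same underlying $\partial$-graph, so the hypothesis ``$G_{S \to \partial}$ is layerable'' applies to it with the same set $S$. (Note also that $G_{S\to\partial}$ being layerable forces $G$ to be a finite $\partial$-graph.)

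Next I would note that, because $G$ has no boundary vertices and is finite, $\c U_0(G, \lambda I - L, \R) = \ker(\lambda I - L \colon \R^V \to \R^V)$, which is exactly the $\lambda$-eigenspace of $L$ (this is the same identification used in Example \ref{ex:characteristicpolynomial2}, using that $L$ and $L^*$ coincide for a finite $\partial$-graph). Applying Theorem \ref{thm:explicitalgorithm} to the $\R^\times$-network $(G, \lambda I - L)$ with this set $S$, and using $|\partial V(G)| = 0$, produces a square matrix $A \in M_{|S| \times |S|}(\R)$ with $\c U_0(G, \lambda I - L, \R) \cong \ker(A \colon \R^{|S|} \to \R^{|S|})$. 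Since the kernel of an $|S| \times |S|$ matrix has dimension at most $|S|$, the $\lambda$-eigenspace of $L$ has dimension at most $|S|$; and since the matrix of $L$ is symmetric ($w(\overline e) = w(e)$), geometric and algebraic multiplicities agree, so every eigenvalue of $L$ has multiplicity at most $|S|$.

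For the ``in particular'' clause, the standard Laplacian $L_{\std}$ is the $\R^\times$-network with $w \equiv 1$ and $d \equiv 0$, so the statement applies directly; and $-A_{\std}$ is itself a generalized Laplacian over $\R$ with $w(e) = 1$ for every edge and $d(x) = -\deg(x)$, so $(G, -A_{\std})$ is an $\R^\times$-network on $G$, and the multiplicity of an eigenvalue $\mu$ of $A_{\std}$ equals the multiplicity of $-\mu$ as an eigenvalue of $-A_{\std}$, which is at most $|S|$ by the case already handled. I expect the only genuine point to get right is the bookkeeping in the first paragraph --- that the shifted operator $\lambda I - L$ (and likewise $-A_{\std}$) is still a generalized Laplacian with \emph{unit} edge weights on the \emph{same} underlying $\partial$-graph, so that Theorem \ref{thm:explicitalgorithm} applies without changing $S$; once that is in place the bound is essentially immediate from the shape of $A$.
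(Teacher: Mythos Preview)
Your proof is correct and follows essentially the same route as the paper: identify the $\lambda$-eigenspace with $\c U_0(G,\lambda I - L,\R)$ and apply Theorem~\ref{thm:explicitalgorithm} to bound its dimension by $|S|$ via the domain of $A$. You are more explicit than the paper about verifying that $\lambda I - L$ (and $-A_{\std}$) remain $\R^\times$-networks on the same underlying $\partial$-graph, which is helpful bookkeeping but not a different approach.
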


\begin{proof}
Consider an $\R^\times$-network $(G,L)$.  Recall that $L$ is symmetric, hence diagonalizable over $\R$.  Let $\lambda \in \R$.  Since $G_{S \to \partial}$ is layerable, by Theorem \ref{thm:explicitalgorithm}, there exists a matrix $A$ with $|S|$ columns such that
\[
\c U_0(G, \lambda - L, \R) \cong \ker_{\R}(A).
\]
This implies that the $\lambda$-eigenspace of $L$ has dimension at most $|S|$.
\end{proof}

\begin{example}
We mentioned that for the complete graph $K_n$, one must assign $n - 1$ boundary vertices.  The adjacency matrix of $K_n$ has all entries equal to $1$, and thus has the eigenvalue $0$ with multiplicity $n - 1$.
\end{example}

\begin{example}
Let $C_n$ by the $n$-cycle graph.  Observe that we can make $C_n$ into a layerable $\partial$-graph by changing two adjacent vertices to boundary vertices.  This implies that every eigenvalue of the adjacency matrix has multiplicity $\leq 2$.  The adjacency matrix is $\Sigma + \Sigma^{-1}$, where $\Sigma$ is the permutation matrix representing the $n$-cycle.  Since the eigenvalues of $\Sigma$ are $\{ e^{2\pi i k/n}\}_{k=0}^{n-1}$, the eigenvalues of $\Sigma + \Sigma^{-1}$ are $\{2 \cos(2\pi k /n)\}_{k=0}^{n-1}$.   In particular, every eigenvalue $\lambda \neq \pm 2$ has multiplicity $2$.
\end{example}

\subsection{Infinite Layerable Filtrations} \label{subsec:infinitefiltration}

In this section, we generalize Proposition \ref{prop:filtration} to infinite $\partial$-graphs, allowing an infinite sequence of simple layerable extensions.  We use the following auxiliary definition:

\begin{definition} \label{def:harmonicsubgraphIU}
Suppose that $\{G_\alpha\}$ is a collection of sub-$\partial$-graphs of a given $\partial$-graph $G$.  Define $\bigcup_\alpha G_\alpha$ by
\[
V\left(\bigcup_\alpha G_\alpha \right) = \bigcup_\alpha V(G_\alpha), \quad V^\circ \left(\bigcup_\alpha G_\alpha \right) = \bigcup_\alpha V^\circ(G_\alpha), \quad E \left(\bigcup_\alpha G_\alpha \right) = \bigcup_\alpha E(G_\alpha).
\]
The definition for $\bigcap_\alpha G_\alpha$ is the same with ``$\cup$'' replaced by ``$\cap$.''  One checks straightforwardly from Definition \ref{def:sub-d-graph} that $\bigcup_\alpha G_\alpha$ and $\bigcap_\alpha G_\alpha$ are sub-$\partial$-graphs.
\end{definition}

\begin{definition}
We say that $G'$ is a layerable extension of $G$ if there is a sequence of sub-$\partial$-graphs
\[
G = G_0 \subseteq G_1 \subseteq \dots
\]
such that $G' = \bigcup_\alpha G_j$.
\end{definition}

In order to extend Proposition \ref{prop:filtration} to the infinite case, we need to be able to take limits of our algebraic functors along increasing sequences of sub-$\partial$-graphs.  To do this, we use the notions of \emph{direct (inductive) limits} and \emph{inverse (projective) limits} of $R$-modules.  For background, see \cite[Exercise 7.6.8, 10.3.25-26]{DummitandFoote}, \cite[Exercises 2.14-19]{AtMac}.

\begin{lemma}
Let $(G_0,L_0) \subseteq (G_1,L_1) \subseteq \dots$ be a sequence of subnetworks of $(G',L')$ and assume that $\bigcup_j G_j = G'$.  Then
\begin{enumerate}
	\item $\Upsilon(G',L')$ is (isomorphic to) the direct limit of the sequence
	\[
	\Upsilon(G_0,L_0) \to \Upsilon(G_1,L_1) \to \dots
	\]
	\item $\c U(G',L',M)$ is (isomorphic to) the inverse limit of the sequence
	\[
	\dots \to \c U(G_1,L_1,M) \to \c U(G_0,L_0,M).
	\]
	\item $\c U_0(G',L',M)$ is (isomorphic to) the direct limit of the sequence
	\[
	\c U_0(G_0,L_0,M) \to \c U_0(G_1,L_1,M) \to \dots
	\]
\end{enumerate}
\end{lemma}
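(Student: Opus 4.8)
The plan is to reduce all three parts to a single elementary observation: because $0$-chains are finitely supported and each $(G_j,L_j)$ is a subnetwork of $(G',L')$, the data defining $\Upsilon$ for $G'$ are the direct limits of those for the $G_j$. First I would record that $V' = \bigcup_j V_j$ and $V'^\circ = \bigcup_j V_j^\circ$ give $RV' = \varinjlim RV_j$ and $RV'^\circ = \varinjlim RV_j^\circ$ (an increasing union of free modules on an increasing union of bases), and that for $x \in V_j^\circ$ the sub-$\partial$-graph condition yields $\mathcal{E}_{G_j}(x) = \mathcal{E}_{G'}(x)$, so together with $w'|_{E(G_j)} = w_j$ and $d'|_{V_j} = d_j$ the operator $L'$ carries $RV_j^\circ$ into $RV_j$ and restricts there to $L_j$. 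Thus the maps $L_j \colon RV_j^\circ \to RV_j$ form a morphism of directed systems (indexed by $\mathbb N$, hence filtered) whose colimit is $L' \colon RV'^\circ \to RV'$, with transition maps the inclusions.

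Part (1) then follows because direct limits over $\mathbb N$ are exact in $R\text{\cat{-mod}}$, so forming cokernels commutes with $\varinjlim$: $\Upsilon(G',L') = \coker(L') = \varinjlim \coker(L_j) = \varinjlim \Upsilon(G_j,L_j)$, and inspecting the definitions identifies the transition maps with the functorial maps of Lemma \ref{lem:upsilonfunctor} induced by the inclusions $(G_j,L_j)\hookrightarrow(G_{j+1},L_{j+1})$. Part (2) follows from (1) by applying $\Hom_R(-,M)$, which sends a colimit to a limit: using the natural isomorphism of Lemma \ref{lem:hom}, $\c U(G',L',M) = \Hom_R(\varinjlim \Upsilon(G_j,L_j),M) = \varprojlim \Hom_R(\Upsilon(G_j,L_j),M) = \varprojlim \c U(G_j,L_j,M)$, with transition maps the restriction maps of Lemma \ref{lem:ufunctor}. (One can also prove (2) directly: a harmonic function on $G'$ restricts to a compatible family on the $G_j$, and conversely a compatible family glues, since $L'u(x)$ for $x \in V_j^\circ$ involves only $u(x)$ and the $u(e_-)$ with $e \in \mathcal{E}_{G'}(x) = \mathcal{E}_{G_j}(x)$, whose arguments all lie in $V_j$.)

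For part (3) I would use the tensor description from the proof of Proposition \ref{prop:tor}: identifying $RV^\circ \otimes_R M$ with the finitely supported $M$-valued functions that vanish on $\partial V$, one has $\c U_0(G,L,M) = \ker(L \otimes \id \colon RV^\circ \otimes M \to RV \otimes M)$. Since $-\otimes_R M$ and $\ker(-)$ both commute with direct limits over $\mathbb N$ (the latter by exactness of $\varinjlim$) and $L' \otimes \id$ restricts to $L_j \otimes \id$ on $RV_j^\circ \otimes M$ by the first paragraph, we get $\c U_0(G',L',M) = \ker(L' \otimes \id) = \varinjlim \ker(L_j \otimes \id) = \varinjlim \c U_0(G_j,L_j,M)$, the transition maps being the degree-$1$ pushforwards (extensions by zero) of Lemma \ref{lem:u0functor}. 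I expect the only genuine subtlety to be here: since the lemma assumes no non-degeneracy we cannot shortcut through $\c U_0 \cong \Tor_1$, and a hands-on argument would have to check that an element $u \in \c U_0(G',L',M)$ supported in some $V_j^\circ$ restricts to an element of $\c U_0(G_j,L_j,M)$ — in particular that $L_j$ of the restriction vanishes also at the boundary vertices of $G_j$, which hinges on the fact that any $e \in \mathcal{E}_{G'}(x) \setminus \mathcal{E}_{G_j}(x)$ has $u(e_-) = 0$ (else $e_- \in \operatorname{supp} u \subseteq V_j^\circ$, which by the sub-$\partial$-graph condition at $e_-$ forces $e \in E(G_j)$). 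The tensor-product route packages exactly this bookkeeping into the statement that $L'$ restricts to $L_j$, so I would present it that way and relegate the direct check to a remark.
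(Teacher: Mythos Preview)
Your proposal is correct and follows essentially the same approach as the paper: both arguments rest on the exactness of filtered direct limits in $R\text{\cat{-mod}}$ applied to the maps $L_j\colon RV_j^\circ \to RV_j$ (and their tensored versions), together with the Hom--colimit/limit adjunction for part (2). The paper packages the exactness step via a short exact sequence of directed systems and the five-lemma, whereas you invoke directly that $\varinjlim$ commutes with $\coker$ and $\ker$; these are equivalent formulations, and your explicit verification that $L'$ restricts to $L_j$ on $RV_j^\circ$ (using the sub-$\partial$-graph condition at interior vertices) fills in a point the paper leaves implicit.
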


\begin{proof}
To prove (1), let $Y$ be the direct limit of the sequence $\Upsilon(G_0,L_0)$.  Note that we have a short exact sequence of directed systems
\[
\begin{tikzcd}
0 \arrow{r} & L_0(RV^\circ(G_0)) \arrow{r} \arrow{d}{\alpha_{0,1}} & RV(G_0) \arrow{r} \arrow{d}{\beta_{0,1}} & \Upsilon(G_0,L_0) \arrow{r} \arrow{d}{\gamma_{0,1}} & 0 \\
0 \arrow{r} & L_1(RV^\circ(G_1)) \arrow{r} \arrow{d}{\alpha_{1,2}} & RV(G_1) \arrow{r} \arrow{d}{\beta_{1,2}} & \Upsilon(G_1,L_1) \arrow{r} \arrow{d}{\gamma_{1,2}} & 0 \\
 & \vdots & \vdots & \vdots &
\end{tikzcd}
\]
where the vertical maps are the obvious ones obtained from the inclusion $V(G_n) \to V(G_{n+1})$ and $V^\circ(G_n) \to V^\circ(G_{n+1})$.  Moreover, we have maps
\[
\begin{tikzcd}
0 \arrow{r} & L_n(RV^\circ(G_n)) \arrow{r} \arrow{d}{\alpha_n} & RV(G_n) \arrow{r} \arrow{d}{\beta_n} & \Upsilon(G_n,L_n) \arrow{r} \arrow{d}{\gamma_n} & 0 \\
0 \arrow{r} & L'(RV^\circ(G')) \arrow{r} & RV(G') \arrow{r} & \Upsilon(G',L') \arrow{r}  & 0,
\end{tikzcd}
\]
where the vertical arrows are obtained from the inclusion $G_n \to G$.  These maps satisfy $\alpha_n \circ \alpha_{m,n} = \alpha_m$ for $m < n$, and the same holds for $\beta$ and $\gamma$.  Therefore, the universal property of direct limits \cite[Exercise 2.16]{AtMac} gives us maps
\[
\begin{tikzcd}
0 \arrow{r} & \varinjlim L_n(RV^\circ(G_n)) \arrow{r} \arrow{d}{\alpha} & \varinjlim RV(G_n) \arrow{r} \arrow{d}{\beta} & \varinjlim \Upsilon(G_n,L_n) \arrow{r} \arrow{d}{\gamma} & 0 \\
0 \arrow{r} & L'(RV^\circ(G')) \arrow{r} & RV(G') \arrow{r} & \Upsilon(G',L') \arrow{r}  & 0.
\end{tikzcd}
\]
Because the direct limit is an exact functor \cite[Exercise 2.19]{AtMac}, the top row of this diagram is exact.  The bottom row is exact by construction of $\Upsilon$.  We easily see that the first two vertical maps are isomorphisms since $V^\circ(G') = \bigcup_n V^\circ(G_n)$ and $V(G') = \bigcup_n V(G_n)$.  Therefore, the five-lemma implies that the third vertical map is an isomorphism.  This completes the proof of (1).

The statement (2) follows by applying the $\Hom(-,M)$ functor to (1) because whenever $M_0 \to M_1 \to \dots$ is a sequence of $R$-modules, there is a natural isomorphism
\[
\Hom(\varinjlim M_n, M) \cong \varprojlim \Hom(M_n,M).
\]

The proof of (3) is symmetrical to the proof of (1).  Instead of using the short exact sequence
\[
0 \to L(RV^\circ(G)) \to RV(G) \to \Upsilon(G,L) \to 0
\]
for each network $G = G_n$ or $G = G'$, we use the short exact sequence
\[
0 \to \c U_0(G,L,M) \to RV^\circ \otimes M \xrightarrow{L \otimes \id} (L \otimes \id)(RV^\circ \otimes M) \to 0.
\]
Here $RV^\circ \otimes M$ is viewed as the module of finitely supported functions $V^\circ \to M$.  In the last step, we apply the five lemma to show that the \emph{first} map out of three is an isomorphism rather than the \emph{last} map as in (1).
\end{proof}

\begin{proposition} \label{prop:infinitefiltration}
Suppose that a $R^\times$-network $(G',L')$ is a layerable extension of $(G,L)$ through the filtration $\{(G_j,L_j)\}_{j=0}^\infty$.  Let $S \subseteq V(G')$ be the set of vertices which are adjoined as isolated boundary vertices at some step of the filtration.  Then the inclusion map $(G,L) \to (G',L')$ induces isomorphisms
\begin{align*}
\Upsilon(G',L') &\cong \Upsilon(G,L) \oplus RS \\
\c U(G',L',M) &\cong \c U(G,L,M) \times M^S \\
\c U_0(G',L',M) &\cong \c U_0(G,L,M).
\end{align*}
\end{proposition}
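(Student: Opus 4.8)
The plan is to bootstrap from the finite case by taking (co)limits along the filtration. By the preceding lemma, $\Upsilon(G',L')$ is the direct limit of the system $\Upsilon(G_0,L_0)\to\Upsilon(G_1,L_1)\to\cdots$, while $\c U(G',L',M)$ is the inverse limit of $\cdots\to\c U(G_1,L_1,M)\to\c U(G_0,L_0,M)$ and $\c U_0(G',L',M)$ is the direct limit of $\c U_0(G_0,L_0,M)\to\c U_0(G_1,L_1,M)\to\cdots$, all with the transition maps induced by the inclusions. First I would note that each $G_j$ is a \emph{finite} layerable extension of $G=G_0$ through the truncated filtration $G_0\subseteq\cdots\subseteq G_j$, so Proposition \ref{prop:filtration} applies at each stage: writing $S_j:=S\cap V(G_j)$ (which is precisely the set of vertices adjoined as isolated boundary vertices among the first $j$ steps, and satisfies $\bigcup_j S_j=S$), we get isomorphisms $\Upsilon(G_j,L_j)\cong\Upsilon(G,L)\oplus RS_j$, $\c U(G_j,L_j,M)\cong\c U(G,L,M)\times M^{S_j}$, and $\c U_0(G_j,L_j,M)\cong\c U_0(G,L,M)$.

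Next I would check that these stagewise isomorphisms are compatible with the transition maps, so that the systems above become, up to isomorphism, the ``obvious'' ones. Here functoriality does the work: the isomorphism of Proposition \ref{prop:filtration} is assembled from the functorial map $\Upsilon$ applied to the inclusion $(G,L)\hookrightarrow(G_j,L_j)$ together with the canonical composite $RS_j\to RV(G_j)\to\Upsilon(G_j,L_j)$, and both of these commute with the inclusions $(G_j,L_j)\hookrightarrow(G_{j+1},L_{j+1})$. Consequently the directed system $\{\Upsilon(G_j,L_j)\}$ is isomorphic to $\{\Upsilon(G,L)\oplus RS_j\}$ with transition maps $\id\oplus\iota_{j,j+1}$, where $\iota_{j,j+1}\colon RS_j\hookrightarrow RS_{j+1}$ is the inclusion; dually (apply $\Hom(-,M)$, using Lemma \ref{lem:ufunctor}) the inverse system $\{\c U(G_j,L_j,M)\}$ is isomorphic to $\{\c U(G,L,M)\times M^{S_j}\}$ with maps $\id\times(\text{restriction }M^{S_{j+1}}\to M^{S_j})$; and since the inclusion $(G,L)\to(G_{j+1},L_{j+1})$ factors through $(G_j,L_j)$ and each of the three induced maps on $\c U_0$ is an isomorphism (Lemma \ref{lem:layeringUpsilon2} or Proposition \ref{prop:filtration}), the directed system $\{\c U_0(G_j,L_j,M)\}$ has all transition maps isomorphisms, hence is ``constant'' at $\c U_0(G,L,M)$.

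Finally I would pass to the (co)limit. Direct limit commutes with direct sums and $\varinjlim RS_j=RS$ because $S=\bigcup_j S_j$, so $\Upsilon(G',L')\cong\varinjlim\Upsilon(G_j,L_j)\cong\Upsilon(G,L)\oplus RS$. Inverse limit commutes with products and $\varprojlim M^{S_j}=M^S$ (a compatible family of functions on the $S_j$ is exactly one function on their union), so $\c U(G',L',M)\cong\varprojlim\c U(G_j,L_j,M)\cong\c U(G,L,M)\times M^S$. And the constant directed system has direct limit $\c U_0(G,L,M)$, so $\c U_0(G',L',M)\cong\c U_0(G,L,M)$. (Alternatively, the last two isomorphisms can be deduced from the first by applying $\Hom(-,M)$ and, for non-degenerate networks, $\Tor_1(-,M)$, via Lemma \ref{lem:hom} and Proposition \ref{prop:tor}; but the direct-limit route for $\c U_0$ is cleaner since it avoids discussing how $\Tor$ interacts with the colimit.) I expect the only genuine work to be the compatibility bookkeeping of the second step — identifying the canonical transition maps with $\id\oplus\iota$, etc. — while everything else is formal, given the exactness of direct and inverse limits recorded in the preceding lemma.
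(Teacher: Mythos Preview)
Your proposal is correct and follows essentially the same approach as the paper: apply the finite case (Proposition~\ref{prop:filtration}) at each stage with $S_j=S\cap V(G_j)$, then pass to the (co)limit using the preceding lemma. The paper's proof is much terser and glosses over the compatibility checks you carefully spell out in your second paragraph, but the strategy is identical.
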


\begin{proof}
Let $S_n = S \cap V(G_n)$.  Then by Proposition \ref{prop:filtration}, the maps $\Upsilon(G,L) \oplus RS_n \to \Upsilon(G_n,L_n)$ are isomorphisms.  By passing to the direct limit, we see that $\Upsilon(G,L) \oplus RS \to \Upsilon(G',L')$ is an isomorphism.  The arguments for the other two statements are similar.
\end{proof}

\section{Functorial Properties of Layer-Stripping} \label{sec:layeringfunctor}

\subsection{Unramified $\partial$-graph Morphisms and Layer-Stripping}

In this section, we will show that layer-stripping pulls back through unramified $\partial$-graph morphisms.  Here it is convenient to take the perspective of removing things from a $\partial$-graph rather than adding things, layer-stripping operations rather than layerable extensions, and decreasing rather than increasing filtrations.  Recall that the layer-stripping operations are \emph{deleting an isolated boundary vertex}, \emph{contracting a boundary spike}, and \emph{deleting a boundary vertex}.  We must also define unramified morphisms and preimages of sub-$\partial$-graphs.

\begin{definition} \label{def:unramified}
We say a $\partial$-graph morphism $f\colon G \to H$ is an {\bf unramified} if $\deg(f,x) = 1$ for all $x \in V^\circ(G)$ and $\deg(f,x) \leq 1$ for all $x \in \partial V(G)$.  We apply the same terminology to $R$-network morphisms.
\end{definition}

Lemma \ref{lem:categorydegree} implies that $\partial$-graphs and unramified morphisms form a category, which we will denote $\partial\text{\cat{-graph}}_{\text{unrm}}$.  The category of $R$-networks and unramified morphisms will be denoted $R\text{\cat{-net}}_{\text{unrm}}$.  The full subcategory of finite $\partial$-graphs or networks will be denoted by a superscript ``$0$.''  We remark that covering maps and inclusions of sub-$\partial$-graphs are unramified.  Moreover, restricting an unramified morphism to a sub-$\partial$-graph yields another unramified morphism.

\begin{definition}
If $f\colon G \to H$ is $\partial$-graph morphism and $H'$ is a sub-$\partial$-graph of $H$, then we can define the {\bf pullback} or {\bf preimage $f^{-1}(H')$} as the $\partial$-graph given by
\begin{align*}
V(f^{-1}(H')) \sqcup E(f^{-1}(H')) &= f^{-1}(V(H') \sqcup E(H')), \\
V^\circ(f^{-1}(H')) &= f^{-1}(V^\circ(H')) \cap V^\circ(G).
\end{align*}
Straightforward casework verifies that $f^{-1}(H')$ is a sub-$\partial$-graph of $G$.
\end{definition}

\begin{lemma} \label{lem:layerstrippingfunctoriality}
Suppose $f\colon G \to H$ is an unramified morphism between finite $\partial$-graphs.  Suppose $H_2 \subseteq H_1 \subseteq H$ are harmonic sub-$\partial$-graphs.  If $H_2$ is obtained from $H_1$ by a layer-stripping operation, then $f^{-1}(H_2)$ is obtained from $f^{-1}(H_1)$ by a sequence of layer-stripping operations.
\end{lemma}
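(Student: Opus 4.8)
The plan is to proceed by cases according to which of the three layer-stripping operations carries $H_1$ to $H_2$, and in each case describe explicitly how the fiber over the affected vertex or edge decomposes, then strip those preimage pieces off one at a time. The key point is that $f$ being unramified means each star $\mathcal{E}(x)$ in $G$ over an interior vertex maps \emph{bijectively} onto its image star, so collapsed edges aside, the local picture upstairs looks exactly like a disjoint union of copies of the local picture downstairs. I would first record the (easy) observation that $f$ restricts to an unramified morphism $f^{-1}(H_1)\to H_1$, so that the degree and star-bijectivity hypotheses persist on the sub-$\partial$-graphs in question.

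First I would handle the case where $H_2$ is obtained from $H_1$ by \textbf{deleting an isolated boundary vertex} $y$. Then $f^{-1}(y)\cap V(G)$ is a finite set of vertices, each of which is isolated in $f^{-1}(H_1)$ (any incident edge would have to map to an edge incident to $y$ in $H_1$, of which there are none; note $f^{-1}(y)$ may also contain edges of $G$, but those are edges \emph{collapsed} to $y$, and such an edge $e$ has both endpoints in $f^{-1}(y)\cap V(G)$, so it is among the objects being removed). Each such vertex is a boundary vertex of $f^{-1}(H_1)$ by condition (2) of Definition \ref{def:dgraphmorphism} read contrapositively — wait, more carefully: an interior vertex maps to an interior vertex, so a vertex over the boundary vertex $y$ is forced to be a boundary vertex. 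Hence $f^{-1}(H_2)$ is obtained from $f^{-1}(H_1)$ by deleting these isolated boundary vertices and the collapsed edges among them; I would order the removals so that collapsed edges go first (as boundary edges, once we check both endpoints are boundary — they are, being over $y$) and then the vertices. This is a finite sequence of layer-stripping operations.

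Next, the case of \textbf{deleting a boundary edge} $e$ (both endpoints boundary vertices of $H_1$): then $f^{-1}(e)\cap E(G)$ is a finite set of edges of $G$, each with both endpoints over the boundary vertices $e_\pm$, hence boundary vertices of $f^{-1}(H_1)$; so each is a boundary edge and we delete them one at a time. Finally, and this is where the unramified hypothesis really bites, the case of \textbf{contracting a boundary spike} $e$ with $e_+\in\partial V(H_1)$, $e_-\in V^\circ(H_1)$. Here I expect the main obstacle: over $e_-$ there may be several vertices, over each such vertex $\tilde x$ the star $\mathcal{E}(\tilde x)$ maps bijectively onto $\mathcal{E}(e_-)$ (since $\deg(f,\tilde x)=1$ and $\tilde x$ is interior), so exactly one edge of $\mathcal{E}(\tilde x)$ lies over $e$; its other endpoint lies over $e_+$ and, since $e_+$ has no other incident edges in $H_1$ and the star map at the \emph{interior} vertex $\tilde x$ is a bijection, that endpoint-over-$e_+$ has no other edges incident to it within $f^{-1}(H_1)$ either — so it is a genuine boundary spike of $f^{-1}(H_1)$. (One must also rule out that $f^{-1}(e)$ meets an edge collapsed to $e_+$ or $e_-$ in a way that obstructs this; a collapsed edge over $e_-$ is impossible since $e_-$'s fiber vertices are interior with bijective stars and $e$ itself is the edge of each star mapping to $e$, and any edge collapsed to $e_+$ would have an endpoint over $e_+$, contradicting that $e_+$'s preimage vertices are isolated-except-for-their-spike.) Thus $f^{-1}(H_2)$ is obtained from $f^{-1}(H_1)$ by contracting this finite collection of boundary spikes in any order — each contraction leaves the others as still-valid boundary spikes because the spikes attach to distinct vertices over $e_+$ with disjoint stars. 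Assembling the three cases completes the argument; the only real care needed is the bookkeeping in the spike case to confirm that "no other incident edges" is inherited upstairs, which is exactly what $\deg(f,\cdot)=1$ at interior vertices guarantees.
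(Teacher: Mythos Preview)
Your case analysis for deleting an isolated boundary vertex and deleting a boundary edge is essentially the same as the paper's. The spike case, however, has two genuine gaps.

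First, you assert that every vertex $\tilde x\in f^{-1}(e_-)$ is interior. This is not forced: condition (2) of Definition~\ref{def:dgraphmorphism} only says interior vertices go to interior vertices, so a \emph{boundary} vertex of $G$ may map to the interior vertex $e_-$ (and Figure~\ref{fig:functorialityexample2} exhibits exactly this). For such $\tilde x$ the edge over $e$ at $\tilde x$ (if any) has both endpoints in $\partial V(f^{-1}(H_1))$ and is therefore a boundary edge, not a spike; your argument never handles this. The paper's proof splits explicitly on whether $e_-'$ is interior or boundary, contracting a spike in the first case and deleting a boundary edge (then an isolated boundary vertex) in the second.

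Second, your exclusion of edges collapsed to $e_+$ is circular: you appeal to ``$e_+$'s preimage vertices are isolated-except-for-their-spike'' in order to rule out such edges, but that is the very statement you are trying to establish. Nothing prevents two boundary vertices $\tilde y,\tilde y'\in f^{-1}(e_+)$ from being joined by an edge of $G$ collapsed to $e_+$; this edge sits in $\mathcal{E}(\tilde y)$ and keeps $\tilde y$ from being a valid spike tip. The paper fixes this by first deleting all collapsed edges over $e_+$ (they are boundary edges of $f^{-1}(H_1)$), then deleting any resulting isolated boundary vertices in $f^{-1}(e_+)$, and only afterward contracting the spikes. Your related claim that collapsed edges over $e_-$ are impossible is also incorrect (condition (5) constrains only the non-collapsed edges $\mathcal{E}(\tilde x)\cap f^{-1}(E_2)$), though this particular error is harmless since such edges remain in both $f^{-1}(H_1)$ and $f^{-1}(H_2)$ and do not obstruct anything.
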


\begin{proof}
Suppose that $H_2$ is obtained from $H_1$ by deleting a boundary edge $e$.  Since $f$ must map interior vertices to interior vertices, any preimage of a boundary vertex is a boundary vertex.  Thus, $f^{-1}(e)$ consists of boundary edges, so $f^{-1}(H_2)$ is obtained from $f^{-1}(H_1)$ by deleting boundary edges.

Suppose that $H_2$ is obtained from $H_1$ by deleting an isolated boundary vertex $x$.  Now $f^{-1}(x)$ may contain boundary vertices of $f^{-1}(H_2)$ as well as edges which are collapsed by the map $f$ into the vertex $x$.  Every edge in $f^{-1}(x)$ is a boundary edge since its endpoints are also in $f^{-1}(x)$ (Definition \ref{def:dgraphmorphism} (4)), and because $f$ maps interior vertices to interior vertices (Definition \ref{def:dgraphmorphism} (2)).  Thus we can obtain $f^{-1}(H_2)$ by deletlng the boundary edges in $f^{-1}(x)$, and then deleting the vertices in $f^{-1}(x)$, which are now isolated boundary vertices.

Suppose that $H_2$ is obtained from $H_1$ by contracting a boundary spike $e$ with boundary endpoint $x = e_+$ and interior endpoint $y = e_-$.  Then we obtain $f^{-1}(H_2)$ from $f^{-1}(H_1)$ in several steps.  First, any edges in $f^{-1}(x)$ are boundary edges, and we can delete them.  Second, if there are any isolated boundary vertices in $f^{-1}(x)$, we delete them.  At this point, any vertex left in $f^{-1}(x)$ is attached to an edge in $f^{-1}(e)$ and no other edges.

Now consider each edge $e' \in f^{-1}(e)$.  Note $e_+' \in f^{-1}(x)$ is a boundary vertex.  If $e_-'$ is also a boundary vertex, then we delete $e'$ as a boundary edge and then delete $e_+'$ as an isolated boundary vertex.  If $e_-'$ is an interior vertex, then we can contract $e'$ as a spike, which will automatically remove $e_+' \in f^{-1}(x)$.  These steps will allow us to obtain $f^{-1}(H_2)$ from $f^{-1}(H_1)$.  Indeed, all vertices and edges in $f^{-1}(x)$ have been removed and all edges in $f^{-1}(e)$ have been removed.  Moreover, all vertices in $f^{-1}(y)$ are now boundary vertices; indeed if $y' \in f^{-1}(y)$ was interior, then because $\mathcal{E}(y') \cap f^{-1}(E(H_1)) \to \mathcal{E}(y)$ is bijective, $y'$ must have been the endpoint of some $e'$ which mapped to $e$.  This $e'$ was contracted as a boundary spike, which means $y'$ has been changed to a boundary vertex.

These are the only possibilities; thus, by a sequence of layer-stripping operations, we can obtain $f^{-1}(H_2)$ from $f^{-1}(H_1)$.
\end{proof}

\begin{figure}
	\begin{center}
	\begin{tikzpicture}
		\begin{scope}
			\node[bd] (1A) at (0,1) [label=left:$1A$] {};
			\node[int] (2A) at (0,0) [label=left:$2A$] {};
			\node[bd] (3A) at (0,-1) [label=left:$3A$] {};
			\node[bd] (1B) at (1,1) [label=right:$1B$] {};
			\node[int] (2B) at (1,0) [label=right:$2B$] {};
			\node[bd] (3B) at (1,-1) [label=right:$3B$] {};
			
			\draw (1A) to (2A) to (3A);
			\draw (1B) to (2B) to (3B);
			\draw (1A) to (1B); \draw (2A) to (2B); \draw (3A) to (3B);
			
			\node at (-3,0) {$G = f^{-1}(H_1)$};
		\end{scope}
		
		\begin{scope}[shift={(5,0)}]
			\node[bd] (1) at (0,1) [label=left:$1$] {};
			\node[int] (2) at (0,0) [label=left:$2$] {};
			\node[bd] (3) at (0,-1) [label=left:$3$] {};
			
			\draw (1) to (2) to (3);
			
			\node at (2,0) {$H = H_1$};
		\end{scope}
		
		\begin{scope}[shift={(0,-5)}]
			\node[bd] (1A) at (0,1) [label=left:$1A$] {};
			\node[bd] (2A) at (0,0) [label=left:$2A$] {};
			\node[bd] (1B) at (1,1) [label=right:$1B$] {};
			\node[bd] (2B) at (1,0) [label=right:$2B$] {};
			
			\draw (1A) to (2A);
			\draw (1B) to (2B);
			\draw (1A) to (1B); \draw (2A) to (2B);
			
			\node at (-3,0.5) {$f^{-1}(H_2)$};
		\end{scope}
			
		\begin{scope}[shift={(5,-5)}]
			\node[bd] (1) at (0,1) [label=left:$1$] {};
			\node[bd] (2) at (0,0) [label=left:$2$] {};
			
			\draw (1) to (2);
			
			\node at (2,0.5) {$H_2$};
		\end{scope}
		
		\draw[->] (2,0) to node[auto] {$f$} (4,0);
		\draw[->] (2,-4.5) to node[auto] {$f|_{f^{-1}(H_2)}$} (4,-4.5);
		
		\draw[->] (0.5,-2) -- (0.5,-3);
		\draw[->] (5,-2) -- (5,-3);
		
		\node[align=center, text width=3cm] at (-3,-2.5) {Delete boundary edge $(3A,3B)$, then contract boundary spikes $(2A,3A)$ and $(2B,3B)$.};
		\node[align=center, text width=2cm] at (7,-2.5) {Contract boundary spike $(2,3)$.};
		
	\end{tikzpicture}
	
	\caption{One case in the proof of Lemma \ref{lem:layerstrippingfunctoriality}.  Here $H_1 = H$, and $H_2$ is obtained from $H_1$ by contracting a boundary spike.} \label{fig:functorialityexample}
	
	\end{center}
\end{figure}
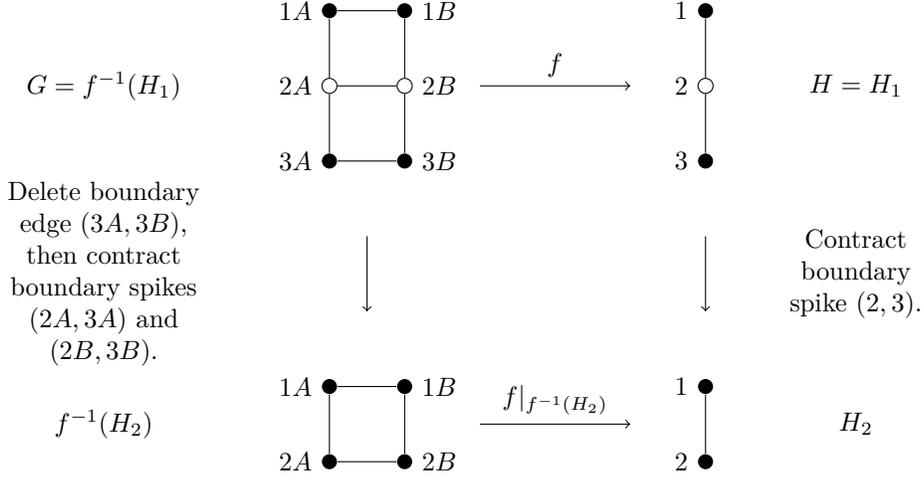

The previous lemma immediately implies
\begin{lemma} \label{lem:layerabilityPullback}
Let $G$ and $H$ be finite $\partial$-graphs.  If $H$ is layerable and there is an unramified morphism $f \colon G \to H$, then $G$ is also layerable.
\end{lemma}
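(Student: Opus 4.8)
The plan is to pull back a complete layer-stripping sequence for $H$ to one for $G$, with all the real work delegated to Lemma \ref{lem:layerstrippingfunctoriality}. First I would record two trivial identifications. Since $f$ maps $V(G)\sqcup E(G)$ into $V(H)\sqcup E(H)$, we have $f^{-1}(V(H)\sqcup E(H)) = V(G)\sqcup E(G)$, and since $f$ sends interior vertices to interior vertices, $V^\circ(G)\subseteq f^{-1}(V^\circ(H))$, so $V^\circ(f^{-1}(H)) = f^{-1}(V^\circ(H))\cap V^\circ(G) = V^\circ(G)$; hence $f^{-1}(H) = G$. Likewise $f^{-1}(\varnothing) = \varnothing$.

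Because $H$ is layerable, there is a layerable filtration $\varnothing = H_0 \subseteq H_1 \subseteq \dots \subseteq H_n = H$, so reading it backwards, $H = H_n, H_{n-1},\dots, H_0 = \varnothing$ is a chain of sub-$\partial$-graphs of $H$ in which each $H_{j-1}$ is obtained from $H_j$ by a single layer-stripping operation. Next I would note that each $f^{-1}(H_j)$ is a sub-$\partial$-graph of $G$ and that the restriction $f|_{f^{-1}(H_j)}$ is again an unramified morphism (restrictions of unramified morphisms to sub-$\partial$-graphs are unramified). Applying Lemma \ref{lem:layerstrippingfunctoriality} to the pair $H_{j-1}\subseteq H_j$ then yields, for each $j$, a finite sequence of layer-stripping operations carrying $f^{-1}(H_j)$ to $f^{-1}(H_{j-1})$, whose intermediate $\partial$-graphs are all sub-$\partial$-graphs of $f^{-1}(H_j)$, hence of $G$.

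Concatenating these sequences for $j = n, n-1, \dots, 1$ produces one finite sequence of layer-stripping operations from $f^{-1}(H_n) = G$ down to $f^{-1}(H_0) = \varnothing$, with every intermediate $\partial$-graph a sub-$\partial$-graph of $G$. Reversing this sequence exhibits $G$ as a finite layerable extension of $\varnothing$, i.e. $G$ is layerable, as desired. I do not expect any genuine obstacle here: the substantive combinatorial casework — tracking how a deletion or contraction on $H$ lifts to a (possibly longer) sequence of deletions and contractions on the preimage — has already been dispatched in Lemma \ref{lem:layerstrippingfunctoriality}, and what remains is only the two preimage identifications above together with the routine bookkeeping of splicing finitely many finite sequences, which is exactly the "induction along the filtration" step the phrase "immediately implies" refers to.
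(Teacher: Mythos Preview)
Your proposal is correct and matches the paper's approach exactly: the paper simply says the result ``immediately implies'' from Lemma~\ref{lem:layerstrippingfunctoriality}, and you have spelled out precisely the filtration-pullback and concatenation argument that this phrase abbreviates. The two preimage identifications $f^{-1}(H)=G$ and $f^{-1}(\varnothing)=\varnothing$ and the splicing of the resulting finite sequences are the only details to check, and you have handled them.
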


Before developing our basic ideas further, we must clear up some technical issues pertaining to pulling back layer-stripping operations.  Lemma \ref{lem:layerstrippingfunctoriality} shows that each layer-stripping operation pulls back to some sequence of layer-stripping operations, but this sequence is not unique since the transformation $f^{-1}(H_1) \mapsto f^{-1}(H_2)$ could be broken up into layer-stripping operations in multiple ways.  However, with a little cleverness, we can find a type of elementary operation that pulls back uniquely to another operation of the same type.

\begin{definition}
We define a {\bf three-step layer-stripping operation} to be a sequence of boundary edge deletion, isolated boundary vertex deletion, and boundary spike contraction (in \emph{that} order).  Multiple edges or vertices are allowed to be removed at each step; however, if multiple boundary spikes are contracted, then they are not allowed to share any endpoints with each other.  We allow any one of the three steps to be trivial, and thus a simple layer-stripping operation can be considered a three-step layer-stripping operation.  We allow infinitely many vertices or edges to be removed if the $\partial$-graph is infinite.
\end{definition}

The following lemma applies even to infinite $\partial$-graphs.

\begin{lemma} \label{lem:layerstrippingfunctoriality2}
Suppose $f\colon G \to H$ is an unramified $\partial$-graph morphism, and suppose $H_2 \subseteq H_1 \subseteq H$ are sub-$\partial$-graphs.  If $H_2$ is obtained from $H_1$ by a three-step layer-stripping operation, then $f^{-1}(H_2)$ is obtained from $f^{-1}(H_1)$ by a three-step layer-stripping operation.
\end{lemma}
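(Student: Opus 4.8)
The plan is to reduce to the one-stage pullbacks that underlie Lemma~\ref{lem:layerstrippingfunctoriality}, and then to reorganize the resulting operations on $G$ so that they again fit into the three prescribed stages. Write the given three-step operation as a chain of sub-$\partial$-graphs $H_1 \supseteq K_1 \supseteq K_2 \supseteq H_2$, where $K_1$ is obtained from $H_1$ by deleting the boundary edges of the first stage, $K_2$ from $K_1$ by deleting the isolated boundary vertices of the second stage, and $H_2$ from $K_2$ by contracting the (pairwise endpoint-disjoint) boundary spikes of the third stage. I would first record that deleting boundary edges or isolated boundary vertices never alters the interior/boundary partition of the vertices that survive, so $\partial V(K_2) \subseteq \partial V(K_1) = \partial V(H_1)$; this fact will be used repeatedly.

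Next I would pull back each stage separately, running the three cases in the proof of Lemma~\ref{lem:layerstrippingfunctoriality} (the casework there is purely local and applies without change to infinite $\partial$-graphs). Pulling back $H_1 \supseteq K_1$ gives a single boundary-edge deletion on $G$: since $f$ carries interior vertices to interior vertices, every preimage of a boundary vertex of $H_1$ is a boundary vertex of $f^{-1}(H_1)$, so each edge lying over a deleted boundary edge is a boundary edge. Pulling back $K_1 \supseteq K_2$ gives a boundary-edge deletion (the collapsed edges lying over the deleted isolated boundary vertices) followed by an isolated-boundary-vertex deletion (the vertices lying over them). Pulling back $K_2 \supseteq H_2$ gives a boundary-edge deletion, then an isolated-boundary-vertex deletion, then a boundary-spike contraction; here the one substantive point is that the pulled-back spikes still have pairwise disjoint endpoints, which I would deduce from unramifiedness of $f$ (each vertex of $G$, interior or boundary, carries at most one incident edge mapping to a given edge of $H$, using the formula $\deg(f,x)=\max_{e\in\mathcal{E}(f(x))}|\mathcal{E}(x)\cap f^{-1}(e)|$ extended to boundary vertices) together with the hypothesis that the spikes of $H$ share no endpoints, so their preimages lie over disjoint sets of vertices of $H$.

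Then I would merge the pieces: collect every edge deleted in any of the three pulled-back stages into a single first step, every isolated boundary vertex deleted into a single second step, and the single batch of spike contractions from the third stage into a third step, performed in the order (edges, vertices, spikes). This is legitimate by three easy commutation facts: deleting a boundary edge changes neither the boundary status of any vertex nor the boundary-edge status of any other edge; deleting an isolated boundary vertex changes no vertex degree (it has no incident edges); and the preimage sets arising in the three stages are pairwise disjoint because the successive stages on $H$ remove or act on disjoint sets of edges and vertices (stage~1 removes edges absent from $K_1$, stage~2 removes vertices absent from $K_2$, stage~3 operates within $K_2$). It follows that every edge appearing in any stage is already a boundary edge of $f^{-1}(H_1)$, every vertex appearing is still isolated and boundary once all those edges have been removed, and after the edge- and vertex-removal steps the graph coincides with the one occurring just before the spike-contraction stage of the stagewise pullback, so the spikes remain valid. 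Since the total collection of removed and contracted objects and the final interior/boundary partition are unchanged by the reordering, the final result is exactly $f^{-1}(H_2)$, reached by a single three-step layer-stripping operation.

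I expect the main obstacle to be the bookkeeping around disjointness and validity under the reordering: in particular, confirming that the boundary spikes produced by pulling back the third stage have pairwise disjoint endpoints (the one place unramifiedness is genuinely needed), and checking that each of the three merged steps still consists of legal layer-stripping moves in the graph it is applied to. Everything else is a routine unwinding of the single-step analysis already carried out in the proof of Lemma~\ref{lem:layerstrippingfunctoriality}.
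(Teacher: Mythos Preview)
Your proposal is correct and follows essentially the same approach as the paper: both arguments reduce to the single-stage casework of Lemma~\ref{lem:layerstrippingfunctoriality} and then reorganize the resulting operations so that all boundary-edge deletions precede all isolated-boundary-vertex deletions, which in turn precede all spike contractions. The paper is terser---it simply lists the seven merged sub-steps in the correct order and leaves the verification to the reader---whereas you spell out the commutation facts (deleting boundary edges does not alter any vertex's interior/boundary status, the three pulled-back stages act on disjoint preimage sets, etc.) that justify the reordering; your explicit observation that unramifiedness is exactly what guarantees the pulled-back spikes have pairwise disjoint endpoints is the one point the paper leaves entirely implicit.
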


\begin{proof}
The steps are the same as in Lemma \ref{lem:layerstrippingfunctoriality} but we must order the operations in a non-obvious way:
\begin{itemize}
	\item Delete all boundary edges in $f^{-1}(H_1)$ that map to the deleted boundary edges in $H_1$.
	\item Delete all boundary edges in $f^{-1}(H_1)$ that map to the deleted isolated boundary vertices in $H_1$.
	\item Delete all boundary edges in $f^{-1}(H_1)$ that map to the boundary endpoints of spikes in $H_1$.
	\item Delete all boundary edges in $f^{-1}(H_1)$ that map to the contracted boundary spikes in $H_1$.
	\item Delete all isolated boundary vertices in $f^{-1}(H_1)$ that map to the deleted isolated boundary vertices in $H_1$.
	\item Delete all isolated boundary vertices in $f^{-1}(H_1)$ that map to the boundary endpoints of contracted spikes in $H_1$.
	\item Contract all boundary spikes in $f^{-1}(H_1)$ that map to the contracted boundary spikes in $H_1$.
\end{itemize}
We leave the reader to verify that this works by adapting the casework in Lemma \ref{lem:layerstrippingfunctoriality}.
\end{proof}

\begin{figure}
	\begin{center}

	\begin{tikzpicture}[scale=0.6]
		\begin{scope}
			\node[int] (1A) at (0,0) [label = below left:$1A$] {};
			\node[bd] (2A) at (0,1) [label = left:$2A$] {};
			\node[bd] (3A) at (0,2) [label = above left:$3A$] {};
			\node[bd] (1B) at (1,0) [label = below:$1B$] {};
			\node[bd] (2B) at (1,1) [label = above right:$2B$] {};
			\node[bd] (3B) at (1,2) [label = above:$3B$] {};
			\node[bd] (1C) at (2,0) [label = below right:$1C$] {};
			\node[bd] (2C) at (2,1) [label = right:$2C$] {};
			
			\draw (1A) to (2A) to (3A);
			\draw (1B) to (2B) to (3B);
			\draw (1A) to (1B) to (1C);
			\draw (2A) to (2B) to (2C);
			\draw (3A) to (3B);
		\end{scope}

		\node at (1,3.5) {$G = f^{-1}(H_1)$};

		\draw[->,dashed] (-1.4,0.8) -- (-2.4,0.2);

		\begin{scope}[shift={(-4.7,-1)},scale=0.8]
			\node[int] (1A) at (0,0) {};
			\node[bd] (2A) at (0,1) {};
			\node[bd] (3A) at (0,2) {};
			\node[bd] (1B) at (1,0) {};
			\node[bd] (2B) at (1,1) {};
			\node[bd] (3B) at (1,2) {};
			\node[bd] (1C) at (2,0) {};
			\node[bd] (2C) at (2,1) {};
			
			\draw (1A) to (2A);
			\draw (1A) to (1B) to (1C);
		\end{scope}

		\draw[->,dashed] (-3.9,-1.5) -- (-3.9,-3);

		\begin{scope}[shift={(-4.7,-4)},scale=0.8]
			\node[int] (1A) at (0,0) {};
			\node[bd] (2A) at (0,1) {};
			\node[bd] (1B) at (1,0) {};
			\node[bd] (1C) at (2,0) {};
			
			\draw (1A) to (2A);
			\draw (1A) to (1B) to (1C);
		\end{scope}

		\draw[->,dashed] (-2.6,-3.8) -- (-0.8,-4.6);

		\begin{scope}[shift={(0,-5)}]
			\node[bd] (1A) at (0,0) [label = below left:$1A$] {};
			\node[bd] (1B) at (1,0) [label = below:$1B$] {};
			\node[bd] (1C) at (2,0) [label = below right:$1C$] {};
			
			\draw (1A) to (1B) to (1C);
		\end{scope}
		
		\node at (1,-6.5) {$f^{-1}(H_2)$};
		
		\draw[->] (1,-1.5) -- (1,-3.5);
		\draw[->] (7,-1.5) -- (7,-3.5);
	
		\draw[->] (4,1) to node[auto] {$f$} (6,1);
		\draw[->] (4,-5) to node[auto] {$f|_{f^{-1}(H_2)}$} (6,-5);
	
		\begin{scope}[shift={(7,0)}]
			\node[int] (1) at (0,0) [label = below right:$1$] {};
			\node[bd] (2) at (0,1) [label = right:$2$] {};
			\node[bd] (3) at (0,2) [label = above right:$3$] {};
			
			\draw (1) to (2) to (3);
		\end{scope}

		\node at (7,3.5) {$H = H_1$};

		\draw[->,dashed] (8.1,0.7) -- (9.2,0);

		\begin{scope}[shift={(10,-1)},scale=0.8]
			\node[int] (1) at (0,0) {};
			\node[bd] (2) at (0,1) {};
			\node[bd] (3) at (0,2) {};
			
			\draw (1) to (2);
		\end{scope}

		\draw[->,dashed] (10,-1.5) -- (10,-2.5);

		\begin{scope}[shift={(10,-4)},scale=0.8]
			\node[int] (1) at (0,0) {};
			\node[bd] (2) at (0,1) {};
			
			\draw (1) to (2);
		\end{scope}

		\draw[->,dashed] (9.2,-4) -- (7.8,-4.7);

		\begin{scope}[shift={(7,-5)}]
			\node[bd] (1) at (0,0) [label = below right:$1$] {};
		\end{scope}
	
		\node at (7,-6.5) {$H_2$};
	
	\end{tikzpicture}
	
	\caption{An example of Lemma \ref{lem:layerstrippingfunctoriality2}.} \label{fig:functorialityexample2}
	
	\end{center}
\end{figure}
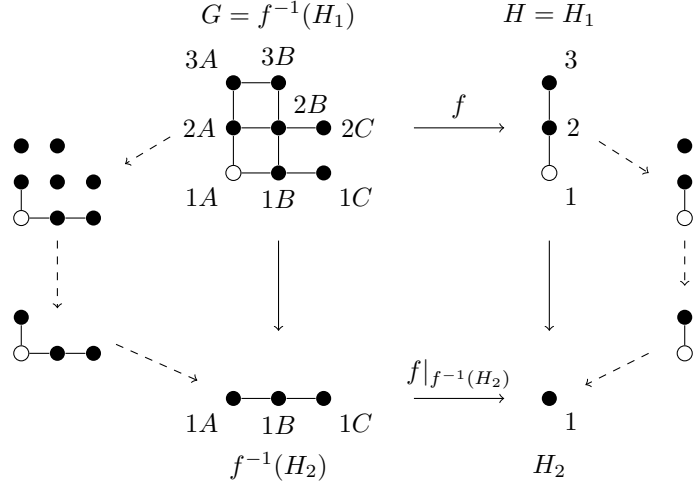

We define as {\bf three-step layer-stripping filtration} of a $\partial$-graph $G$ as a sequence of subgraphs $G = G_0 \supseteq G_1 \supseteq \dots$ such that $\bigcap G_j = \varnothing$ and $G_{j+1}$ is obtained from $G_j$ by a three-step layer-stripping operation.  The last lemma is convenient because it implies
\begin{lemma} \label{lem:functoriality}
Suppose that $f\colon G \to H$ is an unramified $\partial$-graph morphism.  If $H_0 \supseteq H_1 \supseteq \dots$ is a three-step layer-stripping filtration of $H$, then $f^{-1}(H_0) \supseteq f^{-1}(H_1) \supseteq \dots$ is a three-step layer-stripping filtration of $G$.  Therefore, one can define a contravariant functor $\partial\text{\cat{-graph}}_{\text{unrm}} \to \text{\cat{Set}}$ by mapping $G$ to the set of three-step layer-stripping filtrations of $G$.
\end{lemma}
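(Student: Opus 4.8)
The plan is to reduce the statement to Lemma \ref{lem:layerstrippingfunctoriality2}, together with the elementary observation that the pullback operation $H' \mapsto f^{-1}(H')$ commutes with arbitrary intersections and with composition of morphisms. First I would show that if $f \colon G \to H$ is unramified and $H_0 \supseteq H_1 \supseteq \dots$ is a three-step layer-stripping filtration of $H$, then $f^{-1}(H_0) \supseteq f^{-1}(H_1) \supseteq \dots$ is a three-step layer-stripping filtration of $G$. Applying Lemma \ref{lem:layerstrippingfunctoriality2} to the consecutive pair $H_{j+1} \subseteq H_j \subseteq H$ immediately gives that $f^{-1}(H_{j+1})$ is obtained from $f^{-1}(H_j)$ by a three-step layer-stripping operation (this is exactly why the three-step version is the convenient unit of bookkeeping, and why no finiteness hypothesis is needed). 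It remains to check that $f^{-1}(H_0) = G$ and $\bigcap_j f^{-1}(H_j) = \varnothing$. The first holds because $H_0 = H$ forces $V(f^{-1}(H)) \sqcup E(f^{-1}(H)) = f^{-1}(V(H) \sqcup E(H)) = V(G) \sqcup E(G)$, while $V^\circ(f^{-1}(H)) = f^{-1}(V^\circ(H)) \cap V^\circ(G) = V^\circ(G)$ since $f$ maps $V^\circ(G)$ into $V^\circ(H)$ by Definition \ref{def:dgraphmorphism}(2). The second holds because, unwinding Definition \ref{def:harmonicsubgraphIU} and the pullback definition, one has $V(f^{-1}(\bigcap_j H_j)) = \bigcap_j V(f^{-1}(H_j))$ and similarly for $E$ and $V^\circ$ (for $V^\circ$ one uses $V^\circ(f^{-1}(H_j)) = f^{-1}(V^\circ(H_j)) \cap V^\circ(G)$ and intersects over $j$); since $\bigcap_j H_j = \varnothing$ this yields $\bigcap_j f^{-1}(H_j) = \varnothing$.

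Next I would verify the functoriality. Let $\Phi(G)$ denote the set of three-step layer-stripping filtrations of $G$, and for an unramified morphism $f \colon G \to H$ let $\Phi(f) \colon \Phi(H) \to \Phi(G)$ be $\{H_j\} \mapsto \{f^{-1}(H_j)\}$, which is well-defined by the previous paragraph. For the identity, $\id_G^{-1}(H') = H'$ directly from the pullback definition, so $\Phi(\id_G) = \id_{\Phi(G)}$. For composability, given unramified $f \colon G \to H$ and $g \colon H \to K$ (whose composite is again unramified by Lemma \ref{lem:categorydegree}), I would check $(g \circ f)^{-1}(K') = f^{-1}(g^{-1}(K'))$ for every sub-$\partial$-graph $K' \subseteq K$. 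On vertices and edges this is the set-theoretic identity $(g\circ f)^{-1} = f^{-1}\circ g^{-1}$. On interior vertices,
\[
V^\circ\bigl((g\circ f)^{-1}(K')\bigr) = f^{-1}\bigl(g^{-1}(V^\circ(K'))\bigr) \cap V^\circ(G),
\]
whereas
\[
V^\circ\bigl(f^{-1}(g^{-1}(K'))\bigr) = f^{-1}\bigl(g^{-1}(V^\circ(K')) \cap V^\circ(H)\bigr) \cap V^\circ(G) = f^{-1}\bigl(g^{-1}(V^\circ(K'))\bigr) \cap f^{-1}(V^\circ(H)) \cap V^\circ(G),
\]
and the two agree because $V^\circ(G) \subseteq f^{-1}(V^\circ(H))$, again by Definition \ref{def:dgraphmorphism}(2). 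Hence $\Phi(g \circ f) = \Phi(f) \circ \Phi(g)$, so $\Phi$ is a contravariant functor $\partial\text{\cat{-graph}}_{\text{unrm}} \to \text{\cat{Set}}$.

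The substantive work was already carried out in Lemma \ref{lem:layerstrippingfunctoriality2}, so I do not expect a genuine obstacle here; the only thing requiring a little care is the bookkeeping that pullback commutes with intersection and with composition at the level of the interior-vertex data, where the one subtlety is the extra intersection with $V^\circ(G)$ built into the definition of the pullback. In both places this is handled by the single observation that $f$ carries interior vertices to interior vertices, so the verification is routine.
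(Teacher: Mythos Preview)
Your proposal is correct and follows exactly the approach the paper intends: the paper states Lemma~\ref{lem:functoriality} as an immediate consequence of Lemma~\ref{lem:layerstrippingfunctoriality2} without writing out a proof, and you have simply filled in the routine verifications (that $f^{-1}(H_0)=G$, that $\bigcap_j f^{-1}(H_j)=\varnothing$, and that pullback respects identities and compositions). The only place requiring any care is the interior-vertex bookkeeping, which you handle correctly via $V^\circ(G)\subseteq f^{-1}(V^\circ(H))$.
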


\begin{remark}
As in Observation \ref{obs:layeringsubdgraph}, if $G'$ is obtained from $G$ by a sequence of layer-stripping operations, then $G'$ is a sub-$\partial$-graph of $G$.  Moreover, Lemmas \ref{lem:layeringUpsilon1} and \ref{lem:layeringUpsilon2} generalize to layer-stripping operations, even with infinitely many edges.
\end{remark}

\begin{remark}
Lemma \ref{lem:layerstrippingfunctoriality} fails for harmonic morphisms in general: If $x$ is the boundary endpoint of a boundary spike $e$, then a vertex in $f^{-1}(x)$ might not be a boundary spike.  It could have degree $> 1$ since there can be multiple preimages of $e$ attached to it.  This problem is illustrated by the $\partial$-graph morphism in Figure \ref{fig:modifiedprojectionGHMexample}.
\end{remark}

\subsection{The Flower Functor}

Layer-stripping can be viewed as a loose discrete analogue of a deformation retraction; it is a reduction to a smaller space that leaves our algebraic invariant $\Upsilon$ unchanged.  This inspires the following analogue of homotopy equivalence:

\begin{definition}
We say two finite $\partial$-graphs $G$ and $G'$ are {\bf layerably equivalent} if there is a finite sequence of $\partial$-graphs $G = G_0, G_1, \dots, G_n = G'$ such that for each $j$, either 1) $G_j$ is obtained from $G_{j+1}$ by a layer-stripping operation or 2) $G_{j+1}$ is obtained from $G_j$ by a layer-stripping operation.  As usual, we apply the same terminology to $R$-networks.
\end{definition}

If two finite $R^\times$-networks $(G,L)$ and $(G',L')$ are layerably equivalent, then by Lemma \ref{lem:layeringUpsilon1} we have
\[
\Upsilon(G,L) \oplus R^n \cong \Upsilon(G',L') \oplus R^{n'}
\]
for some $n$ and $n' \in \N_0$.  Thus, the torsion submodules of $\Upsilon(G,L)$ and $\Upsilon(G',L')$ are isomorphic.  Similar reasoning applies the modules $\c U$ and $\c U_0$ for the two networks.

Our next goal is to find a canonical representative for each equivalence class.  A natural candidate is a $\partial$-graph with no boundary spikes, boundary edges, or disconnected boundary vertices, which we will call a {\bf flower}.  The name ``flower'' was coined in 1992 by David Ingerman and James Morrow who were studying some planar examples which looked like flowers,\footnote{Personal communication with David Ingerman and James Morrow.} and it was first written down in \cite{Reichert}.  Examples of flowers include the $\partial$-graph in Figure \ref{fig:flowerexample}, the boundary-interior bipartite $\partial$-graphs in Example \ref{ex:completebipartite} and Figure \ref{fig:K32}, the $\CLF$ $\partial$-graphs discussed in \S \ref{sec:CLF} (see FIgure \ref{fig:CLFflower}), and the $\partial$-graph on the left hand side of Figure \ref{fig:modifiedprojectionGHMexample}.

\begin{figure}
\begin{center}
	\begin{tikzpicture}[scale=0.7]
		\node[bd] (1) at (0,1.5) {};
		\node[bd] (2) at (-1.5,0) {};
		\node[bd] (3) at (0,-1.5) {};
		\node[bd] (4) at (1.5,0) {};
		
		\node[int] (A) at (0.5,0.5) {};
		\node[int] (B) at (-0.5,0.5) {};
		\node[int] (C) at (-0.5,-0.5) {};
		\node[int] (D) at (0.5,-0.5) {};
		
		\draw (A) to (1) to (B) to (2) to (C) to (3) to (D) to (4) to (A);
		\draw (A) to (B) to (C) to (D) to (A);
	\end{tikzpicture}
	
	\caption{Example of a flower.} \label{fig:flowerexample}

\end{center}
\end{figure}
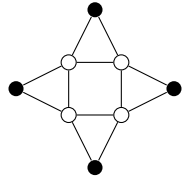

Every finite $\partial$-graph is layerably equivalent to a flower.  Indeed, we can keep removing boundary spikes, boundary edges, and isolated boundary vertices until there are no more left.  The result is a sequence of layer-stripping operations that transforms $G$ into a flower $\flower(G)$.  We claim that in fact this flower is unique and the map $G \mapsto \flower(G)$ is a functor on $\partial \text{\cat{-graph}}_{\text{unrm}}^0$.

~
\begin{theorem}~ \label{thm:flowerfunctor}
\begin{enumerate}
	\item Every finite $\partial$-graph $G$ can be layer-stripped to a unique flower $\flower(G)$.
	\item There is exactly one flower in each layerable equivalence class.
	\item If $f\colon G \to H$ is a UHM, then $\flower(G) \subseteq f^{-1}(\flower(H))$.
	\item $\flower$ is a functor $\partial \text{\cat{-graph}}_{\text{unrm}}^0 \to \partial \text{\cat{-graph}}_{\text{unrm}}^0$, and the inclusion $\flower(G) \to G$ is a natural transformation $\flower \to \id$.
\end{enumerate}
\end{theorem}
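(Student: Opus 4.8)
The plan is to view layer-stripping as an abstract rewriting system on finite $\partial$-graphs: the objects are finite $\partial$-graphs, and one writes $G\rightsquigarrow G'$ when $G'$ is obtained from $G$ by a single layer-stripping operation (deleting an isolated boundary vertex, contracting a boundary spike, or deleting a boundary edge). This system is \textbf{terminating}, since every operation strictly decreases $|V(G)|+|E(G)|\in\N$, which cannot happen infinitely often; its irreducible objects (those admitting no layer-stripping operation) are exactly the flowers; and every finite $\partial$-graph reaches some flower by iterated stripping, as already observed before the theorem. Consequently, once I establish local confluence, Newman's Lemma (a terminating, locally confluent rewriting system has unique normal forms) immediately gives part~(1): $\flower(G)$ is the unique normal form of $G$ under $\rightsquigarrow$, reached by \emph{any} maximal sequence of layer-stripping operations.

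The \textbf{local confluence} check — given $G\rightsquigarrow G_1$ and $G\rightsquigarrow G_2$, produce a common reduct — is the combinatorial heart, and the step I expect to cost the most care. Assign to each operation its \emph{support}: $\{x\}$ for deleting an isolated boundary vertex $x$, and $\{e,\overline{e},e_+,e_-\}$ for contracting a boundary spike or deleting a boundary edge $e$. A short check shows that two operations with disjoint supports commute and each remains legal after the other, so $G_1$ and $G_2$ then share the common reduct obtained by performing both. For overlapping supports: because an isolated boundary vertex has no incident edge, the boundary endpoint of a spike has \emph{only} that spike incident, and an interior vertex is never the boundary endpoint of a spike or boundary edge, the only way two distinct operations can have overlapping supports is two boundary-spike contractions whose spikes $e,e'$ share their interior endpoint $y=e_-=e'_-$. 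In that case, contracting $e$ makes $y$ a boundary vertex, which turns $e'$ into a boundary edge; deleting $e'$ and then its now-isolated boundary endpoint $e'_+$ produces a $\partial$-graph which, by the symmetry $e\leftrightarrow e'$, is exactly what one obtains by contracting $e'$ first. This establishes local confluence, modulo a little attention to parallel edges and multigraph bookkeeping.

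Part~(2) is then formal: confluence gives $\flower(G')=\flower(G)$ whenever $G\rightsquigarrow G'$ (both have the same normal form), so $\flower$ is constant on each layerable-equivalence class; the class of $G$ contains the flower $\flower(G)$, and if $F,F'$ are layerably equivalent flowers then $F=\flower(F)=\flower(F')=F'$, so there is exactly one flower per class.

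For part~(3), I would fix a layer-stripping filtration $H=H_0\supseteq H_1\supseteq\cdots\supseteq H_k=\flower(H)$ and pull it back: by Lemma~\ref{lem:layerstrippingfunctoriality}, $G=f^{-1}(H_0)\supseteq\cdots\supseteq f^{-1}(H_k)=f^{-1}(\flower(H))$ with each step a finite sequence of layer-stripping operations, so $f^{-1}(\flower(H))$ is obtained from $G$ by stripping. Stripping $f^{-1}(\flower(H))$ on to its own flower and invoking uniqueness of normal forms yields $\flower(G)=\flower\bigl(f^{-1}(\flower(H))\bigr)$; since layer-stripping only removes vertices and edges, this is a sub-$\partial$-graph of $f^{-1}(\flower(H))$, which is precisely the inclusion $\flower(G)\subseteq f^{-1}(\flower(H))$. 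Finally, for part~(4): by~(3), $f$ restricts and corestricts to a well-defined map $\flower(f)\colon\flower(G)\to\flower(H)$, and one checks from Definitions~\ref{def:dgraphmorphism}, \ref{def:sub-d-graph}, and~\ref{def:unramified} that $\flower(f)$ is again an unramified morphism, using that a sub-$\partial$-graph preserves the stars of its interior vertices and that $f$ has degree $1$ (resp.\ $\le 1$) at interior (resp.\ boundary) vertices. On underlying set-maps $\flower(f)$ is simply the restriction of $f$, so $\flower(\id_G)=\id_{\flower(G)}$, $\flower(g\circ f)=\flower(g)\circ\flower(f)$, and $\iota_H\circ\flower(f)=f\circ\iota_G$ (the naturality of $\iota\colon\flower\to\id$) all hold by inspection. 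Hence $\flower$ is a functor on the category of finite $\partial$-graphs with unramified morphisms, and $\iota$ is a natural transformation.
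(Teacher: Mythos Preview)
Your argument is correct, though your enumeration of overlapping-support cases misses the trivial one where two boundary edges share a boundary endpoint; since boundary-edge deletions commute regardless, this does not affect confluence. Your route for part~(1) is genuinely different from the paper's. The paper avoids any local-confluence casework: given two flowers $H,H'$ obtained from $G$ by stripping, it applies Lemma~\ref{lem:functoriality} to the \emph{inclusion} $H\hookrightarrow G$ (an unramified morphism) and pulls back the filtration $G\supseteq\cdots\supseteq H'$ to a filtration $H\supseteq\cdots\supseteq H\cap H'$; since $H$ is already a flower, this filtration is trivial, so $H=H\cap H'$, and symmetry gives $H=H'$. Your Newman's-Lemma approach is more self-contained and makes the confluence structure explicit, at the cost of the case analysis; the paper's approach is slicker but leans on the machinery of three-step operations and Lemma~\ref{lem:layerstrippingfunctoriality2} already developed for other purposes. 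For parts~(2)--(4) your arguments coincide with the paper's.
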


\begin{proof}
To prove the uniqueness claim of (1), suppose that we have two flowers $H$ and $H' \subseteq G$, and let
\[
G = H_0 \supseteq H_1 \supseteq \dots \supseteq H_n = H, \qquad G = H_0' \supseteq \dots \supseteq H_m' = H'
\]
be the corresponding three-step layer-stripping filtrations.  Applying Lemma \ref{lem:functoriality} to the inclusion map $H \to G$, we see that
\[
H = H \cap H_0' \supseteq \dots \supseteq H \cap H_m' = H \cap H'
\]
is another three-step layer-stripping filtration.  Since $H$ is a flower, one cannot perform any nontrivial layer-stripping operations on it, so the filtration must be trivial, so that $H = H \cap H'$.  By symmetry, $H' = H \cap H' = H$.

To prove (2), suppose $G'$ is obtained from $G$ by a layer-stripping operation.  We just showed $\flower(G)$ is independent of the sequence of layer-stripping operations, so that $\flower(G)$ is obtained by performing the layer-stripping operation $G \mapsto G'$, then reducing $G'$ to a flower.  Hence, $\flower(G) = \flower(G')$.  In general, if we have a layerable equivalence sequence $G = G_0, G_1, \dots, G_n = G'$, then $\flower(G_j)$ and $\flower(G_{j+1})$ are equal (based on the identification of $G_j$ as a subgraph of $G_{j+1}$ or vice versa) and hence $\flower(G) = \flower(G')$.

(3) follows from Lemma \ref{lem:functoriality}, and (4) follows from (3).
\end{proof}

\begin{remark}
$G$ is layerable if and only if $\flower(G) = \varnothing$.
\end{remark}

As a consequence, the study of torsion for finite $R^\times$-networks can be reduced in a functorial manner to the study of torsion for flowers: 

~

\begin{corollary}~
\begin{enumerate}
	\item There is a flower functor $\flower: R^\times\text{\cat{-net}}_{\text{unrm}}^0 \to R^\times\text{\cat{-net}}_{\text{unrm}}^0$.
	\item $\Upsilon(G,L) = \Upsilon(\flower (G,L)) \oplus R^n$ for some $n$ (depending on $(G,L)$).
	\item If $F$ is any functor on $R$-modules that commutes with direct sums and vanishes on free modules, then the inclusion natural transformation $\flower \to \id$ on $R^\times\text{\cat{-net}}_{\text{unrm}}^0$ induces a natural isomorphism $F \circ \Upsilon \circ \flower \to F \circ \Upsilon$.
\end{enumerate}
\end{corollary}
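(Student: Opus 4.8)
The plan is to bootstrap the purely combinatorial flower functor of Theorem~\ref{thm:flowerfunctor} to $R^\times$-networks, and then push the resulting direct-sum decomposition of $\Upsilon$ through $F$.

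First I would construct the functor in (1). Given a finite $R^\times$-network $(G,L)$, Theorem~\ref{thm:flowerfunctor}(1) produces a unique flower $\flower(G)\subseteq G$, which is a sub-$\partial$-graph; I equip it with the $R^\times$-network structure $\flower(G,L):=(\flower(G),L|_{\flower(G)})$ by restricting $w$ and $d$. The restricted weights are still units, so this is again an $R^\times$-network, and the inclusion $\flower(G,L)\to(G,L)$ is an unramified $R$-network morphism because its defining data is literally the restriction of that of $(G,L)$. For a morphism $f\colon(G,L)\to(H,L')$ in $R^\times\text{\cat{-net}}_{\text{unrm}}^0$, Theorem~\ref{thm:flowerfunctor}(3) gives $\flower(G)\subseteq f^{-1}(\flower(H))$, so $f$ restricts to a $\partial$-graph morphism $\flower(f)\colon\flower(G)\to\flower(H)$; since $f$ is unramified we have $\deg(f,x)=1$ on interior vertices, so the conditions $w_1(e)=w_2(f(e))$ and $d_1(x)=\deg(f,x)\,d_2(f(x))=d_2(f(x))$ defining an $R$-network morphism are simply inherited by the restriction, and $\flower(f)$ is again an unramified $R^\times$-network morphism. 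Functoriality of $\flower$ and naturality of the inclusions $\flower(G,L)\to(G,L)$ then follow immediately from Theorem~\ref{thm:flowerfunctor}(4).

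Next, (2): the sequence of layer-stripping operations taking $(G,L)$ down to $\flower(G,L)$ (Theorem~\ref{thm:flowerfunctor}(1)), read in reverse, is a layerable filtration from $\flower(G,L)$ to $(G,L)$ in the sense of \S\ref{sec:layering}. Applying Proposition~\ref{prop:filtration} with the roles of $(G,L)$ and $(G',L')$ there played by $\flower(G,L)$ and $(G,L)$, I obtain an isomorphism $\Upsilon(G,L)\cong\Upsilon(\flower(G,L))\oplus RS$, where $S$ is the finite set of isolated boundary vertices adjoined along the way, so $RS\cong R^n$ with $n=|S|$. The point worth recording for (3) is that, by the way the map is defined in the proof of Proposition~\ref{prop:filtration}, the inclusion of the summand $\Upsilon(\flower(G,L))$ into this direct sum is exactly $\Upsilon$ applied to the flower-inclusion morphism $\flower(G,L)\to(G,L)$. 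Hence the component at $(G,L)$ of the natural transformation $\Upsilon\circ\flower\to\Upsilon$ is a split monomorphism whose cokernel is the free module $R^n$.

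Finally, (3) is a formal consequence. Composing $\flower\to\id$ with $\Upsilon$ and then with $F$ gives a natural transformation $F\circ\Upsilon\circ\flower\to F\circ\Upsilon$, and naturality is automatic. At each object, $F(\Upsilon(G,L))=F(\Upsilon(\flower(G,L))\oplus R^n)$; since $F$ commutes with (finite) direct sums, the canonical map $F(\Upsilon(\flower(G,L)))\oplus F(R^n)\to F(\Upsilon(\flower(G,L))\oplus R^n)$ is an isomorphism, and $F(R^n)=0$ because $F$ vanishes on free modules, so this identifies $F$ of the summand inclusion with the isomorphism $F(\Upsilon(\flower(G,L)))\xrightarrow{\ \sim\ }F(\Upsilon(G,L))$. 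Therefore the natural transformation is a natural isomorphism. The main obstacle I anticipate is not any of these steps individually but the bookkeeping tying them together: verifying that $\flower$ really is a functor at the level of $R^\times$-networks and, crucially, that the decomposition in (2) is the one realized by the flower-inclusion natural transformation — without this compatibility, (3) would not follow formally, even though the underlying isomorphism would still exist.
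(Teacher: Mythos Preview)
Your proof is correct and follows essentially the same route as the paper: (1) lifts Theorem~\ref{thm:flowerfunctor} to $R^\times$-networks by restricting $L$, (2) invokes the layerable filtration (the paper cites Lemma~\ref{lem:layeringUpsilon1} directly, you cite its iterated form Proposition~\ref{prop:filtration}), and (3) is the formal consequence. Your added care in checking that the summand inclusion in (2) is precisely $\Upsilon$ of the flower-inclusion morphism is a useful elaboration that the paper leaves implicit.
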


\begin{proof}
(1) follows directly from Theorem \ref{thm:flowerfunctor}.  (2) follows from Lemma \ref{lem:layeringUpsilon1} because $\flower(G,L)$ is obtained from $(G,L)$ by a sequence of layer-stripping operations.  (3) follows from (2).
\end{proof}

\section{Complete Reducibility} \label{sec:completereducibility}

Complete reducibility is a generalization of layerability which allows us to applying layer-stripping operations as well as split apart $\partial$-graphs that are glued together at one boundary vertex (Definition \ref{def:completelyreducible}).  We shall give an algebraic characterization of complete reducibility analogous to Theorem \ref{thm:layerabilitycharacterization} except that it uses Laplacians with $d = 0$ rather than generalized Laplacians.  To simplify the proof, we first define a reduced version of $\Upsilon$ suited to Laplacians with $d = 0$.  We use our algebraic characterization to prove that boundary-interior bipartite $\partial$-graphs of a certain type are not completely reducible.

\subsection{The Reduced Module $\tilde{\Upsilon}$}

Recall that a generalized Laplacian $L$ is given by $w: E \to R$ and $d: V \to R$.  If $d = 0$, we will call $L$ a {\bf Laplacian} or {\bf weighted Laplacian}.    A network given by a weighted Laplacian will be called a {\bf normalized} $R$-network.  For weighted Laplacians, constant functions are always harmonic, and dually every element of $L(RV)$ has coordinates which sum to zero.  Therefore, it will be convenient to define a reduced version of $\Upsilon$.

Let $L$ be a weighted Laplacian.  Let $\epsilon: RV \to R$ be the map which sums the coordinates, given by $x \mapsto 1$ for every $x \in V$.  Then $\im L \subseteq \ker \epsilon$ because
\[
\epsilon(Lx) = \sum_{e \in \mathcal{E}(x)} w(e)(\epsilon x -  \epsilon e_-) = 0.
\]
Therefore, we can define
\[
\tilde{\Upsilon}(G,L) = \ker \epsilon / L(RV^\circ).
\]
We remark that since $\ker \epsilon \subseteq RV$, we can regard $\tilde{\Upsilon}(G,L)$ as a submodule of $\Upsilon(G,L)$.  Moreover, for each vertex $x \in V(G)$, there is an internal direct sum
\[
\Upsilon(G,L) = \tilde{\Upsilon}(G,L) \oplus Rx.
\]

We also define
\[
\tilde{\c U}(G,L,M) = \c U(G,L,M) / (\text{constant functions})
\]

Most of the results for $\Upsilon$ and their proofs adapt in a straightforward way to $\tilde{\Upsilon}$.  We list the ones we will need for the algebraic characterization.  Using similar reasoning as in Lemma \ref{lem:hom}, one can show
\begin{lemma}
If $L$ is a weighted Laplacian on $G$, then there is a natural $R$-module isomorphism
\[
\tilde{\c U}(G,L,M) \cong \Hom_R(\tilde{\Upsilon}(G,L),M).
\]
\end{lemma}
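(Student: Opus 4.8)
The plan is to imitate the proof of Lemma \ref{lem:hom}, but to restrict all the relevant maps from $RV$ to its submodule $\ker\epsilon$. First I would record the basic facts: a function $u\colon V\to M$ is the same as an $R$-module morphism $u\colon RV\to M$; for any fixed vertex $x$ we have an internal direct sum $RV=\ker\epsilon\oplus Rx$ (the splitting exists because $\epsilon(x)=1$); since $L$ is a weighted Laplacian we have $L(RV^\circ)\subseteq\ker\epsilon$, so that $\tilde\Upsilon(G,L)=\ker\epsilon/L(RV^\circ)$ makes sense; and every constant function $V\to M$ is harmonic, the constant functions forming a submodule of $\c U(G,L,M)$.

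Next I would introduce the restriction map $\rho\colon\c U(G,L,M)\to\Hom_R(\ker\epsilon,M)$ sending $u$ to $u|_{\ker\epsilon}$, and observe that it is natural in $M$. Since $u$ is harmonic if and only if the morphism $u\colon RV\to M$ vanishes on $L(RV^\circ)$, and $L(RV^\circ)\subseteq\ker\epsilon$, the morphism $u|_{\ker\epsilon}$ vanishes on $L(RV^\circ)$; hence $\rho$ actually takes values in $\Hom_R\!\bigl(\ker\epsilon/L(RV^\circ),M\bigr)=\Hom_R(\tilde\Upsilon(G,L),M)$. Thus $\rho$ is a natural morphism $\c U(G,L,M)\to\Hom_R(\tilde\Upsilon(G,L),M)$, and it remains to compute its image and kernel.

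For surjectivity: given $g\colon\ker\epsilon\to M$ vanishing on $L(RV^\circ)$, extend $g$ to $RV=\ker\epsilon\oplus Rx$ by declaring it zero on $Rx$; the extension still vanishes on $L(RV^\circ)$, so it defines a harmonic function that $\rho$ sends to $g$. For the kernel: if $u\in\ker\rho$ then $u$ vanishes on $\ker\epsilon$, so in particular $u(x)=u(y)$ for all vertices $x,y$ (because $x-y\in\ker\epsilon$), i.e.\ $u$ is constant; conversely every constant function kills $\ker\epsilon$. Hence $\ker\rho$ is exactly the submodule of constant functions, and the first isomorphism theorem yields the natural isomorphism $\tilde{\c U}(G,L,M)=\c U(G,L,M)/(\text{constants})\xrightarrow{\ \sim\ }\Hom_R(\tilde\Upsilon(G,L),M)$.

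There is no substantial obstacle here; the only points requiring a little care are the splitting $RV=\ker\epsilon\oplus Rx$ (and checking that the resulting isomorphism is independent of the choice of $x$, as it must be since it is induced by the canonical map $\rho$) and the naturality of $\rho$ in $M$, both of which are routine. As an alternative one could deduce the statement directly from Lemma \ref{lem:hom} together with the internal direct sum $\Upsilon(G,L)=\tilde\Upsilon(G,L)\oplus Rx$: applying $\Hom_R(-,M)$ and noting that, under the isomorphism of Lemma \ref{lem:hom}, the constant functions correspond precisely to those morphisms $\Upsilon(G,L)\to M$ that factor through $\epsilon\colon\Upsilon(G,L)\to R$, i.e.\ through the projection onto the free summand $Rx$.
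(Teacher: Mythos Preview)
Your proposal is correct and follows exactly the approach the paper indicates: the paper does not write out a proof but simply says ``using similar reasoning as in Lemma~\ref{lem:hom}, one can show'' the result, and your argument is precisely the natural adaptation of that lemma's proof from $RV$ to $\ker\epsilon$. Your alternative derivation via the splitting $\Upsilon(G,L)=\tilde\Upsilon(G,L)\oplus Rx$ is also valid and equally in the spirit of the paper.
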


Moreover, by similar reasoning as in \ref{subsec:degeneracy}, we have

\begin{lemma}
If $(G,L)$ is a non-degenerate normalized network, then a free resolution of $\tilde{\Upsilon}(G,L)$ is given by
\[
\dots \to 0 \to RV^\circ \xrightarrow{L} \ker \epsilon \to \tilde{\Upsilon}(G,L) \to 0.
\]
Moreover,
\[
\Tor_1^R(\tilde{\Upsilon}(G,L),M) \cong \c U_0(G,L,M)
\]
and $\Tor_j^R(\tilde{\Upsilon}(G,L),M) = 0$ for $j > 1$.
\end{lemma}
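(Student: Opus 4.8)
The plan is to mirror the proof of Proposition~\ref{prop:tor}, with $RV$ replaced by $\ker\epsilon$ throughout. The first step is to observe that $\ker\epsilon$ is a free $R$-module: fixing a vertex $x_0 \in V$, the discussion just before the lemma gives the internal direct sum $RV = \ker\epsilon \oplus Rx_0$, and this exhibits $\ker\epsilon$ as the free $R$-module on the basis $\{y - x_0 : y \in V \setminus \{x_0\}\}$ (when $V$ is empty or a single vertex, all modules in sight are zero and the statement is trivial). In particular $\ker\epsilon$ is free, hence projective and flat. Next I would check exactness of
\[
0 \to RV^\circ \xrightarrow{L} \ker\epsilon \to \tilde\Upsilon(G,L) \to 0.
\]
The map $L$ indeed lands in $\ker\epsilon$ since $\epsilon(Lx) = 0$ for every $x$, as computed immediately before the lemma. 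Injectivity of $L \colon RV^\circ \to \ker\epsilon$ is equivalent to injectivity of $L \colon RV^\circ \to RV$, which is exactly non-degeneracy, $\c U_0(G,L,R) = \ker(L\colon RV^\circ \to RV) = 0$. Finally, the cokernel of $L\colon RV^\circ \to \ker\epsilon$ is $\ker\epsilon / L(RV^\circ) = \tilde\Upsilon(G,L)$ by definition. Since both $RV^\circ$ and $\ker\epsilon$ are free, this is a free resolution of $\tilde\Upsilon(G,L)$ of length one.

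From this resolution, $\Tor_j^R(\tilde\Upsilon(G,L),M)$ is the homology of the two-term complex $0 \to RV^\circ \otimes M \xrightarrow{L \otimes \id} \ker\epsilon \otimes M \to 0$, so $\Tor_j^R(\tilde\Upsilon(G,L),M) = 0$ for $j > 1$ and $\Tor_1^R(\tilde\Upsilon(G,L),M) = \ker(L \otimes \id)$. To identify this kernel, note that the split inclusion $\ker\epsilon \hookrightarrow RV$ remains injective after $\otimes M$ (the short exact sequence $0 \to \ker\epsilon \to RV \to R \to 0$ is split, hence stays exact upon tensoring), and the map $RV^\circ \otimes M \to RV \otimes M$ induced by $L$ factors through $\ker\epsilon \otimes M$. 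Therefore $\ker(L \otimes \id \colon RV^\circ \otimes M \to \ker\epsilon \otimes M)$ coincides with $\ker(L \otimes \id \colon RV^\circ \otimes M \to RV \otimes M)$, which was identified with $\c U_0(G,L,M)$ in the proof of Proposition~\ref{prop:tor} (using the identification of $RV^\circ \otimes M$ with finitely supported $M$-valued functions vanishing on $\partial V$, on which $L \otimes \id$ acts as the weighted Laplacian). Naturality of the resulting isomorphism in $M$ is immediate from the construction.

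The argument is essentially routine once the resolution is in place, so I do not anticipate a real obstacle; the one point that genuinely needs attention is the freeness of $\ker\epsilon$, since "free resolution'' is part of the claim and $\ker\epsilon$ is not free "on the nose'' as a submodule of $RV$. That freeness, however, follows directly from the splitting $RV = \ker\epsilon \oplus Rx_0$, which also makes the two descriptions of $\ker\epsilon \otimes M$ compatible with the Laplacian; everything else is bookkeeping that parallels Section~\ref{subsec:degeneracy} verbatim.
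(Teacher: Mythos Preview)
Your proposal is correct and follows precisely the route the paper intends: the paper does not write out a proof but simply says ``by similar reasoning as in \S\ref{subsec:degeneracy},'' i.e.\ redo Proposition~\ref{prop:tor} with $\ker\epsilon$ in place of $RV$. Your explicit verification that $\ker\epsilon$ is free via the splitting $RV = \ker\epsilon \oplus Rx_0$ (and the resulting basis $\{y - x_0\}$) is exactly the one detail that genuinely needs checking and that the paper leaves implicit; the rest of your argument matches the template of Proposition~\ref{prop:tor} as expected.
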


Lemma \ref{lem:layeringUpsilon1} carries over almost word for word.

\begin{lemma} \label{lem:layeringUpsilon3}
Let $(G',L')$ be a normalized $R^\times$-network.  If $(G',L')$ is obtained from $(G,L)$ by adjoining a boundary spike or boundary edge, then the induced maps $\tilde{\Upsilon}(G,L) \to \tilde{\Upsilon}(G',L')$ and $\c U(G,L,M) \to \c U(G',L',M)$ are isomorphisms.

If $(G',L')$ is obtained from $(G,L)$ by adjoining an isolated boundary vertex $x$ and $G$ is nonempty, then we have
\[
\tilde{\Upsilon}(G',L') \cong \tilde{\Upsilon}(G,L) \oplus R
\]
and
\[
\tilde{\c U}(G',L',M) \cong \tilde{\c U}(G,L,M) \times M.
\]
\end{lemma}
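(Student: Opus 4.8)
The plan is to imitate the proof of Lemma \ref{lem:layeringUpsilon1}, using the internal direct sum $\Upsilon(G,L)=\tilde{\Upsilon}(G,L)\oplus Rx$ recorded above to transfer each statement about $\Upsilon$ to the corresponding statement about $\tilde{\Upsilon}$. The one structural fact used throughout is that the inclusion $f\colon G\to G'$ induces a map $f\colon RV(G)\to RV(G')$ compatible with the coordinate-summation maps, i.e.\ $\epsilon_{G'}\circ f=\epsilon_G$; hence $f(\ker\epsilon_G)\subseteq\ker\epsilon_{G'}$, and so the isomorphism $\Upsilon f$ furnished by Lemma \ref{lem:layeringUpsilon1} (or by that lemma's treatment of an adjoined isolated vertex) restricts to a well-defined map $\tilde{\Upsilon}(G,L)\to\tilde{\Upsilon}(G',L')$. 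Since a normalized $R^\times$-network is in particular an $R^\times$-network, the assertions about $\c U$ for boundary spikes and boundary edges are a special case of Lemma \ref{lem:layeringUpsilon1} and require no further argument.

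For a boundary edge one has $V(G')=V(G)$ and $V^\circ(G')=V^\circ(G)$, and, both endpoints of the new edge being boundary vertices, $L'$ agrees with $L$ on $RV^\circ(G)$; thus $\epsilon_{G'}=\epsilon_G$ and $L'(RV^\circ(G'))=L(RV^\circ(G))$, so $\tilde{\Upsilon}(G',L')=\tilde{\Upsilon}(G,L)$ literally and the induced map is the identity. For a boundary spike $e$ (so $e_-\in V(G)$ and $G\neq\varnothing$), Lemma \ref{lem:layeringUpsilon1} gives that $\Upsilon f\colon\Upsilon(G,L)\to\Upsilon(G',L')$ is an isomorphism with $\Upsilon f(\bar y)=\bar y$ for any vertex $y$ of $G$. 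Fixing such a $y$ and writing $\Upsilon(G,L)=\tilde{\Upsilon}(G,L)\oplus R\bar y$, $\Upsilon(G',L')=\tilde{\Upsilon}(G',L')\oplus R\bar y$, the map $\Upsilon f$ sends the first summand into the first and is the identity on $R\bar y$; an elementary chase (given $z\in\tilde{\Upsilon}(G',L')$, write $\Upsilon f^{-1}(z)=a+r\bar y$ with $a\in\tilde{\Upsilon}(G,L)$, apply $\Upsilon f$, and note $r\bar y=z-\Upsilon f(a)\in\tilde{\Upsilon}(G',L')\cap R\bar y=0$) shows that $\Upsilon f$ restricts to an isomorphism $\tilde{\Upsilon}(G,L)\to\tilde{\Upsilon}(G',L')$.

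For an adjoined isolated boundary vertex $x$ with $G\neq\varnothing$, we have $V(G')=V(G)\sqcup\{x\}$, $V^\circ(G')=V^\circ(G)$, $E(G')=E(G)$, whence $L'(RV^\circ(G'))=L(RV^\circ(G))\subseteq RV(G)\subseteq\ker\epsilon_{G'}$. The homomorphism $\ker\epsilon_{G'}\to R$, $a+cx\mapsto c$ (with $a\in RV(G)$), is surjective, kills $L(RV^\circ(G))$, and has kernel $\ker\epsilon_G$; it therefore induces a short exact sequence
\[
0\longrightarrow\tilde{\Upsilon}(G,L)\xrightarrow{\ \Upsilon f\ }\tilde{\Upsilon}(G',L')\longrightarrow R\longrightarrow 0,
\]
which splits because $R$ is free, giving $\tilde{\Upsilon}(G',L')\cong\tilde{\Upsilon}(G,L)\oplus R$. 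Applying $\Hom_R(-,M)$ together with the identification $\tilde{\c U}(G,L,M)\cong\Hom_R(\tilde{\Upsilon}(G,L),M)$ from the preceding lemma yields $\tilde{\c U}(G',L',M)\cong\tilde{\c U}(G,L,M)\times M$. I expect the only place demanding care is the isolated-vertex case: one must keep straight which ambient free module ($RV(G)$, $RV(G')$, or $\ker\epsilon_{G'}$) each object lives in and check that $L'(RV^\circ(G'))$ and the kernels $\ker\epsilon_G$, $\ker\epsilon_{G'}$ line up as claimed — but this is bookkeeping, and no idea beyond Lemma \ref{lem:layeringUpsilon1} and freeness of $R$ is involved.
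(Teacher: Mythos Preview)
Your proof is correct and aligns with the paper's approach: the paper simply remarks that ``Lemma~\ref{lem:layeringUpsilon1} carries over almost word for word'' and gives no further argument, so your detailed write-up is a faithful expansion of that remark. The only minor variation is that for the boundary-spike case you transfer the isomorphism from $\Upsilon$ to $\tilde{\Upsilon}$ via the direct-sum splitting $\Upsilon=\tilde{\Upsilon}\oplus R\bar y$, rather than rerunning the Snake Lemma with $\ker\epsilon$ in place of $RV$; both are equally valid and your route is arguably cleaner since it reuses Lemma~\ref{lem:layeringUpsilon1} as a black box.
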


\subsection{Completely Reducible $\partial$-Graphs}

\begin{definition}
Given $\partial$-graphs $G_1$ and $G_2$ and specified vertices $x_i \in \partial V_i$ for $i = 1,2$, the {\bf boundary wedge sum}
\[
G_1 \vee G_2 = G_1 \vee_{x_1,x_2} G_2
\]
is obtained by identifying $x_1$ with $x_2$ in the disjoint union $G_1 \sqcup G_2$.  Note that $G_1$ and $G_2$ are sub-$\partial$-graphs of $G_1\vee G_2$.  We apply the same terminology to $R$-networks as to $\partial$-graphs.
\end{definition}

Just as with layerable extensions, the behavior of $\tilde{\Upsilon}$ under boundary wedge-sums and disjoint unions is easy to characterize.  A similar result for the critical group of graphs without boundary appears in \cite[Remark, p.\ 280]{Lor1}.

\begin{lemma} \label{lem:wedgesumUpsilon}
Let $(G,L)$ be a normalized $R$-network.   If $(G,L) = (G_1,L_1) \vee (G_2,L_2)$, then
\begin{align*}
\tilde{\Upsilon}(G,L) &\cong \tilde{\Upsilon}(G_1,L_1) \oplus \tilde{\Upsilon}(G_2,L_2) \\
\tilde{\c U}(G,L,M) &\cong \tilde{\c U}(G_1,L_1,M) \times \tilde{\c U}(G_2,L_2,M) \\
\c U_0(G,L,M) &\cong \c U_0(G_1,L_1,M) \oplus \c U_0(G_2,L_2,M).
\end{align*}
If $(G,L) = (G_1,L_1) \sqcup (G_2,L_2)$, then
\begin{align*}
\tilde{\Upsilon}(G,L) &\cong \tilde{\Upsilon}(G_1,L_1) \oplus \tilde{\Upsilon}(G_2,L_2) \oplus R \\
\tilde{\c U}(G,L,M) &\cong \tilde{\c U}(G_1,L_1,M) \times \tilde{\c U}(G_2,L_2,M) \times M \\
\c U_0(G,L,M) &\cong \c U_0(G_1,L_1,M) \oplus \c U_0(G_2,L_2,M).
\end{align*}
\end{lemma}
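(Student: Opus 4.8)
The plan is to reduce everything to elementary bookkeeping with the submodules $\ker\epsilon$ and $L(RV^\circ)$ of $RV$, read off the decomposition of $\tilde{\Upsilon}$ from these, and then deduce the statements for $\tilde{\c U}$ by applying $\Hom_R(-,M)$ together with the identification $\tilde{\c U}(G,L,M)\cong\Hom_R(\tilde{\Upsilon}(G,L),M)$ recorded above, and the statements for $\c U_0$ by a direct restriction--gluing argument. All isomorphisms will be the ones induced by the sub-$\partial$-graph inclusions $G_i\hookrightarrow G$, which are $R$-network morphisms, so that everything is functorial via Lemma \ref{lem:upsilonfunctor} and Lemma \ref{lem:u0functor}.

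For the boundary wedge sum $(G,L)=(G_1,L_1)\vee_{x_1,x_2}(G_2,L_2)$, writing $x$ for the identified boundary vertex, I would record three facts. First, $V^\circ(G)=V_1^\circ\sqcup V_2^\circ$ and, for $y\in V_i^\circ$, $\mathcal{E}_G(y)=\mathcal{E}_{G_i}(y)$, so $Ly=L_iy$; hence $L(RV^\circ(G))=L_1(RV_1^\circ)+L_2(RV_2^\circ)$ inside $RV(G)$, and this sum is direct because $L_i(RV_i^\circ)\subseteq\ker\epsilon_i\subseteq RV_i$ while $\ker\epsilon_1\cap\ker\epsilon_2\subseteq RV_1\cap RV_2=Rx$ and $\ker\epsilon_i\cap Rx=0$. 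Second, taking $x$ as basepoint, $\ker\epsilon=\bigoplus_{v\in V(G)\setminus\{x\}}R(v-x)$ splits along $V(G)\setminus\{x\}=(V_1\setminus\{x\})\sqcup(V_2\setminus\{x\})$ as $\ker\epsilon=\ker\epsilon_1\oplus\ker\epsilon_2$. Quotienting the second fact by the first gives $\tilde{\Upsilon}(G,L)\cong\tilde{\Upsilon}(G_1,L_1)\oplus\tilde{\Upsilon}(G_2,L_2)$, and applying $\Hom_R(-,M)$ yields the product decomposition of $\tilde{\c U}$. For the disjoint union the argument is identical except for the augmentation: now $V_1$ and $V_2$ share no vertex, so for basepoints $x_i\in V_i$ one gets $\ker\epsilon=\ker\epsilon_1\oplus R(x_2-x_1)\oplus\ker\epsilon_2$, the extra rank-one free summand $R(x_2-x_1)$ being disjoint from $L(RV^\circ)=L_1(RV_1^\circ)\oplus L_2(RV_2^\circ)$, whence $\tilde{\Upsilon}(G,L)\cong\tilde{\Upsilon}(G_1,L_1)\oplus R\oplus\tilde{\Upsilon}(G_2,L_2)$ and, after $\Hom_R(-,M)$, the extra factor $M$ in $\tilde{\c U}$.

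For $\c U_0$ I would argue directly rather than through $\tilde{\Upsilon}$, since the lemma is stated for arbitrary normalized networks while the identification $\c U_0\cong\Tor_1(\tilde{\Upsilon},-)$ needs non-degeneracy. In the wedge case, given $u\in\c U_0(G,L,M)$ set $u_i=u|_{V_i}$; then $u_i$ is finitely supported, vanishes on $\partial V_i\subseteq\partial V(G)$, and $L_iu_i(v)=Lu(v)=0$ for every $v\in V_i$ other than $x$ (all edges at such $v$ lying in $G_i$). The one point that needs care --- and the step I expect to be the only real obstacle --- is checking $L_iu_i(x)=0$ at the wedge vertex: I would sum the identity $Lu\equiv0$ over all $v\in V_1$, reindex by oriented edges with $e_+\in V_1$, observe that the edges with both endpoints in $V_1$ cancel in pairs under $e\mapsto\overline{e}$, so that the only surviving terms come from $\mathcal{E}_{G_2}(x)$ and give $0=\sum_{v\in V_1}Lu(v)=L_2u_2(x)$; then $L_1u_1(x)=Lu(x)-L_2u_2(x)=0$ as well. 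Thus $u_i\in\c U_0(G_i,L_i,M)$. Conversely, any pair $(u_1,u_2)$ with $u_i\in\c U_0(G_i,L_i,M)$ agrees at $x$ (both vanish there) and glues to a finitely supported $u$ with $u|_{\partial V(G)}=0$ and $Lu(x)=L_1u_1(x)+L_2u_2(x)=0$, hence $u\in\c U_0(G,L,M)$; these two assignments are mutually inverse $R$-module maps, giving $\c U_0(G,L,M)\cong\c U_0(G_1,L_1,M)\oplus\c U_0(G_2,L_2,M)$.

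Finally, for the disjoint union the same restriction--gluing argument applies with no subtlety at the analogue of the wedge vertex (there is no shared vertex or edge, so $L_iu_i(v)=Lu(v)=0$ for all $v\in V_i$ automatically), and there is no extra $R$ summand because $\c U_0$ is defined without reference to the augmentation $\epsilon$. I would close by noting that naturality of all these isomorphisms in $(G_i,L_i)$ and in $M$ is immediate from the fact that they are induced by the sub-$\partial$-graph inclusions, together with Lemmas \ref{lem:hom}, \ref{lem:upsilonfunctor}, and \ref{lem:u0functor}, so no separate verification is required.
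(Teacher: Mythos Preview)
Your proof is correct and follows essentially the same approach as the paper: decompose $\ker\epsilon$ and $L(RV^\circ)$ compatibly, take the quotient for $\tilde{\Upsilon}$, apply $\Hom_R(-,M)$ for $\tilde{\c U}$, and handle $\c U_0$ by a direct restriction--gluing argument. The one place you differ slightly is in verifying $L_iu_i(x)=0$ at the wedge vertex: the paper observes more directly that $L_ju_j$ lies in $\ker(\epsilon_j\otimes\id_M)$, so since $L_ju_j(y)=0$ for all $y\in V_j\setminus\{x\}$, the value at $x$ is forced to vanish as well; your edge-cancellation computation of $\sum_{v\in V_1}Lu(v)$ arrives at the same conclusion by an equivalent but slightly longer route.
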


\begin{proof}
In the case of a boundary wedge-sum, we have an internal direct sum
\[
\ker \epsilon = \ker \epsilon_1 \oplus \ker \epsilon_2.
\]
Moreover,
\[
L(RV^\circ) = L_1(RV_1^\circ) + L_2(RV_2^\circ), \quad L_1(RV_1^\circ) \subseteq \ker \epsilon_1, \quad L_2(RV_2^\circ) \subseteq \ker \epsilon_2.
\]
Therefore, taking quotients yields
\[
\tilde{\Upsilon}(G,L) \cong \tilde{\Upsilon}(G_1,L_1) \oplus \tilde{\Upsilon}(G_2,L_2).
\]
The claim for $\tilde{\c U}$ follows by applying $\Hom(-,M)$.

To prove the claim for $\c U_0$, consider the map
\[
\Phi: \c U_0(G_1,L_1,M) \oplus \c U_0(G_2,L_2,M) \to \c U_0(G,L,M)
\]
defined by the inclusion maps $(G_j,L_j) \to (G,L)$.  Let $x$ be the common boundary vertex of $G_1$ and $G_2$.  To see that $\Phi$ is injective, suppose $\Phi(u_1 \oplus u_2) = 0$.  Then $u_1$ and $u_2$ vanish at $x$ by definition of $\c U_0(G_j,L_j,M)$, and they vanish on the rest of $G_1$ and $G_2$ respectively because $G_1$ and $G_2$ only intersect at $x$.  To show surjectivity of $\Phi$, let $u \in \c U_0(G,L,M)$, and let $u_j = u|_{V_j}$.  Clearly, $u_j = 0$ on $\partial V_j$ since $\partial V_j \subseteq \partial V$.  Moreover, $L_j u_j(y) = Lu(y) = 0$ for every $y \in V_j \setminus \{x\}$.  Because $\im (L_j \otimes \id_M) \subseteq \ker (\epsilon_j \otimes \id_M)$, this implies that $L_j u(x) = 0$ also.  Thus, $u_j \in \c U_0(G_j,L_j,M)$ and hence $u = \Phi(u_1 \oplus u_2) \in \im \Phi$.  So $\Phi$ is an isomorphism as desired.

In the case of a disjoint union, the claim for $\tilde{\Upsilon}$ is proved in a similar way after noting that
\[
\ker \epsilon \cong \ker \epsilon_1 \oplus \ker \epsilon_2 \oplus R.
\]
The claim for $\c U$ follows by applying $\Hom(-,M)$.  The argument for $\c U_0$ is similar to the boundary wedge-sum case but easier.
\end{proof}

\begin{definition}
{\bf Completely reducible finite $\partial$-graphs} are defined to be the smallest class $\c C$ of finite $\partial$-graphs that contains the empty graph and is closed under layerable extensions, disjoint unions, and boundary wedge-sums.  More informally, a graph $G$ is completely reducible if it can be reduced to nothing by layer-stripping and splitting apart boundary wedge-sums and disjoint unions.
\end{definition}

\begin{definition} \label{def:completelyreducible}
A finite $\partial$-graph is {\bf irreducible} if it has no boundary spikes, boundary edges, or isolated boundary vertices, and it is not a boundary wedge-sum or disjoint union.  Note that every irreducible $\partial$-graph is a flower.
\end{definition}

\begin{figure}

\begin{center}
\begin{tikzpicture}[scale=0.6]
	\begin{scope}
		\node[bd] (A) at (0,0) {};
		\node[int] (C) at (0,1) {};
		\node[bd] (D) at (0.7,1.7) {};
		\node[bd] (E) at (-0.7,1.7) {};
		\node[int] (F) at (0,2.4) {};
		\node[int] (G) at (0,-1) {};
		\node[bd] (H) at (0,-2) {};
		
		\draw (G) to[bend left = 35] (H);
		\draw (H) to [bend left = 35] (G);
		\draw (G) to (A);
		\draw (A) to (C); \draw (C) to (D);
		\draw (C) to (E);  \draw (D) to (F);  \draw (E) to (F);
	\end{scope}

	\draw[->] (0.9,0.7) to (2.1,1.3);
	\draw[->] (0.9,-0.7) to (2.1,-1.3);
	\node at (1.5,0) {(4)};

	\begin{scope}[shift = {(3,-1)}]
		\node[bd] (A) at (0,0) {};
		\node[int] (G) at (0,-1) {};
		\node[bd] (H) at (0,-2) {};
		
		\draw (G) to[bend left = 35] (H);
		\draw (H) to [bend left = 35] (G);
		\draw (G) to (A);
	\end{scope}

	\begin{scope}[shift = {(3,1)}]
		\node[bd] (A) at (0,0) {};
		\node[int] (C) at (0,1) {};
		\node[bd] (D) at (0.7,1.7) {};
		\node[bd] (E) at (-0.7,1.7) {};
		\node[int] (F) at (0,2.4) {};
		
		\draw (A) to (C); \draw (C) to (D);
		\draw (C) to (E);  \draw (D) to (F);  \draw (E) to (F);
	\end{scope}

	\draw[->] (3.8,-2) to (5.2,-2);
	\draw[->] (3.8,2) to (5.2,2);
	\node at (4.5,1.5) {(2)};
	\node at (4.5,-1.5) {(2)};

	\begin{scope}[shift = {(6,-1)}]
		\node[bd] (G) at (0,-1) {};
		\node[bd] (H) at (0,-2) {};
		
		\draw (G) to[bend left = 35] (H);
		\draw (H) to [bend left = 35] (G);
	\end{scope}

	\begin{scope}[shift = {(6,1)}]
		\node[bd] (C) at (0,1) {};
		\node[bd] (D) at (0.7,1.7) {};
		\node[bd] (E) at (-0.7,1.7) {};
		\node[int] (F) at (0,2.4) {};
		
		\draw (C) to (D);
		\draw (C) to (E);  \draw (D) to (F);  \draw (E) to (F);
	\end{scope}
	
	\draw[->] (6.8,-2) to (8.2,-2);
	\draw[->] (6.8,2) to (8.2,2);
	\node at (7.5,1.5) {(3)};
	\node at (7.5,-1.5) {(3)};
	
	\begin{scope}[shift = {(9,-1)}]
		\node[bd] (G) at (0,-1) {};
		\node[bd] (H) at (0,-2) {};
	\end{scope}

	\begin{scope}[shift = {(9,1)}]
		\node[bd] (C) at (0,1) {};
		\node[bd] (D) at (0.7,1.7) {};
		\node[bd] (E) at (-0.7,1.7) {};
		\node[int] (F) at (0,2.4) {};
		
		\draw (D) to (F);  \draw (E) to (F);
	\end{scope}

	\draw[->] (9.8,-2) to (11.2,-2);
	\draw[->] (9.8,2) to (11.2,2);
	\node at (10.5,1.5) {(2)};
	\node at (10.5,-1.5) {(1)};

	\begin{scope}[shift = {(12,1)}]
		\node[bd] (C) at (0,1) {};
		\node[bd] (D) at (0.7,1.7) {};
		\node[bd] (F) at (0,2.4) {};
		
		\draw (D) to (F);
	\end{scope}

	\node at (12,-2) {$\varnothing$};

	\draw[->] (12.8,2) to (14.2,2);
	\node at (13.5,1.5) {(3)};

	\begin{scope}[shift = {(15,1)}]
		\node[bd] (C) at (0,1) {};
		\node[bd] (D) at (0.7,1.7) {};
		\node[bd] (F) at (0,2.4) {};
	\end{scope}

	\draw[->] (15.8,2) to (17.2,2);
	\node at (16.5,1.5) {(1)};

	\node at (18,2) {$\varnothing$};

\end{tikzpicture}
\end{center}

\caption{A completely reducible $\partial$-graph.  The boundary vertices are black and interior vertices are white.  The operations are (1) isolated boundary vertex deletion, (2) boundary spike contraction, (3) boundary edge deletion, (4) splitting a boundary wedge-sum.}

\label{fig:CRexample}

\end{figure}
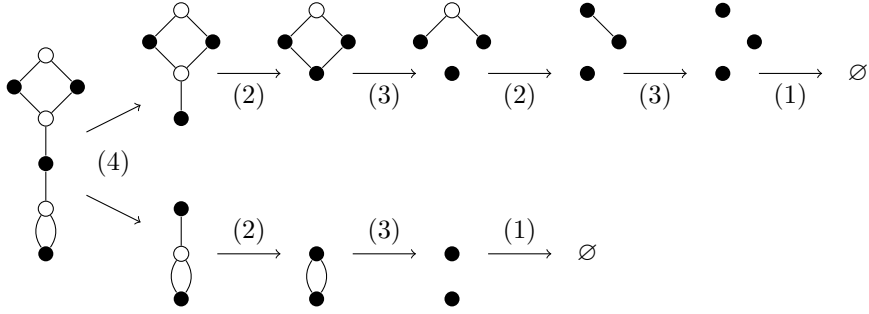

The following is an analogue of Proposition \ref{prop:layerablebehavior}:

\begin{proposition} \label{prop:reduciblebehavior}
Let $G$ be a finite nonempty completely reducible $\partial$-graph.  If $(G,L)$ is a normalized $R^\times$-network, then $(G,L)$ is non-degenerate and $\tilde{\Upsilon}(G,L)$ is a free $R$-module of rank $|\partial V(G)| - 1$.
\end{proposition}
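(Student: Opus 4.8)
The plan is to argue by structural induction on the construction of the class $\c C$ of completely reducible finite $\partial$-graphs. Concretely, I would break each layerable extension into a finite chain of \emph{simple} layerable extensions and induct on the total number of elementary steps (simple layerable extensions, disjoint unions, boundary wedge-sums) needed to build $G$ from $\varnothing$, carrying along three assertions at once: (a) $|\partial V(G)|\ge 1$, (b) every normalized $R^\times$-network $(G,L)$ is non-degenerate, and (c) $\tilde\Upsilon(G,L)$ is a free $R$-module of rank $|\partial V(G)|-1$. The non-degeneracy part (b) is tracked through the $\c U_0$-statements in Lemmas \ref{lem:layeringUpsilon2} and \ref{lem:wedgesumUpsilon} (applied with $M=R$), and (c) through the $\tilde\Upsilon$-statements in Lemmas \ref{lem:layeringUpsilon3} and \ref{lem:wedgesumUpsilon}.

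For the base cases there is nothing to prove when $G=\varnothing$; the only simple layerable extension applicable to $\varnothing$ is adjoining an isolated boundary vertex, which yields a single-vertex $\partial$-graph $G=\{x\}$ with $V^\circ=\varnothing$. Here $\ker\epsilon=0$, so $\tilde\Upsilon(G,L)=0$ is free of rank $0=|\partial V(G)|-1$, non-degeneracy is vacuous, and $|\partial V(G)|=1$. For the inductive step, if $G$ is a simple layerable extension of a \emph{nonempty} $G_0\in\c C$, then Lemma \ref{lem:layeringUpsilon2} gives $\c U_0(G,L,R)\cong\c U_0(G_0,L_0,R)=0$, while Lemma \ref{lem:layeringUpsilon3} gives either $\tilde\Upsilon(G,L)\cong\tilde\Upsilon(G_0,L_0)$ (boundary spike or boundary edge, with $|\partial V|$ unchanged) or $\tilde\Upsilon(G,L)\cong\tilde\Upsilon(G_0,L_0)\oplus R$ (isolated boundary vertex, with $|\partial V|$ increased by $1$); in both cases freeness and the rank identity $\rank\tilde\Upsilon=|\partial V|-1$ persist, and $|\partial V(G)|\ge|\partial V(G_0)|\ge 1$.

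The remaining two cases are handled by Lemma \ref{lem:wedgesumUpsilon}. If $G=G_1\sqcup G_2$, we may assume both $G_i$ are nonempty (otherwise $G\cong G_1$ or $G\cong G_2$), and then $\c U_0(G,L,R)\cong\c U_0(G_1,L_1,R)\oplus\c U_0(G_2,L_2,R)=0$ and $\tilde\Upsilon(G,L)\cong\tilde\Upsilon(G_1,L_1)\oplus\tilde\Upsilon(G_2,L_2)\oplus R$, which is free of rank $(|\partial V_1|-1)+(|\partial V_2|-1)+1=|\partial V(G)|-1$, with $|\partial V(G)|=|\partial V_1|+|\partial V_2|\ge 2$. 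If $G=G_1\vee_{x_1,x_2}G_2$, then each $G_i$ contains the boundary vertex $x_i$ and hence is nonempty, and $\tilde\Upsilon(G,L)\cong\tilde\Upsilon(G_1,L_1)\oplus\tilde\Upsilon(G_2,L_2)$ is free of rank $(|\partial V_1|-1)+(|\partial V_2|-1)=(|\partial V_1|+|\partial V_2|-1)-1=|\partial V(G)|-1$, again with $\c U_0(G,L,R)=0$ and $|\partial V(G)|=|\partial V_1|+|\partial V_2|-1\ge 1$. This closes the induction, and the proposition is precisely conclusions (b) and (c) for nonempty $G$.

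The genuinely delicate point — the step I would be most careful with — is the bookkeeping at the boundary of the induction, not the individual operations. One must observe that the first simple layerable extension of $\varnothing$ is forced to be an isolated boundary vertex (a spike needs an interior vertex, a boundary edge needs two boundary vertices), so that $\tilde\Upsilon$ starts at $0$ and does not acquire a spurious copy of $R$, and that every subsequent simple extension is applied to a \emph{nonempty} $\partial$-graph, which is exactly the hypothesis required to invoke the isolated-boundary-vertex case of Lemma \ref{lem:layeringUpsilon3}. It is worth emphasizing in the write-up that one cannot simply quote Proposition \ref{prop:layerablebehavior} for the layerable part and deduce freeness of $\tilde\Upsilon$ from $\Upsilon(G,L)\cong\tilde\Upsilon(G,L)\oplus Rx$, since over a general commutative ring a stably free module need not be free; the step-by-step passage through Lemma \ref{lem:layeringUpsilon3} is what actually produces an explicit free basis.
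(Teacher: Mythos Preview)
Your proof is correct and follows essentially the same structural-induction approach as the paper, which simply declares that Lemmas~\ref{lem:layeringUpsilon3} and~\ref{lem:wedgesumUpsilon} show the class of $\partial$-graphs satisfying the conclusion is closed under the three operations. Your version is considerably more detailed: you explicitly track $|\partial V|\ge 1$, you correctly invoke Lemma~\ref{lem:layeringUpsilon2} (not just Lemma~\ref{lem:layeringUpsilon3}) for the non-degeneracy statement under simple layerable extensions, you handle the base case carefully, and your observation that the first step out of $\varnothing$ must be an isolated boundary vertex is exactly the point that makes Lemma~\ref{lem:layeringUpsilon3}'s nonemptiness hypothesis available thereafter. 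The closing remark about stably free versus free modules is a genuine subtlety the paper glosses over, and it correctly justifies why one argues directly with $\tilde\Upsilon$ via Lemma~\ref{lem:layeringUpsilon3} rather than trying to descend from Proposition~\ref{prop:layerablebehavior}.
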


\begin{proof}
Let $\mathcal{C}$ be the class of $\partial$-graphs for which the claims hold, together with the empty $\partial$-graph.  Lemmas \ref{lem:layeringUpsilon3} and \ref{lem:wedgesumUpsilon} imply that $\mathcal{C}$ is closed under layerable extensions, disjoint unions, and boundary wedge-sums.
\end{proof}

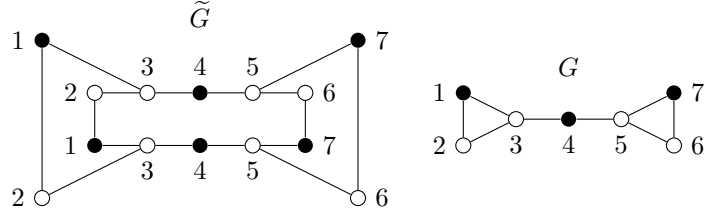
\begin{figure}

\begin{center}

\begin{tikzpicture}[scale=0.7]

\begin{scope}
	\node at (0,2) {$\tilde{G}$};

	\node[int] (2A) at (-3,-1.5) [label = left:$2$] {};
	\node[bd] (1B) at (-3,1.5) [label = left:$1$] {};

	\node[bd] (1A) at  (-2,-0.5) [label = left:$1$] {};
	\node[int] (2B) at (-2,0.5) [label = left:$2$] {};
	
	\node[int] (3A) at (-1,-0.5) [label = below:$3$] {};
	\node[int] (3B) at (-1,0.5) [label = above:$3$] {};
	\node[bd] (4A) at (0,-0.5) [label = below:$4$] {};
	\node[bd] (4B) at (0,0.5) [label = above:$4$] {};
	\node[int] (5A) at (1,-0.5) [label = below:$5$] {};
	\node[int] (5B) at (1,0.5) [label = above:$5$] {};
	
	\node[bd] (7A) at (2,-0.5) [label = right:$7$] {};
	\node[int] (6B) at (2,0.5) [label = right:$6$] {};
	
	\node[int] (6A) at (3,-1.5) [label = right:$6$] {};
	\node[bd] (7B) at (3,1.5) [label = right:$7$] {};
	
	\draw (3A) to (1A) to (2B) to (3B);
	\draw (3B) to (1B) to (2A) to (3A);
	\draw (3A) to (4A) to (5A);
	\draw (3B) to (4B) to (5B);
	\draw (5A) to (7A) to (6B) to (5B);
	\draw (5B) to (7B) to (6A) to (5A);
\end{scope}

\begin{scope}[shift={(7,0)}]
	\node at (0,1) {$G$};

	\node[bd] (1) at (-2,0.5) [label = left:$1$] {};
	\node[int] (2) at (-2,-0.5) [label = left:$2$] {};
	\node[int] (3) at (-1,0) [label = below:$3$] {};
	\node[bd] (4) at (0,0) [label = below:$4$] {};
	\node[int] (5) at (1,0) [label = below:$5$] {};
	\node[int] (6) at (2,-0.5) [label = right:$6$] {};
	\node[bd] (7) at (2,0.5) [label = right:$7$] {};
	
	\draw (1) to (2) to (3) to (1);
	\draw (3) to (4) to (5);
	\draw (5) to (6) to (7) to (5);
\end{scope}

\end{tikzpicture}

\caption{A covering map $f \colon \tilde{G} \to G$ such that $G$ decomposes as a boundary wedge-sum and $\tilde{G}$ does not.  In fact, $G$ is completely reducible and $\tilde{G}$ is irreducible.} \label{fig:reduciblefunctorialityfail}

\end{center}

\end{figure}

Unlike layer-stripping operations, the operation of spliting apart a boundary wedge-sum does \emph{not} pull back through unramified $\partial$-graph morphisms.  The problem is illustrated in Figure \ref{fig:reduciblefunctorialityfail}.  However, we do have

\begin{observation} \label{obs:wedgesumfunctoriality}
Suppose that $G$ is a sub-$\partial$-graph of $H$ and that $H$ decomposes as a boundary wedge-sum or disjoint union of $H_1$ and $H_2$.  Then $G$ decomposes as a boundary wedge-sum or disjoint union of $G \cap H_1$ and $G \cap H_2$.  Together with Lemma \ref{lem:layerstrippingfunctoriality}, this implies that a sub-$\partial$-graph of a completely reducible $\partial$-graph is also completely reducible.
\end{observation}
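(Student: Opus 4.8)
The plan is to first prove the structural decomposition of $G$ and then bootstrap it, via an induction on the construction of the class of completely reducible $\partial$-graphs, to the statement about sub-$\partial$-graphs.

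For the structural decomposition I would set $G_1 = G \cap H_1$ and $G_2 = G \cap H_2$, using the intersection of sub-$\partial$-graphs from Definition~\ref{def:harmonicsubgraphIU} (all of $G$, $H_1$, $H_2$ being sub-$\partial$-graphs of $H$). First one checks that $G_1$ and $G_2$ are sub-$\partial$-graphs of $G$: since $G$ is a sub-$\partial$-graph of $H$ we have $V^\circ(G) \subseteq V^\circ(H)$ and $\mathcal{E}_G(x) = \mathcal{E}_H(x)$ for every interior vertex $x$, and likewise for $H_i$; combining these at a vertex $x \in V^\circ(G_i) = V^\circ(G) \cap V^\circ(H_i)$ gives $\mathcal{E}_{G_i}(x) = \mathcal{E}_G(x)$, which is the criterion of Definition~\ref{def:sub-d-graph}. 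Next, because $V(G) \subseteq V(H) = V(H_1) \cup V(H_2)$ and $E(G) \subseteq E(H) = E(H_1) \sqcup E(H_2)$, one gets $V(G) = V(G_1) \cup V(G_2)$, $E(G) = E(G_1) \sqcup E(G_2)$, and (from the formula for the interior of an intersection) $V^\circ(G) = V^\circ(G_1) \cup V^\circ(G_2)$. Finally $V(G_1) \cap V(G_2) \subseteq V(H_1) \cap V(H_2)$, which is empty if $H$ is a disjoint union and equals a single vertex $\{x\}$ if $H$ is the boundary wedge-sum at $x$; in the wedge-sum case, if $x \in V(G)$ then $x$ is also a boundary vertex of $G$ (a boundary vertex of $H$ lying in $G$ is boundary in $G$, again by Definition~\ref{def:sub-d-graph}) and $V(G_1) \cap V(G_2) = \{x\}$, so $G = G_1 \vee_{x,x} G_2$; otherwise $V(G_1) \cap V(G_2) = \varnothing$ and $G = G_1 \sqcup G_2$. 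Degenerate subcases such as $G \subseteq H_1$, where $G_2 = \varnothing$, are subsumed in these.

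For the corollary I would let $\mathcal{C}'$ denote the class of finite $\partial$-graphs all of whose sub-$\partial$-graphs are completely reducible, and show that $\mathcal{C}'$ contains the empty graph and is closed under layerable extensions, disjoint unions, and boundary wedge-sums. Since the class $\mathcal{C}$ of completely reducible $\partial$-graphs is by definition the smallest such class, this gives $\mathcal{C} \subseteq \mathcal{C}'$, which is precisely the assertion. Closure of $\mathcal{C}'$ under disjoint unions and wedge-sums is immediate from the structural decomposition: if $H = H_1 \sqcup H_2$ or $H = H_1 \vee H_2$ with $H_1, H_2 \in \mathcal{C}'$ and $G \subseteq H$, then $G$ decomposes as $G \cap H_1$ and $G \cap H_2$ glued in the same way, and each $G \cap H_i$ is a sub-$\partial$-graph of $H_i \in \mathcal{C}'$, hence completely reducible, so $G$ is too. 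For closure under layerable extensions: if $H$ layer-strips down to $H_0 \in \mathcal{C}'$ along a layer-stripping filtration $H = J_n \supseteq \cdots \supseteq J_0 = H_0$ and $G \subseteq H$, then applying Lemma~\ref{lem:layerstrippingfunctoriality} to the (unramified) inclusion $f \colon G \hookrightarrow H$ — noting $f^{-1}(J_j) = G \cap J_j$ — shows that $G$ layer-strips down to $G \cap H_0$, which is a sub-$\partial$-graph of $H_0$ and hence completely reducible; thus $G$ is a layerable extension of a completely reducible $\partial$-graph, so $G \in \mathcal{C}$.

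I do not anticipate a real obstacle, since the whole argument is bookkeeping with the definitions; the step requiring the most care is checking that $G \cap H_i$ inherits exactly the right interior/boundary partition and the right stars at interior vertices, so that the decomposition of $G$ is an honest equality of $\partial$-graphs and not merely of underlying graphs. This is where Definitions~\ref{def:sub-d-graph} and~\ref{def:harmonicsubgraphIU} must be applied precisely, and it is also the content needed to make Lemma~\ref{lem:layerstrippingfunctoriality} applicable with $f$ the inclusion.
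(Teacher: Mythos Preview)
Your proof is correct and follows precisely the approach the paper indicates: the paper states this as an observation without proof, merely flagging that the structural decomposition together with Lemma~\ref{lem:layerstrippingfunctoriality} yields the corollary, and your argument fills in exactly those details (checking via Definitions~\ref{def:sub-d-graph} and~\ref{def:harmonicsubgraphIU} that $G \cap H_i$ are sub-$\partial$-graphs with the right boundary assignment, and then running the induction on the definition of $\mathcal{C}$).
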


\subsection{Algebraic Characterization}

We shall prove an algebraic characterization of complete reducibility in the same way as we did for layerability (Theorem \ref{thm:layerabilitycharacterization}).  As in Lemma \ref{lem:layerabilityfieldcharacterization}, we first construct degenerate networks over fields.

\begin{lemma}\label{lem:reducibilityfieldcharacterization}
Let $G$ be a finite $\partial$-graph and let $F$ be an infinite field.  Then $G$ is completely reducible if and only if every normalized $F^\times$-network on $G$ is non-degenerate.
\end{lemma}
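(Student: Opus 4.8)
The plan is to mirror the proof of Lemma \ref{lem:layerabilityfieldcharacterization} as closely as possible, keeping track of the extra constraint that we are now only allowed weighted Laplacians (i.e.\ $d=0$) and that the inductive class of graphs is closed under boundary wedge-sums and disjoint unions in addition to layerable extensions. The forward direction is immediate: if $G$ is completely reducible, then by Proposition \ref{prop:reduciblebehavior} every normalized $F^\times$-network on $G$ is non-degenerate, with no hypothesis on $F$ beyond it being a field.

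For the converse, I would argue contrapositively. Suppose $G$ is \emph{not} completely reducible, and choose a minimal non-completely-reducible sub-$\partial$-graph $G'$ (minimal in the number of vertices plus edges, say). By Observation \ref{obs:wedgesumfunctoriality} a sub-$\partial$-graph of a completely reducible $\partial$-graph is completely reducible, so minimality forces $G'$ to be irreducible in the sense of Definition \ref{def:completelyreducible}: it has no boundary spike, boundary edge, or isolated boundary vertex (else a layer-stripping operation would reduce it to something still non-completely-reducible), and it is not a nontrivial boundary wedge-sum or disjoint union. In particular, as in Lemma \ref{lem:layerabilityfieldcharacterization}, every boundary vertex of $G'$ has all its incident edges going to interior vertices, and the stars $\mathcal{E}(x)$ for distinct $x\in\partial V(G')$ are disjoint. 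I then want to build a nonzero element of $\c U_0(G',L',F)$ for a suitable \emph{weighted} (i.e.\ $d=0$) Laplacian $L'$. Here is where the proof must diverge from the generalized-Laplacian case: we can no longer absorb the ``defect'' into $d'$. Instead I would take $u\colon V(G')\to F$ to be $0$ on $\partial V(G')$ and locally constant on the interior — but since $G'$ need not have connected interior, the right move is to let $u$ be constant on each connected component of the subgraph induced by $V^\circ(G')$, with nonzero (possibly distinct) constants to be chosen. Harmonicity of $u$ at an interior vertex $x$ then reads $\sum_{e\in\mathcal{E}(x),\,e_-\in\partial V(G')} w'(e)u(x) + \sum_{e\in\mathcal{E}(x),\,e_-\in V^\circ(G')} w'(e)(u(x)-u(e_-)) = 0$, and harmonicity (vanishing current) at a boundary vertex $x$ reads $\sum_{e\in\mathcal{E}(x)}w'(e)(0-u(e_-)) = 0$; note interior-interior edges joining components of a fixed color contribute zero automatically. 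The task is to choose $w'\colon E(G')\to F^\times$ and component-values of $u$ making all these equations hold. Since $F$ is infinite, and since at each boundary vertex the star $\mathcal{E}(x)$ is not a single edge (irreducibility again rules out boundary spikes, so $|\mathcal{E}(x)|\geq 2$), there is enough freedom: treating the nonzero component-values of $u$ and the weights as unknowns, the conditions become polynomial equations over $F$, and I would argue that the variety they cut out meets the open locus where all $w'(e)\neq 0$ — because $F$ is infinite, a single nonzero polynomial (the product of the coordinate functions $w'(e)$) cannot vanish identically on that variety. The cleanest way to see the variety is nonempty is explicit: first fix nonzero interior component-values, then at each interior vertex the harmonicity equation is one linear equation in the weights of edges at $x$ with at least one free weight available, and at each boundary vertex the equation $\sum_{e\in\mathcal{E}(x)}w'(e)u(e_-)=0$ is a single linear condition on $|\mathcal{E}(x)|\geq 2$ weights that, given the $u(e_-)$ are nonzero, has solutions with all coordinates nonzero precisely because $F$ is infinite (a line through the origin in $F^{|\mathcal{E}(x)|}$ meets the all-coordinates-nonzero locus). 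The disjointness of the boundary stars is what lets these boundary choices be made independently, and then the interior equations can be satisfied by a final adjustment of the remaining interior-edge weights.

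Having produced $u\in\c U_0(G',L',F)$ with $u\not\equiv 0$ for some weighted Laplacian $L'$ on $G'$, I would extend $w'$ arbitrarily to a weight function $w$ on $G$ with $d=0$, giving a normalized $F^\times$-network $(G,L)$. The inclusion $(G',L')\hookrightarrow(G,L)$ is an injective $F^\times$-network morphism, and extension by zero gives an injection $\c U_0(G',L',F)\to\c U_0(G,L,F)$ as a special case of Lemma \ref{lem:u0functor}. Hence $\c U_0(G,L,F)\neq 0$, so $(G,L)$ is a degenerate normalized $F^\times$-network, completing the contrapositive.

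The main obstacle I anticipate is the construction step: unlike the generalized-Laplacian setting of Lemma \ref{lem:layerabilityfieldcharacterization}, where one could simply set $u=1$ on the whole interior and dump the residual into $d'$, here the interior may be disconnected and $d$ is forced to be $0$, so one genuinely has to solve a coupled system in both the edge weights and the interior potential values. The key simplifications that make it go through are (a) irreducibility of $G'$ guarantees $|\mathcal{E}(x)|\geq 2$ for each $x\in\partial V(G')$ and that distinct boundary stars are disjoint, decoupling the boundary equations; (b) $F$ infinite lets us dodge the finitely many ``bad'' hyperplanes where some weight would be forced to zero; and (c) $G'$ not being a disjoint union or boundary wedge-sum ensures that choosing the interior component-values nonzero does not trivially force $u\equiv 0$ somewhere we cannot control. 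I would also double-check the degenerate base cases (e.g.\ $G'$ a single interior vertex with two parallel edges to one boundary vertex, the $\partial$-graph on the left of Figure \ref{fig:torExample} type examples) to confirm the construction specializes correctly.
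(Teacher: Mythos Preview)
Your forward direction and the reduction to a minimal irreducible sub-$\partial$-graph $G'$ are fine, but the construction of the degenerate weighted Laplacian has a genuine gap.

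You take $u$ to be constant on each connected component of the interior of $G'$. But any edge between two interior vertices lies inside a single such component, so \emph{every} interior--interior edge has $u(e_+)-u(e_-)=0$. Hence at an interior vertex $x$ the harmonicity condition collapses to
\[
u(x)\sum_{\substack{e\in\mathcal{E}(x)\\ e_-\in\partial V(G')}} w'(e)=0,
\]
which, since $u(x)\neq 0$, forces the boundary-incident weights at $x$ to sum to zero. If $x$ has exactly one edge to the boundary this forces that weight to be $0$, and irreducibility does \emph{not} rule this out. For instance, take $\partial V=\{a,b\}$, $V^\circ=\{c,d,e\}$, with edges $ac,\,ad,\,ce,\,de$ and two parallel edges $be,be'$. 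One checks directly that this $\partial$-graph is irreducible (no spikes, no boundary edges, no wedge-sum, connected), yet $c$ has a single boundary edge $ac$; your equation at $c$ reads $w'(ac)=0$. Your assertion that ``at each interior vertex \dots\ at least one free weight [is] available'' is therefore false for this $u$, and the subsequent genericity argument cannot get started. (There is a second, softer problem: even when each vertex equation has several weights, the equations are coupled because every edge appears at two vertices, so solving them ``one vertex at a time'' is not justified as written.)

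The paper's proof fixes exactly this by choosing a finer potential. It lets $S\subseteq E(G')$ be the set of edges lying on some cycle and takes $u$ constant on each component of $G'\setminus S$, with distinct nonzero values on the components containing no boundary vertex; irreducibility is used to show every interior vertex receives a nonzero value. The weights are then built from circulations: for each oriented cycle $C_j$ covering $S$ one sets $w_j(e)=1/du(e)$ on $C_j$ and $0$ off it, so that $w_j(e)\,du(e)$ is $\pm 1$ along the cycle and $L_ju\equiv 0$ automatically at every vertex (incoming and outgoing contributions cancel). A generic $F$-linear combination $w = 1_{E\setminus S}+\sum_j \alpha_j w_j$ then has all edge weights nonzero because $F$ is infinite, while bridges in $E\setminus S$ contribute nothing to $Lu$ since $du$ vanishes across them. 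This cycle-based construction is the missing idea.
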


\begin{proof}
The implication $\implies$ follows from Proposition \ref{prop:reduciblebehavior}.

Let $G'$ be a minimal sub-$\partial$-graph of $G$ which is not completely reducible.  Note that $G'$ must be irreducible.  As in the proof of Lemma \ref{lem:layerabilityfieldcharacterization}, it suffices to construct degenerate edge weights on $G'$.

Our strategy is choose a potential function $u$ first with $u|_{\partial V} = 0$, and \emph{then} choose an edge-weight function $w$ that will make $L u \equiv 0$.  Let $S \subseteq E(G)$ be the union of all cycles, i.e., $S$ contains every edge that is part of any cycle.  Note that every edge in $S$ must have endpoints in distinct components of $G \setminus S$.  Define $u$ to be zero on every component of $G \setminus S$ that contains a boundary vertex of $G$, and assign $u$ a different nonzero value on each component of $G \setminus S$ that does not contain any boundary vertices.

We need to guarantee that $u$ is not identically zero.  But in fact, we claim that $u$ is nonzero at every interior vertex.  To prove this, it suffices to show that every edge $e$ with endpoints $x \in \partial V$ and $y \in V^\circ$ must be in $S$, that is, such an edge $e$ must be contained in some cycle.  By hypothesis, our edge $e$ is not a boundary spike. Thus, there is some other edge $e' \neq e$ incident to $x$.  Let $z$ be the other endpoint $e'$.  Since $G$ is not a boundary wedge-sum, deleting $x$ leaves $G$ connected. Thus, there is a path $P = \{e_1,\dots,e_k\}$ from $y$ to $z$ which avoids $x$.  Then $P \cup \{e,e'\}$ is a cycle containing $e$. Consequently, $u$ is nonzero at every interior vertex.

Now we choose the edge weights.  Choose oriented cycles $C_1, \dots, C_k$ such that $S = \bigcup_{j=1}^k (C_j \cup \overline{C}_j)$.  If $e \in C_j$, then $e \in S$ and hence $e_+$ and $e_-$ are in distinct components of $G \setminus S$, so $du(e) = u(e_+) - u(e_-) \neq 0$.  For each $j$, define
  \[
    w_j(e) = w_j(\overline{e}) = \begin{cases}
      1/du(e), \text{ for } e \in C_j \\
      0, \text{ for } e \not \in C_j \cup \overline{C}_j.
    \end{cases}
  \]
Then $w_j(e) du(e)$ is $1$ on $C_j$ and $-1$ on $\overline{C}_j$ and vanishes elsewhere.  Therefore, if we let $L_j$ be the Laplacian associated to the edge-weight function $w_j$, then we have $L_j u = 0$.  For each $e \in S$, there is a weight function $w_j$ with $w_j(e) \neq 0$.  Since $F$ is infinite and the graph is finite, we may choose $\alpha_j \in F$ such that $\sum_{j=1}^k \alpha_j w_j(e) \neq 0$ for all $e \in S$ simultaneously.

Set $w = 1_{E \setminus S} + \sum_{j=1}^k \alpha_j w_j$ and let $L$ be the associated Laplacian.  Then $w(e) \neq 0$ for each $e$.  Because $u$ is constant on each component of $G \setminus S$, we know that $u(e_+) - u(e_-) = 0$ for each $e \in E \setminus S$.  Thus, these edges do not contribute to $L u$, and so
\[
L u = \sum_{j=1}^k \alpha_j L_ju = 0.
\]
Thus, $(G,L)$ is the desired degenerate $F^\times$-network becuase $0 \neq u \in \c U_0(G,L,F)$.
\end{proof}

We proved equivalent algebraic characterizations for layerability by assigning indeterminates to the edges (see Proposition \ref{prop:genericfieldnetwork}).  The analogue for normalized networks is as follows.

\begin{definition}
Let $G$ be a $\partial$-graph and let $F$ be a field.  Then $\tilde{R} = \tilde{R}(G,F) = F[t_e^{\pm 1}: e \in E]$ will denote the Laurent polynomial algebra over $F$ with generators indexed by the edges of $G$.  Let $\tilde{L} = \tilde{L}(G,F)$ denote the weighted Laplacian over $\tilde{R}$ given by $\tilde{w}(e) = t_e$.
\end{definition}

\begin{proposition} \label{prop:genericfieldnetwork2}
Let $G$ be a finite $\partial$-graph such that each component contains at least one boundary vertex, and let $F$ be a field.  Then $(G, \tilde{L})$ is non-degenerate.  Moreover, $\tilde{\Upsilon}(\tilde{G}, \tilde{L})$ is a flat $\tilde{R}$-module if and only if every normalized $F^\times$-network on $G$ is non-degenerate.
\end{proposition}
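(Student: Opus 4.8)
The plan is to follow the proof of Proposition~\ref{prop:genericfieldnetwork} almost line for line, the two new features being (i) a short argument that $(G,\tilde L)$ is non-degenerate despite the absence of the diagonal indeterminates $t_x$, and (ii) in the converse implication, replacing the Nullstellensatz argument by the combinatorial characterization of complete reducibility (Lemma~\ref{lem:reducibilityfieldcharacterization}) together with Proposition~\ref{prop:reduciblebehavior}. Throughout I would pass from $\tilde\Upsilon$ to $\Upsilon$: since $\tilde L$ is a weighted Laplacian there is an internal direct sum $\Upsilon(G,\tilde L)=\tilde\Upsilon(G,\tilde L)\oplus\tilde R x$ with $\tilde R x\cong\tilde R$ free, so $\tilde\Upsilon(G,\tilde L)$ is a flat $\tilde R$-module iff $\Upsilon(G,\tilde L)$ is, and $\Tor_1^{\tilde R}(\tilde\Upsilon(G,\tilde L),M)\cong\Tor_1^{\tilde R}(\Upsilon(G,\tilde L),M)$ for every $M$; this lets me apply Proposition~\ref{prop:tor} and Corollary~\ref{cor:quotientnetwork} verbatim.

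For non-degeneracy of $(G,\tilde L)$: here $\c U_0(G,\tilde L,\tilde R)$ is the kernel of $\tilde L\colon\tilde R V^\circ\to\tilde R V$, and since $\tilde R$ is an integral domain it suffices to exhibit one non-vanishing $|V^\circ|\times|V^\circ|$ minor of the matrix of $\tilde L$. I would take the principal minor $\det(\tilde L|_{V^\circ\times V^\circ})$, the reduced weighted Laplacian determinant. It is a polynomial in the $t_e$, and specializing $t_e=1$ for $e$ in a fixed spanning forest $F_0$ of $G$ each of whose trees contains exactly one boundary vertex (such an $F_0$ exists because every component of $G$ has a boundary vertex, e.g.\ a simultaneous breadth-first search forest from $\partial V$) and $t_e=0$ otherwise turns this matrix into the reduced Laplacian of the forest $F_0$, whose determinant is $1$. (Equivalently, by the weighted matrix-tree theorem the determinant is $\sum_F\prod_{e\in F}t_e$ over such forests $F$, with each coefficient equal to $1$.) Hence $\det(\tilde L|_{V^\circ\times V^\circ})$ is a nonzero element of $\tilde R$ over any field $F$, so $\tilde L$ is injective and $(G,\tilde L)$ is non-degenerate.

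Given non-degeneracy, Corollary~\ref{cor:quotientnetwork} reduces flatness of $\Upsilon(G,\tilde L)$ to checking that $(G,\tilde L/\mathfrak a)$ is non-degenerate for every maximal ideal $\mathfrak a\subset\tilde R$. The forward implication is then immediate: given a normalized $F^\times$-network $(G,L)$ with weights $w(e)\in F^\times$, the ideal $\mathfrak a=(t_e-w(e):e\in E)$ is maximal with $\tilde R/\mathfrak a\cong F$ (the $t_e$ being units, $t_e\mapsto w(e)$ is a well-defined surjection), and $(G,\tilde L/\mathfrak a)$ is the $F$-network $(G,L)$; since $\Tor_1(\Upsilon(G,\tilde L),\tilde R/\mathfrak a)\cong\c U_0(G,L,F)$, flatness of $\Upsilon(G,\tilde L)$ forces $\c U_0(G,L,F)=0$, i.e.\ $(G,L)$ is non-degenerate.

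For the converse, rather than classify all maximal ideals of $\tilde R$ (which is where Proposition~\ref{prop:genericfieldnetwork} needs $F$ algebraically closed), I would argue: if every normalized $F^\times$-network on $G$ is non-degenerate then, $F$ being infinite, Lemma~\ref{lem:reducibilityfieldcharacterization} shows $G$ is completely reducible, and then Proposition~\ref{prop:reduciblebehavior} applied to the normalized $\tilde R^\times$-network $(G,\tilde L)$ (note $\tilde w(e)=t_e\in\tilde R^\times$) yields that $\tilde\Upsilon(G,\tilde L)$ is free over $\tilde R$, hence flat (the case $G=\varnothing$ being trivial). The step I expect to be the crux is exactly this converse over a general field: the statement really needs $F$ infinite, since for $G$ a triangle with a single boundary vertex and $F$ the two-element field the unique normalized $F^\times$-network is non-degenerate while $\Upsilon(G,\tilde L)$ is not flat, so Lemma~\ref{lem:reducibilityfieldcharacterization} (hence infiniteness of $F$) cannot be dispensed with. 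For an arbitrary field one should instead assert, as in Proposition~\ref{prop:genericfieldnetwork}, only the forward implication in general together with the converse for $F$ algebraically closed, proving the latter via the Nullstellensatz exactly as there.
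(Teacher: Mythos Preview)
Your argument is correct, and your treatment of non-degeneracy and of the forward implication is essentially the paper's: the paper reduces to connected components, picks a boundary vertex $x$, and invokes the weighted matrix-tree theorem to see that $\det\tilde L_x\neq 0$ in $\tilde R$, then declares the remainder ``exactly the same as for Proposition~\ref{prop:genericfieldnetwork}.''

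Where you diverge is in the converse. The paper's intended argument (inherited from Proposition~\ref{prop:genericfieldnetwork}) is the Nullstellensatz route, which yields the converse only for $F$ algebraically closed. You instead route through Lemma~\ref{lem:reducibilityfieldcharacterization} and Proposition~\ref{prop:reduciblebehavior}: non-degeneracy of all normalized $F^\times$-networks forces $G$ completely reducible (for $F$ infinite), whence $\tilde\Upsilon(G,\tilde L)$ is free over $\tilde R$. This is a genuinely different argument, and it buys a strictly weaker hypothesis on $F$ (infinite rather than algebraically closed). You are also right that the unqualified ``if and only if'' in the statement is too strong for arbitrary $F$, and your triangle-over-$\mathbb F_2$ example is to the point; the paper's phrasing is sloppy here, though in the application (Theorem~\ref{thm:reducibilitycharacterization}) only the forward direction and infinite fields are used. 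One small remark: your detour through $\Upsilon$ via $\Upsilon=\tilde\Upsilon\oplus\tilde R x$ is fine but unnecessary, since the paper already records $\Tor_1^{\tilde R}(\tilde\Upsilon(G,\tilde L),M)\cong\c U_0(G,\tilde L,M)$ for non-degenerate normalized networks just before this proposition.
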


\begin{proof}
To prove that $(G,\tilde{L})$ is non-degenerate, it suffices to prove that each connected component of $(G,\tilde{L})$ is non-degenerate.  Therefore, we may assume without loss of generality that $G$ is connected.  Since our original graph has at least one boundary vertex in each connected component, we may assume $G$ is connected and has at least one boundary vertex $x$.

Recall that $(G,\tilde{L})$ is non-degenerate if and only if $\tilde{L} \colon \tilde{R}V^\circ \to \tilde{R}V$ is injective (see proof of \ref{prop:tor}).  For our given boundary vertex $x$, let $\tilde{L}_x \colon \tilde{R}(V \setminus \{x\}) \to \tilde{R}(V \setminus \{x\})$ be the Laplacian $\tilde{L}$, with the domain restricted to chains in $\tilde{R}(V \setminus \{x\}) \subseteq \tilde{R}V$, and with the output truncated by applying the canonical projection $\tilde{R}V \to \tilde{R}(V \setminus x)$.  Then injectivity of $\tilde{L}_x$ will imply injectivity of $\tilde{L} \colon \tilde{R}V^\circ \to \tilde{R}V$ since $V \setminus x \supseteq V^\circ$ and $V \setminus x \subseteq \partial V$.  By the weighted matrix-tree theorem (see \cite[Theorem 1]{RF} and \cite[Theorem 4.2]{RK}), we have
\[
\det \tilde{L}_x = \sum_{T\in\text{Span}(G)} \prod_{e \in T} t_e \neq 0,
\]
where Span$(G)$ denotes the set of spanning trees of $G$.  Since we assumed $G$ is connected, $\det \tilde{L}_x$ is a nonzero polynomial in $(t_e)_{e \in E}$ and hence is a nonzero element of the Laurent polynomial algebra $\tilde{R}$.  Since $\tilde{R}$ is an integral domain, it follows that $\tilde{L}_x$ is injective.  This completes the proof that $(G,\tilde{L})$ is non-degenerate.

The rest of the proof is exactly the same as for Proposition \ref{prop:genericfieldnetwork}.
\end{proof}

The following Theorem is proved the same way as Theorem \ref{thm:layerabilitycharacterization}.

\begin{theorem} \label{thm:reducibilitycharacterization}
Let $G$ be a finite $\partial$-graph such that every component has at least one boundary vertex.  The following are equivalent:
\begin{enumerate}
	\item $G$ is completely reducible.
	\item For every ring $R$, every normalized $R^\times$-network on $G$ is non-degenerate.
	\item For every ring $R$, for every non-degenerate normalized $R^\times$-network $(G,L)$ on the $\partial$-graph $G$, $\tilde{\Upsilon}(G,L)$ is a free $R$-module.
	\item There exists an infinite field $F$ such that $\tilde{\Upsilon}(G, \tilde{L}(G,F))$ is a flat $\tilde{L}(G,F)$-module.
	\item There exists an infinite field $F$ such that every normalized $F^\times$-network on $G$ is non-degenerate.
\end{enumerate}
\end{theorem}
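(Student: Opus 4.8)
The plan is to prove the five conditions equivalent by running exactly the two cycles of implications used for Theorem~\ref{thm:layerabilitycharacterization}, replacing each ingredient by its ``normalized'' analogue: Proposition~\ref{prop:reduciblebehavior} plays the role of Proposition~\ref{prop:layerablebehavior}, Lemma~\ref{lem:reducibilityfieldcharacterization} plays the role of Lemma~\ref{lem:layerabilityfieldcharacterization}, and Proposition~\ref{prop:genericfieldnetwork2} plays the role of Proposition~\ref{prop:genericfieldnetwork}. Concretely, the first cycle is $(1)\Rightarrow(2)\Rightarrow(5)\Rightarrow(1)$, and the second is $(1)\Rightarrow(3)\Rightarrow(4)\Rightarrow(5)$, the latter closing through the $(5)\Rightarrow(1)$ arm already proved in the first.

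For the first cycle: $(1)\Rightarrow(2)$ is immediate from Proposition~\ref{prop:reduciblebehavior}, which says a completely reducible normalized $R^\times$-network is non-degenerate. (The proposition is phrased for nonempty $G$, but its inductive proof already passes through disjoint unions and boundary wedge-sums, so no separate argument is needed for disconnected $G$.) The implication $(2)\Rightarrow(5)$ is trivial once one observes that an infinite field exists, e.g.\ $R=\Q$. Finally $(5)\Rightarrow(1)$ is precisely the content of Lemma~\ref{lem:reducibilityfieldcharacterization}.

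For the second cycle: $(1)\Rightarrow(3)$ is again Proposition~\ref{prop:reduciblebehavior}, this time using the conclusion that $\tilde{\Upsilon}(G,L)$ is a free $R$-module. For $(3)\Rightarrow(4)$, pick any infinite field $F$ and apply $(3)$ to the ring $\tilde{R}(G,F)$ and the weighted Laplacian $\tilde{L}(G,F)$: every edge weight $t_e$ is a unit in the Laurent polynomial algebra, and $(G,\tilde{L})$ is non-degenerate by Proposition~\ref{prop:genericfieldnetwork2} (this is exactly where the standing hypothesis that each component of $G$ has a boundary vertex is used), so $\tilde{\Upsilon}(G,\tilde{L})$ is free and hence flat. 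The implication $(4)\Rightarrow(5)$ is the forward direction of the equivalence in Proposition~\ref{prop:genericfieldnetwork2}: flatness of $\tilde{\Upsilon}(G,\tilde{L})$ forces non-degeneracy of every specialization, via Corollary~\ref{cor:quotientnetwork} applied to the ideals generated by the $t_e - w(e)$. Invoking the already-established $(5)\Rightarrow(1)$ then completes the cycle.

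There is essentially no obstacle in the theorem itself; it is pure bookkeeping of implications, because all of the real work has been front-loaded into the supporting results. The substantive steps are Lemma~\ref{lem:reducibilityfieldcharacterization}, which produces a nonzero element of $\c U_0(G',L',F)$ on a minimal irreducible sub-$\partial$-graph $G'$ by first choosing a potential that is constant on the components of $G'$ minus the union of all cycles and then solving for compatible edge weights inside the cycle space, and Proposition~\ref{prop:genericfieldnetwork2}, whose non-degeneracy assertion rests on the weighted matrix-tree theorem and whose flatness criterion rests on a Nullstellensatz argument. The one point requiring care when assembling the theorem is to carry the hypothesis ``each component of $G$ contains a boundary vertex'' through to the step $(3)\Rightarrow(4)$, since that is precisely what Proposition~\ref{prop:genericfieldnetwork2} needs.
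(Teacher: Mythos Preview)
Your proposal is correct and follows exactly the same approach as the paper, which simply states that the theorem ``is proved the same way as Theorem~\ref{thm:layerabilitycharacterization}'' with the normalized analogues substituted in. Your identification of the two implication cycles and the supporting results (Proposition~\ref{prop:reduciblebehavior}, Lemma~\ref{lem:reducibilityfieldcharacterization}, Proposition~\ref{prop:genericfieldnetwork2}) matches the paper's intent precisely.
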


\subsection{Boundary-Interior Bipartitle $\partial$-Graphs}

The correspondence between algebraic and $\partial$-graph-theoretic conditions in Theorem \ref{thm:reducibilitycharacterization} is illustrated by the following proposition about bipartite graphs.  We present both an algebraic proof and an inductive $\partial$-graph-theoretic proof for comparison.  We say a $\partial$-graph is {\bf boundary-interior bipartite} if every edge has one interior endpoint and one boundary endpoint (similar to Example \ref{ex:completebipartite}).

\begin{proposition} \label{prop:boundaryinteriorbipartite}
Suppose that $G$ is a nonempty finite boundary-interior bipartite $\partial$-graph, $|V^\circ| \geq |\partial V|$, and every interior vertex has degree $\geq 2$.  Then $G$ is not completely reducible.
\end{proposition}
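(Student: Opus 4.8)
The plan is to invoke the algebraic characterization of complete reducibility (Theorem~\ref{thm:reducibilitycharacterization}) and exhibit a single degenerate network on $G$. First I would check that the hypothesis of that theorem holds, i.e.\ that every component of $G$ has a boundary vertex: any interior vertex has degree $\ge 2$, hence a neighbor, which by boundary-interior bipartiteness is a boundary vertex; and a component with no interior vertex is, being bipartite, edge-free, hence a lone isolated boundary vertex. So it suffices to produce one ring $R$ and a normalized $R^\times$-network $(G,L)$ with $\c U_0(G,L,R)\ne 0$, for then condition~(2) of Theorem~\ref{thm:reducibilitycharacterization} fails and $G$ is not completely reducible. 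I would take $R=\Q$ (any infinite field works).

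\emph{Step 1: choosing the weights.} Let $W\subseteq\Q^E$ be the subspace of symmetric weight functions $w$ (so $w(\overline e)=w(e)$) satisfying $\sum_{e\in\mathcal E(y)}w(e)=0$ for every $y\in V^\circ$. I claim $W$ has a nowhere-zero element. For a fixed edge $e_0$ with interior endpoint $y_0$, the set $\{w\in W:\ w(e_0)=0\}$ is a \emph{proper} subspace of $W$: since $\deg(y_0)\ge 2$ there is another edge $e_1$ at $y_0$, and the weight function equal to $1$ on $e_0$, $-1$ on $e_1$, and $0$ elsewhere lies in $W$ but is nonzero at $e_0$. As $\Q$ is infinite and $E$ is finite, $W$ is not the union of these finitely many proper subspaces, so some $w\in W$ is nonzero on every edge. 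Fix such a $w$, set $d\equiv 0$, and let $L$ be the resulting weighted Laplacian, so $(G,L)$ is a normalized $\Q^\times$-network.

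\emph{Step 2: choosing the potential.} For any $u\colon V\to\Q$ with $u|_{\partial V}=0$ and any interior vertex $y$, every neighbor of $y$ is a boundary vertex, so $Lu(y)=u(y)\sum_{e\in\mathcal E(y)}w(e)=0$ by the choice of $w$. For a boundary vertex $x$ one computes $Lu(x)=-(Bu)_x$, where $B$ is the $\partial V\times V^\circ$ matrix with $B_{x,y}=\sum_{e:\,e_+=x,\ e_-=y}w(e)$ (up to sign, the interior-boundary block of $L$). So it remains to find a nonzero $u$ with $Bu=0$. The key observation is that every column of $B$ sums to zero: $\sum_{x\in\partial V}B_{x,y}=\sum_{e:\,e_-=y}w(e)=\sum_{e\in\mathcal E(y)}w(e)=0$, using $w(\overline e)=w(e)$; that is, $\mathbf 1^{t}B=0$. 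If $|V^\circ|>|\partial V|$ then $\ker B\ne 0$ by a dimension count; if $|V^\circ|=|\partial V|$ then $B$ is square and $\mathbf 1^{t}B=0$ with $\mathbf 1\ne 0$ forces $\det B=0$, so again $\ker B\ne 0$. Taking $0\ne u\in\ker B$ and extending it by $0$ on $\partial V$ yields $0\ne u\in\c U_0(G,L,\Q)$, so $(G,L)$ is degenerate and $G$ is not completely reducible.

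The main obstacle — and the only place where the hypothesis $|V^\circ|\ge|\partial V|$ is genuinely used — is the square case $|V^\circ|=|\partial V|$: a bare rank count gives nothing there, and one needs the vanishing of the column sums of $B$, equivalently that $\mathbf 1$ is a left null vector of the interior-boundary block of $L$, which is forced precisely by the constraint imposed on $w$ in Step~1; the degree-$\ge 2$ hypothesis is exactly what makes that step possible. I would also note that a second, purely combinatorial proof is available by induction on the operations (layer-stripping, disjoint union, and splitting of boundary wedge-sums) that generate complete reducibility, tracking how the bipartite structure and the inequality $|V^\circ|\ge|\partial V|$ degrade under each; but the algebraic argument above is the cleaner of the two.
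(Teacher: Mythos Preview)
Your proof is correct and is essentially the paper's algebraic proof: choose unit edge weights with $\sum_{e\in\mathcal E(y)}w(e)=0$ at each interior vertex (possible since the bipartite structure makes the constraints at different interior vertices independent and each has degree $\ge 2$), observe that any $u$ vanishing on $\partial V$ is then automatically harmonic at interior vertices, and use that the image of such $u$ under $L$ lands in $\ker\epsilon\cap F^{\partial V}$, which has dimension $|\partial V|-1<|V^\circ|$. Your case split $|V^\circ|>|\partial V|$ versus $|V^\circ|=|\partial V|$ via $\mathbf 1^tB=0$ is just a rephrasing of the paper's single inequality $\dim\operatorname{im}\le|\partial V|-1$; and the paper, like you, also records a second combinatorial proof by induction on the reducing operations.
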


\begin{proof}[Algebraic proof]
Let $F$ be any field other than the field $F_2$ with two elements.  We will construct a degenerate $F^\times$-network on $G$.  Since each interior vertex has at least two edges incident to it and each edge is only incident to one interior vertex, we can choose $w: E \to F^\times$ such that $\sum_{e \in \mathcal{E}(x)} w(e) = 0$ for each $x \in V^\circ$.  If $u \in 0^{\partial V} \times F^{V^\circ} \subset F^V$, then $L u|_{V^\circ} = 0$ since
\[
L u(x) = \sum_{e: e_+ = x} w(e)(u(x) - u(e_-)) = \sum_{e: e_+ = x} w(e) u(x) = 0 \text{ for all } x \in V^\circ.
\]
Combining this with the fact that $\im L \subseteq \ker \epsilon$ yields
\[
L(0^{\partial V} \times F^{V^\circ}) \subseteq \left\{\phi \in F^{\partial V}: \sum_{x \in \partial V} \phi(x) = 0 \right\} \times 0^{V^\circ}.
\]
Therefore, $\dim L(0^{\partial V} \times F^{V^\circ}) \leq |\partial V| - 1 < |V^\circ|$, since we assumed $|\partial V| \leq |V^\circ|$.  Therefore, by the rank-nullity theorem,
\[
\c U_0(G,L, F) = \ker(L\colon F^{V^\circ} \to F^V) \neq 0. \qedhere
\]
\end{proof}

\begin{proof}[$\partial$-graph-theoretic proof]
By Observation \ref{obs:wedgesumfunctoriality}, it suffices to show that $G$ has a sub-$\partial$-graph which is not completely reducible.  We proceed by induction on the number of vertices.

Since $G$ is nonempty and $|V^\circ| \geq |\partial V|$, $G$ must have at least one interior vertex $x$.  By assumption $x$ has some neighbor $y$, and $y$ must be a boundary vertex since the $\partial$-graph is boundary-interior bipartite.  Therefore, $G$ must have at least one boundary vertex and one interior vertex.  If $G$ has only two vertices, it must have exactly one interior vertex and one boundary vertex with at least two parallel edges between them.  Then $G$ is irreducible.

Suppose $G$ has $n > 2$ vertices and divide into cases:
\begin{itemize}
	\item If $G$ is irreducible, we are done.
	\item Suppose $G$ has a boundary spike $(x,y)$ with $x \in \partial V$ and $y \in V^\circ$.  Let $G'$ be the $\partial$-graph obtained by contracting the space.  Then $y$ is a boundary vertex in $G'$ and by assumption all its neighbors are boundary vertices in $G$.  Thus, we can delete the boundary edges incident to $y$ and then delete the now isolated boundary vertex $y$ to obtain a harmonic sub-$\partial$-graph $G'$ which satisfies the original hypotheses.  The new $\partial$-graph $G'$ is nonempty because $|V(G)| > 2$.  By inductive hypothesis, $G'$ is not completely reducible.
	\item If $G$ can be split apart as a boundary wedge sum or a disjoint union, then each piece is boundary-interior bipartite with interior vertices that have degree $\geq 2$.  Moreover, one of the two subgraphs must have $|\partial V| \leq |V^\circ|$, and hence is not completely reducible by inductive hypothesis.
	\item $G$ has no boundary edges by assumption.  Moreover, if $G$ has an isolated boundary vertex, that can be treated as a special case of disjoint unions.
\end{itemize}
\end{proof}

\section{Network Duality} \label{sec:duality}

\subsection{Dual Circular Planar Networks, Harmonic Conjugates}

As shown in \cite[Theorem 2]{CoriRossin}, dual planar graphs have isomorphic critical groups.  In this section, we generalize this result to circular planar normalized $R^\times$-networks.  The theory here adapts the ideas of duality and discrete complex analysis found in \cite[\S 2]{Mercat}, \cite[\S 10]{CMM}, \cite{Perry}.  In this section, all the networks will be normalized (that is, they will satisfy $d = 0$).

\begin{definition}
A {\bf circular planar $\partial$-graph} $G$ is a (finite) $\partial$-graph embedded in the closed unit disk $\overline{D}$ in the complex plane such that $V\cap \partial D=\partial V$. The {\bf faces} of $G$ are the components of $D\setminus G$.
\end{definition}

\begin{definition}
A connected circular planar $\partial$-graph has a {\bf circular planar dual} $G^\dagger$ defined as follows:  The vertices of $G^\dagger$ correspond to the faces of $G$; each vertex of $G^\dagger$ is placed in the interior of the corresponding face of $G$.  The edges of $G^\dagger$ correspond to the edges of $G$.  For each oriented edge $e$ of $G$, there is a dual edge $e^\dagger$ where $e_+^\dagger$ corresponds to the face on the right of $e$ and $e_-^\dagger$ corresponds to the face on the left of $e$.   A vertex of $G^\dagger$ is considered a boundary vertex if the corresponding face has a side along $\partial D$.  For further explanation and illustration, see \cite[Definition 5.1 and Figure 1]{Perry}.
\end{definition}

\begin{remark}
The planar dual is constructed in a similar fashion for a connected planar network without boundary, and the process is well explained in \cite[\S 2.1 and Figure 2]{Mercat}. To incorporate planar networks without boundary into the circular planar framework, we may designate an arbitrary vertex to be a boundary vertex and embed the $\partial$-graph into the disk.
\end{remark}

\begin{definition}
If $(G,L)$ is a circular planar normalized $R^\times$-network, then the \emph{dual network} $(G^\dagger,L^\dagger)$ is the network on $G^\dagger$ with $w(e^\dagger) = w(e)^{-1}$.  We make the same definition for planar normalized $R^\times$-networks without boundary.
\end{definition}

\begin{theorem} \label{thm:duality}
If $(G,L)$ is a connected circular planar normalized $R^\times$-network, then
\[
\tilde{\Upsilon}(G^\dagger, L^\dagger) \cong \tilde{\Upsilon}(G,L).
\]
The same holds for planar normalized $R^\times$-networks without boundary.
\end{theorem}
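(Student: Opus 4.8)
The plan is to reduce the statement to the classical planar duality between cycles and cuts on the edge module $C_1$, while carefully tracking the weight twist $w\mapsto w^{-1}$.

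\smallskip
\noindent\textbf{Step 1: rewrite $\tilde{\Upsilon}$ as a quotient of $C_1$.} Since the network is normalized, $L=\partial w\partial^*$, where $\partial\colon C_1(G)\to C_0(G)=RV$ is the boundary map, $\partial^*\colon C_0(G)\to C_1(G)$ the coboundary, and $w\colon C_1(G)\to C_1(G)$ the diagonal map $e\mapsto w(e)e$, which is invertible because $(G,L)$ is an $R^\times$-network. Hence $L(RV^\circ)=\partial\bigl(w\,\partial^*(RV^\circ)\bigr)$. Because $G$ is connected, $\im\partial=\ker\epsilon$, so $\partial$ induces an isomorphism $C_1(G)/\ker\partial\xrightarrow{\ \sim\ }\ker\epsilon$ under which $L(RV^\circ)$ is carried to $\bigl(w\,\partial^*(RV^\circ)+\ker\partial\bigr)/\ker\partial$. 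Therefore
\[
\tilde{\Upsilon}(G,L)=\ker\epsilon\big/L(RV^\circ)\ \cong\ C_1(G)\big/\bigl(w_G\,\partial_G^*(RV^\circ(G))+\ker\partial_G\bigr),
\]
and similarly $\tilde{\Upsilon}(G^\dagger,L^\dagger)\cong C_1(G^\dagger)\big/\bigl(w_{G^\dagger}\,\partial_{G^\dagger}^*(RV^\circ(G^\dagger))+\ker\partial_{G^\dagger}\bigr)$.

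\smallskip
\noindent\textbf{Step 2: planar duality of the edge module.} The edge bijection $e\mapsto e^\dagger$ of the circular planar dual gives an isomorphism $\iota\colon C_1(G)\xrightarrow{\ \sim\ }C_1(G^\dagger)$. I would establish the circular-planar versions of the classical identifications (cf.\ \cite[\S 5]{Perry}, \cite[\S 2]{Mercat}):
\[
\iota(\ker\partial_G)=\partial_{G^\dagger}^*(RV^\circ(G^\dagger)),\qquad \iota\bigl(\partial_G^*(RV^\circ(G))\bigr)=\ker\partial_{G^\dagger}.
\]
Both follow from the fact that the \emph{interior faces} of $G$ are exactly the \emph{interior vertices} of $G^\dagger$ (a face of $G^\dagger$ meets $\partial D$ iff the corresponding vertex of $G$ does), together with the fact that the boundary cycles of the interior faces form an $R$-basis of the cycle space $\ker\partial$; the incidences match up to signs as $\iota(\partial\phi)=\pm\,\partial_{G^\dagger}^*(\phi)$ for an interior face $\phi$, and symmetrically with $G$ and $G^\dagger$ interchanged (using that interior faces of $G^\dagger$ are interior vertices of $G$). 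Finally, since $w^\dagger(e^\dagger)=w(e)^{-1}$, the weight maps intertwine: $\iota\circ w_G^{-1}=w_{G^\dagger}\circ\iota$, equivalently $w_{G^\dagger}\,\iota\,w_G=\iota$.

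\smallskip
\noindent\textbf{Step 3: the twisted isomorphism.} Set $\psi:=w_{G^\dagger}\circ\iota=\iota\circ w_G^{-1}\colon C_1(G)\to C_1(G^\dagger)$, which is an isomorphism. Using $w_{G^\dagger}\,\iota\,w_G=\iota$ and the two identities of Step 2,
\[
\psi\bigl(w_G\,\partial_G^*(RV^\circ(G))+\ker\partial_G\bigr)=\iota\bigl(\partial_G^*(RV^\circ(G))\bigr)+w_{G^\dagger}\,\iota(\ker\partial_G)=\ker\partial_{G^\dagger}+w_{G^\dagger}\,\partial_{G^\dagger}^*(RV^\circ(G^\dagger)).
\]
This is precisely the submodule of $C_1(G^\dagger)$ appearing in Step 1 for $\tilde{\Upsilon}(G^\dagger,L^\dagger)$, so $\psi$ descends to the desired isomorphism $\tilde{\Upsilon}(G^\dagger,L^\dagger)\cong\tilde{\Upsilon}(G,L)$. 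Applying $\Hom_R(-,M)$ then yields the harmonic-conjugate form $\tilde{\c U}(G^\dagger,L^\dagger,M)\cong\tilde{\c U}(G,L,M)$. For a connected planar normalized $R^\times$-network without boundary, one designates an arbitrary vertex as a boundary vertex and embeds $G$ in the disk; this changes neither $\tilde{\Upsilon}(G,L)$ nor $\tilde{\Upsilon}(G^\dagger,L^\dagger)$, so the result reduces to the circular planar case.

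\smallskip
\noindent\textbf{Main obstacle.} The substantive work is Step 2: proving the two duality identities in the \emph{disk} setting rather than the sphere, in particular that interior faces of $G$ correspond to interior vertices of $G^\dagger$ (a short Euler-characteristic/connectivity argument gives $|\partial V(G^\dagger)|=|\partial V(G)|$ as a consistency check) and that the interior-face boundary cycles form an $R$-basis of $\ker\partial$ over an arbitrary commutative ring (this uses that the sequence $0\to\ker\partial\to C_1\to\ker\epsilon\to0$ splits since $\ker\epsilon$ is free, so everything base-changes from $\Z$). Once these facts are in place, the weight bookkeeping of Steps 1 and 3 is routine.
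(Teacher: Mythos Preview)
Your proposal is correct and follows essentially the same route as the paper: both express $\tilde{\Upsilon}(G,L)$ as $C_1(G)$ modulo the sum of the cycle submodule $\ker\partial_G$ and the weighted-star submodule $w_G\,\partial_G^*(RV^\circ(G))$, and then observe that the map $e\mapsto w(e)^{-1}e^\dagger$ swaps these two submodules with their counterparts on $G^\dagger$. Your write-up is more explicit about the homological bookkeeping and correctly isolates as the ``main obstacle'' the one point the paper asserts without argument, namely that the interior-face boundary cycles generate $\ker\partial_G$ over an arbitrary commutative ring.
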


Theorem \ref{thm:duality} generalizes \cite[Theorem 2]{CoriRossin} to $R^\times$-networks. Our proof combines ideas from \cite[\S 26 - 29]{Biggs} and \cite[\S 7]{CM}.

\begin{proof}
Consider the circular planar case; the proof for planar networks without boundary is the same.  The result follows from reformulating $\tilde{\Upsilon}$ in terms of oriented edges rather than vertices. Recall that $C_1(G)$ is the free $R$-module on the oriented edges $E$ modulo the relations $\overline{e} = -e$ (see \S \ref{subsec:discretedifferentialgeometry}).  Then $\ker \epsilon$ can be identified with the quotient of $C_1(G)$ by the submodule generated by oriented cycles.  The cycle submodule is in fact generated by the oriented boundaries of interior faces.  Moreover, $L(RV^\circ)$ corresponds to the submodule of $C_1(G)$ generated by $\sum_{e \in \mathcal{E}(x)} w(e) e$.  The edges bounding an interior face of $G$ correspond to the edges incident to an interior vertex in $G^\dagger$.  Therefore,
\begin{align*}
\tilde{\Upsilon}(G,L) &\cong \frac{C_1(G)}{(\sum_{e^\dagger \in \mathcal{E}(x)} e: x \in V^\circ(G^\dagger)) + (\sum_{e \in \mathcal{E}(x)} w(e) e: x \in V^\circ(G))} \\
\tilde{\Upsilon}(G^\dagger,L^\dagger) &\cong \frac{C_1(G^\dagger,L^\dagger)}{(\sum_{e^\dagger \in \mathcal{E}(x)} w(e^\dagger) e^\dagger: x \in V^\circ(G^\dagger)) + (\sum_{e \in \mathcal{E}(x)} e^\dagger: x \in V^\circ(G))}.
\end{align*}
Since $w(e^\dagger) = w(e)^{-1}$, we can define an isomorphism $\tilde{\Upsilon}(G,L) \to \tilde{\Upsilon}(G^\dagger,L^\dagger)$ by $e \mapsto w(e)^{-1} e^\dagger$.
\end{proof}

Application of $\Hom(-,M)$ yields the following discrete-complex-analytic interpretation of network duality, as in \cite[\S 2]{Mercat}, \cite[\S 7]{Perry}:

\begin{proposition} \label{prop:harmonicconjugates}
Let $(G,L)$ be a circular planar normalized $R^\times$-network.  Modulo constant functions, for every $M$-valued harmonic function $u$ on $(G,L)$, there is a unique harmonic conjugate $v$ on $(G^\dagger,L^\dagger)$ satisfying the discrete Cauchy-Riemann equation $w(e)du(e) = dv(e^\dagger)$, where $du(e) = u(e_+) - u(e_-)$ and $dv(e^\dagger) = v(e_+^\dagger) - v(e_-^\dagger)$.  Moreover, a function $u: V(G) \to M$ is harmonic if and only if there exists a function $v$ such that $w(e)du(e) = dv(e^\dagger)$.  The same holds for planar normalized $R^\times$-networks without boundary.
\end{proposition}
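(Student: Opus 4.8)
The plan is to read off Proposition~\ref{prop:harmonicconjugates} from Theorem~\ref{thm:duality} by applying the contravariant functor $\Hom_R(-,M)$ and then translating the abstract isomorphism into a statement about functions. By the reduced version of Lemma~\ref{lem:hom}, $\Hom_R(\tilde{\Upsilon}(G,L),M)\cong\tilde{\c U}(G,L,M)$, and the same for $G^\dagger$; so $\Hom(-,M)$ converts the isomorphism $\tilde{\Upsilon}(G,L)\cong\tilde{\Upsilon}(G^\dagger,L^\dagger)$ of Theorem~\ref{thm:duality} into a natural isomorphism $\tilde{\c U}(G^\dagger,L^\dagger,M)\cong\tilde{\c U}(G,L,M)$. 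The real work is to check that under this isomorphism the class of $u$ is sent to the class of its Cauchy--Riemann conjugate $v$.

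To do that I would use the edge-module presentations of $\tilde{\Upsilon}$ recorded in the proof of Theorem~\ref{thm:duality}. Under those presentations, a homomorphism $\tilde{\Upsilon}(G,L)\to M$ is the same thing as an $M$-valued $1$-form $\phi\in\Hom_R(C_1(G),M)$ (so $\phi(\bar e)=-\phi(e)$) which (i) kills the oriented boundary of every interior face of $G$, hence factors through $\partial\colon C_1(G)\to\ker\epsilon$, hence is \emph{exact}: $\phi=du$ for some $u\colon V(G)\to M$, determined up to an additive constant since $G$ is connected; and (ii) kills $\sum_{e\in\mathcal{E}(x)}w(e)\,e$ for each interior vertex $x$, which after writing $\phi=du$ reads $Lu(x)=\sum_{e\in\mathcal{E}(x)}w(e)\,du(e)=0$. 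Thus $\Hom_R(\tilde{\Upsilon}(G,L),M)$ is identified with $\tilde{\c U}(G,L,M)$ via $u\bmod(\mathrm{const})\mapsto du$, and symmetrically $\Hom_R(\tilde{\Upsilon}(G^\dagger,L^\dagger),M)$ with $\tilde{\c U}(G^\dagger,L^\dagger,M)$ via $v\mapsto dv$, using $w(e^\dagger)=w(e)^{-1}$. Dualizing the map $e\mapsto w(e)^{-1}e^\dagger$ of Theorem~\ref{thm:duality} now sends a form $dv$ on $G^\dagger$ to the form $\phi$ on $G$ with $\phi(e)=w(e)^{-1}\,dv(e^\dagger)$; writing $\phi=du$ this is exactly the discrete Cauchy--Riemann equation $w(e)\,du(e)=dv(e^\dagger)$. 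Injectivity of $v\bmod(\mathrm{const})\mapsto dv$ gives uniqueness of the harmonic conjugate modulo constants.

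For the ``moreover'' clause I would argue directly, without passing through $\tilde{\Upsilon}$. Giving a function $v$ (not assumed harmonic) with $w(e)\,du(e)=dv(e^\dagger)$ is the same as saying the $M$-valued $1$-form $\omega$ on $G^\dagger$ defined by $\omega(e^\dagger)=w(e)\,du(e)$ is exact, equivalently that it sums to zero around the boundary of every bounded face of $G^\dagger$; the bounded faces of $G^\dagger$ correspond to the interior vertices of $G$, and with the orientation conventions of Theorem~\ref{thm:duality} the sum of $\omega$ around the face corresponding to $x$ equals $\sum_{e\in\mathcal{E}(x)}w(e)\,du(e)=Lu(x)$. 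Hence such a $v$ exists if and only if $u$ is harmonic, and in that case $v$ is automatically harmonic on $G^\dagger$ because $L^\dagger v(y)=\sum_{e^\dagger\in\mathcal{E}(y)}w(e)^{-1}dv(e^\dagger)=\sum_{e^\dagger\in\mathcal{E}(y)}du(e)$ telescopes to zero around the interior face of $G$ indexed by $y$. The planar-without-boundary case goes through verbatim, using the corresponding half of Theorem~\ref{thm:duality}.

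The $\Hom$-bookkeeping and the telescoping identities are routine; the genuine obstacle is the planar-duality combinatorics --- matching cycles of $G$ with interior faces of $G^\dagger$ and vice versa, fixing orientation conventions so that ``exact'' is equivalent to ``closed around the interior faces,'' and confirming that boundary faces and boundary vertices impose no extra constraint. Fortunately this matching is exactly what was established in the proof of Theorem~\ref{thm:duality}, so here it is a matter of invoking that setup rather than redoing it.
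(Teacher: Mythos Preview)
Your proposal is correct and follows essentially the same approach as the paper: both use the edge-module presentation of $\tilde{\Upsilon}(G,L)$ from the proof of Theorem~\ref{thm:duality}, identify harmonic functions modulo constants with $1$-forms $\phi$ satisfying the cycle and vertex conditions, and transport along the isomorphism $e\mapsto w(e)^{-1}e^\dagger$ to obtain the Cauchy--Riemann relation. Your write-up is somewhat more explicit about the $\Hom$-functor bookkeeping and about verifying that $v$ is itself harmonic, but the substance is the same.
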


\begin{proof}
Given our interpretation of $\tilde{\Upsilon}(G,L)$ in the previous proof, a harmonic function modulo constants is equivalent to a map $\phi\colon E(G) \to M$ such that $\phi(e)$ sums to zero around every oriented cycle and $\sum_{e \in \mathcal{E}(x)} w(e) \phi(e) = 0$ for each interior vertex; the correspondence between $u$ and $\phi$ is given by $\phi(e) = du(e)$.  For every such $\phi$, we can define a similar function $\psi$ on the dual network by $\psi(e^\dagger) = w(e) \phi(e)$.  This proves the existence and uniqueness of harmonic conjugates.

Next, we must prove that if $u$ and $v$ satisfy $w(e) du(e) = dv(e)$, then $u$ is harmonic.  But note that for each $x \in V^\circ(G)$, we have
\[
Lu(e) = \sum_{e \in \mathcal{E}(x)} w(e) du(e) = \sum_{e \in \mathcal{E}(x)} dv(e^\dagger) = 0
\]
because $\{e^\dagger: e \in \mathcal{E}(x)\}$ is a cycle in $G^\dagger$.  The proof for the case without boundary is the same.
\end{proof}

\begin{proposition} \label{prop:dualCR}
Let $G$ be a connected circular planar $\partial$-graph.  Then $G$ is completely reducible if and only if $G^\dagger$ is completely reducible.
\end{proposition}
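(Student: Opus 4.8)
The plan is to deduce the statement from the algebraic characterization of complete reducibility (Theorem \ref{thm:reducibilitycharacterization}) together with the duality isomorphism for the reduced module $\tilde{\Upsilon}$ (Theorem \ref{thm:duality}). First I would fix an infinite field $F$. Since the circular planar setup forces $\partial V(G) \neq \varnothing$ (see the remark following the definition of the dual) and $G$ is connected, the dual $G^\dagger$ is connected and also has a boundary vertex: each arc of $\partial D$ cut out by the boundary vertices of $G$ lies on the boundary of a face of $G$, which becomes a boundary vertex of $G^\dagger$. Hence both $G$ and $G^\dagger$ satisfy the hypotheses of Theorem \ref{thm:reducibilitycharacterization}, and by its equivalence $(1)\Leftrightarrow(4)$ it suffices to prove that $\tilde{\Upsilon}(G,\tilde{L}(G,F))$ is flat over $\tilde{R}(G,F)$ if and only if $\tilde{\Upsilon}(G^\dagger,\tilde{L}(G^\dagger,F))$ is flat over $\tilde{R}(G^\dagger,F)$.

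The crucial step is to match the two generic networks under duality. The bijection $e \mapsto e^\dagger$ between the edge sets of $G$ and $G^\dagger$ induces a ring isomorphism $\theta\colon \tilde{R}(G^\dagger,F) \to \tilde{R}(G,F)$ with $\theta(t_{e^\dagger}) = t_e^{-1}$. Under $\theta$, the generic weighted Laplacian $\tilde{L}(G^\dagger,F)$ on $G^\dagger$ (edge weights $t_{e^\dagger}$) is carried to the weighted Laplacian on $G^\dagger$ with edge weights $t_e^{-1}$, which by definition of the dual network is exactly $L^\dagger$ for the normalized $\tilde{R}(G,F)^\times$-network $(G,\tilde{L}(G,F))$ (whose edge weights are $t_e$). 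Consequently, restriction of scalars along $\theta$ identifies $\tilde{\Upsilon}(G^\dagger,\tilde{L}(G^\dagger,F))$ with $\tilde{\Upsilon}(G^\dagger,L^\dagger)$ as $\tilde{R}(G,F)$-modules.

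Next I would apply Theorem \ref{thm:duality} to the connected circular planar normalized $R^\times$-network $(G,\tilde{L}(G,F))$ with $R = \tilde{R}(G,F)$, obtaining $\tilde{\Upsilon}(G^\dagger,L^\dagger) \cong \tilde{\Upsilon}(G,\tilde{L}(G,F))$. Because flatness is invariant under module isomorphism and is preserved in both directions by restriction of scalars along the ring isomorphism $\theta$, this shows $\tilde{\Upsilon}(G^\dagger,\tilde{L}(G^\dagger,F))$ is flat over $\tilde{R}(G^\dagger,F)$ precisely when $\tilde{\Upsilon}(G,\tilde{L}(G,F))$ is flat over $\tilde{R}(G,F)$. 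Combined with $(1)\Leftrightarrow(4)$ of Theorem \ref{thm:reducibilitycharacterization}, this yields the desired equivalence.

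The main obstacle I anticipate is bookkeeping rather than a deep difficulty: one must check carefully that the isomorphism produced by Theorem \ref{thm:duality} is the $\theta$-compatible one, so that flatness is being compared over the correct Laurent polynomial rings, and one must confirm the elementary planar facts (connectedness of $G^\dagger$ and existence of a boundary face) needed to invoke Theorem \ref{thm:reducibilitycharacterization} for $G^\dagger$. A more hands-on alternative would be to match complete-reduction sequences of $G$ and $G^\dagger$ directly: planar duality interchanges edge deletion and edge contraction and turns splittings of boundary wedge-sums into manipulations near a cut vertex of $G^\dagger$, so a reduction of one graph could in principle be transported to a reduction of the other. This combinatorial route looks more delicate, so I would pursue the algebraic argument above.
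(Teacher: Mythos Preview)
Your proposal is correct and follows essentially the same strategy as the paper: combine the algebraic characterization of complete reducibility (Theorem~\ref{thm:reducibilitycharacterization}) with the duality isomorphism $\tilde{\Upsilon}(G,L)\cong\tilde{\Upsilon}(G^\dagger,L^\dagger)$ (Theorem~\ref{thm:duality}). The only difference is that the paper invokes condition~(3) of Theorem~\ref{thm:reducibilitycharacterization} and the evident bijection $(G,L)\mapsto(G^\dagger,L^\dagger)$ between normalized $R^\times$-networks on $G$ and on $G^\dagger$, which sidesteps the ring isomorphism $\theta$ you set up to handle condition~(4).
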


\begin{proof}
By Theorem \ref{thm:reducibilitycharacterization}, $G$ is completely reducible if and only if for every ring $R$, for every normalized $R^\times$-network $(G,L)$ on the $\partial$-graph, $\tilde{\Upsilon}(G,L)$ is a free $R$-module.  Clearly, $(G,L) \mapsto (G^\dagger,L^\dagger)$ defines a bijection between $R^\times$-networks on $G$ and $R^\times$-networks on $G^\dagger$.  Thus, Theorem \ref{thm:duality} implies that $G$ is completely reducible if and only if $G^\dagger$ is completely reducible.
\end{proof}

\begin{remark}
There is a direct combinatorial proof of Proposition \ref{prop:dualCR} as well, which we will merely sketch here.  It requires extending the definition of dual to circular planar $\partial$-graphs which are disconnected, which is somewhat tricky and tedious since the dual is not unique; this problem is best dealt by reformulating it using medial graphs as in \cite{WJ}.  One can then show that contracting a boundary spike on $G$ corresponds to deleting a boundary edge in $G^\dagger$ and vice versa.  A decomposition of $G$ into a boundary wedge-sum or disjoint union corresponds to a similar decomposition of $G^\dagger$.
\end{remark}

\subsection{Wheel Graphs} \label{subsec:wheel}

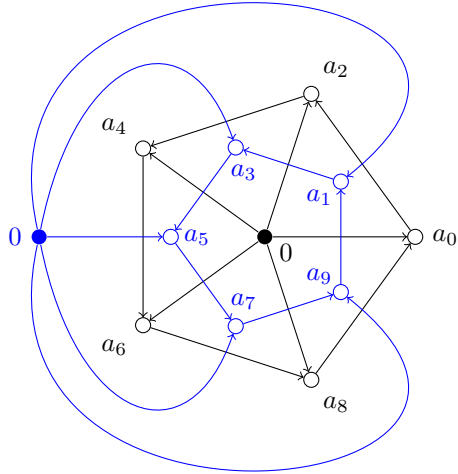
\begin{figure}

\begin{center}
\vspace{-1cm}

\begin{tikzpicture}[scale = 0.5]
 	\node[bd] (Q) at (0,0) {};
	\node at (324:0.7) {$0$};
	\node[int] (0) at (0:4) [label = 0:$a_0$] {};
	\node[int] (2) at (72:4) [label = 72:$a_2$] {};
	\node[int] (4) at (144:4) [label = 144:$a_4$] {};
	\node[int] (6) at (216:4) [label = 216:$a_6$] {};
	\node[int] (8) at (288:4) [label = 288:$a_8$] {};
	
	\begin{scope}[blue]
		\node[bd,blue] (R) at (-6,0) [label = left:$0$] {};
		\node[int,draw=blue] (1) at (36:2.5) {};
		\node at (36:1.8) {$a_1$};
		\node[int,draw=blue] (3) at (108:2.5) {};
		\node at (108:1.8) {$a_3$};
		\node[int,draw=blue] (5) at (180:2.5) {};
		\node at (180:1.8) {$a_5$};
		\node[int,draw=blue] (7) at (252:2.5) {};
		\node at (252:1.8) {$a_7$};
		\node[int,draw=blue] (9) at (324:2.5) {};
		\node at (324:1.8) {$a_9$};
	\end{scope}
	
	\begin{scope}[->]
		\draw (0) to (2); \draw[->] (2) to (4); \draw[->] (4) to (6); \draw[->] (6) to (8); \draw[->] (8) to (0);
		\draw (Q) to (0); \draw[->] (Q) to (2); \draw[->] (Q) to (4); \draw[->] (Q) to (6); \draw[->] (Q) to (8);
	\end{scope}
	\begin{scope}[->,blue]
		\draw[->] (1) to (3); \draw[->] (3) to (5); \draw[->] (5) to (7); \draw[->] (7) to (9); \draw[->] (9) to (1);
		\draw[->] (R) to (5);
		\draw (R) .. controls (-5,5) and (-2,6) .. (3);
		\draw (R) .. controls (-5,-5) and (-2,-6) .. (7);
		\draw (R) .. controls (-8,9) and (9,7) .. (1);
		\draw (R) .. controls (-8,-9) and (9,-7) .. (9);
	\end{scope}
\end{tikzpicture}
\vspace{-1cm}

\end{center}

\caption{$W_5$ and its (isomorphic) dual.  Arrows indicate the paired dual oriented edges.} \label{fig:wheel}
\end{figure}

Consider the wheel graph $W_n$ embedded in the complex plane with vertices at $\left\{e^{2\pi i k/n}\right\}_{k\in\m Z}$ and at $0$. Edges connect $0$ to $e^{2\pi ik/n}$ and $e^{2\pi ik/n}$ to $e^{2\pi i (k+1)/n}$ for all $k\in \m Z$.  Figure \ref{fig:wheel} depicts $W_5$ and its planar dual.  Note that the dual of $W_n$ is isomorphic to $W_n$.  We call the vertex $0$ the {\bf hub} and the set of vertices $\{e^{2\pi i k /n}\}$ the {\bf rim}, and we apply the same terminology to $W_n^\dagger$.  We denote the hub vertex of $W_n^\dagger$ by $0^\dagger$.

The critical group of $W_n$ is computed in \cite{Biggs2} using chip-firing, induction, and the symmetry of the graph, and a connection with Lucas sequences is uncovered.  We present an alternate approach, computing the sandpile group using harmonic continuation and planar duality.

\begin{proposition}[{\cite[Theorem 9.2]{Biggs2}}] \label{prop:wheel}
Let $W_n$ be the wheel graph and let $F_0 = 0$, $F_1 = 1$, $F_2 = 1$, $F_3 = 2$, \dots be the Fibonacci numbers.   Then
\[
\Crit(W_n) \cong \begin{cases} \Z / (F_{n-1} + F_{n+1}) \times \Z / (F_{n-1} + F_{n+1}), & n \text{ odd,} \\ \Z / F_n \times \Z / 5F_n, & n \text{ even.} \end{cases}
\]
\end{proposition}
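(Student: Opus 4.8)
The plan is to reduce $\Crit(W_n)$ to the kernel of an explicit integer $2\times2$ matrix acting on $(\Q/\Z)^2$, to identify that matrix via the self-duality of the wheel as $M^n-I$ with $M$ the square of the Fibonacci matrix, and to extract the invariant factors from classical Fibonacci--Lucas identities.

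First I would reduce to harmonic functions. By Proposition \ref{prop:criticalgroupnoboundary}, $\Crit(W_n)$ is the torsion submodule of $\Upsilon_\Z(W_n,L_{\std})$ and $\c U(W_n,L_{\std},\Q/\Z)\cong\Crit(W_n)\times\Q/\Z$ with the second factor the constant functions; since $W_n$ has no boundary, $\c U_0=\c U$, so it suffices to describe the module of $\Q/\Z$-valued harmonic functions $u$ normalized by $u(\mathrm{hub})=0$. (One could instead declare a rim vertex a boundary vertex and apply Proposition \ref{prop:criticalgrouponeboundary} together with Theorem \ref{thm:explicitalgorithm}: a pair of adjacent rim vertices is a set $S$ for which $(W_n)_{S\to\partial}$ is layerable, since one can peel the spokes as boundary edges and the rim edges as boundary spikes, ending with the hub an isolated boundary vertex; the answer is then a priori the kernel of a two-column integer matrix, and Corollary \ref{cor:invariantfactorsbound} bounds the number of invariant factors by $2$.)

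Next, harmonic continuation around the rim. Let $a_k$ denote the value of $u$ at the $k$-th rim vertex, $k\in\Z/n$; harmonicity at a rim vertex (degree $3$, with $u(\mathrm{hub})=0$) gives $3a_k=a_{k-1}+a_{k+1}$. Rather than iterate this order-two recurrence directly, I would carry the harmonic conjugate $v$ of $u$ on $W_n^\dagger$ (Proposition \ref{prop:harmonicconjugates}), using that $(W_n,L_{\std})$ is self-dual both as a $\partial$-graph and as a normalized network, $W_n^\dagger\cong W_n$ and $w(e^\dagger)=w(e)^{-1}=1$ (Theorem \ref{thm:duality}, Figure \ref{fig:wheel}). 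Tracking at each sector the pair $(a_k,b_{k-1})$, where $b_j$ is the value of $v$ at the dual vertex of the triangular face between spokes $j$ and $j+1$, suitably normalized, the discrete Cauchy--Riemann relations $w(e)du(e)=dv(e^\dagger)$ across one spoke and one rim edge collapse the transition from sector $k-1$ to sector $k$ into multiplication by
\[
M=\begin{pmatrix}2&1\\1&1\end{pmatrix}=\begin{pmatrix}1&1\\1&0\end{pmatrix}^{2}.
\]
A telescoping check (using $b_k-b_{k-1}=a_k$ and $a_k-a_{k-1}=b_{k-1}$) shows that the single condition $M^n x=x$ on the initial vector forces periodicity of $u$ and of $v$ and harmonicity of $u$ at the hub, so the otherwise awkward constraint $\sum_k a_k=0$ is automatic; conversely every fixed vector of $M^n$ yields such a $u$. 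Therefore
\[
\Crit(W_n)\;\cong\;\ker\!\bigl(M^n-I\colon(\Q/\Z)^2\to(\Q/\Z)^2\bigr),\qquad M^n=\begin{pmatrix}F_{2n+1}&F_{2n}\\F_{2n}&F_{2n-1}\end{pmatrix}.
\]

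Finally, the invariant factors. For an integer $2\times2$ matrix $B$ with $\det B\neq0$ one has $\ker(B\colon(\Q/\Z)^2\to(\Q/\Z)^2)\cong\Z/d_1\times\Z/d_2$ with $d_1=\gcd(\text{entries of }B)$ and $d_1d_2=|\det B|$ the Smith invariant factors (as in the proof of Corollary \ref{cor:invariantfactorsbound}). Here $\det(M^n-I)=\det M^n-\operatorname{tr}M^n+1=2-(F_{2n+1}+F_{2n-1})=2-L_{2n}$, and $L_{2n}=L_n^2-2(-1)^n$, $L_n^2-5F_n^2=4(-1)^n$, $L_n=F_{n-1}+F_{n+1}$ give $|\det(M^n-I)|=L_n^2$ for $n$ odd and $5F_n^2$ for $n$ even. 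For the gcd, the identities $F_{2n}=F_nL_n$, $F_{2n+1}-1=F_{n+1}L_n$, $F_{2n-1}-1=F_{n-1}L_n$ (valid for $n$ odd) give $M^n-I=L_n\left(\begin{smallmatrix}F_{n+1}&F_n\\F_n&F_{n-1}\end{smallmatrix}\right)$ with unimodular inner matrix (determinant $F_{n+1}F_{n-1}-F_n^2=-1$), so the Smith form is $\operatorname{diag}(L_n,L_n)$; for $n$ even the identities $F_{2n+1}-1=F_nL_{n+1}$, $F_{2n-1}-1=F_nL_{n-1}$ give $M^n-I=F_n\left(\begin{smallmatrix}L_{n+1}&L_n\\L_n&L_{n-1}\end{smallmatrix}\right)$, whose inner matrix has coprime entries (consecutive Lucas numbers) and determinant $L_{n+1}L_{n-1}-L_n^2=5(-1)^{n-1}=-5$, so its Smith form is $\operatorname{diag}(1,5)$. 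This yields $\Crit(W_n)\cong(\Z/(F_{n-1}+F_{n+1}))^2$ for $n$ odd and $\Z/F_n\times\Z/5F_n$ for $n$ even, as claimed. The main obstacle is the second step: fixing the dual-edge orientations and the normalization of $v$ so that the per-sector transition is exactly $\left(\begin{smallmatrix}1&1\\1&0\end{smallmatrix}\right)^2$, and verifying that $M^n x=x$ really encapsulates all three closure conditions with no residual linear constraint; the Fibonacci--Lucas bookkeeping in the last step is routine.
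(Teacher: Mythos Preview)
Your proposal is correct and takes essentially the same approach as the paper: reduce to harmonic pairs $(u,v)$ via Proposition~\ref{prop:harmonicconjugates} and the self-duality of $W_n$, obtain the Fibonacci recursion, and identify $\Crit(W_n)$ with $\ker(A^{2n}-I)$ on $(\Q/\Z)^2$ where $A=\left(\begin{smallmatrix}1&1\\1&0\end{smallmatrix}\right)$. The only cosmetic differences are that the paper interleaves the rim values of $u$ and $v$ into a single $2n$-periodic sequence (so $A$ appears directly rather than $M=A^2$), and it extracts the Smith form via the unimodular factorization $A^{2n}-I=A^n(A^n-A^{-n})$ rather than your direct Fibonacci--Lucas factorizations of $M^n-I$; both routes yield the same invariant factors with comparable effort.
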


\begin{proof}
By Proposition \ref{prop:criticalgroupnoboundary} it suffices to compute the $\Q / \Z$-valued harmonic functions modulo constants, that is,
\[
\Crit(W_n) \cong \tilde{\c U}(W_n, L_{\std}, \Q / \Z).
\]
By Proposition \ref{prop:harmonicconjugates}, it suffices to compute the $\Z$-module of pairs satisfying Cauchy-Riemann, that is,
\[
\{(u,v) \in [(\Q / \Z)^{V(W_n)}/(\text{constants}) \times (\Q / \Z)^{V(W_n^\dagger)} / (\text{constants})] \colon w(e) du(e) = dv(e^\dagger)\}.
\]
Instead of working modulo constants, we will normalize our functions so that $u$ and $v$ vanish at the hub vertices of $W_n$ and $W_n^\dagger$ respectively.  (The hub vertices are colored solid in Figure \ref{fig:wheel}).  Thus, we want to compute
\[
\{(u,v) \in [(\Q / \Z)^{V(W_n)} \times (\Q / \Z)^{V(W_n^\dagger)}] \colon u(0) = 0, v(0^\dagger) = 0, w(e) du(e) = dv(e^\dagger)\}.
\]
Let $a_0, a_1, a_2, \dots$ be the values of $u$ or $v$ on the rim vertices of $W_n$ and $W_n^\dagger$ in counterclockwise order as shown in the Figure \ref{fig:wheel}, with indices taken modulo $2n$.  The Cauchy-Riemann equations can be rewritten
\[
a_{j+1} - a_{j-1} = a_j - 0.
\]
In other words, the numbers $a_j$ satisfy the Fibonacci-Lucas recurrence $a_{j+1} = a_j + a_{j-1}$, so that
\[
\begin{pmatrix} a_{j+1} \\ a_j \end{pmatrix} = \begin{pmatrix} 1 & 1 \\ 1 & 0 \end{pmatrix} \begin{pmatrix} a_j \\ a_{j-1} \end{pmatrix}.
\]
Note that a harmonic pair $(u,v)$ is uniquely determined by $(a_1,a_0)$.  More precisely, if $A$ is the $2 \times 2$ matrix of the recursion, then $(a_1,a_0)^t \in (\Q / \Z)^2$ will produce a harmonic pair $(u,v)$ through the iteration process if and only if it is a fixed point of $A^{2n}$.  The module of harmonic pairs $(u,v)$ is thus isomorphic to the kernel of $A^{2n} - I$ acting on $(\Q/\Z)^2$.  So the invariant factors of the critical group are given by the Smith normal form of $A^{2n} - I$, which is the same as the Smith normal form of $A^n - A^{-n}$ because $A$ is invertible over $\Z$.  For $n \geq 1$,
\[
A^n = \begin{pmatrix} F_{n+1} & F_n \\ F_n & F_{n-1} \end{pmatrix}, \qquad A^{-n} = (-1)^n \begin{pmatrix} F_{n-1} & -F_n \\ -F_n & F_{n+1} \end{pmatrix}.
\]
If $n$ is odd, then
\[
A^n - A^{-n} = (F_{n+1} + F_{n-1})I,
\]
and if $n$ is even, then
\[
A^n - A^{-n} = \begin{pmatrix} F_{n+1} - F_{n-1} & 2F_n \\ 2F_n & F_{n-1} - F_{n+1} \end{pmatrix} = F_n \begin{pmatrix} 1 & 2 \\ 2 & -1 \end{pmatrix}.
\]
From here, the computation of the invariant factors is straightforward.
\end{proof}

\begin{remark}
Johnson \cite{WJ} in essence developed a system of ``discrete analytic continuation'' for harmonic conjugate pairs $(u,v)$.  Although we will not do so here, we believe future research should combine his ideas with the algebraic machinery of this paper.  Such a theory of discrete analytic continuation would have similar applications to those of Theorem \ref{thm:explicitalgorithm}.
\end{remark}

\section{Covering Maps and Symmetry} \label{sec:symmetry}

The $\partial$-graphs $\CLF(m,n)$ (\S \ref{sec:CLF}) and $W_n$ (\S \ref{subsec:wheel}) had a cyclic structure with a natural action of $\Z / m$ or $\Z / n$ by $\partial$-graph automorphisms.  In this section, we will sketch potential applications of symmetry in general, showing how symmetry imposes algebraic constraints on the group structure of $\Upsilon(G,L)$.  In particular, for $\Z$-networks, symmetry yields some information about the torsion primes of $\Upsilon(G,L)$.  In this section, we shall be brief and not develop a complete theory.  We will merely record a few simple observations for the benefit of future research.

Recall that covering maps of $\partial$-graphs were defined in Definition \ref{def:coveringmap}.  We define a {\bf covering map of $R$-networks} in the obvious way; it is an $R$-network morphism such that the underlying $\partial$-graph morphism is a covering map.  We say a covering map is {\bf finite-sheeted} if $|f^{-1}(x)|$ is finite for every $x \in V(G)$ and $|f^{-1}(e)|$ is finite for every $e \in E(G)$.  We say $f$ is {\bf $n$-sheeted} if $|f^{-1}(x)| = n$ for every $x \in V(G)$ and $|f^{-1}(e)| = n$ for every $e \in E(G)$. 

We will also the notation
\[
\c U_0^+(G,L,M) = \{u \in \c U(G,L,M) \colon u|_{\partial V(G)} = 0, Lu|_{\partial V(G)} = 0\}.
\]
This differs from $\c U_0(G,L,M)$ in that we no longer require $u$ to be finitely supported; however, for finite networks $\c U_0^+(G,L,M) = \c U_0(G,L,M)$.  Moreover, we assume familiarity with the terminology for the actions of finite groups on sets.

\begin{observation} \label{obs:coveringMap}
Let $f\colon (\tilde{G},\tilde{L}) \to (G,L)$ be a covering map.
\begin{enumerate}
	\item As in Lemma \ref{lem:upsilonfunctor} $f$ induces a surjection $\Upsilon(\tilde{G},\tilde{L}) \to \Upsilon(G,L)$ providing the following isomorphism:
\[
  \Upsilon(G,L) \cong \left.\Upsilon(\tilde{G},\tilde{L}) \middle/ \sum_{\substack{
    x,y\in V(G)\\
    f(x) = f(y)
  }} R(x - y)\right.
\]
	\item As in Lemma \ref{lem:ufunctor}, there is an injective map $f^*: \c U(G,L, M) \to \c U(\tilde{G},\tilde{L}, M)$ given by $u \mapsto u \circ f$ which identifies harmonic functions on $(G,L)$ with harmonic functions on $\tilde{G},\tilde{L})$ that are constant on each fiber of $f$.
	\item Moreover, $f^*$ restricts to an injective map $\c U_0^+(G,L,M) \to \c U_0^+(\tilde{G},\tilde{L},M)$.
	\item If $f$ is finite-sheeted, then $f^*$ restricts to an injective map $\c U_0(G,L,M) \to \c U_0(\tilde{G}, \tilde{L}, M)$.
\end{enumerate}
\end{observation}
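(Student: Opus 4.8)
The plan is to treat the four parts in order, deriving everything from Lemma \ref{lem:laplaciandegree} and the functoriality lemmas of \S\ref{subsec:categoryRnetworks}, using the single structural fact that a covering map $f$ satisfies $\deg(f,x)=1$ for every vertex $x$, is surjective on vertices and edges, and carries interior (resp.\ boundary) vertices onto interior (resp.\ boundary) vertices (so, in particular, $f$ maps $V^\circ(\tilde G)$ onto $V^\circ(G)$).

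For (1) I would first note that by Lemma \ref{lem:upsilonfunctor} the covering map induces $\Upsilon f\colon \Upsilon(\tilde G,\tilde L)\to\Upsilon(G,L)$, which is surjective because $f\colon RV(\tilde G)\to RV(G)$ is surjective, being the linear extension of a surjection of bases. To identify the kernel, recall that $\ker\bigl(f\colon RV(\tilde G)\to RV(G)\bigr)$ is exactly the submodule $\sum_{f(x)=f(y)}R(x-y)$, and then show that $f^{-1}\bigl(L(RV^\circ(G))\bigr)=\ker f+\tilde L(RV^\circ(\tilde G))$. The inclusion $\supseteq$ is immediate from Lemma \ref{lem:laplaciandegree}; for $\subseteq$, given $z$ with $f(z)=Lc$ and $c\in RV^\circ(G)$, write $c=f(c')$ with $c'\in RV^\circ(\tilde G)$ (possible since $f$ maps $V^\circ(\tilde G)$ onto $V^\circ(G)$), so that $f(z)=Lf(c')=f(\tilde L c')$ by Lemma \ref{lem:laplaciandegree}, whence $z-\tilde L c'\in\ker f$. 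The isomorphism theorem then yields the displayed formula.

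For (2)--(4) the workhorse is the identity $\tilde L(u\circ f)(\tilde x)=Lu(f(\tilde x))$ for every $\tilde x\in V(\tilde G)$, which follows from $\tilde L(u\circ f)(\tilde x)=(u\circ f)(\tilde L\tilde x)=u(f(\tilde L\tilde x))=u\bigl(\deg(f,\tilde x)\,Lf(\tilde x)\bigr)=u(Lf(\tilde x))$ using Lemma \ref{lem:laplaciandegree} together with $\deg(f,\tilde x)=1$. Injectivity of $u\mapsto u\circ f$ is clear since $f$ is surjective on vertices. In (2), the image of $f^*$ consists precisely of functions constant on fibers: such a function descends to a unique set-map $u$ on $V(G)$, and the identity above, applied at any $\tilde x$ lying over an interior vertex $x$ (which is automatically interior since $f$ maps boundary to boundary), shows $Lu(x)=\tilde L(u\circ f)(\tilde x)=0$, so $u$ is harmonic. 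In (3), if $u|_{\partial V(G)}=0$ and $Lu|_{\partial V(G)}=0$, then $u\circ f$ vanishes on $\partial V(\tilde G)$ (which maps into $\partial V(G)$) and, by the identity, $\tilde L(u\circ f)$ vanishes on $\partial V(\tilde G)$ as well, so $u\circ f\in\c U_0^+(\tilde G,\tilde L,M)$. In (4), one additionally observes that the support of $u\circ f$ equals $f^{-1}$ of the support of $u$, which is finite when $u$ is finitely supported and $f$ is finite-sheeted; combined with (3) this shows $u\circ f\in\c U_0(\tilde G,\tilde L,M)$.

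None of the four parts presents a genuine obstacle; the only point requiring a little care is the kernel identification in (1), specifically the verification that $f^{-1}\bigl(L(RV^\circ(G))\bigr)=\ker f+\tilde L(RV^\circ(\tilde G))$, which is exactly where surjectivity of $f$ on interior vertices enters. Everything else is bookkeeping with Lemma \ref{lem:laplaciandegree} and the definitions of $\c U$, $\c U_0^+$, and $\c U_0$.
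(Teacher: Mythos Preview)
Your argument is correct and is exactly the approach the paper intends: the Observation carries no proof in the paper, only the pointers ``As in Lemma~\ref{lem:upsilonfunctor}'' and ``As in Lemma~\ref{lem:ufunctor}'', and you have filled in precisely those details, including the kernel computation in (1) via $f^{-1}\bigl(L(RV^\circ(G))\bigr)=\ker f+\tilde L(RV^\circ(\tilde G))$.

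One small imprecision worth flagging: you assert the identity $\tilde L(u\circ f)(\tilde x)=Lu(f(\tilde x))$ \emph{for every} $\tilde x\in V(\tilde G)$, citing Lemma~\ref{lem:laplaciandegree}. That lemma is stated only for interior vertices, and for a boundary vertex $\tilde x$ the definition of $R$-network morphism does not force $\tilde d(\tilde x)=d(f(\tilde x))$, so in general $f(\tilde L\tilde x)$ and $Lf(\tilde x)$ may differ by $(\tilde d(\tilde x)-d(f(\tilde x)))f(\tilde x)$. This does not damage your proof: part~(2) uses the identity only at interior vertices, and in parts~(3)--(4) you are applying it to $u$ with $u|_{\partial V(G)}=0$, so $u(f(\tilde x))=0$ at every boundary $\tilde x$ and the $d$-terms drop out on both sides, restoring the identity. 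It would be cleaner to state the workhorse identity only at interior vertices and, for (3), to note separately that at boundary $\tilde x$ the $d$-contributions vanish because $u\circ f$ vanishes there; alternatively, one can verify directly that for a covering map the edge sum $\sum_{e\in\mathcal E(\tilde x)}\tilde w(e)\bigl((u\circ f)(\tilde x)-(u\circ f)(e_-)\bigr)$ equals $\sum_{e'\in\mathcal E(f(\tilde x))}w(e')\bigl(u(f(\tilde x))-u(e'_-)\bigr)$ at every vertex, using the bijection $\mathcal E(\tilde x)\to\mathcal E(f(\tilde x))$.
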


\begin{observation} \label{obs:averaging}
Suppose $f \colon (\tilde{G},\tilde{L}) \to (G,L)$ is a finite-sheeted covering map.
\begin{enumerate}
	\item Proceeding similarly to Lemma \ref{lem:u0functor}, we can define a map
	\[
	f_*\colon \c U(\tilde{G},\tilde{L},M) \to \c U(G,L,M) \colon (f_*u)(y) = \sum_{x \in f^{-1}(y)} u(x).
	\]
	\item Moreover, $f_*$ restricts to define maps $\c U_0^+(G,L,M) \to \c U_0^+(G,L,M)$ and $\c U_0(G,L,M) \to \c U_0(G,L,M)$.
	\item If $f$ is $n$-sheeted, then $f_* \circ f^* u = n \cdot u$.
	\item Suppose $f$ is $n$-sheeted and let $M$ be an $R$-module.  Viewing $n$ as an element of $R$ via the ring morphism $\Z \to R$, we see that multiplication by $n$ defines an $R$-module morphism $n: M \to M$.  Assume $n: M \to M$ is an isomorphism and let $n^{-1}: M \to M$ denote the inverse map.  Then $n^{-1} f_* \circ f^* = \id$.  Hence, $f^*$ defines a split injection $\c U(G,L,M) \to \c U(\tilde{G}, \tilde{L}, M)$ and
	\[
	\c U(\tilde{G}, \tilde{L}, M) = f^* \c U(G,L,M) \oplus \ker f_*.
	\]
	Similarly,
	\[
	\c U_0(\tilde{G}, \tilde{L}, M) = f^* \c U_0(G,L,M) \oplus \ker f_*|_{\c U_0(\tilde{G},\tilde{L},M)}
	\]
	and the same holds for $\c U_0^+$.
\end{enumerate}
(Compare  \cite[Lemma 4.1]{bakerNor1} as well as Maschke's theorem from representation theory \cite[\S 18.1, Thm.\ 1]{DummitandFoote}.)
\end{observation}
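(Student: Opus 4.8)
The plan is to base everything on the covariant analogue of Lemma \ref{lem:laplaciandegree}. Writing $f_* \colon M^{V(\tilde{G})} \to M^{V(G)}$ for the fiberwise-summation map $(f_*u)(y) = \sum_{x \in f^{-1}(y)} u(x)$ (well defined since $f$ is finite-sheeted), I claim
\[
L(f_*u)(y) = \sum_{x \in f^{-1}(y)} \tilde{L}u(x) \qquad \text{for every } y \in V^\circ(G).
\]
To prove this I would first note that because $f$ is a covering map, the restriction $\mathcal{E}(x) \to \mathcal{E}(f(x))$ is a bijection at every vertex $x$, and hence for each oriented edge $e$ of $G$ both maps $f^{-1}(e) \to f^{-1}(e_+),\ \tilde{e} \mapsto \tilde{e}_+$ and $f^{-1}(e) \to f^{-1}(e_-),\ \tilde{e} \mapsto \tilde{e}_-$ are bijections (the second because it factors as $\tilde{e} \mapsto \overline{\tilde{e}} \mapsto \overline{\tilde{e}}_+$). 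Expanding $\sum_{x \in f^{-1}(y)} \tilde{L}u(x)$ from the definition of $\tilde{L}$, regrouping the edge terms by the image edge $e \in \mathcal{E}(y)$, and using $w(\tilde{e}) = w(f(\tilde{e}))$ together with $d(x) = d(f(x))$ for $x \in f^{-1}(y) \subseteq V^\circ(\tilde{G})$ (from Definition \ref{def:Rnetworkmorphism}), the inner sums collapse to $(f_*u)(y)$ and $(f_*u)(e_-)$, which is the displayed identity. Given it, part (1) is immediate: covering maps send interior vertices to interior vertices and boundary to boundary, so $f^{-1}(V^\circ(G)) = V^\circ(\tilde{G})$, and if $u$ is harmonic the right side vanishes on $V^\circ(G)$. (For $\c U_0$ one could instead simply invoke Lemma \ref{lem:u0functor}, since $\deg(f,\cdot) \equiv 1$ for a covering map.)

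For part (2) the same partition argument gives $f^{-1}(\partial V(G)) = \partial V(\tilde{G})$. If $u \in \c U_0^+(\tilde{G},\tilde{L},M)$ and $y \in \partial V(G)$, then every $x \in f^{-1}(y)$ lies in $\partial V(\tilde{G})$, so $u(x) = 0$ and $\tilde{L}u(x) = 0$; hence $(f_*u)(y) = 0$, and repeating the edge-regrouping computation above (now with the diagonal terms vanishing because $u = 0$ on $f^{-1}(y)$) gives $L(f_*u)(y) = -\sum_{e \in \mathcal{E}(y)} w(e)(f_*u)(e_-) = \sum_{x \in f^{-1}(y)} \tilde{L}u(x) = 0$, so $f_*u \in \c U_0^+(G,L,M)$. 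Finite fibers preserve finite support, so $f_*$ likewise restricts to $\c U_0 \to \c U_0$. Part (3) is the one-line computation $(f_*f^*u)(y) = \sum_{x \in f^{-1}(y)} u(f(x)) = |f^{-1}(y)|\,u(y) = n\,u(y)$.

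For part (4), suppose $n\colon M \to M$ is invertible with inverse $n^{-1}$. By part (3), $n^{-1}f_* \circ f^* = \id$ on each of $\c U(G,L,M)$, $\c U_0(G,L,M)$, and $\c U_0^+(G,L,M)$ (using that $f^*$ maps into $\c U_0$ and $\c U_0^+$ by Observation \ref{obs:coveringMap}); hence $f^*$ is a split injection and $p := f^* \circ (n^{-1}f_*)$ is an idempotent endomorphism of $\c U(\tilde{G},\tilde{L},M)$ (resp.\ of $\c U_0$, $\c U_0^+$). The usual splitting $\c U(\tilde{G},\tilde{L},M) = \im p \oplus \ker p$ then applies, with $\im p = f^*\c U(G,L,M)$ (because $n^{-1}f_*$ is onto $\c U(G,L,M)$, having the section $f^*$) and $\ker p = \ker f_*$ (because $n^{-1}$ is invertible), which is the asserted decomposition; the same argument works on $\c U_0$ and $\c U_0^+$.

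The only genuine work here is the intertwining identity — in particular verifying that $\tilde{e} \mapsto \tilde{e}_-$ is a bijection $f^{-1}(e) \to f^{-1}(e_-)$, which is exactly where the covering property is used; the rest is formal manipulation of split idempotents together with the functoriality of $\c U$ and $\c U_0$ already established in \S\ref{subsec:categoryRnetworks}.
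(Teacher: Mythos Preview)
Your argument is correct. The paper records this as an ``Observation'' without proof, simply pointing to Lemma~\ref{lem:u0functor} and to the Maschke-type averaging in \cite{bakerNor1}, and your intertwining identity $L(f_*u)(y) = \sum_{x \in f^{-1}(y)} \tilde{L}u(x)$ together with the idempotent splitting is exactly the expected verification along those lines. One point worth making explicit (which you handle correctly but somewhat implicitly): the identity at \emph{boundary} vertices in part~(2) does not follow from the interior case, since Definition~\ref{def:Rnetworkmorphism} only constrains $\tilde{d}$ on $V^\circ(\tilde{G})$; your observation that the diagonal terms drop out because $u$ vanishes on the boundary fiber is precisely what is needed there.
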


\begin{observation} \label{obs:groupAction}
Suppose that $K$ is a group which acts by $R$-network automorphisms on the $R$-network $(\tilde{G},\tilde{L})$.  Assume the action on vertices and edges is free and that $kx \not \sim x$ for every $k \in G \setminus \{\id\}$ and every $x \in V$.
\begin{enumerate}
	\item There exists a quotient network $(G,L) = (\tilde{G},\tilde{L}) / K$ and a covering map $f \colon (\tilde{G},\tilde{L}) \to (G,L)$.
	\item There is a corresponding action of $K$ on $\c U(\tilde{G},\tilde{L}, M)$ given by $k \cdot u = k_*u$ where $k_*$ is defined as in Observation \ref{obs:averaging}.  The fixed-point submodule of this action is
	\[
	\c U(\tilde{G}, \tilde{L}, M)^K = f^* \c U(G,L,M).
	\]
	The same applies with $\c U$ replaced by $\c U_0$ or $\c U_0^+$.
	\item Suppose $K$ is a finite $p$-group for some prime $p$.  Then by a standard argument using the orbits of the $K$-action on $\c U(\tilde{G},\tilde{L},M)$, we have
	\[
	|\c U(\tilde{G},\tilde{L}, M)| \equiv |\c U(G,L,M)| \text{ mod } p,
	\]
	provided both sides are finite (for instance, assuming $\tilde{G}$ and $M$ are finite).  The same holds for $\c U_0$ and $\c U_0^+$.
\end{enumerate}
\end{observation}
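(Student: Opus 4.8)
The plan is to construct the quotient network $(G,L) := (\tilde G,\tilde L)/K$ explicitly, transport the covering-map machinery of Observations~\ref{obs:coveringMap} and~\ref{obs:averaging} to the present setting, and close part~(3) with the standard $p$-group orbit count.

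For~(1), I would take $V(G) = V(\tilde G)/K$ and $E(G) = E(\tilde G)/K$, declaring a $K$-orbit of vertices interior (resp.\ boundary) exactly when its members are interior (resp.\ boundary); this is unambiguous because $K$ acts by $\partial$-graph automorphisms. The incidence data $e \mapsto e_\pm$, $e \mapsto \bar e$ descend because they are $K$-equivariant, and $w$, $d$ descend because they are $K$-invariant ($K$ acts by $R$-network automorphisms). The hypothesis $kx \not\sim x$ for $k \ne \id$ is exactly what guarantees $E(\tilde G)/K$ is a legitimate edge set: without it an edge joining $x$ to $kx$ would, for $k$ an involution, be identified with its own reverse in the quotient. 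The projection $f$ then has $\deg(f,x) = 1$ for all $x$, since if $e' = ke$ with $e,e' \in \mathcal{E}(x)$ then $kx = x$, forcing $k = \id$ by freeness on vertices; hence $\mathcal{E}(x) \to \mathcal{E}(f(x))$ is a bijection and $f$ is a covering map in the sense of Definition~\ref{def:coveringmap}, and an $R$-network morphism because the weights and diagonal terms match.

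For~(2), observe that for an automorphism $k$, viewed as a $1$-sheeted self-cover, the pushforward of Observation~\ref{obs:averaging} is $k_* u = u \circ k^{-1}$, so $k \mapsto k_*$ is a genuine left action of $K$ on $\c U(\tilde G,\tilde L,M)$. A function $u$ is $K$-fixed iff it is constant on $K$-orbits, i.e.\ constant on the fibers of $f$, and by Observation~\ref{obs:coveringMap}(2) these are precisely the functions $u = f^* v$ with $v$ harmonic on $(G,L)$; thus $\c U(\tilde G,\tilde L,M)^K = f^*\c U(G,L,M)$. For the $\c U_0^+$ and $\c U_0$ versions one adds that $f$ maps $\partial V(\tilde G)$ onto $\partial V(G)$ and that $\tilde L(v \circ f) = (Lv) \circ f$ by Lemma~\ref{lem:laplaciandegree} (with $\deg(f,\cdot) = 1$), so the conditions $u|_{\partial V} = 0$ and $\tilde L u|_{\partial V} = 0$ correspond to those on $v$; in the $\c U_0$ case one uses that $K$ is finite, so $f$ is finite-sheeted and $u = v \circ f$ has finite support iff $v$ does, matching Observation~\ref{obs:coveringMap}(4).

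For~(3), $K$ is a finite $p$-group acting on the finite set $\c U(\tilde G,\tilde L,M)$, so every orbit has $p$-power size; the orbits of size $>1$ contribute multiples of $p$, whence $|\c U(\tilde G,\tilde L,M)| \equiv |\c U(\tilde G,\tilde L,M)^K| \pmod p$, and combining with~(2) and the injectivity of $f^*$ gives $|\c U(\tilde G,\tilde L,M)| \equiv |\c U(G,L,M)| \pmod p$; the identical argument handles $\c U_0$ and $\c U_0^+$. The only genuinely delicate point is the casework in~(1) verifying that the stated freeness and $kx \not\sim x$ hypotheses are exactly what make $f$ a covering map and $G$ a bona fide $\partial$-graph; once that is in place, everything else is a direct application of the functoriality already recorded in Observations~\ref{obs:coveringMap} and~\ref{obs:averaging} together with orbit counting.
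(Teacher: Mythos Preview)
Your proposal is correct and supplies exactly the details the paper leaves implicit: the paper records this as an Observation without a separate proof, relying on the reader to carry out the quotient construction, invoke Observations~\ref{obs:coveringMap} and~\ref{obs:averaging}, and apply the standard $p$-group orbit count, which is precisely what you do. Your remark that the $\c U_0$ case in~(2) tacitly needs $K$ finite (so that $f$ is finite-sheeted and finite support is preserved under $f^*$) is a useful clarification the paper does not make explicit.
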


While these statements hold in general, the mod $p$ counting formula seems especially useful for the case $R = \Z$.  In the following Proposition, we make use of the classification of finitely generated $\Z$-modules (see \cite[\S 12.1]{DummitandFoote}).
\begin{proposition} \label{prop:Znetworkgroupaction}
Suppose $(\tilde{G}, \tilde{L})$ is a finite non-degenerate $\Z$-network.  Suppose $K$ is a finite $p$-group which acts by $\Z$-network automorphisms on $(\tilde{G},\tilde{L})$ as in Observation \ref{obs:groupAction}, let $(G,L)$ be the quotient network, and let $f: (\tilde{G},\tilde{L}) \to (G,L)$ be the projection map.
\begin{enumerate}
	\item The $\Z$-network $(G,L)$ is finite and non-degenerate.
	\item The generalized critical group $\Upsilon(\tilde{G},\tilde{L})$ has nontrivial $p$-torsion if and only if $\Upsilon(G,L)$ has nontrivial $p$-torsion.
	\item Let $q$ be a prime distinct from $p$ and let $k \in \Z$.  Then
	\[
	\c U_0(\tilde{G}, \tilde{L}, \Z / q^k) = f^* \c U_0(G, L, \Z / q^k) \oplus M_{q^k}
	\]
	where $M_{q^k} := \ker f_*|_{\c U_0(\tilde{G}, \tilde{L}, \Z / q^k)}$.
	\item For $q \neq p$, the action of $K$ on $M_{q^k}$ has no fixed points other than zero and in particular $|M_{q^k}| \equiv 1$ mod $p$.
\end{enumerate}
\end{proposition}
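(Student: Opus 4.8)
The plan is to assemble all four claims from the covering-map and group-action observations established above, the $\Tor$-description of torsion (Proposition \ref{prop:tor}), and a standard fixed-point count for $p$-groups. The crucial structural remark I would record first is that, because the $K$-action on the vertices and edges of $\tilde G$ is free (Observation \ref{obs:groupAction}), the quotient map $f\colon(\tilde G,\tilde L)\to(G,L)$ is $n$-sheeted with $n=|K|$; and since $K$ is a finite $p$-group, $n$ is a power of $p$. In particular $f$ is finite-sheeted, so the $\c U_0$-versions (not merely the $\c U_0^+$-versions) of Observations \ref{obs:coveringMap} and \ref{obs:averaging} apply throughout.

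For (1), $G$ is finite because $f$ is surjective and $\tilde G$ is finite. For non-degeneracy I would use that $f$ is finite-sheeted, so Observation \ref{obs:coveringMap}(4) gives an injection $f^*\colon\c U_0(G,L,\Z)\hookrightarrow\c U_0(\tilde G,\tilde L,\Z)$; the target is $0$ by hypothesis, hence $\c U_0(G,L,\Z)=0$. For (2), since both networks are finite and non-degenerate, Proposition \ref{prop:tor} identifies $\c U_0(-,-,\Z/p)$ with $\Tor_1(\Upsilon(-,-),\Z/p)$, and a finitely generated $\Z$-module has nontrivial $p$-torsion exactly when its $\Tor_1(-,\Z/p)$ is nonzero; so (2) reduces to $\c U_0(\tilde G,\tilde L,\Z/p)=0\iff\c U_0(G,L,\Z/p)=0$. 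The map $f^*$ yields one direction. For the other, $\c U_0(\tilde G,\tilde L,\Z/p)$ is a finite module over the field $\Z/p$ carrying a $K$-action; since $K$ is a $p$-group, if this module is nonzero it has a nonzero fixed vector, and by Observation \ref{obs:groupAction}(2) the fixed submodule is $f^*\c U_0(G,L,\Z/p)$, which is therefore nonzero. (Alternatively, one may invoke the mod-$p$ congruence of Observation \ref{obs:groupAction}(3) with $M=\Z/p$, both cardinalities being powers of $p$.)

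For (3), I would apply Observation \ref{obs:averaging}(4) with $M=\Z/q^k$ and $n=|K|$: since $q\neq p$, the integer $n$ is a unit in $\Z/q^k$, so multiplication by $n$ is an automorphism of $\Z/q^k$, and the cited statement gives precisely the asserted splitting $\c U_0(\tilde G,\tilde L,\Z/q^k)=f^*\c U_0(G,L,\Z/q^k)\oplus M_{q^k}$. For (4), the point is that this splitting is $K$-equivariant: $f^*\c U_0(G,L,\Z/q^k)$ is the $K$-fixed submodule by Observation \ref{obs:groupAction}(2), and $M_{q^k}=\ker f_*$ is $K$-stable because $f\circ k=f$ forces $f_*\circ k_*=f_*$. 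Hence the fixed submodule of $\c U_0(\tilde G,\tilde L,\Z/q^k)$ equals $f^*\c U_0(G,L,\Z/q^k)\oplus(M_{q^k})^K$, and comparing with Observation \ref{obs:groupAction}(2) forces $(M_{q^k})^K=0$. Finally $K$ is a $p$-group acting on the finite set $M_{q^k}$ whose only fixed point is $0$, so all other orbits have size a positive power of $p$ and $|M_{q^k}|\equiv1\pmod p$. The main obstacle is not conceptual but bookkeeping: one must carefully check that $f$ is $|K|$-sheeted (so the finite-sheeted hypotheses are met) and that $f^*$, $f_*$, and the direct-sum decomposition all intertwine the $K$-action, so that fixed submodules may legitimately be compared.
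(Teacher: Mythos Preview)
Your proposal is correct and follows essentially the same route as the paper: part (1) via injectivity of $f^*$ on $\c U_0(-,-,\Z)$, part (3) via Observation \ref{obs:averaging}(4) with $n=|K|$ invertible in $\Z/q^k$, and part (4) by comparing the $K$-fixed submodule with the direct-sum decomposition and then counting orbits. For part (2) the paper invokes the mod-$p$ congruence of Observation \ref{obs:groupAction}(3) directly, whereas you lead with the nonzero-fixed-vector argument for a $p$-group acting on an $\mathbb{F}_p$-vector space; both are standard and amount to the same thing, and you even note the congruence as an alternative.
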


\begin{proof}
(1) We assume covering maps to be surjective on the vertex and edge sets by definition; thus, since $\tilde{G}$ is finite, $G$ is also finite.  Because $f^*$ defines an injective map $\c U_0(G,L,\Z) \to \c U_0(\tilde{G},\tilde{L}, \Z) = 0$, we know $(G,L)$ is non-degenerate.

(2) Because the networks are non-degenerate, Proposition \ref{prop:tor} shows that
\[
\Tor_1(\Upsilon(\tilde{G},\tilde{L}), \Z / p) \cong \c U_0(\tilde{G},\tilde{L}, \Z / p)
\]
and the same holds for $(G,L)$.  On the other hand, by Observation \ref{obs:groupAction} (3), we have
\[
|\c U_0(\tilde{G},\tilde{L}, \Z / p)| \equiv |\c U_0(G,L, \Z / p)| \text{ mod } p.
\]
Each of the two $\Z$-modules in this equation is either zero (hence has cardinality one mod $p$) or else it has cardinality zerp mod $p$, which implies (2).

(3) Note that $f$ is a $|K|$-sheeted covering map.  Since $|K|$ is a power of $p$, multiplication by $|K|$ acts as an isomorphism on $\Z / q^k$.  Therefore, claim (3) follows from Observation \ref{obs:averaging} (4).

(4) It follows from (3) and Observation \ref{obs:groupAction} (2) that zero is the only fixed point of the $K$-action on $M_{q^k}$.  Since $K$ is a $p$-group, we thus have $|M_{q^k}| \equiv 1$ mod $p$.
\end{proof}

\begin{example}
Consider the networks $\CLF(m,n)$ from \S \ref{sec:CLF}.  There is an obvious translation action of $\Z / k$ on $\CLF(km,n)$ with the quotient $\CLF(m,n)$.  The covering map $\CLF(km,n) \to \CLF(m,n)$ induces an inclusion $\c U_0(\CLF(m,n), L_{\std}, \Q/\Z) \to \c U_0(\CLF(km,n), L_{\std}, \Q / \Z)$.  Note that when $k$ is a power of $2$, Proposition \ref{prop:Znetworkgroupaction} (2) holds because $\c U_0(\CLF(m,n), \Z / 2^\ell)$ is nontrivial for all $m \geq 2$ and $n \geq 1$ by Theorem \ref{thm:CLF}.  Moreover, for odd integers $k$, we have $\c U_0(\CLF(m,n), \Z / k) = 0$ for all $m$, so (2) also holds when $k$ is an odd prime power.  One can verify that the other claims in Proposition \ref{prop:Znetworkgroupaction} also hold rather vacuously in the case of $\CLF(m,n)$ as well.
\end{example}

\begin{remark}
Though Proposition \ref{prop:Znetworkgroupaction} falls far short of computing $\Upsilon(\tilde{G},\tilde{L})$ from $\Upsilon(G,L)$, it nonetheless gives a significant amount of information, especially in parts (3) and (4).  Indeed, one can argue from the classification of finite $\Z$-modules that the $q$-torsion component of $\Upsilon(\tilde{G},\tilde{L})$ is uniquely determined up to isomorphism by the quantities $|\Tor_1(\Upsilon(\tilde{G},\tilde{L}), \Z / q^k)|$ for $k = 0, 1, \dots$.   Moreover, by (3)
\[
|\Tor_1(\Upsilon(\tilde{G},\tilde{L}), \Z / q^k)| = |\c U_0(G,L, \Z / q^k)| \cdot |M_{q^k}|.
\]
By (4), we know $|M_{q^k}|$ is a power of $q$ which equals $1$ mod $p$ and that the group $K$ acts by automorphisms on $M_{q^k}$ with no nontrivial fixed points.  This narrows down the possibilities for $|M_{q^k}|$, especially when combined with other information such as bounds on the number of invariant factors for the torsion part of $\Upsilon(G,L)$ from Corollary \ref{cor:invariantfactorsbound} or bounds on the size of $\Tor_1(\Upsilon(\tilde{G}, \tilde{L}), \Q / \Z)$ obtained through determinantal computations.
\end{remark}

As stated, Proposition \ref{prop:Znetworkgroupaction} does not yield optimal information for the case of graphs without boundary and the critical group since it relies on non-degeneracy.  The simplest way to handle this problem is by considering $\partial$-graphs with one boundary vertex (see Proposition \ref{prop:criticalgrouponeboundary}) and allowing one branching point in our covering map.

\begin{definition}
Let $\tilde{G}$ and $G$ be $\partial$-graphs with exactly one boundary vertex each, called $\tilde{x}$ and $x$ respectively.  A {\bf pseudo-covering map} $f \colon \tilde{G} \to G$ is a $\partial$-morphism such that $f$ is surjective on the vertex and edge sets, $f$ maps $\tilde{x}$ to $x$, $f$ maps interior vertices to interior vertices, $f$ maps edges to edges, and $\deg(f,y) = 1$ for every $y \in V(\tilde{G}) \setminus \{\tilde{x}\}$.
\end{definition}

The foregoing observations all adapt to pseudo-covering maps for \emph{normalized} $R$-networks (and in particular apply to critical groups).  The verifications are straightforward once we make the following observation:  Let $G$ be a $\partial$-graph with a single vertex $x$ and let $L$ be a weighted Laplacian (recall this means $d = 0$).  If $u: V \to M$ satisfies $Lu(y) = 0$ for all $y \neq x$, then it also satisfies $Lu(x) = 0$ because $\sum_{y \in V(G)} Lu(y) = 0$.

\begin{example}
Let $W_n$ be the wheel graph from \S \ref{subsec:wheel} where $0$ is considered a boundary vertex.  For any $k \in \N$, there is a group action of $\Z / k$ on $W_{kn}$ by rotation and a corresponding quotient map $W_{kn} \to W_n$ which is a pseudo-covering map.  By combining the results from \ref{subsec:wheel} with the results from this section, we obtain the following information about the $q$-torsion components of $\Crit(W_n)$ for each prime $q$.

(1) The $q$-torsion component has at most two invariant factors.  Indeed, the harmonic continuation argument in Proposition \ref{prop:wheel} showed that $\c U_0(W_n, L_{\std}, \Q / \Z)$ is isomorphic to the submodule of $(\Q / \Z)^2$ consisting of fixed points of $A^{2n}$.  A submodule of $(\Q / \Z)^2$ can have at most two invariant factors.  Since $\c U_0(W_n, L_{\std}, \Q / \Z)$ has at most two invariant factors, so does its $q$-torsion component.

(2) For every $k$, there exists some $n$ such that $\c U_0(W_n, L_{\std}, \Z / q^k) \cong (\Z / q^k)^2$.  To prove this, it suffices to show that every $\phi \in (\Z / q^k)^2$ will be a fixed point of $A^{2n}$ for some $n$.  Note that $A$ maps $(\Z / q^k)^2$ into itself and $(\Z / q^k)^2$ is finite, so there must exist two distinct integers $k$ and $\ell$ with $A^{2k} \phi = A^{2\ell} \phi$.  Since $A$ is invertible over $\Z$, we have $A^{2(k-\ell)} \phi = \phi$, so we can take $n = k - \ell$.

(3) If $q$ is a prime other than $5$, then we know from Proposition \ref{prop:wheel} that the $q$-torsion component of $\Crit(W_n)$ has the form $(\Z / q^k)^2$ for some $k$.  Moreover, the $5$-torsion component has the form $(\Z / 5^k)^2$ for odd $n$ and $\Z / 5^k \times \Z / 5^{k+1}$ for even $n$.

(4) If $m | n$, then there is a pseudo-covering map $W_m \to W_n$ and hence by Observation \ref{obs:coveringMap} (4), we can identify the $q$-torsion component of $\Crit(W_n)$ with a submodule of the $q$-torsion component for $\Crit(W_m)$.

(5) Suppose $n$ is such that the $q$-torsion component $\Crit(W_n)$ has two invariant factors, and let $p$ be a prime other than $q$.  Then $\Crit(W_{pn})$ has the same $q$-torsion submodule as $\Crit(W_n)$.  Indeed, multiplication by $p$ acts as an isomorphism on $\Z / q^k$.  Thus, by Proposition \ref{prop:Znetworkgroupaction} (3), we have
\[
\c U_0(W_{pn}, L_{\std}, \Z / q^k) \cong \c U_0(W_n, L_{\std}, \Z / q^k) \oplus M_{q^k}.
\]
We know that $\c U_0(W_n, L_{\std}, \Z / q^k)$ has two invariant factors, while $\c U_0(W_{pn}, L_{\std}, \Z / q^k)$ has at most two invariant factors.  This implies that $M_{q^k} = 0$ and hence
\[
\c U_0(W_{pn}, L_{\std}, \Z / q^k) \cong \c U_0(W_n, L_{\std}, \Z / q^k).
\]
Since this holds for all $k$, the $q$-torsion components of $\Crit(W_{pn})$ and $\Crit(W_n)$ are isomorphic.
\end{example}

\section{Open Problems} \label{sec:openproblems}

Much like the sandpile group, the fundamental module $\Upsilon$ connects ideas from network theory, combinatorics, algebraic topology, homological algebra, and complex analysis.  We have correlated the algebraic properties of $\Upsilon$ with the combinatorial properties of $\partial$-graphs, including $\partial$-graph morphisms, layer-stripping, boundary wedge-sums, duality, and symmetry and we have given applications to the critical group and Laplacian eigenvalues.  Our results lead to the following questions:

\begin{question}
Do our algebraic invariants extend to higher-dimensional cell complexes, along the lines of \cite{DKM}?  Do they generalize to directed graphs?  What are the analogues of $\partial$-graph morphisms and layer-stripping in these settings?
\end{question}

\begin{question}
Can the techniques developed herein (particularly Theorem \ref{thm:explicitalgorithm}) be used to aid the computation of previously intractable sandpile groups?  What applications do they have for computing eigenvectors and characteristic polynomials?
\end{question}

\begin{question}
Corollary \ref{cor:invariantfactorsbound} used layer-stripping to give a bound on the number of invariant factors for $\Crit(G)$ and the multiplicity of eigenvalues.  How sharp is this bound for general graphs?  For a graph without boundary, is there an algebraic characterization of the minimal number of boundary vertices one has to assign to achieve layerability?  What is the most efficient algorithm for finding a choice of boundary vertices that achieves this minimal number?
\end{question}

\begin{question}
Are there other operations on $\partial$-graphs which interact nicely with $\Upsilon$ and with $\partial$-graph morphisms?  Can such operations be used to compute $\Upsilon$ or at least produce short exact sequences?  See Remark \ref{rem:nonunitlayering} and \cite[Proposition 2]{Lor1}, \cite[Proposition 21]{Treumann}.
\end{question}

\begin{question}
We have studied algebraic invariants which test layerability (Theorem \ref{thm:layerabilitycharacterization}).  Are there algebraic invariants of $\partial$-graphs which test whether or not the electrical inverse problem can be solved by layer-stripping?
\end{question}

\begin{question}
Do Theorems \ref{thm:layerabilitycharacterization} and \ref{thm:reducibilitycharacterization} extend to infinite $\partial$-graphs? In particular, for a fixed infinite graph $G$, if $\Upsilon(G,L)$ is flat for all unit edge-weight functions $w$, must $\Upsilon(G,L)$ also be free for all unit edge-weight functions?
\end{question}

\begin{question}
Determine the $\Z$-module of $\m Q/\m Z$-harmonic functions supported in a given subset of the $\m Z^2$ lattice. Applying Lemma \ref{lem:M1computation} to $\CLF(\infty,n)$ resolves the case of a diagonal strip with sides parallel to the lines $y=\pm x$.  An argument using harmonic continuation shows that these are the only \emph{strips} with a nonzero answer.
\end{question}

\begin{question}
Can the techniques of \S \ref{sec:CLF} be modified to handle $\partial$-graphs built from the triangular or hexagonal lattice rather than the rectangular lattice?
\end{question}

\subsection*{Acknowledgments:} The ideas in this paper were in part developed at the University of Washington REU in Electrical Inverse Problems (summer 2015), in which David Jekel and Avi Levy were graduate student TAs, and the undergraduates Will Dana, Collin Litterell, and Austin Stromme were students.  We all owe a great debt to James A.\ Morrow for organizing the REU, participating in discussions, and encouraging our interest in networks.

Furthermore, we thank the organizers of the CMO-BIRS workshop on Sandpile Groups for their support, encouragement, and hospitality. One of the authors presented a preliminary version of these results at this workshop and would like to thank Dustin Cartwright, Caroline Klivans, Lionel Levine, Jeremy Martin, David Perkinson, and Farbod Shokrieh for stimulating discussions.

We thank the referees and journal editors of SIDMA for many helpful corrections and suggestions on exposition, as well as for pointing out several references.

\bibliographystyle{siam}
\bibliography{torsion}

\begin{thebibliography}{10}

\bibitem{ALT}
{\sc J.~Alman, C.~Lian, and B.~Tran}, {\em Circular planar electrical networks:
  Posets and positivity}, J. Combin. Theory Ser. A, 132 (2015), pp.~58--101.

\bibitem{AtMac}
{\sc M.~F. Atiyah and I.~G. MacDonald}, {\em Introduction to Commutative
  Algebra}, Westview Press, 1969.

\bibitem{Bai}
{\sc H.~Bai}, {\em On the critical group of the n-cube}, Linear Algebra Appl.,
  369 (2003), pp.~251--261.

\bibitem{BakerFaber}
{\sc M.~Baker and X.~Faber}, {\em Metric properties of the tropical
  {A}bel--{J}acobi map}, J. Algebraic Combin., 33 (2011), pp.~349--381.

\bibitem{bakerNor2}
{\sc M.~Baker and S.~Norine}, {\em Riemann--{R}och and {A}bel--{J}acobi theory
  on a finite graph}, Adv. Math., 215 (2007), pp.~766 -- 788.

\bibitem{bakerNor1}
\leavevmode\vrule height 2pt depth -1.6pt width 23pt, {\em Harmonic morphisms
  and hyperelliptic graphs}, Int. Math. Res. Not. IMRN, 2009 (2009),
  pp.~2914--2955.

\bibitem{Berman}
{\sc K.~A. Berman}, {\em Bicycles and spanning trees}, SIAM Journal on
  Algebraic Discrete Methods, 7 (1986), pp.~1--12.

\bibitem{Biggs}
{\sc N.~L. Biggs}, {\em Algebraic potential theory on graphs}, Bull. Lond.
  Math. Soc., 29 (1997), pp.~641--682.

\bibitem{Biggs2}
\leavevmode\vrule height 2pt depth -1.6pt width 23pt, {\em Chip-firing and the
  critical group of a graph}, J. Algebraic Combin., 9 (1999), pp.~25--45.

\bibitem{BLS}
{\sc A.~Bj{\"o}rner, L.~Lov{\'a}sz, and P.~W. Shor}, {\em Chip-firing games on
  graphs}, European Journal of Combinatorics, 12 (1991), pp.~283--291.

\bibitem{BobenkoGunther}
{\sc A.~I. Bobenko and F.~G{\"u}nther}, {\em Discrete complex analysis on
  planar quad-graphs}, in Advances in Discrete Differential Geometry, A.~I.
  Bobenko, ed., Springer Berlin Heidelberg, Berlin, Heidelberg, 2016,
  pp.~57--132.

\bibitem{dVGV}
{\sc Y.~{Colin de Verdiere}, I.~Gitler, and D.~Vertigan}, {\em Reseaux
  \'electriques planaires ii}, Comment. Math. Helv., 71 (1996), pp.~144--167.

\bibitem{CoriRossin}
{\sc R.~Cori and D.~Rossin}, {\em On the sandpile group of dual graphs},
  European Journal of Combinatorics, 21 (2000), pp.~447 -- 459.

\bibitem{CIM}
{\sc E.~Curtis, D.~Ingerman, and J.~Morrow}, {\em Circular planar graphs and
  resistor networks}, Linear Algebra Appl., 283 (1998), pp.~115 -- 150.

\bibitem{CMM}
{\sc E.~B. Curtis, E.~Mooers, and J.~A. Morrow}, {\em Finding the conductors in
  circular networks from boundary measurements}, Mathematical Modelling and
  Numerical Analysis, 28 (1994), pp.~781--814.

\bibitem{CMdn}
{\sc E.~B. Curtis and J.~A. Morrow}, {\em The dirichlet to neumann map for a
  resistor network}, SIAM Journal on Applied Mathematics, 51 (1991),
  pp.~1011--1029.

\bibitem{CM}
\leavevmode\vrule height 2pt depth -1.6pt width 23pt, {\em Inverse Problems for
  Electrical Networks}, World Scientific, 2000.

\bibitem{Dhar}
{\sc D.~Dhar}, {\em Self-organized critical state of sandpile automaton
  models}, Phys. Rev. Lett., 64 (1990), pp.~1613--1616.

\bibitem{DummitandFoote}
{\sc D.~S. Dummit and R.~M. Foote}, {\em Abstract Algebra}, John Wiley and
  Sons, Inc., 2004.

\bibitem{DKM}
{\sc A.~M. Duval, C.~J. Klivans, and J.~L. Martin}, {\em Critical groups of
  simplicial complexes}, Ann. Comb., 17 (2013), pp.~53--70.

\bibitem{RF}
{\sc R.~Forman}, {\em Determinants of laplacians on graphs}, Topology, 32
  (1993), pp.~35 -- 46.

\bibitem{GodsilRoyle}
{\sc C.~Godsil and G.~Royle}, {\em Algebraic Graph Theory}, Springer-Verlag New
  York, 2001.

\bibitem{HLMPPW}
{\sc A.~E. Holroyd, L.~Levine, K.~M{\'e}sz{\'a}ros, Y.~Peres, J.~Propp, and
  D.~B. Wilson}, {\em Chip-firing and rotor-routing on directed graphs}, in In
  and Out of Equilibrium 2, V.~Sidoravicius and M.~E. Vares, eds.,
  Birkh{\"a}user Basel, Basel, 2008, pp.~331--364.

\bibitem{layering}
{\sc D.~Jekel}, {\em Layering $\partial$-graphs and networks: $\partial$-graph
  transformations, discrete harmonic continuation, and a generalized electrical
  inverse problem}.
\newblock Unpublished paper at arXiv:1601.00247, 2016.

\bibitem{WJ}
{\sc W.~Johnson}, {\em Circular planar resistor networks with nonlinear and
  signed conductors}.
\newblock Unpublished paper at arXiv:1203.4045, 2012.

\bibitem{RK}
{\sc R.~Kenyon}, {\em The laplacian on planar graphs and graphs on surfaces},
  Current Developments in Mathematics,  (2011).

\bibitem{LPcyl}
{\sc T.~Lam and P.~Pylyavskyy}, {\em Inverse problem in cylindrical electrical
  networks}, SIAM J. Appl. Math., 72 (2012), pp.~767--788.

\bibitem{LP}
\leavevmode\vrule height 2pt depth -1.6pt width 23pt, {\em Electrical networks
  and lie theory}, Algebra Number Theory, 9 (2015), pp.~1401--1418.

\bibitem{lamRing}
{\sc T.-Y. Lam}, {\em A First Course in Noncommutative Rings}, Springer-Verlag
  New York, 2013.

\bibitem{levine}
{\sc L.~Levine}, {\em The sandpile group of a tree}, European Journal of
  Combinatorics, 30 (2009), pp.~1026--1035.

\bibitem{Lor2}
{\sc D.~J. Lorenzini}, {\em Arithmetical graphs}, Math. Ann., 285 (1989),
  pp.~481--501.

\bibitem{Lor1}
\leavevmode\vrule height 2pt depth -1.6pt width 23pt, {\em A finite group
  attached to the laplacian of a graph}, Discrete Math., 91 (1991), pp.~277 --
  282.

\bibitem{MacLane}
{\sc S.~MacLane}, {\em Homology}, Springer-Verlag New York, 1975.

\bibitem{Mercat}
{\sc C.~Mercat}, {\em Discrete riemann surfaces}, Comm. Math. Phys., 218
  (2001), pp.~77--216.

\bibitem{Perry}
{\sc K.~Perry}, {\em Discrete complex analysis}.
\newblock Unpublished paper from Univ. of Wash. Math REU, 2003.

\bibitem{Reichert}
{\sc N.~Reichert}, {\em The smallest recoverable flower}.
\newblock Unpublished paper from Univ. of Wash. Math REU, 2004.

\bibitem{Solomyak}
{\sc R.~Solomyak}, {\em On the coincidence of entropies for two types of
  dynamical systems}, Ergodic Theory and Dynamical Systems, 18 (1998),
  pp.~731--738.

\bibitem{Spencer}
{\sc J.~Spencer}, {\em Balancing vectors in the max norm}, Combinatorica, 6
  (1986), pp.~55--65.

\bibitem{Treumann}
{\sc D.~Treumann}, {\em Functoriality of critical groups}.
\newblock Undergraduate thesis at Univ. of Minn., 2002.

\bibitem{Urakawa}
{\sc H.~Urakawa}, {\em A discrete analogue of the harmonic morphism and green
  kernel comparison theorems}, Glasg. Math. J., 42 (2000), pp.~319--334.

\bibitem{Vermani}
{\sc L.~R. Vermani}, {\em An elementary approach to homological algebra}, CRC
  Press, 2003.

\bibitem{Weibel}
{\sc C.~A. Weibel}, {\em An introduction to homological algebra}, Cambridge
  University Press, 1995.

\end{thebibliography}

\end{document}